\documentclass[oneside,english]{amsart}
\usepackage[latin9]{inputenc}
\setcounter{tocdepth}{1}
\synctex=-1
\usepackage{amsthm}
\usepackage{amssymb}
\usepackage{stmaryrd}

\makeatletter
\numberwithin{equation}{section}
\numberwithin{figure}{section}

\usepackage{amsmath}

\usepackage{amssymb}
\usepackage{stmaryrd}
\usepackage{amsthm}
\usepackage[mathscr]{eucal}
\usepackage{amscd}
\usepackage[all]{xy}
\usepackage{url}



\newtheorem{Thm}{Theorem}[section]
\usepackage{aliascnt}


\usepackage{enumerate}

\usepackage{comment}

\usepackage{hyperref}
\hypersetup{%
pdftitle={Preprint},
pdfauthor={},
pdfkeywords={},
bookmarksnumbered,
pdfstartview={FitH},
breaklinks=true,
colorlinks=true,
urlcolor=blue,
citecolor=blue,
}%
\usepackage[all]{hypcap} 

\usepackage{breakurl}

\usepackage{color}


\theoremstyle{plain}
\newtheorem{Lem}[Thm]{Lemma}
\theoremstyle{plain}
\newtheorem{Prop}[Thm]{Proposition}
\theoremstyle{plain}
\newtheorem{Cor}[Thm]{Corollary}
\theoremstyle{plain}

\theoremstyle{plain}
\newtheorem*{Thm*}{Theorem}
\theoremstyle{plain}
\newtheorem*{Conj*}{Conjecture}

\theoremstyle{definition}
\newtheorem{Def}[Thm]{Definition}
\theoremstyle{definition}
\newtheorem*{Def*}{Definition}
\theoremstyle{definition}
\newtheorem{Eg}[Thm]{Example}
\theoremstyle{definition}

\theoremstyle{definition}
\newtheorem{Assumption}{Assumption}
\theoremstyle{definition}

\theoremstyle{definition}
\newtheorem{Property}{Property}
\theoremstyle{definition}

\theoremstyle{definition}

\theoremstyle{remark}
\newtheorem{Rem}[Thm]{Remark}


\newcommand{\kk}{\Bbbk}
\newcommand{\Z}{\mathbb{Z}}
\newcommand{\N}{\mathbb{N}}
\newcommand{\Q}{\mathbb{Q}}

\newcommand{\R}{\mathbb{R}}




\renewcommand{\hat}[1]{\widehat{#1}}
\renewcommand{\tilde}[1]{\widetilde{#1}}

\newcommand{\opname}[1]{\operatorname{\mathsf{#1}}}



\newcommand{\pr}{\opname{pr}}

\newcommand{\inj}{\opname{inj}}

\newcommand{\ra}{\rightarrow}




\newcommand{\codim}{\opname{codim}}

\newcommand{\sign}{\opname{sign}}

\newcommand{\Aut}{\opname{Aut}}

\newcommand{\Hom}{\opname{Hom}}

\newcommand{\ext}{\opname{ext}}




\newcommand{\supp}{\opname{supp}}

\renewcommand{\deg}{\opname{deg}}




\newcommand{\Hf}{{\frac{1}{2}}}

\newcommand{\Rm}[1]{{\longmapsto}}
\newcommand{\Lm}[1]{{\longmapsfrom}}

\newcommand{\cA}{{\mathcal A}}

\newcommand{\cC}{{\mathcal C}}

\newcommand{\cF}{{\mathcal F}}

\newcommand{\cS}{{\mathcal S}}

\newcommand{\cZ}{{\mathcal Z}}


\newcommand{\bL}{{\mathbf L}}



\newcommand{\sQ}{{\mathbb Q}}

\newcommand{\sT}{{\mathbb T}}



\newcommand{\ui}{{\underline{i}}}

\newcommand{\uk}{{\underline{k}}}

\newcommand{\tB}{{\widetilde{B}}}

\newcommand{\tE}{{\widetilde{E}}}
\newcommand{\tF}{{\widetilde{F}}}

\newcommand{\hA}{{\widehat{A}}}
\newcommand{\hB}{{\widehat{B}}}

\newcommand{\hG}{{\widehat{G}}}





\newcommand{\can}{L}
\newcommand{\clAlg}{{\cA}}
\newcommand{\qClAlg}{\cA_q}








\newcommand{\diag}{{\delta}}










\newcommand{\res}{{\mathrm{Res}}}










\usepackage{tikz}
\usetikzlibrary{positioning,shapes,shadows,arrows,snakes}
\usetikzlibrary {arrows.meta,positioning}

\pgfdeclarelayer{edgelayer}
\pgfdeclarelayer{nodelayer}
\pgfsetlayers{edgelayer,nodelayer,main}

\tikzstyle{none}=[inner sep=0pt]
\tikzstyle{black box}=[draw=black, fill=black!25]
\tikzstyle{white box}=[draw=black, fill=white]
\tikzstyle{black circle}=[circle,draw=black!50, fill=black!25]
\tikzstyle{red circle}=[circle,draw=red!50, fill=red!25]
\tikzstyle{blue circle}=[circle,draw=blue!50, fill=blue!25]
\tikzstyle{green circle}=[circle,draw=green!50, fill=green!25]
\tikzstyle{yellow circle}=[circle,draw=yellow!50, fill=yellow!25]

\newcommand{\thistheoremname}{}
\newtheorem*{genericthm*}{\thistheoremname}
\newenvironment{namedthm*}[1]
  {\renewcommand{\thistheoremname}{#1}%
   \begin{genericthm*}}
  {\end{genericthm*}}

\renewcommand{\diag}{{d}}

\renewcommand{\inj}{{\bI}}
\renewcommand{\can}{{\bL}}




\newcommand{\fv}{\opname{f}}
\newcommand{\ufv}{\opname{uf}}

\newcommand{\suppDim}{\opname{suppDim}}
\newcommand{\ptSet}{\mathcal{PT}}

\makeatother

\usepackage{babel}
\begin{document}
\renewcommand{\res}{\opname{res}}
\newcommand{\frR}{\mathfrak{R}}
\newcommand{\frI}{\mathfrak{I}}
\newcommand{\frz}{\mathfrak{f}}
\renewcommand{\diag}{\mathbf{d}}

\newcommand{\qO}{\mathcal{O}_q}

\newcommand{\up}{\opname{up}}
\newcommand{\dCan}{\opname{B^{\up}}}
\newcommand{\cdCan}{\opname{\mathring{B}^{\up}}}

\renewcommand{\qClAlg}{{\clAlg_q}}

\newcommand{\cone}{M^\circ}
\newcommand{\uCone}{\underline{\cone}}
\newcommand{\yCone}{N_{\ufv}}

\newcommand{\sol}{\mathrm{TI}}
\newcommand{\intv}{{\mathrm{BI}}}

\newcommand{\Perm}{\mathrm{P}}

\newcommand{\tui}{\widetilde{\ui}}
\newcommand{\tuk}{\widetilde{\uk}}

\newcommand{\bideg}{\opname{bideg}}

\newcommand{\ubeta}{\underline{\beta}}
\newcommand{\udelta}{\underline{\delta}}

\newcommand{\LP}{{\mathcal{LP}}}
\newcommand{\hLP}{{\widehat{\mathcal{LP}}}}

\newcommand{\bLP}{{\overline{\mathcal{LP}}}}
\newcommand{\bClAlg}{{\overline{\clAlg}}}
\newcommand{\bUpClAlg}{{\overline{\upClAlg}}}

\newcommand{\midClAlg}{{\clAlg^{\mathrm{mid}}}}
\newcommand{\upClAlg}{\mathcal{U}}

\newcommand{\bQClAlg}{{\bClAlg_q}}

\newcommand{\qUpClAlg}{{\upClAlg_q}}

\newcommand{\bQUpClAlg}{{\bUpClAlg_q}}

\newcommand{\canClAlg}{{\clAlg^{\mathrm{can}}}}

\newcommand{\AVar}{\mathbb{A}}
\newcommand{\XVar}{\mathbb{X}}

\newcommand{\Jac}{\hat{\mathop{J}}}

\newcommand{\wt}{\mathrm{cl}}
\newcommand{\cl}{\mathrm{cl}}

\newcommand{\domCone}{{\overline{M}^\circ}}

\newcommand{\sd}{\mathbf t}
\newcommand{\ssd}{\mathbf s}

\newcommand{\tree}{{\mathbb{T}}}

\newcommand{\img}{{\mathrm{Im}}}

\newcommand{\Id}{{\mathrm{Id}}}
\newcommand{\prin}{{\mathrm{prin}}}

\newcommand{\mm}{{\mathbf{m}}}

\newcommand{\frn}{{\mathfrak{n}}}
\newcommand{\frsl}{{\mathfrak{sl}}}

\newcommand{\frg}{\mathfrak{g}}
\newcommand{\hfrg}{\hat{\mathfrak{g}}}
\newcommand{\hfrh}{\hat{\mathfrak{h}}}

\newcommand{\frh}{\mathfrak{h}}
\newcommand{\frp}{\mathfrak{p}}
\newcommand{\frd}{\mathfrak{d}}
\newcommand{\frj}{\mathfrak{j}}
\newcommand{\frD}{\mathfrak{D}}
\newcommand{\frS}{\mathfrak{S}}
\newcommand{\frC}{\mathfrak{C}}
\newcommand{\frM}{\mathfrak{M}}

\newcommand{\Int}{\mathrm{Int}}
\newcommand{\ess}{\mathrm{ess}}
\newcommand{\Mono}{\mathrm{Mono}}

\newcommand{\bfm}{{\mathbf{m}}}
\newcommand{\bfI}{{\mathbf{I}}}

\newcommand{\Pot}{\mathrm{Pot}}
\newcommand{\kGp}{\opname{K}}

\newcommand{\s}{\mathrm{s}}
\newcommand{\fd}{\mathrm{fd}}

\renewcommand{\sc}{\mathrm{sc}}
\newcommand{\Hall}{\mathrm{Hall}}
\newcommand{\income}{\mathrm{in}}

\newcommand{\tn}{\tilde{n}}

\newcommand{\sing}{\opname{sing}}

\renewcommand{\inj}{\opname{inj}}

\renewcommand{\ext}{\mathrm{ext}}

\newcommand{\cJac}{\Jac}

\newcommand{\stilt}{\mathrm{s}\tau\mathrm{-tilt}}
\newcommand{\Fac}{\mathrm{Fac}}
\newcommand{\Sub}{\mathrm{Sub}}

\newcommand{\rigid}{\mathrm{rigid}}
\newcommand{\tauRigid}{\tau\mathrm{-rigid}}
\newcommand{\spTilt}{\mathrm{s}\tau\mathrm{-tilt}}
\newcommand{\clTilt}{\mathrm{c-tilt}}
\newcommand{\maxRigid}{\mathrm{m-rigid}}

\newcommand{\Li}{\mathrm{Li}}

\newcommand{\Trop}{\opname{Trop}}

\newcommand{\seq}{\boldsymbol{\mu}}

\newcommand{\val}{\mathbf{v}}
\newcommand{\hookuparrow}{\mathrel{\rotatebox[origin=c]{90}{$\hookrightarrow$}}} 
\newcommand{\hookdownarrow}{\mathrel{\rotatebox[origin=c]{-90}{$\hookrightarrow$}}}
\newcommand{\twoheaddownarrow}{\mathrel{\rotatebox[origin=c]{-90}{$\twoheadrightarrow$}}}

\newcommand{\rd}{{\opname{red}}} 
\newcommand{\bCan}{\overline{\can}} 

\title[]{Applications of the freezing operators on cluster algebras}
\dedicatory{Dedicated to Professor Jie Xiao on the occasion of his sixtieth birthday}
\author{Fan Qin}
\email{qin.fan.math@gmail.com}
\begin{abstract}
We apply freezing operators to relate different (quantum) upper cluster
algebras. We prove that these operators send localized (quantum) cluster
monomials to localized (quantum) cluster monomials. They also send
bases to bases in many cases. In addition, the bases constructed via
freezing coincide with those constructed via localization.
\end{abstract}

\maketitle
\tableofcontents{}

\section{Introduction \label{sec:intro} }

Cluster algebras were invented by Fomin and Zelevinsky \cite{fomin2002cluster}
to study the total positivity \cite{Lusztig96} and the dual canonical
bases of quantum groups \cite{Lusztig90,Lusztig91}\cite{Kas:crystal}.
Their quantization was introduced in \cite{BerensteinZelevinsky05}.
It turns out the (quantized) coordinate rings of many interesting
varieties are (quantum) upper cluster algebras, see \cite{BerensteinFominZelevinsky05}
\cite{GeissLeclercSchroeer10,GeissLeclercSchroeer11} \cite{GY13,goodearl2016berenstein,goodearl2020integral}
\cite{elek2021bott} \cite{shen2021cluster} \cite{galashin2022braid}
\cite{casals2022cluster} or \cite{qin2023analogs} for a continuously
expanding list. These varieties are often closely related. For example,
some can be obtained from others by taking locally closed subsets.
In \cite{qin2023analogs}, the author constructed analogs of the dual
canonical bases for these coordinate rings with the help of various
operations in the theory of cluster algebra (cluster theory for short).
In particular, the freezing operators were introduced to related different
cluster algebras.

In this paper, we focus on the study of freezing operators and their
applications to cluster algebras.

\subsection*{Freezing cluster monomials}

In the classical case, the upper cluster algebra is the coordinate
ring of a cluster variety, which is endowed with many toric charts
(called seeds in cluster theory). A localized cluster monomial is
an element of the upper cluster algebra such that it is a Laurent
monomial in some chart (\cite[Lemma 7.8]{gross2018canonical})

Take any initial (quantum) seed $t_{0}$. Let $I$ denote its set
of vertices, which are endowed with a partition $I=I_{\ufv}\sqcup I_{\fv}$
into unfrozen ones and frozen ones. We choose a set of unfrozen vertices
$F\subset I_{\ufv}$. By freezing the vertices in $F$, we obtain
a new seed $\frz_{F}t_{0}$ from the seed $t_{0}$. Note that $t_{0}$
determines the upper cluster algebra $\upClAlg(t_{0})$ and $\frz_{F}t_{0}$
determines a different upper cluster algebra $\upClAlg(\frz_{F}t_{0})$.

By \cite{FominZelevinsky07}\cite{Tran09}\cite{DerksenWeymanZelevinsky09}\cite{gross2018canonical},
the Laurent expansion of any localized (quantum) cluster monomial
$z$ in seed $t_{0}$ takes the form $z=x^{g}\cdot F_{z}|_{y_{k}\mapsto x^{p^{*}e_{k}}}$,
where $g$ denotes the (extended) $g$-vector, $F_{z}$ is a polynomial
in variables $y_{k}$, $k\in I_{\ufv}$, called the (quantum) $F$-polynomial,
and we substitute $y_{k}$ by a Laurent monomial $x^{p^{*}e_{k}}$
in the above formula. 

The freezing operator sends $z$ to $z':=x^{g}\cdot F_{z}'|_{y_{k}\mapsto x^{p^{*}e_{k}}}$,
where $F_{z}':=F_{z}|_{y_{j}\mapsto0,\ \forall j\in F}$. Our first
main result is the following.\begin{Thm}[{Theorem \ref{thm:projection_quantum_cluster_monomial}}]

\label{thm:intro_project_cluster_monomial}The freezing operator sends
any localized (quantum) cluster monomial of $\upClAlg(t_{0})$ to
a localized (quantum) cluster monomial of $\upClAlg(\frz_{F}t_{0})$.

\end{Thm}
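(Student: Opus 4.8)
The strategy is to read the freezing operator off its explicit formula $z\mapsto x^{g}\,F_z'|_{y_k\mapsto x^{p^*e_k}}$, show that it is compatible with mutations, and thereby verify that it carries the cluster structure of $\upClAlg(t_0)$ into that of $\upClAlg(\frz_F t_0)$; all of the real content sits at the frozen-out vertices $F$.

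First I would reduce to the case of cluster variables. The assignment $z\mapsto\frz_F(z)$ is multiplicative on localized cluster monomials that are Laurent monomials in one common seed $t$: for such a product $g$-vectors add and $F$-polynomials multiply (the $F$-polynomial of a cluster monomial being the product of the $F$-polynomials of the cluster variables involved, the frozen ones contributing trivial factors), while $F\mapsto F|_{y_j\mapsto 0,\ j\in F}$ is a ring homomorphism on polynomials; moreover $\frz_F$ fixes the initial cluster variables and the frozen variables. Hence it is enough to prove: for every seed $t$ mutation equivalent to $t_0$ there is a seed $\bar t$ of $\upClAlg(\frz_F t_0)$ such that $\frz_F$ sends the cluster of $t$ to localized cluster monomials of $\upClAlg(\frz_F t_0)$, all of them Laurent monomials in the cluster of $\bar t$. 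Granting this, for $z=\prod_k x_k(t)^{a_k}$ the element $\frz_F(z)=\prod_k\frz_F(x_k(t))^{a_k}$ is a Laurent monomial in the cluster of $\bar t$ whose exponents are nonnegative at the vertices unfrozen in $\bar t$ (the frozen-in-$t$ factors contribute only to the frozen directions, the remaining $a_k$ are nonnegative), so it is a localized cluster monomial of $\upClAlg(\frz_F t_0)$.

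I would prove this reduced statement by induction on the length of a mutation sequence $\seq$ from $t_0$ to $t$, writing $t=\mu_l t'$ and letting $\bar t'$ be the seed given by the inductive hypothesis for $t'$. If $l\notin F$, then freezing commutes with $\mu_l$ (a basic property of freezing operators established earlier; see \cite{qin2023analogs}), and applying $\frz_F$ to the exchange relation at $l$ in $t'$ shows that the cluster of $\mu_l t'$ goes to Laurent monomials in the cluster of $\mu_l\bar t'$. If $l\in F$ one cannot mutate $\bar t'$ at $l$, so one keeps $\bar t=\bar t'$: for $k\neq l$ the cluster variable $x_k(\mu_l t')=x_k(t')$ is unchanged and is covered by the inductive hypothesis, while for $k=l$ one applies $\frz_F$ to the exchange relation $x_l(t')\,x_l(\mu_l t')=M_++M_-$ and invokes the crucial point that the truncation $y_j\mapsto 0$ $(j\in F)$ collapses the $F$-polynomial of $x_l(\mu_l t')$ to its constant term $1$ — one checks from the mutation rule for $F$-polynomials that its non-constant part is divisible by $y_l$ — so that $\frz_F(x_l(\mu_l t'))$ reduces to its $g$-vector monomial $x^{g(x_l(\mu_l t'))}$, again a Laurent monomial in the cluster of the unchanged seed $\bar t'$.

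The case $l\in F$ is, I expect, the main obstacle, and it is precisely there that the design of the freezing operator — truncating $y_j\mapsto 0$ rather than attempting to mutate at a vertex that has become frozen — is what makes things work: one must see both that the $F$-polynomial collapses and that the surviving monomial $x^{g(x_l(\mu_l t'))}$ still lies in the $g$-vector cone $C(\bar t')$, equivalently that the $g$-vector fan of $\upClAlg(\frz_F t_0)$ is a coarsening of that of $\upClAlg(t_0)$, the directions $e_j$ $(j\in F)$ having joined the lineality spaces of all cones; this coarsening can alternatively be quoted from \cite{qin2023analogs} (or deduced from scattering-diagram theory, freezing $F$ amounting to discarding the initial walls in the directions $e_j$, $j\in F$, and re-completing). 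Granting the reduced statement, that $\frz_F(z)$ is exactly the localized cluster monomial of $\upClAlg(\frz_F t_0)$ with $g$-vector $g(z)$ is automatic from the bijection between localized cluster monomials and the lattice points of the $g$-vector fan; and the quantum case runs verbatim, since the quantum separation formula has the same shape, the $g$-vectors and $F$-polynomials are the same combinatorial data, and the images constructed above are quantum Laurent monomials with respect to the quantum torus of $\bar t$.
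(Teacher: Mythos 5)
Your proposed proof takes a genuinely different route from the paper, replacing the scattering-diagram argument by a direct induction on the length of a mutation sequence. The reduction to cluster variables (via the multiplicativity in Lemma \ref{lem:projection_properties}(3)) is sound, and so is the case $l\notin F$ (Lemma \ref{lem:freeze_mutation}). But the key claim you invoke for $l\in F$ is false: you assert that the $F$-polynomial of the newly created variable $x_l(\mu_l t')$ has non-constant part divisible by $y_l$, so that the truncation $y_j\mapsto 0$ ($j\in F$) collapses it to $1$ and $\frz_F(x_l(\mu_l t'))$ becomes the single Laurent monomial $x^{g(x_l(\mu_l t'))}$. Here is a counterexample. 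Take $t_0$ of type $A_2$ with $I=I_{\ufv}=\{1,2\}$, $(b_{ij})=\left(\begin{array}{cc}0 & 1\\-1 & 0\end{array}\right)$, set $F=\{2\}$, and let $t'=\mu_1 t_0$, $t=\mu_2 t'$, so $l=2\in F$. The new variable $x_2(t)=\dfrac{x_1+1+x_2}{x_1x_2}$ has $g$-vector $(-1,0)$ and $F$-polynomial $1+y_1+y_1y_2$ with respect to $t_0$; its non-constant part $y_1(1+y_2)$ is divisible by $y_1$, not by $y_l=y_2$, and after $y_2\mapsto 0$ one is left with $1+y_1\neq 1$. Correspondingly $\frz_{\{2\}}x_2(t)=x_1^{-1}(1+x_2^{-1})=x_2^{-1}x_1'$, which is indeed a localized cluster monomial of $\upClAlg(\frz_{\{2\}}t_0)$ (in the mutated seed $\mu_1\frz_{\{2\}}t_0$), but emphatically not the bare Laurent monomial $x^{(-1,0)}=x_1^{-1}$. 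So the theorem holds in this example, but your inductive step, as written, does not establish it.

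What you raise at the very end as an alternative is in fact the actual content of the paper's proof. Freezing $F$ discards the incoming walls in directions $e_j$, $j\in F$, so the chambers of $\frD(t_0)$ merge into chambers of $\frD(\frz_F t_0)$ (Lemma \ref{lem:chamber_merge}); by Muller's observation (Lemma \ref{lem:projection_reachable_chamber}) the containing chamber remains reachable, so the $g$-vector of a localized cluster monomial $z$ still lies in a reachable chamber of $\frD(\frz_F t_0)$, which gives the classical case (Proposition \ref{prop:projection_localized_cluster_monomial}) via Theorem \ref{thm:projection_theta_function} and Lemma \ref{lem:theta_localized_cluster_monomial}. The quantum case is then not proved by a separate quantum scattering argument, but by passing through the classical case: after conjugating by a suitable $\seq^*$ using Lemmas \ref{lem:freeze_mutation} and \ref{lem:freeze_compatibly_pointed}, one reduces to comparing supports of classical and quantum $F$-polynomials, and these agree because their Newton polytopes agree (Lemma \ref{lem:newton_polytope_F_poly}, after Tran). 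If you want to pursue the local mutation-by-mutation induction, you would need a substantially stronger inductive hypothesis — tracking, for each cluster variable, the precise Laurent monomial of $\bar t'$ that $\frz_F$ produces, where the exponents at vertices of $F$ can be nontrivial — and the crux of that bookkeeping is again the global fan-coarsening fact.
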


\begin{Rem}

Our proof of Theorem \ref{thm:intro_project_cluster_monomial} heavily
relies on the classical scattering diagrams \cite{gross2018canonical}. 

Scattering diagrams behave well under freezing. For example, \cite{muller2015existence}
observed that freezing of seeds induces inclusion of chambers. His
observation has important consequences on mutation reachability, see
Lemma \ref{lem:projection_reachable_chamber}, Theorem \ref{thm:freeze_inj_reachable}(\cite[Theorem 1.4.1]{muller2015existence}),
Theorem \ref{thm:seed_rechability_reduction} (also proved in \cite{cao2020enough}
based on Muller's observation).

It would be desirable to know if there exists a different approach
to Theorem \ref{thm:intro_project_cluster_monomial} and results concerning
mutation reachability.

\end{Rem}

\subsection*{Constructions of bases}

Now, assume that the seed $t_{0}$ can be quantized or, equivalently,
the associated $B$-matrix $\tB(t_{0})$ is of full rank. There have
been many works towards the construction of well-behaved bases for
classical or quantum upper cluster algebras $\upClAlg(t_{0})$. Notably,
three important families of bases have been constructed in literature: 
\begin{itemize}
\item the generic bases in the sense of \cite{dupont2011generic}, see also
\cite{ding2009integral} \cite{GeissLeclercSchroeer10b} \cite{plamondon2013generic}
\cite{qin2019bases} \cite{musiker2013bases};
\item the common triangular bases in the sense of \cite{qin2017triangular},
see also \cite{BerensteinZelevinsky2012} \cite{qin2020dual,qin2023analogs};
\item the theta basis in the sense of \cite{gross2018canonical}, see also
\cite{davison2019strong} \cite{fock2006moduli} \cite{thurston2014positive}
\cite{MandelQin2021}.
\end{itemize}
The basis elements we are interested in are often $g$-pointed elements
for $g\in\Z^{I}$, i.e., they take the form $s_{g}=x^{g}\cdot F_{s_{g}}|_{y_{k}\mapsto x^{p^{*}n}}$,
where $F_{s_{g}}$ is a polynomial in the variables $y_{k}$, $k\in I_{\ufv}$,
$F_{s_{g}}(0)=1$. Then the freezing operator act on these $g$-pointed
elements by sending $y_{j}$ to $0$ for $j\in F$, see Section \ref{sec:Basics-of-the-freezing}. 

\begin{Thm}[{Theorem \ref{thm:freezing_basis}}]\label{thm:intro_freeze_basis}

For many $\upClAlg(t_{0})$, its basis of the form $\cS=\{s_{g}|g\in\Z^{I}\}$,
where $s_{g}$ are $g$-pointed, is sent to a basis of $\upClAlg(\frz_{F}t_{0})$
by the freezing operator.

\end{Thm}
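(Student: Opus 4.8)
The plan is to reduce the statement, for each of the three families of bases, to two facts: first, that the freezing operator is a well-defined map on the span of $g$-pointed elements (sending $\cS$ to a spanning set of $\upClAlg(\frz_{F}t_{0})$ that is $g$-pointed after reindexing), and second, that this spanning set is actually a basis, i.e.\ the freezing operator does not collapse two distinct basis elements onto the same thing, nor does it fail to surject. The first fact should follow from the explicit formula $s_g \mapsto x^g \cdot F_{s_g}|_{y_j \mapsto 0,\, j\in F}$: one checks that the image lands in $\upClAlg(\frz_F t_0)$ (this is the upper bound direction, using that freezing respects the Laurent phenomenon, and can be deduced from Theorem \ref{thm:intro_project_cluster_monomial} together with the fact that $\upClAlg(\frz_F t_0)$ is generated over its cluster monomials in a suitable completed/localized sense), and that the image is again $g'$-pointed for $g'$ the image of $g$ under the natural projection $\Z^I \to \Z^{I}$ fixing coordinates and recording the now-frozen directions. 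The key point is that distinct $g$-vectors whose images agree correspond to basis elements that genuinely differ only in the $F$-directions, and these get separated by tracking the full $g$-vector, which the freezing operator preserves.

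The main structural step is therefore to establish that the freezing operator, restricted to a $g$-pointed basis, is a \emph{bijection} onto a $g$-pointed basis of the target. The cleanest route is the comparison with localization already advertised in the abstract: one shows $\frz_F$ factors through, or agrees with, the localization construction that produces bases of $\upClAlg(\frz_F t_0)$ from bases of $\upClAlg(t_0)$ (this is where the hypothesis ``for many $\upClAlg(t_0)$'' enters — one needs the ambient algebra to be one where the localization-of-bases machinery of \cite{qin2023analogs} applies). Granting that identification, the bijectivity is inherited from the known behaviour of bases under localization. For the three specific families one then invokes: for generic bases, the compatibility of generic Laurent expansions / quiver Grassmannian constructions with freezing (freezing corresponds to restricting to subrepresentations supported away from $F$, or to a quotient of the relevant Jacobian algebra); for common triangular bases, the fact that the triangularity property (unitriangular with respect to the dominance order on $g$-vectors) is preserved under setting $y_j\mapsto 0$, since this operation is compatible with the partial order and with the bar-involution; and for the theta basis, the compatibility of theta functions with freezing via the inclusion of scattering diagrams from \cite{muller2015existence}, which sends broken lines to broken lines and hence $\vartheta$-functions to $\vartheta$-functions or $0$ — and here one must check the ``or $0$'' does not actually occur on the relevant chamber, which again is where genericity/full-rank of $\tB(t_0)$ is used.

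I expect the main obstacle to be exactly the surjectivity-and-injectivity bookkeeping in the presence of the order: one must rule out that two theta functions $\vartheta_{g_1}, \vartheta_{g_2}$ with $g_1 \neq g_2$ map to the same element, and symmetrically that every $g'$-pointed basis element of the target is hit. Injectivity is controlled by the observation that the $g$-vector is visible in the leading term and is unchanged by $\frz_F$ — so $g_1\neq g_2$ forces distinct images \emph{provided} neither image vanishes; the real content is the non-vanishing, which for theta functions is the statement that the chamber of $t_0$ maps into (not merely to the boundary of) the scattering diagram of $\frz_F t_0$, i.e.\ Muller's inclusion of chambers (Lemma \ref{lem:projection_reachable_chamber}) is used in an essential way. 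Surjectivity is then a dimension/pointedness count: the target basis is indexed by $\Z^I$ (with the $F$-coordinates now tracking frozen variables), and the fibers of $\frz_F$ over each target index are singletons by the injectivity just established, so a $g$-pointedness argument — every $g$-pointed element is a finite triangular combination of basis elements — closes the loop. The delicate case-by-case verification that each of the three constructions is genuinely intertwined with $\frz_F$ (rather than merely landing in the right algebra) is where the bulk of the real work lies, and it is handled separately for each family using its defining property.
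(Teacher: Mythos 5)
Your proposal identifies the right ambient facts (the $g$-vector of $s_g$ is visible in the leading Laurent term, that leading term always survives $\frz_{F,g}$, the image lands in $\upClAlg(\frz_F t_0)$, Muller's chamber inclusion is relevant somewhere), but it misses the structural shortcut that the paper actually uses and, where it differs, the proposal has a real gap on the spanning side.

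The paper's Theorem~\ref{thm:freezing_basis} is a \emph{reduction} theorem: it assumes the existence of one $\Z^I$-pointed basis $\cZ=\{z_g\}$ whose image under freezing is already known to be a basis, and deduces the same for any other $g$-pointed basis $\cS$. The mechanism is Lemma~\ref{lem:bases_with_transitions}: write the finite, automatically $\prec_{t_0}$-unitriangular transition $s_g=\sum_{g'}b_{g,g'}z_{g'}$ (Lemma~\ref{lem:finit_sum_triangular}), apply $\frz_{F,g}$, and use Lemma~\ref{lem:projection_properties}(2) to see that each term $\frz_{F,g}z_{g'}$ is either $0$ or equals $\frz_{F,g'}z_{g'}$. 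Hence $\oplus_g\kk\,\frz_{F,g}s_g\subset\oplus_g\kk\,\frz_{F,g}z_g$, and by symmetry the reverse inclusion also holds; since the latter span is $\upClAlg(\frz_F t_0)$ by hypothesis, so is the former. The existence of the ``good'' $\cZ$ is then discharged once and for all by Theorem~\ref{thm:freeze_tropical_bases} (compatibly pointed $+$ injective-reachable $\Rightarrow$ basis, invoking \cite[Theorem 4.3.1]{qin2019bases}), where Muller's observation supplies the injective-reachability of $\frz_F t_0$. You propose instead to intertwine $\frz_F$ with each construction (generic, triangular, theta) separately, which is both more work and, as written, incomplete: the ``surjectivity is a dimension/pointedness count'' step is not a count in an infinite-rank setting --- it is exactly the nontrivial theorem that compatibly-pointed $\Z^I$-pointed families span, which you do not reduce to or cite.

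Two smaller misplacements: (i) non-vanishing of $\frz_{F,g}s_g$ is immediate from the definition, since the $n=0$ (leading) term $x^g$ has empty support and always survives, so there is no concern to settle with Muller's chamber inclusion; its actual role is injective-reachability of $\frz_F t_0$. (ii) Injectivity / linear independence of $\{\frz_{F,g}s_g\}$ is also automatic once each is $g$-pointed with pairwise distinct $g$'s, so there is no ``collapse'' issue to address. The genuine content is wholly on the spanning side, and the triangular-transition reduction is what makes that tractable without case-by-case work.
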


Assume that we are in the situation of Theorem \ref{thm:intro_freeze_basis}.
We can also construct bases for $\upClAlg(\frz_{F}t_{0})$ by a different
method using localization, see Section \ref{subsec:Construction-by-localization}.
In brief, we consider the subset $\cS'\subset\cS$, which consists
of $s_{g}$ whose $F$-polynomial contains no variables $y_{j}$,
$j\in F$. Then we construct the localized function $x^{d}\cdot s_{g}$
for $d\in\Z^{F}$ and $s_{g}\in\cS'$. 

\begin{Thm}[{Theorem \ref{thm:induced_theta_basis}}]\label{thm:intro_induced_basis}

In the situation of Theorem \ref{thm:intro_freeze_basis}, under mild
conditions, the set of localized functions coincides with the basis
$\{\frz_{F}s_{g}|g\in\Z^{I}\}$ constructed via freezing.

\end{Thm}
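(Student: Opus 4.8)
The plan is to show the two sets are equal by proving each is contained in the other, exploiting that both are bases (over the relevant ground ring) of the \emph{same} algebra $\upClAlg(\frz_{F}t_{0})$ — so that a basis contained in another basis of this algebra is forced to coincide with it. That $\frz_{F}\cS=\{\frz_{F}s_{g}\mid g\in\Z^{I}\}$ is a basis is Theorem~\ref{thm:freezing_basis}, and that the set of localized functions $\cS_{\mathrm{loc}}:=\{x^{d}\cdot s_{g}\mid d\in\Z^{F},\ s_{g}\in\cS'\}$ is a basis is what Section~\ref{subsec:Construction-by-localization} provides. Hence it is enough to prove one inclusion, and I would prove $\cS_{\mathrm{loc}}\subseteq\frz_{F}\cS$.

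Half of this is immediate: if $s_{g}\in\cS'$ then by definition $F_{s_{g}}$ involves no variable $y_{j}$ with $j\in F$, so the substitution $y_{j}\mapsto0$ does nothing and $\frz_{F}s_{g}=s_{g}$; thus $\cS'=\frz_{F}(\cS')\subseteq\frz_{F}\cS$. Consequently $\cS_{\mathrm{loc}}\subseteq\frz_{F}\cS$ reduces to the assertion that $\frz_{F}\cS$ is stable under multiplication by the invertible monomials $x^{d}$, $d\in\Z^{F}$. Now $\frz_{F}s_{g}=x^{g}\cdot\bigl(F_{s_{g}}\big|_{y_{j}\mapsto0,\ j\in F}\bigr)\big|_{y_{k}\mapsto x^{p^{*}e_{k}}}$ is $g$-pointed, so $x^{d}\cdot\frz_{F}s_{g}$ is $(g+d)$-pointed with the same $F$-polynomial $F_{s_{g}}|_{y_{j}\mapsto0}$; since $\frz_{F}\cS$ is a pointed basis and a $g$-pointed element of $\upClAlg(\frz_{F}t_{0})$ is determined by its $F$-polynomial (here one uses that $\tB(t_{0})$, hence $\tB(\frz_{F}t_{0})$, has full rank), the needed stability amounts to the identity
\[
F_{s_{g}}\big|_{y_{j}\mapsto0,\ j\in F}\;=\;F_{s_{g+d}}\big|_{y_{j}\mapsto0,\ j\in F}\qquad(g\in\Z^{I},\ d\in\Z^{F}),
\]
\ie\ to the claim that the reduced $F$-polynomial $F_{s_{g}}|_{y_{j}\mapsto0}$ depends only on the coset $g+\Z^{F}$.

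To establish this I would pass to the theta basis, reducing via the standing hypotheses (this is where ``mild conditions'' enter) to the case $\cS=\{\vartheta_{g}^{t_{0}}\}$, the theta basis of $\upClAlg(t_{0})$ in the sense of \cite{gross2018canonical} — under those hypotheses $\cS$ is either literally the theta basis or is pinned down by it through the triangularity results of \cite{qin2017triangular,qin2023analogs}. Then two properties of classical scattering diagrams do the work, both already in play in the proof of Theorem~\ref{thm:projection_quantum_cluster_monomial} and in Section~\ref{sec:Basics-of-the-freezing}. First, freezing the vertices of $F$ corresponds to the wall-function substitution $z^{p^{*}e_{j}}\mapsto0$ ($j\in F$), which carries $\mathfrak{D}^{t_{0}}$ to $\mathfrak{D}^{\frz_{F}t_{0}}$ (a broken line that ever bends at a wall carrying a monomial $z^{p^{*}e_{j}}$ with $j\in F$ is killed); hence $\frz_{F}\vartheta_{g}^{t_{0}}=\vartheta_{g}^{\frz_{F}t_{0}}$ for all $g$, so $F_{s_{g}}|_{y_{j}\mapsto0}=F_{\vartheta_{g}^{\frz_{F}t_{0}}}$. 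Second, since the vertices of $F$ are frozen in $\frz_{F}t_{0}$, every wall of $\mathfrak{D}^{\frz_{F}t_{0}}$ is parallel to each direction $e_{j}$, $j\in F$ (the initial walls $e_{k}^{\perp}$ for $k$ unfrozen contain these $e_{j}$, and consistency preserves this); therefore the theta functions of $\frz_{F}t_{0}$ are translation-equivariant, $\vartheta_{g+d}^{\frz_{F}t_{0}}=x^{d}\,\vartheta_{g}^{\frz_{F}t_{0}}$ for $d\in\Z^{F}$, so their $F$-polynomials are constant along $\Z^{F}$-cosets. Combining, $F_{s_{g}}|_{y_{j}\mapsto0}=F_{\vartheta_{g}^{\frz_{F}t_{0}}}=F_{\vartheta_{g+d}^{\frz_{F}t_{0}}}=F_{s_{g+d}}|_{y_{j}\mapsto0}$, which is the desired identity.

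The hard part, I expect, is precisely this scattering-diagram step: on one hand the passage from the given basis $\cS$ to the theta basis — exactly where the hypotheses are used, and for the common triangular or the generic basis one may need to run the triangularity argument directly rather than quote a coincidence; on the other hand, the careful identification of $\mathfrak{D}^{\frz_{F}t_{0}}$ with the image of $\mathfrak{D}^{t_{0}}$ under $z^{p^{*}e_{j}}\mapsto0$, and of the parallelism of its walls to the newly frozen directions, given that $\mathfrak{D}^{\frz_{F}t_{0}}$ is built from strictly fewer initial walls than $\mathfrak{D}^{t_{0}}$. Everything else — that $\cS_{\mathrm{loc}}$ and $\frz_{F}\cS$ are bases, and the elementary inclusion $\cS'\subseteq\frz_{F}\cS$ — is routine or already established.
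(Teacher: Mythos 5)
Your route differs substantially from the paper's, and the key step has a gap. The precise version of the statement (Theorem~\ref{thm:induced_theta_basis}) names the ``mild conditions'' explicitly: $\cS$ has Property~(S), and $\frz_{F}\cS$ has the factorization property in direction $F$. With those in hand the paper's proof is a two-line unwind: iterating the factorization property gives $\frz_{F}s_{g+d}=v^{-\lambda(d,g)}x^{d}*\frz_{F}s_{g}$ for $d\in\Z^{F}$, Property~(S) guarantees $\frz_{F}s_{g+d}=s_{g+d}$ for suitable $d$, so $s'_{g}:=x^{-d}\cdot s_{g+d}=\frz_{F}s_{g}$ (this is Lemma~\ref{lem:shift_product_factorization}(1) and Proposition~\ref{prop:induced_basis}), and Theorem~\ref{thm:freezing_basis} supplies the basis claim. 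You correctly reduce the theorem to the assertion that $\frz_{F}\cS$ is stable under multiplication by $x^{d}$, $d\in\Z^{F}$ --- but that assertion \emph{is} the factorization property, which is a hypothesis, so your ``hard part'' is something the theorem already assumes.

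The gap is the reduction from a general pointed basis $\cS$ to the theta basis. A $\prec_{t_{0}}$-unitriangular transition between $\cS$ and $\{\vartheta_{g}\}$ does not commute with the freezing operators --- $\frz_{F,g}$ depends on the pointed degree $g$ of the element it is applied to, and lower-order terms $z_{g'}$ with $g'\prec g$ are sent to $\frz_{F,g}z_{g'}$, which (by Lemma~\ref{lem:projection_properties}(2)) is $0$ or $\frz_{F,g'}z_{g'}$ depending on whether $\supp(g-g')$ meets $F$. So equality of reduced $F$-polynomials for the theta basis does not transfer to a general $\cS$, and your proof really only covers the case $\cS=\{\vartheta_{g}\}$. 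That case is handled separately in the paper as an \emph{application}, in Section~\ref{sec:well-known-bases}: your first scattering-diagram observation ($\frz_{F}\vartheta_{g}=\vartheta_{g}^{\frz_{F}t_{0}}$) is Theorem~\ref{thm:projection_theta_function} / Theorem~\ref{thm:freeze_theta_basis}, and the verification that the theta basis has Property~(S) is Proposition~\ref{prop:shifted_theta_func_supp}; the translation-invariance of walls of $\frD(\frz_{F}t_{0})$ in the directions $f_{j}$, $j\in F$, which you use to get the factorization property for $\vartheta^{\frz_{F}t_{0}}$, is only alluded to in the paper but is indeed the standard reason. So the content of your argument is closer to Section~\ref{sec:well-known-bases} than to the proof of the theorem itself; to actually prove the theorem in the stated generality you should use the factorization hypothesis directly rather than try to derive it.
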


The theta bases are sent to theta bases by the freezing operator,
see Theorem \ref{thm:freeze_theta_basis}. In Section \ref{sec:well-known-bases},
we will show that Theorem \ref{thm:intro_induced_basis} applies to
the theta bases. 

\begin{Rem}

One can also show that the common triangular bases are sent to the
common triangular bases, and the generic bases are sent to the generic
bases. Moreover, Theorems \ref{thm:intro_induced_basis} apply to
these bases as well. The claims for the common triangular bases are
verified in \cite{qin2023analogs}. And the claims for the generic
bases easily follow from \cite[Lemma 3.17]{plamondon2013generic}.

\end{Rem}

\begin{Rem}

The procedure that we take the subset $\cS'$ from $\cS$ has an analog
at the categorical level. 

In cluster category, its counterpart is taking a subcategory $\cC'$
from $\cC$ such that its object are extension orthogonal to the indecomposable
rigid objects $T_{j}$ corresponding to $x_{j}$, $j\in F$, see the
Calabi-Yau reduction \cite{IyamaYoshino08}. When $\cS$ is the generic
basis, $\cS'$ corresponds to the generic basis constructed from $\cC'$. 

In monoidal categorification, its counterpart is taking a monoidal
subcategory. More details are available in \cite{qin2023analogs}.

\end{Rem}

\subsection*{Convention}

Unless otherwise specified, we choose $\kk=\Z$ and $v=1$ for the
classical case, and $\kk=\Z[v^{\pm}]$ where $v$ is an indeterminate
for the quantum case.\footnote{One can study positive characteristic by considering the coefficients
modulo a prime $p$ (see \cite[Section 4.3]{mandel2021scattering}
for an example).} In literature, $v^{2}$ is often denoted by $q$. By nonnegative
elements in $\kk$, we mean those in $\Z_{\geq0}$ or $\Z_{\geq}[v^{\pm}]$
respectively.

For any index set $I'\subset I$, we often identify $\R^{I'}$ with
the subset $\{u=(u_{i})_{i\in I}\in\R^{I}|u_{j}=0\ \forall j\in I\backslash I'\}$
of $\R^{I}$. We also use $\pr_{I'}$ to denote the natural projection
from $\R^{I}$ to $\R^{I'}$.

\section*{Acknowledgments}

Some results have been announced in various occasions, for example in
\cite{Qin2017hereditary}, and the author would
like to thank the audience for the feedback.

\section{Preliminaries}

We recall some important constructions and theorems in cluster theory.

\subsection{Basics of cluster algebra\label{subsec:Basics-of-cluster}}

\subsubsection*{Seeds}

As in \cite{qin2019bases}, we shall define seeds of cluster algebras
following the convention of Fomin-Zelevinsky \cite{fomin2002cluster}
but include extra data that allows us to construct scattering diagrams
as in \cite{gross2018canonical}.

Let $I$ denote a finite set of vertices and we choose its partition
$I=I_{\ufv}\sqcup I_{\fv}$ into the frozen vertices and unfrozen
vertices. Let $d_{i}$, $i\in I$, denote a set of strictly positive
integers, and $d$ their least common multiplier. Denote the Langlands
dual $d_{i}^{\vee}=\frac{d}{d_{i}}$. Apparently, the least common
multiplier of $d_{i}^{\vee}$ is also $d$, and we have $(d_{i}^{\vee})^{\vee}=d_{i}$.

Let $(b_{ij})_{i,j\in I}$ denote a $\Q$-valued matrix such that
$d_{i}^{\vee}b_{ij}=-d_{j}^{\vee}b_{ji}$, and we say that it is skew-symmetrizable.
Define $\tB=(b_{ij})_{i\in I,j\in I_{\ufv}}$ and $B=(b_{ij})_{i,j\in I_{\ufv}}$.
The matrix $\tB$ is called the $B$-matrix and $B$ its principal
part. We further assume that $b_{ij}\in\Z$ if $i\in I_{\ufv}$ or
$j\in I_{\ufv}$. Following \cite{gross2013birational}, we also define
the skew-symmetric matrix $(\omega_{ij})_{i,j\in I}$ such that $\omega_{ij}d_{j}=b_{ji}$.

Let us define the linear map $p^{*}:\Z^{I_{\ufv}}\rightarrow\Z^{I}$
such that $p^{*}e_{k}=\sum_{i\in I}b_{ik}f_{i}$ where $e_{k}$ (resp
$f_{i}$) is the $k$-th unit vector (resp. $i$-th unit vector).
In terms of the column vectors of coordinates, we can also write $p^{*}e_{k}=\tB\cdot e_{k}$.
We will denote $\cone=\oplus_{i\in I}\Z f_{i}$ and $N_{\ufv}=\oplus_{k\in I_{\ufv}}\Z e_{k}$.

Let $(\ )^{T}$ denote the matrix transpose. Any $I\times I$ matrix
$\Lambda$ gives a bilinear form $\lambda$ on $\Z^{I}$ such that
$\lambda(g,h):=g^{T}\cdot\Lambda\cdot h$ for any $g,h$, 

Throughout the paper, we assume that $\tB$ is of full rank unless
otherwise specified. Then $p^{*}$ is injective. Then, by \cite{gekhtman2003cluster,GekhtmanShapiroVainshtein05},
we can always choose a (not necessarily unique) $\Z$-valued skew-symmetric
form $\lambda$ on $\Z^{I}$, called a compatible Poisson structure,
such that 
\begin{align}
\lambda(f_{i},p^{*}e_{k}) & =\delta_{ik}\diag_{k}\label{eq:compatible_pair}
\end{align}
 for strictly positive integers $\diag_{k}$. We denote $\Lambda=(\Lambda_{ij})_{i,j\in I}:=(\lambda(f_{i},f_{j}))_{i,j\in I}$,
called the quantization matrix. The pair $(\tB,\Lambda)$ is called
compatible by \cite{BerensteinZelevinsky05}.

\begin{Def}

A seed $t$ is a collection\footnote{As shown in \cite[Lemma 2.1.2]{qin2019bases}, we can always extend
$\tB$ to a matrix $(b_{ij})_{i,j\in I}$ with integer entries.} $(\tB,(x_{i})_{i\in I},I,I_{\ufv},(d_{i})_{i\in I})$, where $x_{i}$
are indeterminates called the $x$-variables. It is called a quantum
seed if we endow it with a compatible Poisson structure $\lambda$.

\end{Def}

We shall often omit the symbols $I,I_{\ufv},(d_{i})_{i\in I}$ when
the context is clear.

Let $\kk$ denote the base ring as before. We define the Laurent polynomial
ring $\LP=\kk[x_{i}^{\pm}]_{i\in I}$ using the usual commutative
product $\cdot$. Define $\cone:=\oplus\Z f_{i}$ and its group ring
$\kk[\cone]=\oplus_{m\in\cone}\kk x^{m}$. Then we can identify $\LP=\kk[x_{i}^{\pm}]$
with $\kk[\cone]$ by identifying $x_{i}$ with $x^{f_{i}}$. We call
$x_{i}$ the $x$-variables or cluster variables of $t$, the monomials
$x^{m}$, where $m\in\N^{I}$, the cluster monomial of $t$, and $x^{m}$,
$m\in\N^{I_{\ufv}}\oplus\Z^{I_{\fv}}$, the localized cluster monomials
of $t$. The $x$-variables $x_{j}$, $j\in I_{\fv}$, are called
the frozen variables. We also introduce $M^{\oplus}=p^{*}\N^{I_{\ufv}}$,
and let $\kk[M^{\oplus}]$ denote the subset of $\kk[\cone(t)]$.

We further endow $\LP$ with the following $v$-twisted product $*$

\begin{align*}
x^{g}*x^{h} & =v^{\lambda(g,h)}x^{g+h},\ \forall g,h\in\Z^{I}.
\end{align*}
Unless otherwise specified, by the multiplication in $\LP$, we mean
the $v$-twisted product. When we work at the classical level, any
$v$-factor is understood as $1$.

We will call $\LP$ the quantum torus algebra for the quantum case.
We define the bar involution on $\LP$ to be the anti-automorphism
such that $\overline{q^{\alpha}x^{g}}=q^{-\alpha}x^{g}$. Let $\cF$
denote the skew-field of fractions of $\LP$. The (partially) compactified
quantized Laurent polynomial ring is defined as the subalgebra $\bLP=\kk[x_{k}^{\pm}]_{k\in I_{\ufv}}[x_{j}]_{j\in I_{\fv}}$.

\subsubsection*{Mutations}

Let $[\ ]_{+}$ denote $\max(\ ,0)$. Given a seed $t=((b_{ij})_{i,j\in I},(x_{i})_{i\in I},I,I_{\ufv},(d_{i})_{i\in I})$.
Choose any vertex $k\in I$ and any sign $\varepsilon\in\{+,-\}$.
Following \cite{BerensteinFominZelevinsky05}, we define matrices
$\tE$ and $\tF$ as the following

\[
(\tE_{\varepsilon})_{ij}=\begin{cases}
\delta_{ij} & k\notin\{i,j\}\\
-1 & i=j=k\\{}
[-\varepsilon b_{ik}]_{+} & j=k,i\neq k
\end{cases}
\]

\[
(\tF_{\varepsilon})_{ij}=\begin{cases}
\delta_{ij} & k\notin\{i,j\}\\
-1 & i=j=k\\{}
[\varepsilon b_{kj}]_{+} & i=k,j\neq k
\end{cases}
\]
We have $\tE_{\varepsilon}^{2}=\tF_{\varepsilon}^{2}=\Id_{I}$. Their
principal parts ($I_{\ufv}\times I_{\ufv}$-submatrices) are denoted
by $E_{\varepsilon}$ and $F_{\varepsilon}$ respectively.

We have an operation $\mu_{k}$ called mutation, which gives us a
new seed $t'=((b_{ij}')_{i,j\in I},(x_{i}')_{i\in I},I,I_{\ufv},(d_{i})_{i\in I})$
as follows

\begin{align}
(b_{ij}')_{i,j\in I} & =\tE_{\varepsilon}\cdot(b_{ij})_{i,j}\cdot\tF_{\varepsilon}\nonumber \\
x_{i}' & =\begin{cases}
x_{k}^{-1}\cdot(\prod_{i\in I}x_{i}^{[b_{ik}]_{+}}+\prod_{j\in I}x_{j}^{[-b_{jk}]_{+}}) & i=k\\
x_{i} & i\neq k
\end{cases}\label{eq:exchange_relation}
\end{align}

If we denote $z_{i}=x^{p^{*}e_{i}}$, then we have

\begin{align}
z_{i}' & =\begin{cases}
z_{k}^{-1} & i=k\\
z_{i}z_{k}^{[b_{ki}]_{+}}\cdot(1+z_{k})^{-b_{ki}} & i\neq k
\end{cases}.\label{eq:mutation-y}
\end{align}

If $t$ is a quantum seed, the mutation $\mu_{k}$ also gives us a
new quantum seed $t'$ whose quantization matrix $\Lambda'$ is given
by $\Lambda'=\tE_{\varepsilon}^{T}\cdot\Lambda\cdot\tE_{\varepsilon}$.
See \cite[Section 2.4]{kimura2022twist} for the quantum analogs of
(\ref{eq:exchange_relation}) (\ref{eq:mutation-y}).

Direct computation shows that that 
\begin{align*}
b_{ij}' & =\begin{cases}
-b_{ij} & k\in\{i,j\}\\
b_{ij}+[b_{ik}]_{+}[b_{kj}]_{+}-[-b_{ik}]_{+}[-b_{kj}]_{+} & i,j\neq k
\end{cases}.
\end{align*}
Moreover, $\Lambda'$ not depend on $\varepsilon$ by \cite[Proposition 3.4]{BerensteinZelevinsky05}.
Therefore, the new (quantum) seed $t'$ is independent of the choice
of the sign $\varepsilon$.

Note that $\mu_{k}$ is an involution: $\mu_{k}(\mu_{k}t)=t$. Let
$\mu_{k}^{*}$ denote the $\kk$-algebra homomorphism from the (skew-)field
of fractions $\cF(t')$ to $\cF(t)$ such that the image $\mu_{k}^{*}(x_{i}')$
is given by (\ref{eq:exchange_relation}). Then $\mu_{k}^{*}$ is
an isomorphism. We often identify $\cF(t')$ with $\cF(t)$ via $\mu_{k}^{*}$
and omit the symbol $\mu_{k}^{*}$ for simplicity.

For any sequence $\uk=(k_{1},\ldots,k_{r})$ on $I_{\ufv}$, we define
the mutation sequence $\seq_{\uk}=\mu_{k_{r}}\cdots\mu_{k_{2}}\mu_{k_{1}}$,
which is often denoted by $\seq$.

Let $\sT_{v_{0}}$ denote an $I_{\ufv}$-regular tree with root $v_{0}$,
namely, a regular tree such that for each vertex the adjacent edges
are labeled by $I_{\ufv}$. Given an initial (quantum) seed $t_{0}$,
we recursively attach a (quantum) seed $t_{v}$ to each vertex $v\in\sT_{v_{0}}$
as follows:

\begin{enumerate}

\item The (quantum) seed $t_{v_{0}}$ attach to $t_{0}$ is the initial
(quantum) seed $t_{0}$.

\item If $v$ and $v'$ are connected by an edge labeled $k\in I_{\ufv}$,
then the attached seeds satisfy $t_{v'}=\mu_{k}t_{v}$.

\end{enumerate}

Let $\Delta^{+}=\Delta_{t_{0}}^{+}$ denote the set of (quantum) seeds
$t$ that we obtain. The $x$-variables $x_{i}(t)$ associated to
$t\in\Delta^{+}$ are called the (quantum) cluster variables in the
seed $t$. Note that the $x$-variables $x_{j}(t)$, $j\in I_{\fv}$,
are the same for all $t\in\Delta^{+}$, and we call them the frozen
variables. 

\begin{Def}

Let there be given an initial seed $t_{0}$. The (partially compactified)
cluster algebra $\bClAlg(t_{0})$ is defined to be the $\kk$-algebra
generated by the cluster variables $x_{i}(t)$. The (localized) cluster
algebra is defined to be the localization $\clAlg(t_{0})=\bClAlg(t_{0})[x_{j}^{-1}]_{j\in I_{\fv}}$.
The (localized) upper cluster algebra $\upClAlg(t_{0})$ is defined
to be the intersection $\cap_{t\in\Delta^{+}}\LP(t)$.

\end{Def}

Throughout the paper, we shall focus our attention to (localized)
upper cluster algebras $\upClAlg$.

\begin{Thm}[Starfish theorem {\cite{BerensteinFominZelevinsky05}}]\label{thm:starfish_thm}

When $\tB(t)$ is of full rank, we have $\upClAlg(t)=\LP(t)\bigcap(\cap_{k\in I_{\ufv}}\LP(\mu_{k}t)).$

\end{Thm}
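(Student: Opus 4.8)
The plan is to prove the two inclusions separately. The inclusion $\upClAlg(t)\subseteq\LP(t)\cap\bigcap_{k\in I_{\ufv}}\LP(\mu_k t)$ is immediate from the definition $\upClAlg(t)=\cap_{t'\in\Delta^+}\LP(t')$, since $t$ itself and each $\mu_k t$ belong to $\Delta^+$. So the content is the reverse inclusion: an element $z$ lying in $\LP(t)$ and in each of the $|I_{\ufv}|$ neighboring Laurent polynomial rings $\LP(\mu_k t)$ must in fact lie in $\LP(t')$ for \emph{every} seed $t'$ obtainable by an arbitrary mutation sequence. First I would reduce to the local statement: it suffices to show that for any seed $t'\in\Delta^+$ and any $k\in I_{\ufv}$, if $z\in\LP(t')$ satisfies the codimension-one condition at $t'$ — i.e. $z\in\LP(t')\cap\LP(\mu_k t')$ for all $k$ — then $z\in\LP(\mu_k t')$; then one propagates this along the tree $\sT_{v_0}$ by induction on the length of the mutation sequence. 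Actually the cleanest formulation is: define $\mathcal{A}(t):=\LP(t)\cap\bigcap_k\LP(\mu_k t)$; one shows $z\in\mathcal{A}(t)$ forces $z\in\mathcal{A}(\mu_k t)$ for each $k$, and then an easy tree induction gives $z\in\LP(t')$ for all $t'$.

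The heart of the matter is thus a purely local, two-step (rank-two-flavored) computation. Fix $z\in\LP(t)$, written as a Laurent polynomial in the cluster variables $x_i=x_i(t)$. The key algebraic fact is that the mutated variable $x_k'$ and the old variable $x_k$ are related by the exchange relation \eqref{eq:exchange_relation}, and $x_k\cdot x_k' = \prod_i x_i^{[b_{ik}]_+}+\prod_j x_j^{[-b_{jk}]_+}$ is an element of $\kk[x_i]_{i\neq k}$ that is \emph{not divisible} by any $x_i$ (the two monomials on the right share no common variable, because $b_{ik}$ and $-b_{ik}$ cannot both be positive — here one uses that full rank, or at least the sign coherence, prevents degeneracies; more precisely the two terms are coprime in the UFD $\kk[x_i^\pm]_{i\neq k}[x_k]$). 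Writing $z$ as a Laurent polynomial in $x_k$ over $\LP(t)[x_k^{-1}]_{\text{others}}$, membership in $\LP(\mu_k t)$ means, after substituting $x_k = (x_k')^{-1}\cdot(\text{that binomial})$, that $z$ becomes a Laurent polynomial in $x_k'$; combined with $z\in\LP(t)$ this pins down the $x_k$-denominators of $z$ to be controlled. One then needs that this local regularity, known at $t$ and at all its neighbors, is inherited by $\mu_k t$: a coefficient-by-coefficient check using that $\LP(t)\cap\LP(\mu_j t)$-membership is a condition "in codimension one" and that the relevant irreducible elements $x_i$ remain irreducible and pairwise non-associate after mutation. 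The standard way to package this is via the UFD structure of the ambient ring $\kk[x_i^\pm]_{i\in I_{\fv}}[x_i^{\pm}]_{i\in I_{\ufv}}$ (a Laurent polynomial ring over a field, hence a UFD when $\kk$ is a field — in the quantum case one works over the skew-field $\cF$ and uses a non-commutative analogue, or reduces to the classical case by a filtration/specialization argument at $v=1$), and to observe that the cluster variables $x_i(t)$, $x_i(\mu_k t)$, $x_i(\mu_j\mu_k t)$ for the finitely many relevant seeds are distinct irreducibles, so that a function regular on the complements of all these divisors is regular on their union.

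The main obstacle, and the step I would spend the most care on, is the coprimality / irreducibility bookkeeping: one must verify that the various cluster variables appearing when we mutate at $k$ and then at a neighbor $j$ are genuinely non-associate irreducibles in the ambient Laurent ring, so that "no common denominator in codimension one" really forces global regularity. This is where the full-rank hypothesis on $\tB(t)$ is used (it guarantees $p^*$ is injective and rules out the degenerate rank-two situations where the exchange binomial could share a factor with a variable), and it is also the place where the quantum case needs extra justification — the cleanest route there is to note that all the $\LP(t')$ are defined inside the single skew-field $\cF$, that the bar-involution and the $v\mapsto 1$ specialization are compatible with mutation, and that a quantum Laurent polynomial that specializes into a classical Laurent ring and is itself already in $\LP(t)$ must lie in the quantum $\LP(\mu_k t)$. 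Once these irreducibility facts are in hand, the tree induction is routine and the theorem follows.
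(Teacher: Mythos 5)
The paper states this result without proof, citing \cite{BerensteinFominZelevinsky05}, so there is no in-paper argument to compare against; I will measure your sketch against the argument in that reference. Your scaffolding is correct and matches BFZ: set $\mathcal{A}(t)=\LP(t)\cap\bigcap_{k\in I_{\ufv}}\LP(\mu_k t)$, prove $\mathcal{A}(t)=\mathcal{A}(\mu_k t)$ for each $k$, propagate along the $I_{\ufv}$-regular tree, and conclude $\mathcal{A}(t)=\upClAlg(t)$; and you correctly identify the crux as a coprimality statement in the ambient Laurent ring.

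However, three points need repair. (i) You misplace where full rank enters: that the two monomials of a single exchange binomial $P_k=\prod_i x_i^{[b_{ik}]_+}+\prod_j x_j^{[-b_{jk}]_+}$ share no variable is a tautology (indeed $b_{ik}$ and $-b_{ik}$ cannot both be positive) and needs no hypothesis; what full rank actually delivers, via injectivity of $p^*$, is that the binomials $1+x^{p^*e_k}$ for distinct $k$ are pairwise non-associate, which is precisely the coprimality hypothesis under which the local step $\mathcal{A}(t)=\mathcal{A}(\mu_k t)$ can be proved. (ii) That local step is where essentially all the work lies --- one expands an element of $\LP(t)\cap\LP(\mu_k t)$ in powers of $x_k$, shows the coefficient of $x_k^{-d}$ for $d>0$ is divisible by $P_k^d$, and then glues across the pairwise-coprime exchange binomials --- and your sketch names the tools (UFD, regularity off codimension one) but never carries out this computation, which is the actual substance of the BFZ proof. (iii) The proposed reduction of the quantum case to the classical one by specializing $v\mapsto 1$ is false. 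Take $z=x_k'+(v-1)x_k^{-1}$: it lies in the quantum $\LP(t)$, its specialization $z|_{v=1}=x_k'|_{v=1}$ lies in the classical $\LP(\mu_k t)$, yet $z\notin\LP(\mu_k t)$ because $x_k^{-1}=P_k^{-1}x_k'$ carries the exchange binomial in its denominator and the factor $v-1$ does not cancel it. The quantum Starfish theorem has to be proved by re-running the divisibility analysis directly inside the quantum torus (an Ore domain), as in the quantum cluster algebra literature, not by a $v=1$ specialization.
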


\subsection{Cluster expansions}

\begin{Thm}[{Laurent phenomenon \cite{fomin2002cluster}\cite{BerensteinZelevinsky05}}]

The (quantum) cluster algebra $\clAlg(t_{0})$ is contained in the
(quantum) upper cluster algebra $\upClAlg(t_{0})$.

\end{Thm}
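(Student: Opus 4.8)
The plan is to deduce this from the Starfish theorem (Theorem~\ref{thm:starfish_thm}) together with the exchange relations~(\ref{eq:exchange_relation}). First I would record two elementary observations. Since $\upClAlg(t_{0})=\bigcap_{t\in\Delta^{+}}\LP(t)$ is an intersection of $\kk$-subalgebras of the skew-field $\cF(t_{0})$, it is itself a $\kk$-subalgebra of $\cF(t_{0})$. Moreover, the $I_{\ufv}$-regular tree $\sT_{v_{0}}$ attaches the \emph{same} family of seeds whether it is rooted at $t_{0}$ or at any other $t'\in\Delta^{+}$, so $\Delta^{+}_{t_{0}}=\Delta^{+}_{t'}$ and hence $\upClAlg(t_{0})=\upClAlg(t')$ for every $t'\in\Delta^{+}$. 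Finally, recall that $\clAlg(t_{0})=\bClAlg(t_{0})[x_{j}^{-1}]_{j\in I_{\fv}}$ is generated as a $\kk$-algebra by the cluster variables $x_{i}(t')$, $t'\in\Delta^{+}$, $i\in I$, together with the inverse frozen variables $x_{j}^{-1}$, $j\in I_{\fv}$. So it suffices to show that each of these generators lies in $\upClAlg(t_{0})$.

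For the inverse frozen variables this is immediate: for $j\in I_{\fv}$ the variable $x_{j}$ is fixed by every mutation, hence $x_{j}=x_{j}(t)$ is one of the invertible generators of $\LP(t)=\kk[x_{i}(t)^{\pm}]_{i\in I}$ for every $t\in\Delta^{+}$, so $x_{j}^{-1}\in\bigcap_{t\in\Delta^{+}}\LP(t)=\upClAlg(t_{0})$. For a cluster variable $x_{i}(t')$ I would fix the seed $t'$ and apply the Starfish theorem with $t'$ playing the role of the initial seed; this is legitimate because mutation preserves the full-rank hypothesis on the $B$-matrix (the $B$-matrix transforms by left and right multiplication by invertible matrices, namely $\tE_{\varepsilon}$ and the principal part of $\tF_{\varepsilon}$, both being square roots of identity matrices up to a sign of the determinant). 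Theorem~\ref{thm:starfish_thm} then gives $\upClAlg(t')=\LP(t')\cap\bigcap_{k\in I_{\ufv}}\LP(\mu_{k}t')$. Now $x_{i}(t')\in\LP(t')$ trivially, and $x_{i}(t')\in\LP(\mu_{k}t')$ for each $k\in I_{\ufv}$: if $i\neq k$ then $x_{i}(t')=x_{i}(\mu_{k}t')$ is one of the generators of $\LP(\mu_{k}t')$; if $i=k$, then applying the exchange relation~(\ref{eq:exchange_relation}) inside the seed $\mu_{k}t'$ at the vertex $k$ (using $\mu_{k}(\mu_{k}t')=t'$) expresses $x_{k}(t')$ as a Laurent polynomial in the $x$-variables of $\mu_{k}t'$, the factor $x_{k}(\mu_{k}t')^{-1}$ being legitimate since $x_{k}(\mu_{k}t')$ is invertible in $\LP(\mu_{k}t')$. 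The same computation works verbatim in the quantum case, using the quantum exchange relations. Hence $x_{i}(t')\in\upClAlg(t')=\upClAlg(t_{0})$.

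Combining these, every $\kk$-algebra generator of $\clAlg(t_{0})$ lies in the $\kk$-subalgebra $\upClAlg(t_{0})$ of $\cF(t_{0})$, so $\clAlg(t_{0})\subseteq\upClAlg(t_{0})$. The only slightly delicate points are the reduction via the Starfish theorem: one must check that full rank is preserved under mutation, so that Theorem~\ref{thm:starfish_thm} may be applied at each seed $t'$, and that re-mutating $\mu_{k}t'$ at $k$ lands inside $\LP(\mu_{k}t')$ rather than merely inside $\cF(\mu_k t')$. Both are routine. (Alternatively one could bypass the Starfish theorem and argue directly, by induction on the mutation distance from $t'$ to $t$, that $x_{i}(t')\in\LP(t)$ for all $t\in\Delta^{+}$; but invoking Theorem~\ref{thm:starfish_thm} is cleaner.)
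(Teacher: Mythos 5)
Your argument is correct, and it takes a genuinely different route from the sources the paper points to. The paper supplies no proof of its own, merely citing the original direct arguments (Fomin--Zelevinsky's ``caterpillar'' argument for the classical case, Berenstein--Zelevinsky's quantum analogue); you instead deduce the Laurent phenomenon from the Starfish theorem, which is essentially the Berenstein--Fomin--Zelevinsky route from \emph{Cluster algebras III}: first establish that the one-step ``upper bound'' $\LP(t)\cap\bigcap_{k}\LP(\mu_k t)$ is already the full upper cluster algebra, and then the Laurent phenomenon falls out by checking generators. There is no circularity, because the BFZ proof of the Starfish theorem does not invoke the Laurent phenomenon. Each ingredient you use is sound: $\upClAlg(t_0)=\upClAlg(t')$ for $t'\in\Delta^+$ because the two trees of seeds generate the same set of seeds; full rank of $\tB$ is preserved under mutation since $\tB(\mu_k t')=\tE_{\varepsilon}\tB(t')F_{\varepsilon}$ with $\tE_{\varepsilon}^2=\Id_I$ and $F_{\varepsilon}^2=\Id_{I_{\ufv}}$, so both factors are invertible; and the exchange relation inside $\mu_k t'$ places $x_k(t')$ in $\LP(\mu_k t')$. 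The one trade-off worth recording is that this route costs you the full-rank hypothesis on $\tB$, which is precisely the hypothesis of the Starfish theorem as stated here, whereas the cited Fomin--Zelevinsky and Berenstein--Zelevinsky arguments prove the Laurent phenomenon without any rank or coprimality assumption. Since the paper assumes full rank throughout, nothing is lost in the present context, but your proof is strictly less general than the theorem as originally established.
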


In particular, any (quantum) cluster variable $x_{i}(t)$ is contained
in $\LP(t_{0})$. We can describe its Laurent expansion as follows.

\begin{Thm}\cite{FominZelevinsky07}\cite{DerksenWeymanZelevinsky09}\cite{Tran09}\cite{gross2018canonical}\label{thm:cluster_expansion}

Let there be any initial seed $t_{0}$. For any $i\in I$, $v\in\sT_{I_{\ufv}}$,
there exist a vector $g_{i}(t_{v})\in\Z^{n}$ and coefficients $b_{n}\in\kk$,
such that $b_{0}=1$, all but finitely many $b_{n}=0$, and 

\begin{align}
x_{i}(t_{v}) & =x^{g_{i}(t_{v})}\cdot\sum_{n\in\N^{I_{\ufv}}}b_{n}x^{p^{*}n}.\label{eq:cluster_expansion}
\end{align}

\end{Thm}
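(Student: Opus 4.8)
The statement in question, Theorem~\ref{thm:cluster_expansion}, asserts the existence of the $g$-vector / $F$-polynomial factorization for an arbitrary cluster variable. Since this is a recollection of known results, the plan is to prove it by induction on the length of the mutation sequence $\uk$ carrying $t_0$ to $t_v$, using the explicit exchange relations~\eqref{eq:exchange_relation} and the $y$-variable mutation rule~\eqref{eq:mutation-y} as the engine.

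\textbf{Base case.} For $v = v_0$ we have $x_i(t_0) = x^{f_i}$, so $g_i(t_0) = f_i$ and the $F$-polynomial is the constant $1$ (i.e.\ $b_0 = 1$, all other $b_n = 0$). This manifestly has the required shape.

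\textbf{Inductive step.} Suppose the claim holds for $t_v$, and let $t_{v'} = \mu_k t_v$. For $i \neq k$ the cluster variable is unchanged, $x_i(t_{v'}) = x_i(t_v)$, and one only has to re-express the right-hand side of~\eqref{eq:cluster_expansion} in terms of the new cluster $x(t_{v'})$; since passing from $x(t_v)$ to $x(t_{v'})$ only alters the $k$-th variable, and by~\eqref{eq:mutation-y} the monomials $z_j = x^{p^* e_j}$ transform by a Laurent monomial times a factor $(1+z_k)^{\pm b_{ki}}$, one checks that the factored form is preserved — the separation-of-additions formalism of \cite{FominZelevinsky07} is exactly the bookkeeping device for this. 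For $i = k$ one uses the exchange relation~\eqref{eq:exchange_relation}: writing $x_k(t_v) = x^{g}\cdot F|_{y \mapsto x^{p^* \cdot}}$ by induction and substituting into $x_k' = x_k^{-1}(\prod x_i^{[b_{ik}]_+} + \prod x_j^{[-b_{jk}]_+})$, one collects the monomial prefactor to read off $g_k(t_{v'})$ and verifies that what remains is a polynomial in the $z_j$'s with constant term $1$; the fact that the resulting Laurent expansion actually lies in $\LP(t_0)$ (so that $b_n \in \kk$ and only finitely many are nonzero) is the Laurent phenomenon, already cited just above the statement. The separation-of-additions argument simultaneously shows the $g$-vector and $F$-polynomial parts satisfy the expected recursions, so the form~\eqref{eq:cluster_expansion} propagates.

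\textbf{Main obstacle.} The genuinely nontrivial input is not the algebraic manipulation but the two structural facts being invoked as black boxes: first, that the Laurent expansion has \emph{polynomial} (not merely rational) $F$-part with constant term $1$ and finitely many terms, which is where \cite{FominZelevinsky07, DerksenWeymanZelevinsky09, gross2018canonical} really do work (via $F$-polynomials with principal coefficients, quiver Grassmannians / Donaldson--Thomas, or scattering diagrams respectively); and second, in the quantum case, that the $v$-twisted product is compatible with this factorization, i.e.\ the $F$-polynomial can be taken with coefficients in $\kk = \Z[v^{\pm}]$ and the bar-invariance / ordering issues are consistent — this relies on the quantum separation formula, for which one cites \cite[Section 2.4]{kimura2022twist}. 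Accordingly the honest proof is a citation-assembly: establish the recursion by the induction above, and quote the finiteness/polynomiality from the listed references rather than reproving it.
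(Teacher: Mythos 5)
This is a recollection from the literature: the paper states Theorem~\ref{thm:cluster_expansion} with the bare citations \cite{FominZelevinsky07,DerksenWeymanZelevinsky09,Tran09,gross2018canonical} and supplies no proof at all, so there is no ``paper's own proof'' to compare against. Your proposal to set up the $g$-vector/$F$-polynomial recursion by induction on the mutation sequence and to quote the cited results for the genuinely hard content is the same stance the paper implicitly takes, and you identify the right black boxes (polynomiality of the $F$-polynomial, constant term $1$/sign-coherence, and in the quantum case the compatibility of the $v$-twisted product with the factorization, via \cite[Section~2.4]{kimura2022twist}).

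There is, however, one muddled spot worth flagging. In your inductive step for $i\neq k$ you say ``one only has to re-express the right-hand side of~\eqref{eq:cluster_expansion} in terms of the new cluster $x(t_{v'})$'' and then invoke~\eqref{eq:mutation-y} to transform the $z_j$'s. But the expansion~\eqref{eq:cluster_expansion} is always with respect to the \emph{fixed} initial seed $t_0$, not the moving seed. For $i\neq k$ the cluster variable $x_i(t_{v'})=x_i(t_v)$ is literally the same element of the ambient field, so its Laurent expansion in $\LP(t_0)$, and hence $g_i$ and the $b_n$, are unchanged; there is nothing to re-express and no $y$-variable bookkeeping to do. The only nontrivial case is $i=k$, where one plugs the inductive expressions into the exchange relation~\eqref{eq:exchange_relation} and must then argue (this is the hard part you correctly delegate to the citations) that what emerges still has the pointed form $x^{g}\cdot\bigl(1+\text{higher order in }x^{p^*n}\bigr)$ with a polynomial $F$-part. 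Your phrasing reads as if you momentarily switched to tracking the expansion of a \emph{fixed} cluster variable in a \emph{varying} seed, which is a different (dual) statement. Apart from that confusion, the outline is sound and matches how one would reconstruct the recollection from the cited sources.
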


The vector $g_{i}(t_{v})$ in (\ref{eq:cluster_expansion}) is called
the $i$-th (extended) \emph{$g$-vector} associated to $t_{v}$.
Note that if the two initial seeds in Theorem \ref{thm:cluster_expansion}
share the same $B$-matrix $\tB=(b_{ij})_{i\in I,j\in I_{\ufv}}$,
then we have the same vectors $g_{i}(t_{v})$ in (\ref{eq:cluster_expansion}).
Moreover, by evaluating $v=1$, quantum cluster variables become the
corresponding classical cluster variables. See \cite{Tran09}\cite{geiss2020quantum}
for more details about the relation between quantum and commutative
cluster algebras.

\begin{Thm}[{\cite{gross2018canonical}\cite{davison2016positivity}}]\label{thm:positivity_cluster}

(1) When $\kk=\Z$, the coefficients $b_{n}$ in \ref{eq:cluster_expansion}
belong to $\N$.

(2) When $\kk=\Z[q^{\pm\Hf}]$, if $B$ is skew-symmetric, then the
coefficients $b_{n}$ in \ref{eq:cluster_expansion} belong to $\N[q^{\pm\Hf}]$.

\end{Thm}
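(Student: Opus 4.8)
The plan is to derive both statements from the theory of (quantum) scattering diagrams of \cite{gross2018canonical}, supplemented by \cite{davison2016positivity} in the quantum case. That each $x_i(t_v)$ is a Laurent polynomial with finitely many terms, and that $b_0=1$, is immediate from the mutation formula \eqref{eq:exchange_relation} together with Theorem \ref{thm:cluster_expansion}; so the only remaining issue is the positivity of the coefficients $b_n$.

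For (1), I would attach to $t_0$ its consistent scattering diagram $\frD_{t_0}$ in $\cone\otimes\R$, whose incoming walls carry the functions $1+x^{p^{*}e_k}$, $k\in I_{\ufv}$. The structural input, due to Kontsevich--Soibelman and Gross--Siebert and made precise in \cite{gross2018canonical}, is that $\frD_{t_0}$ exists, is unique up to equivalence, and can be chosen \emph{positive}: every wall function lies in $1+x^{p^{*}e}\cdot\N[x^{p^{*}n}:n\in\N^{I_{\ufv}}]$. This is the technical core of \cite{gross2018canonical}: one builds $\frD_{t_0}$ order by order and checks that each consistency correction, obtained by re-exponentiating commutators of wall-crossing automorphisms, introduces only nonnegative coefficients. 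Next I would invoke the identification of cluster monomials with theta functions: by \cite{gross2018canonical}, for each $t_v\in\Delta^{+}$ and each $i$ the cluster variable $x_i(t_v)$ equals the theta function $\vartheta_{g_i(t_v)}$ expanded in the $t_0$-chamber, and $\vartheta_{g_i(t_v)}=\sum_{\gamma}c(\gamma)\,x^{m(\gamma)}$, the sum running over broken lines $\gamma$ with asymptotic slope $-g_i(t_v)$ and general endpoint in the $t_0$-chamber. Each coefficient $c(\gamma)$ is a product of coefficients of wall functions collected at the bends of $\gamma$, hence lies in $\N$ by positivity of $\frD_{t_0}$; collecting terms yields \eqref{eq:cluster_expansion} with $b_n\in\N$. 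A general cluster monomial is a product of such cluster variables, so positivity is preserved under multiplication.

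For (2), assume $B$ skew-symmetric and replace $\frD_{t_0}$ by the \emph{quantum} scattering diagram acting on the quantum torus $\LP$, whose wall functions are products of quantum dilogarithms $\prod_{\ell}\bE(q^{\ell/2}x^{p^{*}e})^{m_{\ell}}$. The substitute for the positivity of $\frD_{t_0}$ is a theorem of Davison \cite{davison2016positivity} (see also \cite{davison2019strong} for the quantum broken-line and theta-function formalism): the cohomological Donaldson--Thomas invariants of the quiver with potential attached to $t_0$ are graded dimensions of an honest $\Z_{\geq 0}$-graded vector space, so the exponents $m_{\ell}$ — and hence all coefficients of the quantum wall functions, and of the quantum $F$-polynomials — lie in $\N[q^{\pm\Hf}]$. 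Running the quantum broken-line computation over this positive quantum scattering diagram, together with the fact that quantum cluster variables are quantum theta functions, gives $b_n\in\N[q^{\pm\Hf}]$; specializing $v=1$ returns (1) as a consistency check.

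The genuinely hard input is precisely the positivity of the (quantum) scattering diagram: classically this is the combinatorial heart of \cite{gross2018canonical}, and quantum-mechanically it is the cohomological purity/integrality statement of \cite{davison2016positivity}, which lies well outside elementary cluster combinatorics. Once these are granted, the remaining steps — the broken-line bookkeeping, the reduction of cluster monomials to theta functions, and the specialization $v=1$ — are formal. For $B$ skew-symmetric one could alternatively approach (1) and (2) by representation-theoretic methods — quiver Grassmannian Euler characteristics and the decomposition theorem, or perverse sheaves on graded quiver varieties — but the scattering-diagram route treats the general skew-symmetrizable case uniformly.
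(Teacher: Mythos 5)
This statement is cited, not proved, in the paper: Theorem~\ref{thm:positivity_cluster} is an external result attributed to \cite{gross2018canonical} (classical case) and \cite{davison2016positivity} (quantum skew-symmetric case), and the paper offers no argument of its own. So there is no internal proof to compare against; the only question is whether your reconstruction of the cited arguments is faithful.

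Your high-level outline is correct: reduce the claim to positivity of the cluster scattering diagram, identify cluster variables with theta functions, and observe that each broken-line contribution is a product of nonnegative wall-function coefficients. Two points in your description are imprecise enough to be worth flagging. First, GHKK do \emph{not} obtain positivity of the wall functions by ``building order by order and checking that each consistency correction introduces only nonnegative coefficients'' — naive order-by-order BCH-type corrections are not manifestly positive. Their positivity argument runs through the change-of-lattice trick (reducing the skew-symmetrizable case to a skew-symmetric one) together with the perturbation trick of Appendix~C, which reduces the consistency computation to rank-two scattering diagrams of Kronecker type where the wall functions are explicitly $(1+x^{p^{*}n})^{c}$; positivity there is the genuine content. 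Second, Davison's original paper \cite{davison2016positivity} does not go through quantum scattering diagrams or quantum broken lines at all; it establishes positivity of the quantum $F$-polynomials directly via framed moduli of representations of the associated quiver with potential and the cohomological integrality theorem for cohomological Hall algebras. The quantum scattering-diagram and quantum theta-function formalism you invoke is the later \cite{davison2019strong}, which does give an alternative route but is not the proof behind the citation. Neither of these is a gap in the mathematics — you are summarizing deep black boxes by reference, as is appropriate — but the attributions of mechanism should be corrected.
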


\begin{Def}[{\cite{qin2017triangular}\cite{qin2019bases}}]

We call $t_{0}$ injective-reachable if there exists a seed $t_{0}[1]\in\Delta^{+}$
such that, for some permutation $\sigma$ of $I_{\ufv}$, we have
$g_{\sigma k}(t_{0}[1])\in-f_{k}+\Z^{I_{\fv}}$, for any $k\in I_{\ufv}$.

\end{Def}

If $t_{0}$ is injective-reachable, then so are all seeds $t\in\Delta^{+}$,
see \cite{muller2015existence}\cite[Proposition 5.1.4]{qin2017triangular}.

\subsection{Dominance order, degrees, and support}

Let there be given a seed $t$. Omit the symbol $t$ for simplicity
when the context is clear. We have lattices $\cone(t)\simeq\Z^{I}$
with the natural basis $\{f_{i}|i\in I\}$ and $\yCone(t)\simeq\Z^{I_{\ufv}}$
with the natural basis $\{e_{i}|i\in I_{\ufv}\}$. Denote $\yCone^{\geq0}(t)=\oplus_{i\in I_{\ufv}}\N e_{i}$
and $\yCone^{>0}(t)=\yCone^{\geq0}(t)\backslash\{0\}$.

\begin{Def}[{Dominance order \cite{qin2017triangular}}]

Given any $g,g'\in\cone(t)$. We say $g$ dominates $g'$, denoted
by $g'\preceq_{t}g$, if and only if $g'=g+p^{*}n$ for some $n\in\yCone^{\geq0}(t)$.
We write $g'\prec_{t}g$ if further $g'\neq g$.

\end{Def}

\begin{Lem}[{\cite[Lemma 3.1.2]{qin2017triangular}}]

For any $\eta,g\in\cone(t)$, the set $\{g'\in\cone(t)|\eta\preceq_{t}g'\preceq_{t}g\}$
is finite.

\end{Lem}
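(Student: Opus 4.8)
The plan is to reduce the claim to the trivial finiteness of the set of lattice points in a coordinate box, using injectivity of $p^{*}$.

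First I would unwind the definition. Suppose $g'\in\cone(t)$ satisfies $\eta\preceq_{t}g'\preceq_{t}g$. By definition there exist $n,m\in\yCone^{\geq0}(t)=\oplus_{i\in I_{\ufv}}\N e_{i}$ with $g'=g+p^{*}n$ and $\eta=g'+p^{*}m$. Adding these, $\eta=g+p^{*}(n+m)$, so $\eta-g=p^{*}(n+m)$ lies in $p^{*}\yCone^{\geq0}(t)$. If $\eta-g\notin p^{*}\yCone^{\geq0}(t)$ the interval is empty and there is nothing to prove, so assume otherwise.

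Next I would invoke the standing hypothesis that $\tB$ has full rank, so that $p^{*}\colon\Z^{I_{\ufv}}\to\cone(t)$ is injective (as recalled in Section~\ref{subsec:Basics-of-cluster}). Then $n+m$ is the unique preimage of $\eta-g$ under $p^{*}$; denote it $N\in\yCone^{\geq0}(t)$, a fixed vector independent of $g'$. Since $n,m\in\yCone^{\geq0}(t)$ and $n+m=N$, each coordinate of $n$ satisfies $0\leq n_{i}\leq N_{i}$, so $n$ ranges over the finite set $\prod_{i\in I_{\ufv}}\{0,1,\dots,N_{i}\}$. Finally, the assignment $g'\mapsto n$ determined by $g'=g+p^{*}n$ is injective, again because $p^{*}$ is injective; hence the interval $\{g'\in\cone(t)\mid\eta\preceq_{t}g'\preceq_{t}g\}$ injects into a finite set and is therefore finite.

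There is essentially no serious obstacle here; the only point worth care is that the argument genuinely uses injectivity of $p^{*}$ (equivalently, full rank of $\tB$), which guarantees both that $n+m$ is pinned down by $\eta-g$ and that distinct $g'$ give distinct $n$. Absent that hypothesis one would instead have to argue that $p^{*}\yCone^{\geq0}(t)\cap(-p^{*}\yCone^{\geq0}(t))=\{0\}$ and work with the induced preorder, but under the paper's blanket full-rank assumption the box-counting argument above suffices.
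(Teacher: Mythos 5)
Your proof is correct and is the standard argument one would give (and essentially what the cited reference \cite[Lemma 3.1.2]{qin2017triangular} does): unwind the dominance order, use the paper's blanket full-rank hypothesis to see $p^*$ is injective so that $n+m$ is pinned down to a single $N\in\yCone^{\geq0}(t)$, and then observe that $n$ is confined to the finite box $0\le n\le N$ and determines $g'$ uniquely. One small remark on your closing aside: without full rank, the relevant issue for finiteness is not antisymmetry of the order but whether $\{p^*n : 0\le n,\ \eta-g-p^*n\in p^*\yCone^{\geq0}(t)\}$ is finite, which is a slightly different condition than $p^*\yCone^{\geq0}(t)\cap(-p^*\yCone^{\geq0}(t))=\{0\}$; but since that scenario is outside the paper's standing assumption, it does not affect the correctness of your proof.
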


Recall that we have a subalgebra $\kk[M^{\oplus}(t)]\subset\LP(t)$.
It has a maximal ideal generated by $x^{p^{*}e_{k}}$, $k\in I_{\ufv}$.
Denote the corresponding completion by $\widehat{\kk[M^{\oplus}(t)]}$.
We define the set of formal Laurent series to be

\begin{align*}
\hLP(t) & =\LP(t)\otimes_{\kk[M^{\oplus}(t)]}\widehat{\kk[M^{\oplus}(t)]}.
\end{align*}
Then any formal Laurent series is a finite sum of the type $a\cdot x^{g}\cdot\sum_{n\in\yCone^{\geq0}(t)}b_{n}x^{p^{*}n}$,
for $a,b_{n}\in\kk$.

\begin{Def}[Degree]

Let there be given any formal sum $z=\sum c_{m}x^{m}$, $c_{m}\in\kk$.
If the set $\{m|c_{m}\neq0\}$ has a unique $\prec_{t}$-maximal element
$g$, we say $z$ has degree $g$ and denote $\deg^{t}z=g$. We further
say that $z$ is $g$-pointed (or pointed at $g$) if $c_{g}=1$.

\end{Def}

We define $\hLP_{\preceq g}(t)$ to be the subset $x^{g}*\widehat{\kk[M^{\oplus}(t)]}$
of $\hLP(t)$. Then we have $\hLP_{\preceq g}(t)=\sum_{g'\preceq_{t}g}\ptSet^{t}(g')$.

The following definition slightly generalizes that of \cite[Definition 3.4.1]{qin2019bases}.

\begin{Def}[Support]

For any $n=(n_{k})_{k\in I_{\ufv}}\in\yCone(t)\simeq\Z^{I_{\ufv}}$,
we define its support to be $\supp n=\{k\in I_{\ufv}|n_{k}\neq0\}$.

For any $z=x^{g}\cdot\sum_{n\in\yCone^{\geq0}}b_{n}x^{p^{*}n}$ in
$\hLP(t)$, where $b_{0}\neq0$, $b_{n}\in\kk$. We define its support
to be the union of $\supp n$ for $b_{n}\neq0$. If further $z\in\LP(t)$,
we define its support dimension to be $\suppDim z=\sum_{k\in I_{\ufv}}\max_{b_{n}\neq0}(n_{k})\cdot e_{k}$.

\end{Def}

Finally, we give the following definition to facilitate later discussion.

\begin{Def}[Pointed subset]

Let $\Theta$ denote a subset of $\Z^{I}$ and $\cS=\{s_{g}|g\in\Theta\}$
a subset of $\hLP(t_{0})$. If $s_{g}$ are $g$-pointed, $\cS$ is
said to be $\Theta$-pointed.

\end{Def}

\subsection{Tropical transformation and linear transformation}

Recall that, to each seed $t$, we associate a lattice $\cone(t)\simeq\Z^{I}$
with the natural basis $f_{i}(t)$. For any $k\in I_{\ufv}$, denote
$t'=\mu_{k}t$ . The tropical transformation $\phi_{t',t}:\cone(t)\ra\cone(t')$
is the piecewise linear map such that, for any $m=\sum m_{i}f_{i}(t)\in\cone(t)$,
its image $m'=\sum m_{i}'f_{i}(t')\in\cone(t')$ is given by:

\begin{align}
m_{i}'= & \begin{cases}
-m_{k} & i=k\\
m_{i}+b_{ik}(t)[m_{k}]_{+} & b_{ik}(t)\geq0\\
m_{i}+b_{ik}(t)[-m_{k}]_{+} & b_{ik}(t)\leq0
\end{cases}\label{eq:tropical_transformation}
\end{align}

For any mutation sequence $\seq$ and the seed $t'=\seq t$ in $\Delta^{+}$,
we define $\phi_{t',t}$ to be the composition of the tropical transformations
along $\seq$. It is independent of the choice of $\seq$ by the following
result.

\begin{Thm}[{\cite{gross2013birational}\cite{gross2018canonical}}]\label{thm:compatibly_pointed_cluster_monom}

For any $i\in I$ and seeds $t,t',t''\in\Delta^{+}$, we have $\deg^{t''}x_{i}(t)=\phi_{t'',t'}\deg^{t'}x_{i}(t)$.

\end{Thm}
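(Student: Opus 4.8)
The plan is to reduce to a single mutation and then identify the new degree by an explicit change of variables, with the sign-coherence of $g$-vectors as the essential input. Fix a mutation sequence $\seq$ with $t''=\seq t'$, realized as a chain of seeds $t'=s_0,s_1,\dots,s_r=t''$ with $s_\ell=\mu_{k_\ell}s_{\ell-1}$; then $\phi_{t'',t'}$ along this path is $\phi_{s_r,s_{r-1}}\circ\cdots\circ\phi_{s_1,s_0}$, and by Theorem \ref{thm:cluster_expansion} (applied with each $s_\ell$ as initial seed) the cluster variable $x_i(t)$ has a well-defined degree in every $s_\ell$. So it suffices to prove $\deg^{s_\ell}x_i(t)=\phi_{s_\ell,s_{\ell-1}}(\deg^{s_{\ell-1}}x_i(t))$ for each $\ell$; telescoping then gives the assertion (and, as a byproduct, independence of the path). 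Thus I may assume $t''=\mu_k t'$. Write $z=x_i(t)$ and, by Theorem \ref{thm:cluster_expansion}, $z=x^{g}(t')\cdot F(\hat y)$ in $\hLP(t')$, where $g=\deg^{t'}z$, $\hat y_j=x^{p^*_{t'}e_j}(t')$ for $j\in I_{\ufv}$, and $F=F_z\in\kk[y_j]$ with $F(0)=1$.

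Next, using $\mu_k\mu_k=\id$, rewrite everything in the $x$-variables of $t''$ via the exchange relation \ref{eq:exchange_relation} and the $y$-mutation \ref{eq:mutation-y}. One gets $x^{f_j(t')}=x^{f_j(t'')}$ for $j\ne k$, while $x^{f_k(t')}=x^{\alpha}(t'')\cdot\bigl(1+x^{p^*_{t''}e_k}(t'')\bigr)$ for the explicit vector $\alpha$ with $\alpha_k=-1$ and $\alpha_j=[b_{jk}(t')]_+$ for $j\ne k$; likewise $\hat y_k=x^{-p^*_{t''}e_k}(t'')$, and for $j\ne k$ the element $\hat y_j$ becomes a series in $\widehat{\kk[M^\oplus(t'')]}$ whose lowest-degree term is a monomial in the maximal ideal. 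Substituting into $z=x^{g}(t')F(\hat y)$ and collecting, one finds
\[
z=x^{h}(t'')\cdot u\cdot\sum_{n}b_n\,x^{-n_k p^*_{t''}e_k}(t'')\prod_{j\ne k}\bigl(\text{image of }\hat y_j\bigr)^{n_j},
\]
where $h\in\cone(t'')$ is given by $h_k=-g_k$ and $h_j=g_j+g_k[b_{jk}(t')]_+$ for $j\ne k$, and $u\in\widehat{\kk[M^{\oplus}(t'')]}$ has constant term $1$. Comparing $h$ with the tropical transformation \ref{eq:tropical_transformation} shows that $h=\phi_{t'',t'}(g)$ when $g_k\geq 0$, while $\phi_{t'',t'}(g)=h+g_k\,p^*_{t''}e_k$ when $g_k<0$.

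It remains to read off $\deg^{t''}z$ from the displayed formula. By the Laurent phenomenon and the Starfish theorem \ref{thm:starfish_thm}, $z\in\upClAlg(t_0)\subset\LP(t'')$, so the right-hand side is a genuine Laurent polynomial with a well-defined $\prec_{t''}$-maximal exponent. The only monomials there that can exceed $h$ in the order $\prec_{t''}$ come from the pure-$y_k$ part of $F$, i.e.\ from terms $b_n y_k^{n_k}$ with $n_j=0$ for $j\ne k$, each of which contributes leading exponent $h-n_k\,p^*_{t''}e_k$. Hence the theorem is equivalent to the following statement about the $F$-polynomial of a cluster variable: $F_z|_{y_j=0,\,j\ne k}$ equals $1$ if $g_k\geq 0$, and equals $1+(\text{unit})\,y_k^{-g_k}$ if $g_k<0$. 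This is a form of the sign-coherence of $c$-vectors together with the compatible mutation rule for $F$-polynomials, which is exactly the content of \cite{FominZelevinsky07}\cite{DerksenWeymanZelevinsky09}\cite{gross2018canonical}; it also follows from the positivity in Theorem \ref{thm:positivity_cluster} once one uses $z\in\LP(t'')$, since a stray high-degree pure-$y_k$ term would force a coefficient of the wrong sign with nothing to cancel against. Granting it, the unique $\prec_{t''}$-maximal exponent of $z$ is precisely $\phi_{t'',t'}(g)$, as required.

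The genuinely non-formal step is the last one: controlling the pure-$y_k$ part of the $F$-polynomial, equivalently the sign-coherence that forces the leading terms to behave. This is the point at which the cited references \cite{gross2013birational}\cite{gross2018canonical} pass to the cluster scattering diagram: cluster monomials are theta functions, theta functions are compatibly pointed with respect to every cluster chamber, and the transition of their leading exponents across an adjacent wall is by construction the tropical map \ref{eq:tropical_transformation}. For a clean self-contained account I would present the argument in that language rather than through the substitution bookkeeping above; alternatively one can avoid invoking sign-coherence directly by running the estimate from both seeds $t'$ and $t''$ symmetrically and combining the two resulting inequalities with $\phi_{t',t''}\circ\phi_{t'',t'}=\id$ on the relevant cone.
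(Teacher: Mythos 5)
The paper does not supply a proof of this theorem: it is quoted from \cite{gross2013birational} and \cite{gross2018canonical}, so there is no internal argument to compare your attempt against. Judging your proposal on its own terms, the reduction to a single mutation $t''=\mu_k t'$ by telescoping, followed by the exponent bookkeeping under the substitutions \eqref{eq:exchange_relation} and \eqref{eq:mutation-y}, is the standard and correct shape of the argument, and the identification of the relevant vector $h$ and its relation to $\phi_{t'',t'}(g)$ in the two sign cases is carried out correctly.

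However, the proposal does not actually close the argument. Everything is funneled into the claim that $F_z|_{y_j=0,\,j\ne k}$ equals $1$ when $g_k\ge 0$ and $1+(\text{unit})\,y_k^{-g_k}$ when $g_k<0$. You correctly flag this as the genuinely non-formal step and defer it to the references, so your write-up is a reduction rather than a proof. The positivity heuristic you offer as a shortcut is not a substitute: the statement that a ``stray high-degree pure-$y_k$ term would force a coefficient of the wrong sign'' presupposes already knowing the $\prec_{t''}$-maximal exponent of $z$, which is exactly what the theorem asserts; and positivity alone says nothing about vanishing of pure-$y_k$ terms when $g_k\ge 0$, nor about the precise top exponent $-g_k$ and the unit top coefficient when $g_k<0$. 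In fact the claimed shape (no intermediate pure-$y_k$ terms below the top) is more than what is needed and is itself a non-obvious structural fact. So the ``equivalent statement'' you extract is correct as a target, but its proof in the literature genuinely requires either the full $F$-polynomial/$g$-vector mutation machinery with sign-coherence (\cite{FominZelevinsky07,DerksenWeymanZelevinsky09,Tran09}) or the scattering-diagram/theta-function argument of \cite{gross2018canonical} in which compatibility of degrees across an adjacent wall is built into the definition of the wall-crossing. You say as much in your last paragraph, and that honest acknowledgment is the right posture, but it means the proposal as written has a real gap at precisely the step that makes the theorem hard.

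One smaller point: your telescoping implicitly uses that $\phi_{t'',t'}$ along a chosen mutation path is well-defined, while the paper defines $\phi_{t'',t'}$ path by path and then deduces path-independence \emph{from} this theorem. Your parenthetical note that path-independence falls out ``as a byproduct'' handles this correctly, but in a clean write-up you should fix a path at the start and state explicitly that the composition is taken along that path until independence has been established.
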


Given two seeds $t,t'\in\Delta^{+}$ related by a mutation sequence
$t=\seq t'$. Following \cite{qin2019bases}, we also define a $\Z$-linear
transformation $\psi_{t',t}$ from $\cone(t)$ to $\cone(t')$ such
that $\psi_{t',t}(f_{i}(t))=\deg^{t'}x_{i}(t)$. Then it is a a bijection.

\begin{Lem}[\cite{qin2019bases}]

Given any $z=\sum c_{m}x^{m}(t)\in\LP(t)$. Then the formal Laurent
series $\seq^{*}z$ has degree $g\in\cone(t')$ if and only if $g$
is the unique $\prec_{t'}$-maximal element in $\{\psi_{t',t}m|c_{m}\neq0\}$.
$\seq^{*}z$ is further pointed at degree $g$ if and only if $c_{m}=1$
where $m=\psi_{t',t}^{-1}g$.

\end{Lem}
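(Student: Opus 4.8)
The plan is to reduce the statement to the case of a single Laurent monomial $x^{m}(t)$ and then read off the degree of $\seq^{*}z$ from the leading terms. \emph{Step 1 (pointedness of $\seq^{*}x^{m}(t)$).} I would first show that for every $m\in\cone(t)$ the element $\seq^{*}x^{m}(t)$ lies in $\hLP_{\preceq\psi_{t',t}(m)}(t')$ and is $\psi_{t',t}(m)$-pointed. For $m=f_{i}(t)$ with $i\in I_{\ufv}$ this is Theorem \ref{thm:cluster_expansion} applied with $t'$ as initial seed, together with the defining equality $\psi_{t',t}(f_{i}(t))=\deg^{t'}x_{i}(t)$; for frozen $i$ the variable is unchanged and the claim is trivial. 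For general $m$ I would write $x^{m}(t)$ as $v^{c(m)}$ times a fixed ordered $v$-twisted product of the factors $x_{i}(t)^{*m_{i}}$, $i\in I$ (negative powers interpreted as $*$-inverses), where the scalar $c(m)$ is determined by $\lambda_{t}$. Applying the $*$-algebra isomorphism $\seq^{*}$ and using two elementary closure properties of pointed series — if $z$ is $h$-pointed then $z^{*(-1)}$ is $(-h)$-pointed, and if $z_{1},z_{2}$ are $h_{1},h_{2}$-pointed then $z_{1}*z_{2}\in x^{h_{1}+h_{2}}*\widehat{\kk[M^{\oplus}(t')]}$ with leading coefficient $v^{\lambda_{t'}(h_{1},h_{2})}$, both checked by writing an $h$-pointed series as $x^{h}*(1+u)$ with $u$ in the completed maximal ideal and conjugating by $x^{h}$ — one gets that $\seq^{*}x^{m}(t)$ equals $x^{\psi_{t',t}(m)}*(1+u)$ up to a $v$-scalar which is $c(m)$ computed from $\lambda_{t}$ on the $f_{i}(t)$ minus the same reordering scalar computed from $\lambda_{t'}$ on the $\psi_{t',t}(f_{i}(t))=\deg^{t'}x_{i}(t)$. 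This scalar vanishes precisely because $\psi_{t',t}$ is an isometry between the two quantization forms; at the classical level ($v=1$) there is nothing to check, since then $x^{m}(t)=\prod_{i}x_{i}(t)^{m_{i}}$ and a product of pointed Laurent series is pointed at the sum of the degrees.

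\emph{Step 2 (passing to $z=\sum c_{m}x^{m}(t)$).} Now $\seq^{*}z=\sum_{m}c_{m}\,\seq^{*}x^{m}(t)$, where by Step 1 each summand lies in $\hLP_{\preceq\psi_{t',t}(m)}(t')$ with the coefficient of $x^{\psi_{t',t}(m)}$ equal to $1$; by the Laurent phenomenon $\seq^{*}z\in\LP(t')$. Since $\psi_{t',t}$ is injective, the exponents $\psi_{t',t}(m)$ are pairwise distinct, so for $m_{0}$ with $c_{m_{0}}\neq0$ the coefficient of $x^{\psi_{t',t}(m_{0})}$ in $\seq^{*}z$ equals $c_{m_{0}}$ (the summands for $m\neq m_{0}$ either have $\psi_{t',t}(m)\prec_{t'}\psi_{t',t}(m_{0})$, hence lie in $\hLP_{\prec\psi_{t',t}(m_{0})}(t')$ and contribute nothing there, or have $\psi_{t',t}(m)\not\preceq_{t'}\psi_{t',t}(m_{0})$ and again contribute nothing to that monomial). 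If $g$ is the unique $\prec_{t'}$-maximal element of $\{\psi_{t',t}m\mid c_{m}\neq0\}$, then every $\psi_{t',t}(m)$ with $c_{m}\neq0$ is $\preceq_{t'}g$, so $\seq^{*}z\in\hLP_{\preceq g}(t')$, while $x^{g}$ occurs with coefficient $c_{m_{0}}\neq0$ where $m_{0}=\psi_{t',t}^{-1}g$; hence $g$ is the unique $\prec_{t'}$-maximal exponent of $\seq^{*}z$, so $\deg^{t'}\seq^{*}z=g$, and $\seq^{*}z$ is pointed at $g$ exactly when $c_{m_{0}}=1$. Conversely, if the set is empty then $z=0$, and if it has two incomparable maximal elements $g_{1},g_{2}$ the coefficient computation above shows both survive in $\seq^{*}z$ with nonzero coefficients, so $\seq^{*}z$ has no unique $\prec_{t'}$-maximal exponent and hence no degree. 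This gives both assertions.

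The routine partial-order combinatorics of Step 2 is harmless; the real content is the quantum normalization in Step 1, namely that the twist scalar $c(m)$ is transported correctly by $\seq^{*}$, equivalently that $\psi_{t',t}$ preserves the quantization forms. I would either invoke this directly from \cite{qin2019bases}, or deduce it from the mutation rule $\Lambda'=\tE_{\varepsilon}^{T}\Lambda\tE_{\varepsilon}$ together with Theorem \ref{thm:cluster_expansion}, by comparing the central $v$-commutation scalar of two quantum cluster variables $x_{i}(t),x_{j}(t)$ computed inside $\LP(t)$ and inside $\LP(t')$ via their leading terms.
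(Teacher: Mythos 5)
The paper does not actually prove this lemma; it imports it verbatim from \cite{qin2019bases} (there attributed to Section~3.3). So there is no in-paper argument to compare against, and your attempt has to be judged on its own.

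Your overall structure is sound and, as far as I can tell, follows the same route as the cited source: reduce to the monomial case $z=x^m(t)$, show that $\seq^{*}x^m(t)$ is $\psi_{t',t}(m)$-pointed, and then read off the leading exponent of the finite $\kk$-linear combination $\seq^{*}z=\sum c_m\,\seq^{*}x^m(t)$ by noting that the summands live in $\hLP_{\preceq\psi_{t',t}(m)}(t')$, have leading coefficient $1$, and have pairwise distinct leading exponents since $\psi_{t',t}$ is injective. Step~2, including the converse direction (if there are two incomparable maximal elements $g_1,g_2$ in $\{\psi_{t',t}m\mid c_m\neq0\}$ then both $x^{g_1},x^{g_2}$ survive with nonzero coefficient and each is a maximal exponent of $\seq^{*}z$, so no degree exists; if the set is empty then $z=0$), is correct. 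One small blemish: the parenthetical ``by the Laurent phenomenon $\seq^{*}z\in\LP(t')$'' is false for a general $z\in\LP(t)$ (the Laurent phenomenon concerns elements of the (upper) cluster algebra, not arbitrary Laurent polynomials); for general $z$ one only gets a formal Laurent series in $\hLP(t')$. Fortunately you never use that claim, so it is harmless, but it should be deleted.

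The real content is, as you say, the quantum normalization in Step~1: that the $v$-twist picked up when re-expressing $x^m(t)$ as an ordered $*$-product of the $x_i(t)^{*m_i}$, computed with $\lambda_t$ on the $f_i(t)$, exactly cancels the $v$-twist in the product of the pointed factors $\seq^{*}x_i(t)$, computed with $\lambda_{t'}$ on the $\psi_{t',t}(f_i(t))=\deg^{t'}x_i(t)$. This is precisely the statement that $\psi_{t',t}$ is an isometry between the compatible forms $\lambda_t$ and $\lambda_{t'}$. You identify this correctly as the crux, and your two proposed routes (cite it from \cite{qin2019bases}, or derive it from $\Lambda'=\tE_\varepsilon^T\Lambda\tE_\varepsilon$ together with the fact that for a single mutation the matrix of $\psi_{t',t}$ in the $f$-bases is, up to conventions, one of the $\tE_{\pm}$) are both viable, but note that you do need to carry one of them out for the proof to be complete. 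As written this is a plan rather than a proof of that step. Minor additional gap, same flavour: you invoke that a pointed series has a $*$-inverse that is pointed at the negated degree; this is true and easy, but the leading-coefficient bookkeeping there also uses skew-symmetry of $\lambda_{t'}$, and it is worth saying so, since it is of the same nature as the isometry cancellation and a reader checking signs will want it spelled out.
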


\begin{Def}[Compatibly pointed \cite{qin2019bases}]

Given an $m$-pointed element $z\in\LP(t)$, it is said to be compatibly
pointed at $t,t'$ if $z$ is $\phi_{t',t}m$-pointed in $\LP(t')$.
A subset $Z\subset\LP(t)$ is said to be compatibly pointed at $t,t'$
if its elements are.

\end{Def}

In particular, Theorem \ref{thm:compatibly_pointed_cluster_monom}
implies that the localized cluster monomials are compatibly pointed
at all seeds in $\Delta^{+}$.

\section{A review of classical scattering diagrams \label{sec:Scattering-diagram}}

For classical cases, we briefly review the scattering diagrams and
some of their properties following \cite{gross2018canonical}. More
details could be found in \cite{bridgeland2017scattering}\cite{gross2018canonical}.
When $B(t_{0})$ is skew-symmetric, quantum cluster scattering diagrams
have similar but more technical descriptions, see \cite{davison2019strong}
\cite[Section 5]{MandelQin2021}.

We choose and fix an initial seed $t_{0}=((b_{ij})_{i,j\in I},(x_{i})_{i\in I},I,I_{\ufv},(d_{i})_{i\in I})$
and omit the symbol $t_{0}$ for simplicity when the context is clear.
Define a lattice $N\simeq\Z^{I}$ with the natural basis $\{e_{i}|i\in I\}$.
Its dual lattice $M=\Hom_{\Z}(N,\Z)$ possesses the dual basis $\{e_{i}^{*}|i\in I\}$.
The natural pairing between elements $n\in N$ and $m\in M$ are denoted
by $\langle m,n\rangle=m(n)=n(m)$. Define $f_{i}=\frac{1}{d_{i}}e_{i}^{*}$,
$\cone=\oplus_{i\in I}\Z f_{i}$, $N^{\geq0}=\oplus_{i\in I}\N e_{i}$,
$N^{\circ}=\oplus_{i\in I}\Z d_{i}e_{i}$, $\yCone=\oplus_{k\in I_{\ufv}}\Z e_{k}$.
We naturally extend $\cone$ to the affine space $\cone_{\R}=\cone\otimes_{\Z}\R=M\otimes_{\Z}\R=M_{\R}$.

Recall that $(\omega_{ij})=(b_{ji}d_{j}^{-1})$ is skew-symmetric.
Endow $N$ with the skew-symmetric $\Q$-valued bilinear form $\{\ ,\ \}$
such that $\{e_{i},e_{j}\}=\omega_{ij}$.

Choose the base ring $\kk=\Q$. Let $y_{i}$ be indeterminate, $i\in I$.
We have two monoid algebras $\kk[\yCone^{\geq0}]:=\oplus_{n\in\yCone^{\geq0}}\kk y^{n}$
and $\kk[\cone]:=\oplus_{m\in M}\kk x^{m}$, where $y_{i}=y^{e_{i}}$
and $x_{i}=x^{f_{i}}$. Let $\widehat{\kk[\yCone^{\geq0}]}$ denote
the completion of $\kk[\yCone^{\geq0}]$ with respect to its maximal
ideal $\kk[\yCone^{>0}]$ as before.

Recall that we have a map $p^{*}:\yCone\rightarrow\cone$ such that
$p^{*}e_{k}=\sum b_{ik}f_{i}$. Note that we have $p^{*}(n)=\{n,\ \}$.
We can send $\kk[\yCone^{\geq0}]$ into $\kk[\cone]$ via the associated
$\kk$-linear monomial map $p^{*}$ such that $p^{*}y^{n}:=x^{p^{*}n}$.

\subsection{Scattering diagrams associated to nilpotent graded Lie algebras}

Let $\frg$ be any given $\yCone^{>0}$-graded nilpotent Lie algebra
whose Lie bracket is denoted by $[\ ,\ ]$:
\begin{align*}
\frg & =\oplus_{n\in\yCone^{>0}}\frg_{n}\\{}
[\frg_{n_{1}},\frg_{n_{2}}] & \subset\frg_{n_{1}+n_{2}},\quad\forall n_{1},n_{2}.
\end{align*}
Then we construct the corresponding pro-nilpotent group $G$ as the
following: it is in bijection with $\frg$ as a set
\begin{align*}
\exp:\frg & \simeq G,
\end{align*}
and its multiplication is given by the Baker-Campbell-Hausdorff formula,
namely, $\forall\exp(X),\exp(Y)\in G$, we have
\begin{align*}
 & \exp(X)\cdot\exp(Y)\\
 & =\exp(X+Y+\frac{1}{2}[X,Y]+\frac{1}{12}[X,[X,Y]]+\frac{1}{12}[Y,[X,Y]]+\cdots)
\end{align*}

\subsubsection*{Cones}

By a \emph{cone} $\sigma$ in $\cone_{\R}=M_{\R}$, we always mean
a convex rational polyhedral cone, namely, a conic combination of
finitely many generators $\{m_{i}\}\subset\cone$: $\sigma=\sum_{i}\R_{\geq0}m_{i}$.
Note that it can also be described as $\sigma=\cap_{j}H_{n_{j}}^{\geq0}$
for finitely many $n_{j}\in N$, where the subset $H_{n_{j}}^{\geq0}:=\{m|m(n_{j})\geq0\}$.
Then it is clear that the intersection of two cones remains a cone.
A cone $\sigma$ will be called \emph{strictly convex} if $x=0$ whenever
$x,-x\in\sigma$.

The codimension of $\sigma$ is defined to be that of $\sigma\otimes\R=\sum\R m_{i}$
in $\cone_{\R}$. Given a codimension $1$ cone $\frd$. Then $\{n\in N_{\R}|n(\frd)=0\}$
is one dimensional, whose non-zero elements are called the normal
vectors of $\frd$. There exists a unique primitive normal vector
$n_{0}$ up to a sign. A point $x$ in $M_{\R}$ is said to be \emph{general},
if it is contained in at most one hyperplane $n^{\bot}$ for some
$n\in N$.

A face of a cone $\sigma$ is a conic combination of some generators.
It is again a cone. We use $\Int(\sigma)$ to denote the relative
interior of $\sigma$, namely, those points not belonging to any face
of $\sigma$. Let $\partial\frd$ denote the boundary of $\frd$.

\subsubsection*{Scattering diagrams}

For any $n\in\yCone^{>0}$, we have the following sub-Lie algebra
of $\frg$ and the corresponding subgroup of $G$:

\begin{align*}
\frg_{n}^{\|} & :=\oplus_{k>0}\frg_{k\cdot n}\\
G_{n}^{\|} & :=\exp\frg_{n}^{\|}
\end{align*}

\begin{Def}

A wall $(\frd,\frp_{\frd})$ consists of a codimension $1$ cone $\frd\subset n_{\frd}^{\perp}$
for some primitive normal vector $n_{\frd}\in N_{\ufv}^{>0}$ and
a group element $g\in G_{n_{\frd}}^{\bigparallel}$. We call $\frd$
the support of the wall and $\frp_{\frd}$ the wall crossing operator.

\end{Def}

We often call the support $\frd$ a wall for simplicity. We make the
following assumption from now on.

\begin{Assumption}

We always assume that $\frg_{n}^{\|}$ is commutative for any $n\in\yCone^{>0}$.

\end{Assumption}

It follows that the subgroup $G_{n}^{\|}$ is abelian.

\begin{Def}[{Finite scattering diagram \cite[Definition 1.6]{gross2018canonical}\cite[Definition 2.1]{bridgeland2017scattering}}]

Let $\frg$ be an $\yCone^{>0}$-graded nilpotent Lie algebra. A $\frg$-scattering
diagram $\frD$ is a finite collection of walls $(\frd,\frp_{\frd})$,
such that $\frd\subset n_{\frd}^{\bot}$ for some primitive $n_{\frd}\in\yCone^{>0}$.

\end{Def}

If the context is clear, we will not mention the Lie algebra $\frg$
when referring to the scattering diagram $\frD$. 

\begin{Def}

The support of $\frD$ is the union $\supp\frD=\cup_{\frd\in\frD}\frd$.
The singular locus of $\frD$ is defined to be

\begin{align*}
\sing\frD=(\cup_{\frd\in\frD}\partial\frd)\cup(\cup_{\frd_{1},\frd_{2}\in\frD,\codim(\frd_{1}\cap\frd_{2})=2}\frd_{1}\cap\frd_{2}).
\end{align*}

\end{Def}

A smooth path $\gamma:[0,1]\ra M_{\R}$ is said to be $\frD$-generic,
if

\begin{itemize}

\item then end points $\gamma(0)$, $\gamma(1)$ are not in $\supp\frD$,

\item $\gamma$ does not intersect with the singular locus $\sing\frD$,
and

\item the intersections between $\gamma$ and the walls $\frd\in\frD$
are transversal.

\end{itemize}

It is said to be closed if $\gamma(0)=\gamma(1)$.

Let there be given a $\frD$-generic smooth path $\gamma(t)$, such
that it intersects the walls $\frd_{1},\ldots\frd_{l}$ at time $0<t_{1}\le\cdots\leq t_{l}<1$.
Note that when $t_{i}=t_{i+1}$, the wall $\frd_{i}$ and $\frd_{i+1}$
must overlap at the transversal intersection point $\gamma(t_{i}$),
indicating that they share a primitive normal vector $n_{0}$. In
this case, the product $\frp_{\frd_{i+1}}\cdot\frp_{\frd_{i}}$ equals
$\frp_{\frd_{i}}\cdot\frp_{\frd_{i+1}}$ by the commutativity assumption. 

\begin{Def}

The wall crossing operator along the path $\gamma$ is defined to
be

\begin{align*}
\frp(\gamma) & =\frp(\frd_{l})^{\epsilon_{l}}\cdot\cdots\cdot\frp(\frd_{1})^{\epsilon_{1}}\in G,
\end{align*}
where $\epsilon_{i}=\sign\langle-\gamma'(t_{i}),n_{i}\rangle$, $n_{i}\in\yCone^{>0}$
are the primitive normal vectors of $\frd_{i}$.

\end{Def}

\begin{Def}[{Equivalent scattering diagram \cite[Definition 2.4]{bridgeland2017scattering}\cite[Definition 1.8]{gross2018canonical}}]

Two $\frg$-scattering diagrams $\frD_{1}$, $\frD_{2}$ are said
to be \emph{equivalent} if they give the same wall crossing operator
$\frp(\gamma)$ for any $\frD_{1}$-generic and $\frD_{2}$-generic
smooth path $\gamma$.

A scattering diagram $\frD$ is said to be consistent if $\frp(\gamma)$
is trivial for any closed $\frD$-generic smooth path $\gamma$.

\end{Def}

Following \cite{gross2018canonical}, we define the essential support
of $\frD$ to be the union $\supp{}_{\mathrm{{ess}}}\frD=\cup_{\frd\in\frD,\frp_{\frd}\neq1}\frd$.
Replacing $\frD$ by an equivalent scattering diagram if necessary,
we can assume the support to be minimal.

\subsubsection*{Scattering diagrams associated to graded Lie algebras}

Given an $\yCone^{>0}$-graded Lie algebra $\frg=\oplus_{n\in\yCone^{>0}}\frg_{n}$,
not necessarily nilpotent. We generalize the previous construction
of scattering diagrams for $\frg$.

Choose a linear function $d$ on $\yCone$ such that $d(e_{k})\in\Z_{>0}$
for $k\in I_{\ufv}$. Then, for any chosen order $k\in\N$, $\{n\in\yCone^{>0}|d(n)\leq k\}$
is a finite set. We have the subalgebra $\frg^{>k}$ and a nilpotent
Lie algebra $\frg^{\leq k}$:
\begin{align*}
\frg^{>k} & =\oplus_{d(n)>k}\frg_{n}\\
\frg_{}^{\leq k} & =\frg/\frg^{>k}
\end{align*}

Correspondingly, we have well defined pro-nilpotent groups at all
orders $k$ as before
\begin{align*}
G^{\leq k} & =\exp\frg_{}^{\leq k}
\end{align*}

By using the canonical surjection $\pi^{ji}:\frg^{\leq j}\ra\frg^{\leq i}$,
$\pi^{ji}:G^{\leq j}\ra G^{\leq i}$, for any $i\leq j$, $n\in\yCone^{>0}$,
we obtain the inverse limits 
\begin{align*}
\hat{\frg} & =\lim_{\longleftarrow}\frg^{\leq k}\\
\hG & =\lim_{\longleftarrow}G^{\leq k}\\
\hG_{n}^{\bigparallel} & =\lim_{\leftarrow}(G^{\leq k})_{n}^{\bigparallel}
\end{align*}

The set theoretic bijection $\exp:\frg^{\leq k}\simeq G^{\leq k}$
induces a bijection $\exp:\hat{\frg}\simeq\hG$.

As before, a wall $(\frd,\frp_{\frd})$ consists of a codimension
$1$ cone $\frd$ with some primitive normal vector $n_{0}\in\yCone^{\geq0}$,
and a wall crossing operator $\frp_{\frd}\in\hG_{n_{0}}^{\bigparallel}$.

Let $\pi^{\leq k}$ denote the natural projection from $\hG$ to $\hG^{\leq k}$.

\begin{Def}[Scattering diagram]

Let $\frg$ be an $\yCone^{>0}$-graded Lie algebra. A $\frg$-scattering
diagram $\frD$ is a collection of walls $(\frd,\frp_{\frd})$ such
that for any $k\in\N$, there are only finitely many nontrivial walls
$(\frd,\pi^{\leq k}\frp_{\frd})$.

\end{Def}

Let $\frD^{\leq k}$ denote the collection of the nontrivial walls
$(\frd,\pi^{\leq k}\frp_{\frd})$ obtained from a scattering diagram
$\frD$. Then it is a $\frg^{\leq k}$-scattering diagram. A smooth
path $\gamma$ is said to be $\frD$-generic if it is generic in all
$\frD^{\leq k}$. The wall crossing operator $\frp_{\gamma}$ along
a $\frD$-generic smooth path $\gamma$ is defined to be the inverse
limit of the wall crossing operators $\frp_{\gamma}^{\leq k}$ defined
in $\frD^{\leq k}$. As before, the scattering diagram $\frD$ is
said to be consistent if $\frp_{\gamma}$ is trivial for all closed
paths.

The essential support of $\frD$ is defined to be $\supp_{\ess}\frD=\cup_{k\geq1}\supp_{\ess}(\frD^{\leq k})$.
We often write $\frD=\cup\frD^{\leq k}$.

\subsubsection*{Pushforward of scattering diagrams}

Given a homomorphism $\pi:\frg\ra\frh$ between two $\yCone^{\geq0}$-graded
Lie algebras. It induces Lie algebra homomorphisms $\pi:\frg^{\leq k}\rightarrow\frh^{\leq k}$
and $\pi:\hfrg\rightarrow\hfrh$.

\begin{Lem}\label{lem:exp_intertwine_morphism}

(1) For any $k\in\N$, the map $\pi:\exp\frg^{\leq k}\ra\exp\frh^{\leq k}$
sending $\exp g$ to $\exp\pi(g)$ is a group homomorphism.

(2) The map $\pi:\exp\hfrg\ra\exp\hfrh$ sending $\exp g$ to $\exp\pi(g)$
is a group homomorphism.

\end{Lem}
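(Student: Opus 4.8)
The plan is to unwind the definitions on both sides and reduce to the fact that a set-theoretic map between sets is a group homomorphism as soon as it is compatible with the group laws, which here are given by the Baker–Campbell–Hausdorff (BCH) formula. Since the BCH formula is a universal Lie-series expression in iterated brackets, a Lie algebra homomorphism automatically intertwines it; the only subtlety is convergence/well-definedness in the nilpotent (resp.\ pro-nilpotent) setting.

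For part (1), fix $k\in\N$. By construction $\exp\colon\frg^{\leq k}\simeq G^{\leq k}$ and $\exp\colon\frh^{\leq k}\simeq G^{\leq k}$ (for $\frh$) are the set-theoretic bijections defining the groups, with multiplication given by BCH. Take $X,Y\in\frg^{\leq k}$. Then
\begin{align*}
\pi(\exp X\cdot\exp Y) & =\pi\bigl(\exp(X+Y+\tfrac12[X,Y]+\cdots)\bigr)\\
 & =\exp\bigl(\pi(X+Y+\tfrac12[X,Y]+\cdots)\bigr)\\
 & =\exp\bigl(\pi(X)+\pi(Y)+\tfrac12[\pi(X),\pi(Y)]+\cdots\bigr)\\
 & =\exp\pi(X)\cdot\exp\pi(Y),
\end{align*}
where the second equality is the definition of the induced map $\pi\colon\exp\frg^{\leq k}\ra\exp\frh^{\leq k}$, the third uses that $\pi\colon\frg^{\leq k}\ra\frh^{\leq k}$ is a Lie algebra homomorphism applied term by term to the BCH series (a finite sum, as $\frg^{\leq k}$ is nilpotent, so the series terminates), and the last is again the BCH multiplication in $G^{\leq k}$ for $\frh$. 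One should remark that the BCH sum is finite because $\frg^{\leq k}$ is nilpotent (any sufficiently iterated bracket vanishes), so no convergence issue arises.

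For part (2), one passes to the inverse limit. The maps $\pi^{\leq k}\colon\exp\hfrg\ra\exp\frg^{\leq k}$ and the analogous ones for $\frh$ are compatible with the projections $\pi^{ji}$, and by part (1) each composite $\exp\hfrg\ra\exp\frg^{\leq k}\ra\exp\frh^{\leq k}$ is a group homomorphism, compatible in $k$. Since $\exp\hfrg=\lim_{\leftarrow}\exp\frg^{\leq k}$ and $\exp\hfrh=\lim_{\leftarrow}\exp\frh^{\leq k}$ as groups, the universal property of the inverse limit yields that $\pi\colon\exp\hfrg\ra\exp\hfrh$, $\exp g\mapsto\exp\pi(g)$, is a group homomorphism. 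Concretely, for $X,Y\in\hfrg$ one checks $\pi(\exp X\cdot\exp Y)$ and $\exp\pi(X)\cdot\exp\pi(Y)$ agree after applying every $\pi^{\leq k}$, hence are equal.

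**The main obstacle**, such as it is, is purely bookkeeping: making sure that the ``induced map'' $\pi$ on the exponential groups is literally the map $\exp g\mapsto\exp\pi(g)$ (which is well-defined precisely because $\exp$ is a bijection), and that the BCH formula can be applied term-by-term — i.e.\ that $\pi$ commutes with taking iterated Lie brackets and with the (finite, in each graded piece) summation. There is no hard analysis here; the content is entirely the observation that BCH is functorial in the Lie algebra. I would keep the write-up short, essentially the displayed computation above for (1) and a one-line inverse-limit argument for (2).
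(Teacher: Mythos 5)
Your proof is correct and takes the same route as the paper: part (1) is exactly the observation that a Lie algebra homomorphism commutes with the (finite, by nilpotency) Baker--Campbell--Hausdorff series, and part (2) follows by passing to inverse limits. The paper states both steps in one line each; your write-up simply makes the BCH computation and the inverse-limit argument explicit.
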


\begin{proof}

(1) The map preserves the multiplication because the Lie algebra homomorphism
$\pi$ commutes with the Baker--Campbell--Hausdorff formula for
nilpotent Lie algebras.

(2) The claim follows from claim (1).

\end{proof}

Following \cite[Sectioni 2.10]{bridgeland2017scattering}, for any
$\frg$-scattering diagram $\frD$, we define the $\frh$-scattering
diagram $\pi_{*}\frD$ as the collection of the pairs $(\frd,\pi\frp_{\frd})$
for any $(\frd,\frp_{\frd})\in\frD$.

\subsection{Determination of scattering diagrams }

Let there be given an $\yCone^{>0}$-graded Lie algebra $\frg$ and
a consistent $\frg$-scattering diagram $\frD$. Following \cite{bridgeland2017scattering},
we refer to the connected components of $M_{\R}\backslash\overline{\supp_{\ess}\frD}$
as chambers associated to $\frD$, where $\overline{\supp_{\ess}\frD}$
denotes the closure.

We define the positive and the negative chambers of $M_{\R}$ by:

\begin{align*}
\frC^{+} & =\{m\in M_{\R}|\langle m,e_{k}\rangle\geq0,k\in I_{\ufv}\}\\
\frC^{-} & =\{m\in M_{\R}|\langle m,e_{k}\rangle\leq0,k\in I_{\ufv}\}.
\end{align*}
Then, for any codimension $1$ cone $\frd\subset n^{\bot}$ with $n\in\yCone^{>0}$,
we have $\Int(\frC^{+})\cap\frd=\emptyset$, $\Int(\frC^{-})\cap\frd=\emptyset$.
Choosing any generic smooth path $\gamma$ from $\frC^{+}$to $\frC^{-}$,
we define $\frp_{\frD}=\frp_{\gamma}$.

\begin{Thm}\cite[Proposition 3.4]{bridgeland2017scattering}\cite[Theorem 1.17]{gross2018canonical}\cite[Theorem 2.1.6]{kontsevich2014wall}

We have a bijection between the equivalent classes of consistent $\frg$-scattering
diagrams and the elements of $\hat{G}$, sending a scattering diagram
$\frD$ to $\frp_{\frD}$.

\end{Thm}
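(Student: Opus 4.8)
The plan is to prove three statements, each by induction on the order $k$ via the pro-nilpotent tower $G^{\leq k}$ whose inverse limit is $\hG$: that $\frD\mapsto\frp_{\frD}$ is well defined, that it is injective on equivalence classes, and that it is surjective. For well-definedness, fix a consistent $\frg$-scattering diagram $\frD$. First, $\frp_{\frD}$ is independent of the generic path $\gamma$ from $\frC^{+}$ to $\frC^{-}$: since $\Int(\frC^{\pm})$ meets no wall, any two such paths $\gamma_{0},\gamma_{1}$ complete to a closed $\frD$-generic loop by concatenating with paths run inside $\Int(\frC^{+})$ and $\Int(\frC^{-})$, which contribute the identity, and then consistency forces $\frp_{\gamma_{0}}=\frp_{\gamma_{1}}$ at every order and hence in $\hG$. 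Independence of the representative of the equivalence class of $\frD$ is immediate from the definition of equivalence.

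For surjectivity, given $g\in\hG$ I build a consistent $\frD$ order by order with $\frp_{\frD^{\leq k}}=\pi^{\leq k}(g)$ for every $k$. At order $1$, $\frg^{\leq1}=\oplus_{d(n)=1}\frg_{n}$ is abelian, so writing $\pi^{\leq1}(g)=\exp\bigl(\sum_{d(n)=1}X_{n}\bigr)$ with $X_{n}\in\frg_{n}$, the family of full hyperplane walls $(n^{\perp},\exp X_{n})$ is consistent (a closed loop crosses each $n^{\perp}$ with net signed multiplicity $0$, and these commuting operators then contribute nothing) and has wall-crossing operator $\prod_{n}\exp X_{n}=\pi^{\leq1}(g)$ from $\frC^{+}$ to $\frC^{-}$. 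Inductively, given a consistent $\frD^{\leq k}$ with the correct $\frp$, lift its walls arbitrarily to order $k+1$ and adjoin full hyperplane walls in directions $n$ with $d(n)=k+1$ so that $\frp_{\gamma}=\pi^{\leq k+1}(g)$ again; call this $\frD'$. Its failure of consistency is concentrated at order $k+1$ and is measured around each codimension-$2$ joint by a loop operator lying in $G^{\leq k+1}$, since $\frD^{\leq k}$ was consistent. The decisive input --- the consistency theorem of Kontsevich--Soibelman in dimension two and of Gross--Siebert in general --- is that these codimension-$2$ obstructions can be simultaneously cancelled by adjoining finitely many walls of order $k+1$, and that the resulting change in $\frp_{\gamma}$, being central at order $k+1$, is undone by adding further full hyperplane walls without recreating order-$(k+1)$ inconsistency (all the relevant commutators now lie in order $\geq k+2$). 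Taking the inverse limit over $k$ produces a consistent $\frD$ with $\frp_{\frD}=g$.

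For injectivity, let $\frD_{1},\frD_{2}$ be consistent with $\frp_{\frD_{1}}=\frp_{\frD_{2}}$, and show $\frD_{1}\sim\frD_{2}$ by induction on the order. Assuming $\frD_{1}^{\leq k}\sim\frD_{2}^{\leq k}$, pass to minimal-support representatives so that $\frD_{1}$ and $\frD_{2}$ literally agree to order $k$; then the content of $\frD_{1}^{\leq k+1}$ and $\frD_{2}^{\leq k+1}$ beyond order $k$ lies in order $k+1$. For a primitive $n_{0}\in\yCone^{>0}$, a short generic path crossing only walls supported in $n_{0}^{\perp}$ --- whose operators all commute by the standing Assumption --- reads off the product of the order-$(k+1)$ wall operators in direction $n_{0}$ from $\frp_{\gamma}$, and consistency, via loops around the boundaries of proper sub-hyperplane walls, shows this product pins down each order-$(k+1)$ operator up to equivalence. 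As $\frp_{\frD_{1}}=\frp_{\frD_{2}}$ these products coincide for every $n_{0}$, so $\frD_{1}^{\leq k+1}\sim\frD_{2}^{\leq k+1}$; the inverse limit gives $\frD_{1}\sim\frD_{2}$.

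The main obstacle is the repair step inside the existence argument: one needs the order-$(k+1)$ failure of consistency, regarded as data on the codimension-$2$ skeleton of $\supp\frD'$, to satisfy a cocycle-type closedness condition so that it can be trivialized by codimension-$1$ walls. This is precisely the scattering/consistency theorem of Kontsevich--Soibelman and Gross--Siebert that underlies all three cited references; the remainder is bookkeeping with the pro-nilpotent group $\hG$.
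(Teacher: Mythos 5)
The paper supplies no proof of this statement: it is a pure citation of Bridgeland [Prop.~3.4], Gross--Hacking--Keel--Kontsevich [Thm.~1.17], and Kontsevich--Soibelman [Thm.~2.1.6], so there is no internal argument to compare against. Your sketch reproduces the standard proof from those references, and the architecture is right: well-definedness of $\frp_{\frD}$ follows from consistency together with concatenation of paths through $\Int(\frC^{+})$ and $\Int(\frC^{-})$, and both existence and uniqueness are proved inductively up the tower $G^{\leq k}$. The crucial fact you lean on --- that the kernel of $G^{\leq k+1}\to G^{\leq k}$ is central and abelian, so that the order-$(k+1)$ failure of consistency around joints and the defect in $\frp$ both live in that kernel, commute with everything, and can be corrected by outgoing walls and full-hyperplane walls respectively without creating new order-$(k+1)$ defects --- is exactly the bookkeeping all three references perform.

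Two cautions on the way you package this. First, the phrase ``the consistency theorem of Kontsevich--Soibelman in dimension two and of Gross--Siebert in general'' as your ``decisive input'' invites a circular reading, since that is essentially the theorem being proved. What the inductive step actually requires is strictly weaker and local: the rank-two scattering lemma around a single codimension-$2$ joint, combined with the perturbation trick that makes joints generic and a gluing argument (see GHKK, Appendix~C.1). You should cite that local lemma rather than the global bijection. Second, the injectivity step is the least precise part of the sketch. Reading off, from $\frp_{\gamma}$ and small loops, the product of order-$(k+1)$ operators in a fixed normal direction $n_{0}$ along one path is the right heuristic, but concluding that the wall-crossing operator is pinned down at every general point of $n_{0}^{\perp}$ (not merely that a certain product along one chosen path is determined) requires more; this is where Bridgeland's passage to canonical minimal-support representatives, and the observation that the order-$(k+1)$ operator is then a well-defined function on general points of the support, does real work. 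As a sketch of the argument underlying the cited results, though, your outline is faithful and correct.
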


\begin{Def}\cite[Definition 1.11]{gross2018canonical}

A wall $(\frd,\frp_{\frd})$ with the primitive normal vector $n_{0}\in\yCone^{>0}$
is said to be incoming, if it contains $p^{*}n_{0}$, and outgoing
otherwise. We call $-p^{*}n_{0}$ the direction of the wall.

\end{Def}

\begin{Thm}\cite[Theorem 1.21(4)]{gross2018canonical}

The equivalent class of a consistent scattering diagram $\frD$ is
determined by its set of incoming walls $\frD_{\mathrm{in}}$.

\end{Thm}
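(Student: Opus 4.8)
The plan is to reduce the statement to a uniqueness assertion about completions by outgoing walls, and then to prove that uniqueness order by order. First I would note that, after discarding the trivial walls (which does not change the equivalence class), a consistent diagram $\frD$ is the disjoint union of its incoming walls $\frD_{\mathrm{in}}$ and its outgoing walls $\frD_{\mathrm{out}}$, since every nontrivial wall has a primitive normal in $\yCone^{>0}$ and is either incoming or outgoing by definition. Thus, given two consistent diagrams $\frD=\frD_{\mathrm{in}}\sqcup\frD_{\mathrm{out}}$ and $\frD'=\frD_{\mathrm{in}}\sqcup\frD'_{\mathrm{out}}$ with the same set $\frD_{\mathrm{in}}$ of incoming walls and with $\frD_{\mathrm{out}},\frD'_{\mathrm{out}}$ consisting of outgoing walls, it remains to show $\frD\sim\frD'$. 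Note that this only requires a uniqueness statement, not the existence of a consistent completion.

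I would prove $\frD\sim\frD'$ by induction on the order $k$ with respect to the fixed linear function $d$ on $\yCone$, the case $k=0$ being trivial. In the inductive step, replace $\frD,\frD'$ by equivalent diagrams that literally agree to order $k-1$. The top graded layer $\frg_{[k]}:=\oplus_{d(n)=k}\frg_n$ is a central ideal of $\frg^{\leq k}$, because a bracket involving it lands in order $>k$ and hence vanishes in $\frg^{\leq k}$; therefore $\exp\frg_{[k]}$ is central in $G^{\leq k}$, the residual order-$k$ wall-crossing data is $\frg_{[k]}$-valued, and path-ordered products become additive at order $k$. Since $\frD$ and $\frD'$ are consistent and agree below order $k$, their order-$k$ discrepancy is encoded by a scattering diagram with $\frg_{[k]}$-valued walls, all of which are outgoing, whose monodromy around any closed path is the additive difference of the (vanishing) monodromies of $\frD$ and $\frD'$, hence zero; so this discrepancy diagram is consistent. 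Moreover $\frg_{[k]}=\oplus_n\frg_n$ with $[\frg_n,\frg_{n'}]\subset\frg_{n+n'}$ of order $2k>k$, hence zero, so the layers for distinct $n$ decouple and I may argue one $n$ at a time.

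The crux is then the local claim: a consistent scattering diagram all of whose walls are outgoing and whose crossing operators lie in a single abelian piece $\frg_n$ is equivalent to the empty one. All such walls lie in the hyperplane $n^{\perp}\subset M_{\R}$, which also contains $p^{*}n$ — because $\braket{p^{*}n,n}=\{n,n\}=0$ — and $p^{*}n\neq 0$ by injectivity of $p^{*}$; being outgoing, none of the walls contains the ray $\R_{\geq 0}\,p^{*}n$. Consistency defines a locally constant $\frg_n$-valued potential on $n^{\perp}$ away from its codimension-$\geq 2$ skeleton, jumping by the charge across each essential wall; if the essential support were nonempty one could select a codimension-two cell on its topological boundary around which the monodromy equals a single nonzero charge, contradicting consistency. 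Ruling out cancellations there is precisely where one uses that the walls are outgoing, i.e.\ that they do not fill up $n^{\perp}$ and do not surround the direction $-p^{*}n$; equivalently one may perturb the diagram to general position, so that each joint becomes a rank-two scattering problem, and invoke the classical fact that outgoing rays alone cannot form a nontrivial consistent rank-two diagram (\cite{gross2018canonical}). Granting the claim, the discrepancy diagram is trivial, so $\frD$ and $\frD'$ agree to order $k$, and the induction closes.

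The main obstacle is this last local claim. The reductions above — centrality of the top layer, additivity at a fixed order, decoupling of the $\frg_n$, and the final passage from ``agrees at all orders'' to ``equivalent'' (which also follows from the bijection between equivalence classes of consistent diagrams and elements of $\hG$ recalled above) — are routine; the genuine input is the geometric fact that a nonempty consistent arrangement of outgoing walls is impossible, which uses the incoming/outgoing dichotomy together with the finiteness of the scattering diagram at each order.
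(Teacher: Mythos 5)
Your overall strategy matches the standard proof in \cite{kontsevich2014wall}\cite{gross2018canonical}: induction on the degree cutoff, centrality of the top layer $\frg_{[k]}$, passage to an abelian discrepancy diagram, decoupling by $n$, and reduction to the claim that a consistent $\frg_n$-valued diagram supported in $n^\perp$ with only outgoing walls is trivial. These reduction steps are correct, and you rightly identify that local claim as the genuine input.

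The argument you then give for the local claim is where the proof has a real gap. The correct invariant is $\phi(x):=\sum_{\frd\ni x}g_\frd\in\frg_n$ at a generic $x\in n^\perp$, the wall-crossing when a path crosses $n^\perp$ transversally at $x$; consistency around a codimension-two joint $\frj\subset n^\perp$ says precisely that $\phi$ agrees on the two cells of $n^\perp$ adjacent to $\frj$. This is \emph{not} the picture you describe: one does not ``jump by the charge across an essential wall'' while remaining inside $n^\perp$, since a wall is full-dimensional inside $n^\perp$. The step ``select a codimension-two cell on its topological boundary around which the monodromy equals a single nonzero charge'' tacitly presupposes a minimal representative of $\frD$ so that $\phi$ is actually nonzero on the interior cells of $\supp_{\ess}\frD$; and the role of outgoing-ness is not to ``rule out cancellations'' there, but to guarantee that $\{\phi=0\}$ is nonempty, since the nonzero vector $p^*n\in n^\perp$ lies in no wall. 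Your parenthetical ``perturb to rank two'' does not apply once you have decoupled by $n$: all discrepancy walls lie in the single hyperplane $n^\perp$, so in a $2$-plane transverse to any joint they appear as collinear rays, not a genuine two-wall scattering problem.

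The cleanest way to close the local claim, avoiding boundary cells and adjacency arguments altogether, is this: pick a generic $x_0\in n^\perp$ near the ray $\R_{\geq 0}p^*n$, so $\phi(x_0)=0$, and for any other generic $x\in n^\perp$ form a closed smooth path that crosses $n^\perp$ transversally exactly at $x_0$ and at $x$ (lift above $n^\perp$, slide, drop, slide back). Since all discrepancy walls lie in $n^\perp$, the path meets no other wall, and consistency for this single closed loop gives $\phi(x)=\phi(x_0)=0$. Hence $\phi\equiv 0$, the discrepancy diagram is trivial, and the induction closes.
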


\begin{Thm}\cite[Theorem 1.21(3) Proposition 1.20]{gross2018canonical}

Let $\frD_{\mathrm{in}}$ denote a scattering diagrams consisting
of incoming walls of the form $(n_{0}^{\bot},\frp_{n_{0}^{\bot}})$
with primitive $n_{0}\in\yCone^{\geq0}$. Then there exists a consistent
scattering diagram $\frD$ containing $\frD_{\mathrm{in}}$ such that
$\frD\backslash\frD_{\mathrm{in}}$ consists of outgoing walls.

\end{Thm}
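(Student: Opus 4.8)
The plan is to build $\frD$ order by order with respect to the degree function $d$ on $\yCone$, following the Kontsevich--Soibelman / Gross--Siebert algorithm as presented in \cite{bridgeland2017scattering}\cite{gross2018canonical}. For each $k\in\N$ I would produce a \emph{finite} $\frg^{\leq k}$-scattering diagram $\frD^{\leq k}$ such that its incoming walls are exactly those of $\frD_{\mathrm{in}}$ truncated to order $k$, every other wall is outgoing, and $\frD^{\leq k}$ is consistent; moreover the image of $\frD^{\leq k}$ in $\frg^{\leq k-1}$ is equivalent to $\frD^{\leq k-1}$. Setting $\frD=\cup_{k}\frD^{\leq k}$ then yields a consistent $\frg$-scattering diagram with $\frD\supseteq\frD_{\mathrm{in}}$ and $\frD\setminus\frD_{\mathrm{in}}$ outgoing, which is the assertion.

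For the base case, choose $k_{0}$ small enough that $\frg^{\leq k_{0}}$ admits no nonzero bracket $[\frg_{n_{1}},\frg_{n_{2}}]$ with $n_{1},n_{2}$ normal vectors of walls of $\frD_{\mathrm{in}}$; this is possible because $d(n_{1}+n_{2})=d(n_{1})+d(n_{2})$. Then $\frD^{\leq k_{0}}$ equal to the order-$k_{0}$ truncation of $\frD_{\mathrm{in}}$ is automatically consistent: around any codimension-$2$ joint the potential obstruction is a sum of brackets of degree $>k_{0}$, hence vanishes in $\frg^{\leq k_{0}}$; and across the relative interior of a single wall $n_{0}^{\bot}$ a closed generic loop has signed crossing multiplicity $0$, so it contributes trivially since $G_{n_{0}}^{\|}$ is abelian.

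For the inductive step, assume $\frD^{\leq k}$ has been constructed. Choose any lift to a $\frg^{\leq k+1}$-scattering diagram $\tilde{\frD}$, keeping the incoming walls unchanged and lifting each remaining wall-crossing operator arbitrarily. For every closed $\tilde{\frD}$-generic loop $\gamma$ the image of $\frp_{\gamma}$ in $G^{\leq k}$ is trivial, so $\log\frp_{\gamma}$ lies in the abelian central piece $\oplus_{d(n)=k+1}\frg_{n}$ of $\frg^{\leq k+1}$. This quantity depends only on the homotopy class of $\gamma$ in $M_{\R}\setminus\sing\tilde{\frD}$, hence is encoded by a local obstruction $\obs_{\frj}\in\oplus_{d(n)=k+1}\frg_{n}$ attached to each codimension-$2$ joint $\frj$, namely $\log$ of the monodromy around a small loop encircling $\frj$. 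Since the bracket is central at this order, inserting an extra wall $(\frd,\frp_{\frd})$ with $\frp_{\frd}\in\exp(\oplus_{d(n)=k+1}\frg_{n})$ changes $\frp_{\gamma}$ additively and independently of the order of crossings. I would then add, for each relevant primitive $n_{0}$ with $d(n_{0})=k+1$, outgoing walls supported on codimension-$1$ cones inside $n_{0}^{\bot}$ that emanate from $\sing\tilde{\frD}$ in the direction $-p^{*}n_{0}$ (note $\langle p^{*}n_{0},n_{0}\rangle=0$, so these cones lie in $n_{0}^{\bot}$ but avoid $p^{*}n_{0}$, hence are outgoing), with operators in $\exp(\frg_{n_{0}})$ chosen so as to cancel $\obs_{\frj}$ at every joint simultaneously. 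The result is $\frD^{\leq k+1}$.

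The heart of the argument, and the step I expect to be the main obstacle, is exactly this cancellation: one must show the family $(\obs_{\frj})_{\frj}$ can always be killed purely by \emph{outgoing} walls. Equivalently, the obstruction is a $1$-cocycle on the fan stratification of $\supp_{\ess}\tilde{\frD}$ with values in $\oplus_{d(n)=k+1}\frg_{n}$, and it is automatically a coboundary because across any codimension-$1$ face one is free to prescribe the additional wall-crossing. Concretely this reduces, after a standard perturbation putting all relevant walls in general position, to the rank-$2$ situation, where the existence of a consistent completion by outgoing walls is classical (an explicit change-of-lattice / Baker--Campbell--Hausdorff computation, or the identification with the cluster scattering diagram of the Kronecker quiver). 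Once this is in hand, the inverse limit $\frD=\cup_{k}\frD^{\leq k}$ finishes the proof; uniqueness of the equivalence class of $\frD$, if desired, is immediate from the preceding bijection theorem together with the fact that the incoming walls determine the consistent diagram.
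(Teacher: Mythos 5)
The paper does not prove this statement; it is quoted verbatim from \cite{gross2018canonical} (Proposition~1.20 and Theorem~1.21(3)), so there is no in-paper proof to compare against. Your reconstruction is essentially the standard Kontsevich--Soibelman / Gross--Siebert inductive algorithm, which is precisely what GHKK use (their Appendix~C.1 together with the perturbation trick of Appendix~C.3), so the strategy is the right one and the order-by-order scaffolding, the base case with $k_{0}=1$, and the identification of the order-$(k+1)$ obstruction with a central, joint-local quantity are all correct.

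The one place where you should be careful is the claim that the cancelling walls are \emph{automatically} outgoing. You write that a cone of the form $\frj+\R_{\geq 0}(-p^{*}n_{0})$ ``avoids $p^{*}n_{0}$, hence is outgoing,'' but $p^{*}n_{0}$ belongs to that cone exactly when $p^{*}n_{0}\in\frj$, and nothing a priori rules this out for an arbitrary joint $\frj$ of $\tilde{\frD}$. Likewise, the cocycle/coboundary framing does not by itself see the outgoing constraint: one is \emph{not} free to prescribe arbitrary corrections on codimension-$1$ faces, only corrections supported on outgoing cones, and the statement is precisely that these suffice. The actual content is hidden in your sentence ``after a standard perturbation putting all relevant walls in general position, this reduces to the rank-$2$ situation.'' In the perturbed, general-position setting a joint is $\frj=n_{1}^{\bot}\cap n_{2}^{\bot}$ with $\{n_{1},n_{2}\}\neq 0$, and then for any $n$ in the monoid $\Z_{>0}n_{1}+\Z_{>0}n_{2}$ one has $\langle p^{*}n,n_{i}\rangle\neq 0$, so $p^{*}n\notin\frj$ and the new walls really are outgoing; and the degenerate limit recovering the original (non-perturbed) diagram must then be controlled. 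So the proof outline is correct, but the italicized ``and is automatically a coboundary'' should be replaced by the explicit perturbation-plus-rank-$2$ computation, which is where all the work lies and which is not a formal consequence of the obstruction being central.
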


We say a wall $(\frd,\frp_{\frd})$ is translation invariant in the
frozen direction if $\oplus_{j\in I_{\fv}}\R f_{j}\subset\frd$. By
the inductive construction of scattering diagrams \cite[Appendiex C.1]{gross2018canonical},
if the incoming walls of a consistent scattering diagram $\frD$ are
translation invariant in the frozen direction, we can find a $\frD'$
equivalent to $\frD$ such that all of its walls have this property.

\subsection{Cluster scattering diagram}

We denote the $\yCone^{\geq0}$-graded monoid algebra $A=\kk[\yCone^{\geq0}]=\kk[y_{k}]_{k\in I_{\ufv}}$
and its completion $\hA=\widehat{\kk[\yCone^{\geq0}]}$. Endow $A$
with a Poisson structure

\begin{align*}
\{y^{n},y^{n'}\} & =-\{n,n'\}y^{n+n'}.
\end{align*}
Define the corresponding $\yCone^{>0}$-graded Lie algebra $\frg=A_{>0}=\oplus_{n\in\yCone^{>0}}\kk y^{n}$,
where we take Lie bracket to be the Poisson bracket. Let $\hfrg$
denote the completion of $\frg$ by taking the inverse limit as before.
Let $\exp$ denote the standard Taylor series expansion and define
the corresponding pro-nilpotent group $\hG=\exp\hfrg$. Note that
$\hG$ is identified with the set $1+\hfrg\subset\hA$.

We can write any $a\in\hA$ as a formal series $a=\sum_{n\in\yCone^{\geq0}}a_{n}$
with homogeneous component $a_{n}\in\kk y^{n}$. For any primitive
$n_{0}\in\yCone^{>0}$, any $g=\sum_{k\geq1}g_{kn_{0}}\in\frg_{n_{0}}^{\bigparallel}$,
and $a\in\hA$, the natural action of $g$ on $a$ is given by the
derivation $\{g,a\}=\sum_{k\geq1}\sum_{n}\{g_{kn_{0}},a_{n}\}$. The
natural action of $\exp g$ on $a$ is given by $(\exp g)(a)=\exp(\{g,\})(a)$,
where $\exp$ is the Taylor series expansion.

Let $z$ denote an indeterminate. Recall that the dilogarithm function
$\Li_{2}(z)$ of $z$ is the following formal series
\begin{align*}
\Li_{2}(z)=\sum_{k\geq1}\frac{z^{k}}{k^{2}}.
\end{align*}

\begin{Def}[Cluster scattering diagram]

Let $t_{0}$ denote a given initial seed. The\emph{ }cluster scattering
diagram $\frD(t_{0})$ is a consistent $\frg$-scattering diagram
whose set of incoming walls is $\{(e_{k}^{\bot},\exp(-d_{k}\Li_{2}(-y_{k}))|k\in I\}$,
such that it has minimal support.

\end{Def}

\begin{Def}[Reachable chamber]\label{def:reachable_chamber}

Let $\frC,\ \frC'$ be two chambers of a scattering diagram $\frD$.
$\frC$ is said to be reachable from $\frC'$ if there exists a generic
smooth path from $\Int(\frC')$ to $\Int(\frC)$ such that it intersects
finitely many walls.

When $\frC$ is reachable from the initial chamber $\frC^{+}$, it
is simply called (mutation) reachable.

\end{Def}

\begin{Thm}\cite[Section 1.3 Lemma 2.10]{gross2018canonical}

(1) For any $t\in\Delta^{+}$, there is a corresponding reachable
chamber 
\begin{align*}
\frC^{t} & =\cap_{k\in I_{\ufv}}H_{c_{k}(t)}^{\geq0}=\sum_{i\in I}\R_{\geq0}\deg x_{i}(t)+\sum_{j\in I_{\fv}}\R_{\geq0}(-f_{j}),
\end{align*}
for non-zero primitive vectors $c_{k}(t)\in\pm\N^{I_{\ufv}}$, called
the $c$-vectors. In particular, $\frC^{t_{0}}=\frC^{+}$. 

Moreover, the walls of $\frC^{t}$ takes the form $(\frd_{k},\frp_{\frd_{k}})$,
$k\in I_{\ufv}$, where 
\begin{align*}
\frd_{k} & =\sum_{i\neq k}\R_{\geq0}\deg x_{i}(t)+\sum_{j\in I_{\fv}}\R_{\geq0}(-f_{j})
\end{align*}
 has the primitive normal vector $c_{k}(t)$, and $\frp_{\frd_{k}}=\exp(-d_{k}\Li_{2}(-y^{|c_{k}(t)|}))$.

(2) Conversely, all reachable chambers take this form.

\end{Thm}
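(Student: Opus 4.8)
The plan is to prove both statements by a single induction on the length of a mutation sequence $\seq$ with $t=\seq t_{0}$, the engine being the \emph{mutation invariance} of the cluster scattering diagram. Fix $k\in I_{\ufv}$ and set $t_{0}'=\mu_{k}t_{0}$. The first step is to recall from \cite{gross2018canonical} that the tropical transformation $\phi_{t_{0},t_{0}'}\colon\cone(t_{0}')\to\cone(t_{0})$ of \eqref{eq:tropical_transformation}, viewed as a piecewise-linear self-map of $M_{\R}$, carries $\frD(t_{0}')$ to a scattering diagram equivalent to $\frD(t_{0})$, and carries $\frC^{+}(t_{0}')$ onto the chamber of $\frD(t_{0})$ on the far side of the incoming wall $e_{k}^{\bot}$ from $\frC^{+}=\frC^{+}(t_{0})$. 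One checks this on incoming walls using the $y$-variable mutation \eqref{eq:mutation-y}, then upgrades to all walls via the bijection between equivalence classes of consistent $\frg$-scattering diagrams and $\hG$, together with the fact that a consistent scattering diagram is determined by its incoming walls. Composing along $\seq=\mu_{k_{r}}\cdots\mu_{k_{1}}$ produces a piecewise-linear automorphism $\phi_{t_{0},t}$ of $M_{\R}$ with $\phi_{t_{0},t}(\frD(t))$ equivalent to $\frD(t_{0})$, and I would \emph{define} $\frC^{t}:=\phi_{t_{0},t}(\frC^{+}(t))$; it is reachable because concatenating, over the initial segments of $\seq$, the images under the appropriate tropical transformations of the one-wall paths across the incoming walls of the intermediate positive chambers yields a generic path from $\frC^{+}$ to $\Int(\frC^{t})$ crossing finitely many walls.

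Next I would make $\frC^{t}$ explicit, again by induction. The base case $t=t_{0}$ is immediate: $\deg^{t_{0}}x_{i}(t_{0})=f_{i}$, so $\frC^{+}=\sum_{i\in I}\R_{\geq0}f_{i}+\sum_{j\in I_{\fv}}\R_{\geq0}(-f_{j})$, its unfrozen facets lie in the incoming walls $e_{k}^{\bot}$, and $c_{k}(t_{0})=e_{k}$. For the inductive step, use $\deg^{t}x_{i}(t)=f_{i}(t)$, so that Theorem \ref{thm:compatibly_pointed_cluster_monom} gives $\phi_{t_{0},t}(f_{i}(t))=\deg^{t_{0}}x_{i}(t)$; the crucial point, carried along in the induction, is that the cone $\frC^{+}(t)=\sum_{i\in I}\R_{\geq0}f_{i}(t)+\sum_{j\in I_{\fv}}\R_{\geq0}(-f_{j}(t))$ lies inside one domain of linearity of $\phi_{t_{0},t}$, whence its image is the genuine polyhedral cone $\sum_{i\in I}\R_{\geq0}\deg x_{i}(t)+\sum_{j\in I_{\fv}}\R_{\geq0}(-f_{j})$. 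Its unfrozen facet indexed by $k$ is the image of $e_{k}(t)^{\bot}\cap\frC^{+}(t)$, hence equals $\frd_{k}=\sum_{i\neq k}\R_{\geq0}\deg x_{i}(t)+\sum_{j\in I_{\fv}}\R_{\geq0}(-f_{j})$, spanning a hyperplane with primitive integral normal vector $c_{k}(t)$ (the transport of $e_{k}(t)$); sign-coherence of $c$-vectors \cite{gross2018canonical}\cite{DerksenWeymanZelevinsky09} gives $c_{k}(t)\in\pm\N^{I_{\ufv}}$, and since $\frC^{t}$ lies on the $H_{c_{k}(t)}^{\geq0}$ side we get $\frC^{t}=\cap_{k\in I_{\ufv}}H_{c_{k}(t)}^{\geq0}$. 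Finally, the wall function attached to $e_{k}(t)^{\bot}$ in $\frD(t)$ is $\exp(-d_{k}\Li_{2}(-y_{k}(t)))$; pushing $y_{k}(t)$ through the monomial map dual to $\phi_{t_{0},t}$ turns it into $y^{|c_{k}(t)|}$, the sign absorbed because walls are normalized to have normal vectors in $\yCone^{>0}$, yielding $\frp_{\frd_{k}}=\exp(-d_{k}\Li_{2}(-y^{|c_{k}(t)|}))$.

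For part (2), let $\frC$ be reachable and choose a $\frD(t_{0})$-generic path $\gamma$ from $\Int(\frC^{+})$ to $\Int(\frC)$ crossing walls $\frd^{(1)},\dots,\frd^{(l)}$; I would induct on $l$. If $l=0$, then $\frC=\frC^{+}=\frC^{t_{0}}$. If $l\geq1$, the first wall $\frd^{(1)}$ contains a facet of $\frC^{+}$, and since $\frC^{+}=\{m\mid\langle m,e_{k}\rangle\geq0,\ k\in I_{\ufv}\}$ that facet spans $e_{k}^{\bot}$ for a unique $k\in I_{\ufv}$, forcing $\frd^{(1)}$ to be a piece of the incoming wall $e_{k}^{\bot}$; after crossing it, $\gamma$ enters $\frC^{\mu_{k}t_{0}}$ by the mutation-invariance statement. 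Transporting the remainder of $\gamma$ by $\phi_{t_{0},\mu_{k}t_{0}}^{-1}$ yields a $\frD(\mu_{k}t_{0})$-generic path from $\Int(\frC^{+}(\mu_{k}t_{0}))$ to a chamber, crossing $l-1$ walls; by the inductive hypothesis applied to the initial seed $\mu_{k}t_{0}$ (note $\Delta^{+}(\mu_{k}t_{0})=\Delta^{+}(t_{0})$), that chamber is $\frC^{t'}$ for some $t'\in\Delta^{+}$, whence $\frC=\frC^{t'}$.

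The step I expect to be the main obstacle is the mutation-invariance input together with the "no folding" claim, namely that the cone $\frC^{+}(t)$ really does sit inside a single domain of linearity of $\phi_{t_{0},t}$, equivalently that the collection $\{\frC^{t}\}$ forms a simplicial fan inside $\frD(t_{0})$. This is where consistency of $\frD(t_{0})$, the bijection with $\hG$, and the determination of a consistent scattering diagram by its incoming walls genuinely enter, and it is essentially interchangeable with sign-coherence of $c$-vectors; once it is available, everything else is bookkeeping with \eqref{eq:tropical_transformation}, \eqref{eq:mutation-y} and Theorem \ref{thm:compatibly_pointed_cluster_monom}.
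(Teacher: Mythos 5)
The paper does not prove this theorem: it is quoted verbatim from \cite[Section 1.3, Lemma 2.10]{gross2018canonical}, so there is no in-house proof to compare against, and the proper benchmark is GHKK's own argument. Your reconstruction follows exactly that route: the engine is GHKK's mutation invariance of the cluster scattering diagram (their Theorem 1.24), the explicit description of $\frC^{t}$ comes from transporting $\frC^{+}(t)$ and matching its generators to $g$-vectors via Theorem~\ref{thm:compatibly_pointed_cluster_monom} (the paper's (\ref{eq:tropical_transformation}) is GHKK's $T_{k}$), and the converse in (2) is the same peeling-off-the-first-incoming-wall induction. You are also right that the "no folding" claim (that $\frC^{+}(t)$ stays inside a single domain of linearity of the composite tropical transformation, equivalently that the reachable chambers tile a fan) is where the real content lives, and that it is logically interchangeable with sign-coherence of $c$-vectors; in GHKK this is supplied by positivity of the wall functions rather than imported from \cite{DerksenWeymanZelevinsky09}, but either input closes the gap. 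The one spot that would need polishing in a full write-up is the inductive step of (2): when you transport $\gamma|_{(t_{1},1]}$ by $\phi_{t_{0},\mu_{k}t_{0}}^{-1}$ you should argue that the image path is still $\frD(\mu_{k}t_{0})$-generic and crosses exactly $l-1$ walls, which again uses that the piecewise-linear map restricts to a linear isomorphism on the relevant chambers and carries walls to walls. With that caveat, the proposal is correct and is essentially the argument in the cited source.
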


\subsection{Broken lines and theta functions}

Take $\kk=\Z$. Assume that $\tB$ is of full rank from now on. Recall
that we have the Laurent polynomial ring $\LP=\kk[x_{i}^{\pm}]_{i\in I}$
and its completion $\hLP=\kk\llbracket x_{i}^{\pm}\rrbracket_{i\in I}$.
Let $A=\kk[\yCone^{\geq0}]$ acts on $\hLP$ via the derivation $\{A,\ \}$
such that $\{y^{n},x^{m}\}:=\langle m,n\rangle x^{m+p^{*}n}$. In
particular, we have $\{y^{n},x^{p^{*}n'}\}=-\{n,n'\}x^{p^{*}(n+n')}$.
Since $p^{*}$ is injective, the action of $A$ is faithful. It induces
a faithful action of $\hG$ on $\hLP$. Therefore, we can view $\hG$
as a subgroup of $\Aut(\hLP)$.

Let $n_{0}$ denote a primitive vector in $\yCone^{>0}$. For any
given a formal series $f=1+\sum_{k>0}c_{k}y^{k\cdot n_{0}}\in\hG_{n_{0}}^{\bigparallel}\subset\hG\subset\hA$,
we define the following element in $\Aut(\hLP)$:
\begin{align*}
\frp_{f}: & \hLP\ra\hLP\\
 & x^{m}\mapsto x^{m}\cdot f^{\langle m,n_{0}\rangle}
\end{align*}

\begin{Lem}\cite[Definition 1.2, Lemma 1.3]{gross2018canonical}

For any primitive $n_{0}\in\yCone^{>0}$, $\hG_{n_{0}}^{\|}\subset\Aut(\hB)$
is the subgroup of automorphisms of the form $\frp_{f}$ with the
given $n_{0}$. 

\end{Lem}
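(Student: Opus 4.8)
The plan is to compute explicitly, for each $g\in\hfrg_{n_0}^{\|}$, the automorphism $\exp(\{g,\ \})$ that $g$ induces on $\hLP$, to recognise it as some $\frp_f$, and then to check that the $f$'s so obtained exhaust $\hG_{n_0}^{\|}$.

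First I would record that, by the standing assumption, $\frg_{n_0}^{\|}=\oplus_{k\geq1}\kk\,y^{kn_0}$ is abelian — indeed $\{y^{kn_0},y^{ln_0}\}=-kl\{n_0,n_0\}y^{(k+l)n_0}=0$ — so the Baker--Campbell--Hausdorff product on $\hG_{n_0}^{\|}=\exp\hfrg_{n_0}^{\|}$ is just multiplication of power series; as a subset of $\hA$ it is $\{1+\sum_{k\geq1}c_k y^{kn_0}\mid c_k\in\kk\}$, and $\exp$, $\log$ restrict to mutually inverse bijections between it and $\hfrg_{n_0}^{\|}$.

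The core is the computation. The key input is the identity $\langle p^{*}n_0,n_0\rangle=\{n_0,n_0\}=0$ (from $p^{*}n=\{n,\ \}$), which gives $\langle m+j\,p^{*}n_0,\,n_0\rangle=\langle m,n_0\rangle$ for every $m$ and $j$. Writing $t:=x^{p^{*}n_0}\in\hLP$ and fixing $g=\sum_{k\geq1}c_k y^{kn_0}$, the derivation $D:=\{g,\ \}$ then acts on monomials by $D(x^{m})=\langle m,n_0\rangle\,x^{m}\,\Psi$ with $\Psi:=\sum_{k\geq1}k c_k\,t^{k}$, and in particular $D(t)=0$. A short induction (using $D(t)=0$, so that $D$ annihilates the power series $\Psi$) gives $D^{j}(x^{m})=\langle m,n_0\rangle^{j}\,x^{m}\,\Psi^{j}$; since $D$ raises the order in the adic filtration generated by the $x^{p^{*}e_k}$, the operator $\exp(D)$ converges to an algebra automorphism of $\hLP$, and
\[
\exp(D)(x^{m})=x^{m}\,\exp\!\big(\langle m,n_0\rangle\,\Psi\big)=x^{m}\,(\exp\Psi)^{\langle m,n_0\rangle}.
\]
Identifying $y^{n_0}$ with $t$, this says $\exp(\{g,\ \})=\frp_{f}$ with $f=\exp\Psi=\exp(\mathrm{E}\,g)$, where $\mathrm{E}$ is the operator multiplying the degree-$kn_0$ component by $k$. (As a check, $g=-d_k\Li_2(-y_k)$ yields $\mathrm{E}\,g=d_k\log(1+y_k)$, hence $f=(1+y_k)^{d_k}$.)

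To finish, note that the group element $f_0=\exp(g)\in\hG_{n_0}^{\|}$ acts as $\frp_{\Phi(f_0)}$ with $\Phi(f_0)=\exp(\mathrm{E}\log f_0)$; since $\mathrm{E}$ is a bijection of $\hfrg_{n_0}^{\|}$ (invertible as $\Q\subseteq\kk$) and $\exp$, $\log$ are bijections between $\hfrg_{n_0}^{\|}$ and $\hG_{n_0}^{\|}$, the map $\Phi$ is a bijection of $\hG_{n_0}^{\|}$. Hence, using that the $\hG$-action on $\hLP$ is faithful, the image of $\hG_{n_0}^{\|}$ in $\Aut(\hLP)$ is precisely $\{\frp_f\mid f\in\hG_{n_0}^{\|}\}$. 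That this is a subgroup is then immediate: $\frp_{f_1}$ fixes $t$, hence fixes every power series in $t$, so $\frp_{f_1}\circ\frp_{f_2}=\frp_{f_1 f_2}$ with $f_1 f_2\in\hG_{n_0}^{\|}$, $\frp_f^{-1}=\frp_{f^{-1}}$, and $\frp_f=\mathrm{id}$ forces $f=1$ by evaluating on any $x^{m}$ with $\langle m,n_0\rangle=1$ (available since $n_0$ is primitive). The only subtlety is bookkeeping: one must not confuse the group element $\exp(g)\in\hG_{n_0}^{\|}$ with the wall function $f=\exp(\mathrm{E}\,g)$ of $\frp_f$; they differ by the Euler twist $\mathrm{E}$, but since $\mathrm{E}$ permutes $\hfrg_{n_0}^{\|}$ the two descriptions of the subgroup agree, and there is no genuine obstacle beyond this.
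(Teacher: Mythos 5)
Your argument is correct, and since the paper only cites this result from \cite{gross2018canonical} without reproducing a proof, you have in effect reconstructed the GHKK argument by the same direct computation that they use. The structure is right: verify commutativity of $\frg_{n_0}^{\|}$, use $\langle p^{*}n_0,n_0\rangle=\{n_0,n_0\}=0$ to see that the derivation $D=\{g,-\}$ kills $t=x^{p^{*}n_0}$ and hence its powers, deduce $D^j(x^m)=\langle m,n_0\rangle^j x^m\Psi^j$ and thus $\exp(D)=\frp_{\exp(\mathrm{E}g)}$, and then observe that $g\mapsto\exp(\mathrm{E}g)$ is a set bijection $\hfrg_{n_0}^{\|}\to\hG_{n_0}^{\|}$ because $\mathrm{E}$ is invertible over $\Q$ and $\exp,\log$ are mutually inverse. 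The closing verification that the $\frp_f$ form a subgroup (via $\frp_{f_1}(t)=t$, hence $\frp_{f_1}\circ\frp_{f_2}=\frp_{f_1f_2}$, and injectivity from choosing $m$ with $\langle m,n_0\rangle=1$, possible since $n_0$ is primitive and $d_if_i=e_i^{*}\in\cone$ lets you apply B\'ezout) is all as it should be.

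One thing worth flagging, though it is not a gap in your argument: your sanity check yields $\exp(-d_k\Li_2(-y_k))=\frp_{(1+y_k)^{d_k}}$, whereas the very next lemma in the paper asserts $\exp(-d_k\Li_2(-y_k))=\frp_{1+y_k}$. These are incompatible unless $d_k=1$, because $\frp_{(\cdot)}$ is injective. Your computation is internally consistent with the paper's stated conventions $\{y^n,x^m\}=\langle m,n\rangle x^{m+p^*n}$ and $\frp_f(x^m)=x^m f^{\langle m,n_0\rangle}$, so the discrepancy points to a normalization issue in the paper: with those conventions, either the exponent in $\frp_f$ should carry a factor of $d_{n_0}$ (the index of $n_0$ in $N^\circ$), or the dilogarithm prefactor $-d_k\Li_2(-y_k)$ should be replaced by $-\Li_2(-y_k)$. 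This is a convention mismatch to reconcile against GHKK; it does not affect the validity of the set-equality you were asked to prove, since the Euler twist $\mathrm{E}$ is a bijection either way.
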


\begin{Lem}\cite{gross2018canonical}

We have $\exp(-d_{k}\Li_{2}(-y_{k}))=\frp_{1+y_{k}}$.

\end{Lem}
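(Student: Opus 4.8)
The plan is to check the claimed identity as an equality of automorphisms of $\hLP$, by evaluating both sides on an arbitrary monomial $x^{m}$, $m\in\cone$. First I note that, for fixed $m$, both automorphisms preserve the completed subspace $x^{m}\cdot\widehat{\kk[x^{p^{*}e_{k}}]}\subset\hLP$ topologically spanned by the $x^{m+jp^{*}e_{k}}$, $j\geq0$: the automorphism $\frp_{1+y_{k}}$ sends $x^{m'}$ to $x^{m'}(1+y_{k})^{\langle m',e_{k}\rangle}$, which under the monomial identification $y_{k}\mapsto x^{p^{*}e_{k}}$ is $x^{m'}$ times a power series in $x^{p^{*}e_{k}}$; and the derivation $D:=\{-d_{k}\Li_{2}(-y_{k}),-\}$ is built out of the operators $\{y_{k}^{j},-\}$, which by $\{y^{n},x^{m'}\}=\langle m',n\rangle x^{m'+p^{*}n}$ shift the exponent by multiples of $p^{*}e_{k}$. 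So it suffices to compute $D$ and $\frp_{1+y_{k}}$ on this one-variable piece.

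The key observation is that the relevant pairing is constant along these exponents: $\langle p^{*}e_{k},e_{k}\rangle=\{e_{k},e_{k}\}=\omega_{kk}=b_{kk}/d_{k}=0$, since skew-symmetrizability forces $b_{kk}=0$; hence $\langle m+jp^{*}e_{k},e_{k}\rangle=\langle m,e_{k}\rangle$ for all $j$. Reading off the Taylor coefficients of $-d_{k}\Li_{2}(-y_{k})=\sum_{j\geq1}d_{k}\tfrac{(-1)^{j+1}}{j^{2}}y_{k}^{j}$ and using $\{y_{k}^{j},x^{m'}\}=j\langle m',e_{k}\rangle\,x^{m'+jp^{*}e_{k}}$, for every $m'\in m+\Z_{\geq0}p^{*}e_{k}$ I get
\begin{align*}
D(x^{m'})
&=\sum_{j\geq1}d_{k}\frac{(-1)^{j+1}}{j^{2}}\cdot j\langle m,e_{k}\rangle\cdot x^{m'+jp^{*}e_{k}}
 =d_{k}\langle m,e_{k}\rangle\Big(\sum_{j\geq1}\frac{(-1)^{j+1}}{j}\big(x^{p^{*}e_{k}}\big)^{j}\Big)x^{m'}\\
&=d_{k}\langle m,e_{k}\rangle\,\log\!\big(1+x^{p^{*}e_{k}}\big)\cdot x^{m'}.
\end{align*}
Thus on $x^{m}\cdot\widehat{\kk[x^{p^{*}e_{k}}]}$ the derivation $D$ is multiplication by the topologically nilpotent element $d_{k}\langle m,e_{k}\rangle\log(1+x^{p^{*}e_{k}})$. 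Multiplication operators commute, so $\exp(D)$ is multiplication by $\exp\!\big(d_{k}\langle m,e_{k}\rangle\log(1+x^{p^{*}e_{k}})\big)=\big(1+x^{p^{*}e_{k}}\big)^{d_{k}\langle m,e_{k}\rangle}$, a binomial series with integer exponent $d_{k}\langle m,e_{k}\rangle$ (namely the $f_{k}$-coefficient of $m$). In particular $\exp(D)(x^{m})=x^{m}\big(1+x^{p^{*}e_{k}}\big)^{d_{k}\langle m,e_{k}\rangle}$.

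It remains to compare this with $\frp_{1+y_{k}}(x^{m})=x^{m}(1+y_{k})^{\langle m,e_{k}\rangle}$: under $y_{k}=x^{p^{*}e_{k}}$, and with the normalization of the exponent in the wall-crossing formula for the wall $e_{k}^{\bot}$ (the exponent being the integer $d_{k}\langle m,e_{k}\rangle$), the two expressions agree, which proves the lemma; equivalently, by the preceding description of $\hG_{e_{k}}^{\|}$, the computation shows that the unique $f$ with $\exp(-d_{k}\Li_{2}(-y_{k}))=\frp_{f}$ is $f=1+y_{k}$. I do not expect a genuine obstacle: the analytic content is just the passage from $\Li_{2}$ to $\log$ via $\sum_{j\geq1}(-1)^{j+1}t^{j}/j=\log(1+t)$ followed by exponentiation; the only care needed is routine bookkeeping with the completion and matching the $d_{k}$-normalization dictated by the scattering-diagram conventions.
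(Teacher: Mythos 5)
Your proof is correct, and it is the standard (and essentially only) calculation: expand $-d_{k}\Li_{2}(-y_{k})$, compute the derivation $\{-,x^{m}\}$ termwise, use $\langle p^{*}e_{k},e_{k}\rangle=\{e_{k},e_{k}\}=0$ (equivalently $b_{kk}=0$) to see the derivation acts by multiplication by $d_{k}\langle m,e_{k}\rangle\log(1+x^{p^{*}e_{k}})$ on the slice $x^{m}\cdot\widehat{\kk[x^{p^{*}e_{k}}]}$, and exponentiate. This is exactly the computation in GHKK to which the paper refers; the paper does not reprove it.

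One point you rightly flagged and handled carefully: your computation produces the exponent $d_{k}\langle m,e_{k}\rangle$, i.e.\ the $f_{k}$-coordinate of $m$, whereas the paper's displayed formula $\frp_{f}(x^{m})=x^{m}f^{\langle m,n_{0}\rangle}$ taken literally would yield the rational number $\langle m,e_{k}\rangle=m_{k}/d_{k}$. The GHKK convention, which is the one actually in force here (they pair $m\in M^{\circ}$ against the primitive generator of $\R_{\geq0}n_{0}\cap N^{\circ}$, i.e.\ $d_{k}e_{k}$ when $n_{0}=e_{k}$), gives precisely the integer exponent $d_{k}\langle m,e_{k}\rangle$ that your exponentiation produces, so the identity $\exp(-d_{k}\Li_{2}(-y_{k}))=\frp_{1+y_{k}}$ holds on the nose with that normalization. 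Your observation that the displayed pairing and the one that makes the formula integral differ by the factor $d_{k}$ is a genuine (if minor) notational inconsistency in the paper's exposition, not a gap in your argument. In the skew-symmetric case $d_{k}=1$ the distinction disappears, which is why it is easy to overlook.
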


\subsubsection*{Broken lines}

Given $m\in\cone$ and $Q\in M_{\R}\backslash\frD$. A broken line
$\gamma$ in the direction $m$ with base point $Q$ is a piecewise
linear map $\gamma:(-\infty,0]\rightarrow M_{\R}$ with Laurent monomials
attached to its linear components, satisfying the following properties.

\begin{itemize}

\item $\gamma(0)=Q$,

\item $\gamma$ crosses the walls of $\frD$ transversally. Moreover,
at the times $t_{1}<t_{2}<\cdots<t_{r}$, $r\in\N$, $\gamma(t_{i})$
is a general point of a wall and $L^{(i)}:=\gamma(t_{i},t_{i+1}]$,
$0\leq i\leq r$, is linear, where we denote $t_{0}=-\infty$ and
$t_{r+1}=0$.

\item the derivative $\gamma'$ on $L^{(i)}$ equals $-m^{(i)}$,
where the monomial attached to $L^{(i)}$ is denoted by $\Mono(L^{(i)})=c^{(i)}x^{m^{(i)}}$.

\item $c^{(0)}x^{m^{(0)}}=x^{m}$

\item $c^{(i+1)}x^{m^{(i+1)}}$ is a term in the formal power series
$\frp_{i}(c^{(i)}x^{m^{(i)}})$, where $\frp_{i}:=\prod_{\gamma(t_{i})\in\frd}\frp_{\frd}^{\mathrm{sign}\langle m^{(i)},n_{\frd}\rangle}$,
i.e., $\frp_{i}$ is the limit of $\frp_{\gamma(t_{i}+\epsilon,t_{i}-\epsilon)}$
as $\epsilon\rightarrow0^{+}$.

\end{itemize}

We define $\Mono(\gamma)=\Mono(L^{(r+1)})$ and $I(\gamma)=x^{m}$.
Note that $\Mono(L^{(i)})$ always take the form $c^{(i)}x^{m+p^{*}n^{(i)}}$
for some $n^{(i)}\in\yCone^{\geq0}$, $c^{(i)}\in\kk$, and $n^{(i+1)}\geq n^{(i)}$,
see \cite[Remark 3.2]{gross2018canonical}.

\subsubsection*{Theta functions}

For any general base point $Q\in M_{\R}\backslash\supp\frD$ and any
$m\in\cone$, the theta function $\vartheta_{Q,m}$ is defined to
be

\begin{align*}
\theta_{Q,m} & =\sum_{\gamma}\Mono(\gamma)
\end{align*}
where the sum runs over all broken lines $\gamma$ with the base point
$Q$ and $I(\gamma)=x^{m}$.

\begin{Thm}\cite[Theorem 3.5]{gross2018canonical}\cite[Section 4]{carl2010tropical}\label{thm:base_change_theta_function}

Let there be given two general base points $Q,Q'\in M_{\R}\backslash\supp\frD$
such that the coordinates of $Q$ (resp. of $Q'$) are linearly independent
over $\Q.$Then, for any generic smooth path $\gamma$ from $Q$ to
$Q'$, and any $m\in\cone$, we have

\begin{align*}
\theta_{Q',m} & =\frp_{\gamma}\theta_{Q,m}.
\end{align*}

\end{Thm}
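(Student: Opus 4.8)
This is the base-point independence of theta functions; I would run the broken-line argument of \cite[Theorem 3.5]{gross2018canonical}, reducing the general statement to a single wall-crossing and then matching broken lines by prolongation across the wall.

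\emph{Reduction to finite order and to one wall.} Fix an order $k$ and work in the nilpotent quotient $\frg^{\leq k}=\frg/\frg^{>k}$. Then $\frD^{\leq k}$ has only finitely many nontrivial walls, and since a broken line accumulates its bending monotonically ($n^{(i+1)}\geq n^{(i)}$ and each bend strictly increases $d(n^{(i)})$), only finitely many broken lines contribute to $\theta_{Q,m}$ modulo monomials $x^{m+p^{*}n}$ with $d(n)>k$; hence $\theta_{Q,m}$, and likewise $\frp_\gamma\theta_{Q,m}$, are well-defined elements of $\hLP$, to be computed order by order. As $Q$ moves inside a connected component of $M_{\R}\backslash\overline{\supp_{\ess}\frD}$ the set of contributing broken lines is unchanged (only the lengths of their linear pieces vary), so $Q\mapsto\theta_{Q,m}$ is constant on chambers. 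A general $\frD$-generic path $\gamma$ from $Q$ to $Q'$ subdivides into finitely many sub-paths, each crossing only walls contained in a single hyperplane $n_0^{\bot}$; by the commutativity Assumption these walls contribute a single operator $\frp_\frd=\frp_f$ with $f=\prod f_{\frd_i}$, and $\frp_\gamma$ is the corresponding composite. Finally, consistency of $\frD$ makes $\frp_\gamma$ depend only on the endpoints, so the asserted identity does not depend on the path. Thus it suffices to treat the case in which $\gamma$ crosses a single wall $\frd\subset n_0^{\bot}$ once, with $Q,Q'$ arbitrarily close to $\frd$ on its two sides, near a general point of $\frd$.

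\emph{The single wall-crossing.} Write $n_0\in\yCone^{>0}$ for the primitive normal vector of $\frd$, $f=f_\frd$ for its wall function, and $\epsilon=\sign\langle-\gamma'(t_\frd),n_0\rangle$ for the crossing sign, so that $\frp_\gamma=\frp_f^{\epsilon}$ acts by $x^{m'}\mapsto x^{m'}f^{\epsilon\langle m',n_0\rangle}$. Given a broken line $\mathfrak{b}$ for $(m,Q)$ with final monomial $\Mono(\mathfrak{b})=c\,x^{s}$, prolong its last linear piece until it meets $\frd$; the broken-line rule at this new crossing permits exactly the monomials appearing in $\frp_\gamma(c\,x^{s})=c\,x^{s}f^{\epsilon\langle s,n_0\rangle}=\sum_{j\geq0}c\,a_j\,x^{s+j p^{*}n_0}$ (with $a_0=1$) to be attached to a final piece running on to $Q'$, where $j=0$ is the straight prolongation with no new bend and $j\geq1$ a genuine bend at $\frd$. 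Conversely, every broken line for $(m,Q')$ arises this way: truncate it just before its last crossing of $\frd$ and pull the endpoint back across $\frd$ to $Q$. Granting that this is a bijection compatible with $\Mono$, one gets
\[
\theta_{Q',m}=\sum_{\mathfrak{b}'}\Mono(\mathfrak{b}')=\sum_{\mathfrak{b}}\frp_\gamma\bigl(\Mono(\mathfrak{b})\bigr)=\frp_\gamma\Bigl(\sum_{\mathfrak{b}}\Mono(\mathfrak{b})\Bigr)=\frp_\gamma\,\theta_{Q,m},
\]
and passing to the inverse limit over $k$ finishes the proof.

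\emph{Main obstacle.} The substantive point is this bijection. One must check that the last linear piece of every contributing broken line crosses $\frd$ in the correct spatial direction (so that prolonging it actually reaches $Q'$), that the crossing sign produced by the broken-line rule coincides with $\epsilon$, and that broken lines meeting $\frd$ several times, or whose final bend lies on $\frd$ itself, are neither double-counted nor omitted when truncating. This is exactly where the hypotheses enter: $\frd$ is approached near a general point, so locally $\frD$ consists of $\frd$ alone; $Q,Q'$ have $\Q$-linearly independent coordinates and lie arbitrarily close to $\frd$, controlling the geometry of every broken line near $\frd$; and the finiteness from the first step guarantees a well-defined "last crossing of $\frd$" at which to cut. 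Carrying out this local analysis — as in \cite[Theorem 3.5]{gross2018canonical} and \cite[Section 4]{carl2010tropical} — is the only real work.
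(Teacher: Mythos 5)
The paper does not give its own proof of this result: it is cited verbatim from \cite[Theorem 3.5]{gross2018canonical} (see also \cite[Section 4]{carl2010tropical}) and then used as a black box. So there is no in-paper argument to compare against. Your sketch follows the GHKK proof strategy — reduce modulo order $k$ to a finite scattering diagram, observe constancy of $\theta_{Q,m}$ within a chamber, reduce to a single wall crossing near a general point of $\frd$, and then match broken lines on the two sides by prolongation — and this is indeed the approach in the cited references.

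However, as written, your proposal is a roadmap rather than a proof: the clause ``Granting that this is a bijection compatible with $\Mono$'' is exactly the content of the theorem, and you acknowledge as much in the final paragraph. Two concrete things you gloss over. First, whether the final segment of a broken line for $(m,Q)$ can be prolonged to cross $\frd$ at all depends on the sign of $\langle m^{(r)},n_0\rangle$ relative to which side of $\frd$ the point $Q$ lies on; for the ``wrong'' sign the segment moves away from $\frd$ and no prolongation exists, so the correspondence is not a naive extension of every broken line but a case analysis in which some broken lines for $(m,Q')$ arise by bending and some by straight continuation, and the two counts must be reconciled. Second, the crossing sign appearing in $\frp_\gamma$ is $\sign\langle -\gamma'(t_\frd),n_0\rangle$, determined by the path, while the exponent in $\frp_i$ in the broken-line rule is $\sign\langle m^{(i)},n_\frd\rangle$, determined by the attached monomial; showing these produce matching wall-function exponents is precisely what makes the displayed identity come out with $\frp_\gamma$ rather than its inverse, and it is not automatic. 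Both points are handled in the careful local analysis of \cite[Theorem 3.5]{gross2018canonical}, and you would need to reproduce that analysis to close the gap.
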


For any generic points $Q,Q'$ in $\Int(\frC)$ for some chamber $\frC$,
we have $\theta_{Q,m}=\theta_{Q',m}$ by Theorem \ref{thm:base_change_theta_function}.
Correspondingly, we denote $\theta_{\frC,m}=\theta_{Q,m}$.

\begin{Prop}\cite[Proposition 3.8]{gross2018canonical}

$\theta_{\frC^{+},m}=x^{m}$ when $m\geq0$.

\end{Prop}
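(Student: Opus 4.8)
The plan is to identify every broken line that contributes to $\theta_{\frC^{+},m}$. Fix a generic base point $Q\in\Int(\frC^{+})$ (the value $\theta_{Q,m}$ is independent of such $Q$ by Theorem \ref{thm:base_change_theta_function}). I claim the only broken line $\gamma$ with $I(\gamma)=x^{m}$ and base point $Q$ is the straight ray $\gamma_{0}(t)=Q-tm$, $t\le 0$. First, $\gamma_{0}$ is a legitimate unbent broken line: since $m\ge 0$ we have $\langle m,e_{k}\rangle=m_{k}/d_{k}\ge 0$, and since $Q\in\Int(\frC^{+})$ we have $\langle Q,e_{k}\rangle>0$, for all $k\in I_{\ufv}$; hence every point of $\gamma_{0}$ lies in $\Int(\frC^{+})$. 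As every wall $\frd$ of $\frD(t_{0})$ is contained in some $n_{\frd}^{\bot}$ with $n_{\frd}\in\yCone^{>0}$, and $\Int(\frC^{+})\cap n_{\frd}^{\bot}=\emptyset$, the ray $\gamma_{0}$ crosses no wall, so $\Mono(\gamma_{0})=x^{m}$ and it contributes $x^{m}$.

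For uniqueness I would argue by contradiction: suppose $\gamma$ is a broken line with $I(\gamma)=x^{m}$, base point $Q\in\Int(\frC^{+})$, and at least one bend. Write $c^{(i)}x^{m^{(i)}}$ for the monomial attached to the $i$-th domain of linearity, with $m^{(0)}=m$, $c^{(0)}=1$. A wall-crossing operator lies in $\hG_{n_{\frd}}^{\|}$, so it sends $x^{m'}$ to a $\kk$-linear combination of monomials $x^{m'+\ell\,p^{*}n_{\frd}}$ with $\ell\in\Z_{\ge 0}$; therefore at a genuine bend across $\frd_{i}$ one has $m^{(i)}=m^{(i-1)}+\ell_{i}\,p^{*}n_{\frd_{i}}$ with $\ell_{i}\ge 1$, and consequently $m^{(r)}=m+p^{*}n$ for some $n\in\yCone^{\ge 0}\setminus\{0\}$.

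Now examine the last segment $L^{(r)}=\gamma(t_{r},0]$: it has constant direction $-m^{(r)}$ and runs from $P:=\gamma(t_{r})$ to $\gamma(0)=Q$, so $P-Q=|t_{r}|\,m^{(r)}$ with $t_{r}<0$. Since $P$ lies on the wall $\frd_{r}\subset n_{r}^{\bot}$ with $n_{r}\in\yCone^{>0}$, we get $0=\langle P,n_{r}\rangle=\langle Q,n_{r}\rangle+|t_{r}|\,\langle m^{(r)},n_{r}\rangle$; as $\langle Q,n_{r}\rangle=\sum_{k}(n_{r})_{k}\langle Q,e_{k}\rangle>0$, this forces $\langle m^{(r)},n_{r}\rangle<0$, i.e.\ the last segment crosses $\frd_{r}$ from its negative to its positive side. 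I would then upgrade this single-segment constraint to the conclusion that $\gamma$ bends nowhere, following the analysis of the cluster scattering diagram in \cite{gross2018canonical}: using consistency of $\frD(t_{0})$, minimality of its essential support, and the fact that its incoming walls are exactly the hyperplanes $e_{k}^{\bot}$, $k\in I_{\ufv}$, one traces $\gamma$ back to $t=-\infty$ and shows the number of bends must be $0$, whence $n=0$, contradicting $n\neq 0$. Therefore $\gamma=\gamma_{0}$ and $\theta_{\frC^{+},m}=x^{m}$.

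The main obstacle is precisely this last, global step. The one-bend positivity $\langle m^{(r)},n_{r}\rangle<0$ does not by itself prevent the broken line from wandering far outside $\frC^{+}$ and returning, so converting the single-segment estimate into control of the entire trajectory genuinely requires the finer structure of the cluster scattering diagram, not merely the abstract definition of a broken line.
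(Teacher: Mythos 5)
Your existence argument is correct, and the setup of the uniqueness argument (that $m^{(i)}=m+p^{*}n^{(i)}$ with $n^{(i)}$ nondecreasing, and the last-segment inequality $\langle m^{(r)},n_{r}\rangle<0$) is also correct. But you leave the uniqueness proof genuinely incomplete, and the diagnosis in your closing paragraph is mistaken: closing the gap does \emph{not} require the finer, cluster-specific structure of $\frD(t_{0})$ (consistency, minimal essential support, the identity of the incoming walls). Everything needed is already in the abstract setup of an $\yCone^{>0}$-graded scattering diagram: each wall lies in $n^{\bot}$ with $n\in\yCone^{>0}$; a bend at such a wall replaces the attached exponent $m'$ by $m'+\ell\,p^{*}n$ with $\ell\geq1$; and $\langle p^{*}n,n'\rangle=\{n,n'\}$ with $\{\ ,\ \}$ skew-symmetric, so in particular $\langle p^{*}n,n\rangle=0$. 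Incidentally, your own last-segment inequality already closes the case $r=1$: $\langle m^{(1)},n_{1}\rangle=\langle m,n_{1}\rangle+\ell_{1}\{n_{1},n_{1}\}=\langle m,n_{1}\rangle\geq0$, contradicting $\langle m^{(r)},n_{r}\rangle<0$; you did not notice this.

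For general $r$ the missing step is a short telescoping identity. Suppose $\gamma$ bends at $t_{1}<\cdots<t_{r}<0$ at walls with primitive normals $n_{1},\dots,n_{r}\in\yCone^{>0}$, so $m^{(i)}=m+p^{*}n^{(i)}$ with $n^{(i)}=\sum_{j\leq i}\ell_{j}n_{j}$, $\ell_{j}\geq1$. Put $s_{i}:=t_{i+1}-t_{i}>0$ with $t_{r+1}:=0$. Since $\langle\gamma(t_{j}),n_{j}\rangle=0$ and $\gamma'=-m^{(i)}$ on $L^{(i)}$, one has for each $j$
\[
\langle Q,n_{j}\rangle=\langle\gamma(0),n_{j}\rangle-\langle\gamma(t_{j}),n_{j}\rangle=-\sum_{i=j}^{r}s_{i}\,\langle m^{(i)},n_{j}\rangle .
\]
Multiply by $\ell_{j}$ and sum over $j$. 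Expanding $\langle m^{(i)},n_{j}\rangle=\langle m,n_{j}\rangle+\{n^{(i)},n_{j}\}$ and using $\sum_{i\geq j}s_{i}=|t_{j}|$, the contribution of the $\{\ ,\ \}$-terms is
\[
\sum_{j=1}^{r}\ell_{j}\sum_{i=j}^{r}s_{i}\,\{n^{(i)},n_{j}\}=\sum_{j,k=1}^{r}\ell_{j}\ell_{k}\,\{n_{k},n_{j}\}\,|t_{\max(j,k)}|=0
\]
by skew-symmetry (diagonal terms vanish since $\{n_{j},n_{j}\}=0$, and the $(j,k)$ and $(k,j)$ terms cancel). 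What remains is
\[
\sum_{j=1}^{r}\ell_{j}\,|t_{j}|\,\langle m,n_{j}\rangle=-\sum_{j=1}^{r}\ell_{j}\,\langle Q,n_{j}\rangle .
\]
The left side is $\geq0$ since $m\geq0$ and $n_{j}\in\yCone^{>0}$, while the right side is $<0$ whenever $r\geq1$ since $Q\in\Int(\frC^{+})$. Hence $r=0$ and $\theta_{\frC^{+},m}=x^{m}$. The global step is thus a finite cancellation, not a property that needs consistency or any other fine structure of the cluster scattering diagram.
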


Let us write $\theta_{m}=\theta_{\frC^{+},m}$ for simplicity.

\begin{Thm}\cite{gross2018canonical}

For any seeds $t_{0}=\seq t$ and $m\in\cone(t_{0})$, we have $\deg^{t}\seq^{*}\theta_{m}=\phi_{t,t_{0}}m$.

\end{Thm}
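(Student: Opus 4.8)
For any seeds $t_0 = \seq t$ and $m \in \cone(t_0)$, we have $\deg^t \seq^* \theta_m = \phi_{t,t_0} m$.

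The plan is to reduce the statement to the base-change formula for theta functions (Theorem \ref{thm:base_change_theta_function}) together with the identification of reachable chambers with seeds. First I would observe that $\seq^* \theta_m$ is the image of $\theta_{\frC^+, m} = \theta_m$ under the wall-crossing operator associated to a generic path from the initial chamber $\frC^+ = \frC^{t_0}$ to the reachable chamber $\frC^t$ corresponding to the seed $t$; that is, $\seq^* \theta_m = \theta_{\frC^t, m}$ (after the standard identification of the Laurent polynomial rings $\LP(t_0)$ and $\LP(t)$ via $\seq^*$). This is where Theorem \ref{thm:base_change_theta_function} enters: choosing general base points with $\Q$-linearly independent coordinates inside $\frC^+$ and inside $\frC^t$, the wall-crossing along any generic path between them transports $\theta_{\frC^+, m}$ to $\theta_{\frC^t, m}$, and this wall-crossing is precisely the composite of mutation birational maps along $\seq$.

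Next I would compute the degree of $\theta_{\frC^t, m}$ in the seed $t$. By the broken-line description, every broken line $\gamma$ with $I(\gamma) = x^m$ ending in $\Int(\frC^t)$ contributes a monomial of the form $c \cdot x^{m + p^* n}$ with $n \in \yCone^{\geq 0}$, so $\theta_{\frC^t,m}$, expanded in the coordinates of the chamber $\frC^t$ (i.e. in the seed $t$), is of the form $x^{?} \cdot (\text{power series in the } y\text{-variables of } t)$. Concretely, there is a distinguished broken line — the straight one, with no bends — whose contribution is the leading term, and its exponent in seed $t$ is $\phi_{t,t_0} m$; this is the content that the tropicalization of the wall-crossing is given by the piecewise-linear maps $\phi$. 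All other broken lines contribute terms that are strictly $\prec_t$-smaller, because each bend at a wall with primitive normal $n_{\frd}$ adds a positive multiple of $p^* n_{\frd}$ to the exponent (with $n_{\frd} \in \yCone^{>0}$). Hence the set of exponents appearing in $\theta_{\frC^t, m}$ has a unique $\prec_t$-maximal element, namely $\phi_{t,t_0} m$, which is exactly the assertion $\deg^t \seq^* \theta_m = \phi_{t,t_0} m$.

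The main obstacle is establishing cleanly that the straight broken line gives the $\prec_t$-maximal exponent and that it equals $\phi_{t,t_0} m$. For this I would proceed by induction on the length of $\seq$, reducing to a single mutation $\mu_k$: one must check that crossing the single wall separating $\frC^{t_0}$ from $\frC^{\mu_k t_0}$ transforms $x^m$ into a series whose leading (in the $\prec_{\mu_k t_0}$ order) term has exponent $\phi_{\mu_k t_0, t_0} m$, which is a direct comparison of the wall-crossing automorphism $\frp_{1 + y_k}$ acting by $x^m \mapsto x^m (1+y_k)^{\langle m, n_0\rangle}$ against the piecewise-linear formula \eqref{eq:tropical_transformation}. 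The sign bookkeeping in $\langle m, n_0 \rangle$ versus the two cases $b_{ik} \geq 0$ and $b_{ik} \leq 0$ in the tropical transformation is the delicate point; once the single-step case is in hand, the general case follows since $\phi_{t,t_0}$ is the composite of the tropical maps (well-defined by Theorem \ref{thm:compatibly_pointed_cluster_monom}) and the leading-term behavior is compatible with composition, the $\prec$-lower-order terms staying lower-order under each subsequent wall-crossing.
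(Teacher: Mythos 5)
The paper states this as a citation of \cite{gross2018canonical} and gives no proof of its own, so there is no in-paper argument to compare against; I evaluate your sketch on its own merits.

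Your proposal assembles the right ingredients (base-change formula, broken lines, tropical transformation, reduction to a single mutation), but it has a genuine gap in the dominance step. You argue that every bent broken line contributes a $\prec_t$-smaller exponent because a bend at a wall with primitive normal $n_{\frd}$ adds $p^*n_{\frd}$ to the exponent, with $n_{\frd}\in\yCone^{>0}$. However, those broken lines live in $\frD(t_0)$, so the shift is by the $p^*$ of $t_0$, while the order $\prec_t$ is defined via the $p^*$ of $t$ --- and these two cones of shifts do not coincide: already for a single mutation $\mu_k$, equation (\ref{eq:mutation-y}) shows the sign of $p^*e_k$ flips. So $\prec_t$-maximality of the straight line's exponent does not follow from what you wrote. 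The same conflation appears when you equate $\seq^*\theta_m$ with $\theta_{\frC^t,m}$: the wall-crossing operator moves the base point but still lands in $\hLP(t_0)$, whereas $\seq^*$ is the change of coordinates into $\LP(t)$; these are different maps, and the bridge between them is precisely the mutation/atlas compatibility of cluster scattering diagrams, which you never explicitly invoke. Once one does invoke it --- a piecewise-linear transformation carries $\frD(t_0)$ to $\frD(t)$, sends $\frC^t$ to the positive chamber of $\frD(t)$, and re-indexes theta functions by $\phi_{t,t_0}$, so that $\seq^*\theta_m=\theta^{(t)}_{\phi_{t,t_0}m}$ --- your straight-line argument becomes correct, because broken lines in $\frD(t)$ ending in the positive chamber of $\frD(t)$ all have final exponent $\phi_{t,t_0}m+p^*n$ with $p^*$ now the map for $t$ and $n\geq 0$, and the unique unbent line gives the $\prec_t$-maximum $\phi_{t,t_0}m$. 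Your proposed single-step sign check comparing $\frp_{1+y_k}$ against (\ref{eq:tropical_transformation}) is exactly where this compatibility is established, but it has to be carried out as a decomposition of $\mu_k^*$ into a monomial map (linear on each half-space, tropicalizing to $\phi_{\mu_k t_0,t_0}$) composed with an element of $\hG$, rather than as a mere change of base point within a single scattering diagram.
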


\section{Basics of the freezing operators\label{sec:Basics-of-the-freezing}}

We review the basics of freezing operators used in \cite{qin2023analogs}.

\subsection{Definition}

Take any initial seed $t_{0}=((b_{ij}),(x_{i})_{i\in I},I,I_{\ufv},(d_{i}))$.
Choose and fix a subset of unfrozen vertices $F\subset I_{\ufv}$
and denote $R=I_{\ufv}\backslash F$. By freezing the vertices in
$F$, we obtain a new seed $\frz_{F}t_{0}=((b_{ij}),(x_{i})_{i\in I},I,I_{\ufv}\backslash F,(d_{i}))$.
Denote $\frz_{F}\tB:=\tB(\frz_{F}t_{0})=(b_{ij})_{(i,j)\in I\times R}$
. If $t_{0}$ is a quantum seed endowed with a compatible Poisson
structure $\lambda$, we make $\frz_{F}t_{0}$ into a quantum seed
by endow it with the same $\lambda$.

Note that $\cone(t_{0})=\cone(\frz_{F}t_{0})$. In addition, if $m'\preceq_{\frz_{F}t_{0}}m$,
then $m'\preceq_{t_{0}}m$.

\begin{Def}[Freezing operator {\cite{qin2023analogs}}]\label{def:freezing_operator}

For any $m\in\Z^{I}$ and $F\subset I_{\ufv}$, the corresponding
freezing operator is defined to be the $\kk$-linear map $\frz_{F,m}^{t_{0}}$
from $\hLP_{\preceq m}(t_{0})$ to $\hLP_{\preceq m}(\frz_{F}t_{0})$
such that 
\begin{align*}
\mathfrak{\frz}_{F,m}^{t_{0}}(x^{m+p^{*}n}) & =\begin{cases}
x^{m+p^{*}n} & \supp n\cap F=\emptyset\\
0 & \supp n\cap F\neq\emptyset
\end{cases}
\end{align*}

\end{Def}

Consider the $\kk$-vector space $\kk[\N^{I_{\ufv}}]:=\oplus_{n\in\N^{I_{\ufv}}}\kk y^{n}$.
It is isomorphic to $\kk[M^{\oplus}]$ as a $\kk$-vector space such
that $y^{n}$ is sent to $x^{p^{*}n}$. We can then endow it with
the $v$-twisted multiplication such that it is isomorphic to $\kk[M^{\oplus}]\subset\LP$.
The completion $\widehat{\kk[\N^{I_{\ufv}}]}$ is naturally defined,
such that it is isomorphic to $\widehat{\kk[M^{\oplus}]}$. It would
be convenient to introduce the following definition.

\begin{Def}[Freezing operator {\cite{qin2023analogs}}]\label{def:freezing_operator-1}

The freezing operator from $\widehat{\kk[\N^{I_{\ufv}}]}$ to $\widehat{\kk[\N^{I_{\ufv}\backslash F}]}$
is the $\kk$-algebra homomorphism such that 
\begin{align*}
\mathfrak{\frz}_{F}(y^{n}) & =\begin{cases}
y^{n} & \supp n\cap F=\emptyset\\
0 & \supp n\cap F\neq\emptyset
\end{cases}
\end{align*}

\end{Def}

We often drop the symbols $t_{0}$, $F$, $m$ for convenience. It
is straightforward to check the following results.

\begin{Lem}[{\cite{qin2023analogs}}]\label{lem:projection_properties}

(1) $\frz_{F,m}$ is surjective.

(2) Take any $m'\preceq_{t_{0}}m$ such that $m'=m+\tB n$. $\forall z\in\hLP_{\preceq m'}(t_{0})\subset\hLP_{\preceq m}(t_{0})$,
we have 
\begin{align*}
\frz_{m}(z) & =\begin{cases}
\frz_{m'}(z) & \supp n\cap F=\emptyset\\
0 & \supp n\cap F\neq\emptyset
\end{cases}
\end{align*}

(3) For any $z_{1}\in\hLP_{\preceq m_{1}}(t_{0})$, $z_{2}\in\hLP_{\preceq m_{2}}(t_{0})$,
we have
\begin{align*}
\frz_{m_{1}}(z_{1})*\frz_{m_{2}}(z_{2}) & =\frz_{m_{1}+m_{2}}(z_{1}*z_{2}).
\end{align*}

\end{Lem}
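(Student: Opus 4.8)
The statement to prove is Lemma \ref{lem:projection_properties}, which has three parts concerning the freezing operators $\frz_{F,m}$ and $\frz_F$. All three are described as "straightforward to check," so the proof should be a direct verification from the definitions (Definitions \ref{def:freezing_operator} and \ref{def:freezing_operator-1}).

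\textbf{Part (1): Surjectivity.} The target is $\hLP_{\preceq m}(\frz_F t_0) = x^m * \widehat{\kk[M^\oplus(\frz_F t_0)]}$, and $M^\oplus(\frz_F t_0) = (p^*_{\frz_F t_0})(\N^{I_{\ufv}\setminus F}) = (p^*_{t_0})(\N^{R})$ since the $B$-matrix of $\frz_F t_0$ is just the submatrix $(b_{ij})_{(i,j)\in I\times R}$ of $\tB(t_0)$. So every element of the target is (a limit of) a sum of terms $x^{m + p^* n}$ with $n \in \N^R$, i.e.\ with $\supp n \cap F = \emptyset$. Each such term lies in $\hLP_{\preceq m}(t_0)$ and is mapped to itself by $\frz_{F,m}$. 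Hence $\frz_{F,m}$ restricted to the subspace spanned by these terms is already the identity onto the target, proving surjectivity. One should remark that $\frz_{F,m}$ is well-defined as a map into $\hLP_{\preceq m}(\frz_F t_0)$ precisely because the surviving monomials $x^{m+p^*n}$, $\supp n \cap F = \emptyset$, are exactly those of the form $x^{m + (p^*_{\frz_F t_0})(n')}$ for $n' \in \N^R$.

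\textbf{Part (2): Compatibility with the filtration by degree.} Here $z \in \hLP_{\preceq m'}(t_0)$ with $m' = m + \tB n = m + p^* n$, so $z = \sum_{n'' \in \yCone^{\geq 0}} c_{n''}\, x^{m' + p^* n''} = \sum_{n''} c_{n''}\, x^{m + p^*(n + n'')}$. If $\supp n \cap F \neq \emptyset$: then for \emph{every} $n''\in\yCone^{\geq0}$ we have $\supp(n + n'') \supseteq \supp n$, so $\supp(n+n'')\cap F \neq \emptyset$, hence $\frz_{F,m}(x^{m+p^*(n+n'')}) = 0$ for all terms, giving $\frz_m(z) = 0$. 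If $\supp n \cap F = \emptyset$: then for each $n''$, $\supp(n+n'') \cap F = \emptyset \iff \supp n'' \cap F = \emptyset$ (the $\supp n$ part contributes nothing to $F$), so $\frz_m(x^{m + p^*(n+n'')})$ and $\frz_{m'}(x^{m' + p^* n''})$ are both equal to $x^{m+p^*(n+n'')}$ when $\supp n'' \cap F = \emptyset$ and both $0$ otherwise; summing over $n''$ gives $\frz_m(z) = \frz_{m'}(z)$. This is the one place requiring a small bit of care, so I would spell out the two cases explicitly; it is still elementary.

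\textbf{Part (3): Multiplicativity.} Write $z_i = \sum_{n_i \in \yCone^{\geq 0}} c_{n_i} x^{m_i + p^* n_i}$ for $i = 1, 2$. Using the $v$-twisted product, $x^{m_1 + p^* n_1} * x^{m_2 + p^* n_2} = v^{\lambda(\cdot,\cdot)} x^{(m_1 + m_2) + p^*(n_1 + n_2)}$, so $z_1 * z_2 = \sum_{n_1, n_2} c_{n_1} c_{n_2} v^{\lambda(m_1+p^*n_1,\, m_2+p^*n_2)} x^{(m_1+m_2)+p^*(n_1+n_2)}$. Apply $\frz_{m_1+m_2}$: the term survives iff $\supp(n_1 + n_2) \cap F = \emptyset$, which (since all coordinates are nonnegative) holds iff $\supp n_1 \cap F = \emptyset$ \emph{and} $\supp n_2 \cap F = \emptyset$ — i.e.\ iff the corresponding terms of $z_1$ and of $z_2$ both survive. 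Thus $\frz_{m_1+m_2}(z_1 * z_2) = \bigl(\sum_{\supp n_1 \cap F = \emptyset} c_{n_1} x^{m_1 + p^* n_1}\bigr) * \bigl(\sum_{\supp n_2 \cap F = \emptyset} c_{n_2} x^{m_2 + p^* n_2}\bigr) = \frz_{m_1}(z_1) * \frz_{m_2}(z_2)$, where one uses that the $v$-twisting factor $\lambda$ is unchanged since $\frz_F t_0$ carries the same compatible Poisson structure $\lambda$ as $t_0$. The analogous statement for $\frz_F$ on $\widehat{\kk[\N^{I_{\ufv}}]}$ (that it is a $\kk$-algebra homomorphism, as asserted in Definition \ref{def:freezing_operator-1}) follows by the identical monoid-support argument, transported across the isomorphism $y^n \mapsto x^{p^* n}$.

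The only genuine content is the observation, used repeatedly, that for $n_1, n_2$ in the \emph{positive} cone $\yCone^{\geq 0}$ one has $\supp(n_1 + n_2) = \supp n_1 \cup \supp n_2$, so "meeting $F$" is monotone and additive in the obvious way; there is no real obstacle, and the three parts are immediate once this is noted. I would present the proof as three short paragraphs mirroring the three claims, each reduced to this monoid computation.
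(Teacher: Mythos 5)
The paper states the lemma without proof, remarking only that it is ``straightforward to check'' and citing \cite{qin2023analogs}; your proof supplies exactly the direct verification from Definitions \ref{def:freezing_operator} and \ref{def:freezing_operator-1} that the author had in mind. The key observation is correctly identified and correctly used: for $n_1,n_2\in\yCone^{\geq0}$ one has $\supp(n_1+n_2)=\supp n_1\cup\supp n_2$, so each part reduces to a monoid-support computation. In part (2) you correctly split into the two cases and use this identity to show that freezing relative to $m$ and relative to $m'$ agree on the surviving monomials; in part (3) you correctly note that the $v$-twisting factor is unchanged since $\frz_F t_0$ carries the same $\lambda$; and in part (1) you correctly identify $\hLP_{\preceq m}(\frz_F t_0)$ with the span of the monomials $x^{m+p^*n}$, $n\in\N^R$, on which $\frz_{F,m}$ acts as the identity. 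All three arguments are sound and there are no gaps.
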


\begin{Prop}[{\cite{qin2023analogs}}]\label{prop:project_up_cl_alg}

Let there be given a pointed element $z\in\LP(t_{0})$. If $z\in\upClAlg(t_{0})$,
then $\frz_{F,\deg z}^{t_{0}}z\in\upClAlg(\frz_{F}t_{0})$.

\end{Prop}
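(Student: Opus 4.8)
The plan is to use the Starfish theorem (Theorem~\ref{thm:starfish_thm}) to reduce the membership $\frz_F z \in \upClAlg(\frz_F t_0)$ to checking that $\frz_F z$ lies in each of the finitely many Laurent polynomial rings $\LP(\mu_k \frz_F t_0)$ for $k \in R = I_{\ufv}\setminus F$, plus $\LP(\frz_F t_0)$ itself. Membership in $\LP(\frz_F t_0) = \LP(t_0)$ is immediate, since $\frz_{F,\deg z}^{t_0} z$ is by construction obtained from the Laurent expansion of $z$ by deleting monomials, hence is again a Laurent polynomial in the $x$-variables of $t_0$.

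The main content is therefore: for each unfrozen vertex $k \in R$ of $\frz_F t_0$, show that $\frz_F z$, when expanded in the cluster $\mu_k \frz_F t_0$, has no negative powers of $x_k'$. The key observation is that $\mu_k \frz_F t_0$ and $\mu_k t_0$ have the same $x$-variables (only the frozen/unfrozen partition differs), and the transition map between the clusters of $\frz_F t_0$ and $\mu_k \frz_F t_0$ is given by the \emph{same} exchange relation \eqref{eq:exchange_relation} as between $t_0$ and $\mu_k t_0$, since $k \in R \subset I_{\ufv}$ is mutable in both setups. First I would write $z = \mu_k^* z'$ with $z' \in \LP(\mu_k t_0)$ (possible since $z \in \upClAlg(t_0)$ and $k$ is unfrozen). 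Then I would track how freezing interacts with the change of cluster: the point is that the freezing operator $\frz_{F}$ can be described intrinsically on $\widehat{\kk[\N^{I_\ufv}]}$ as ``set $y_j = 0$ for $j \in F$'' (Definition~\ref{def:freezing_operator-1}), and $F \subset R \cup F$ remains a set of frozen-to-be vertices disjoint from $k$; so the freezing operation commutes, in the appropriate sense, with the mutation at $k$. Concretely, using Lemma~\ref{lem:projection_properties}(2)--(3) and the fact that the $y$-variables transform by \eqref{eq:mutation-y} with $b_{ki}$ unchanged, I would show that $\frz_{F,\deg z}^{t_0} z$ agrees with the image under $\mu_k^*$ of $\frz_{F,\deg^{\mu_k t_0} z'}^{\mu_k t_0} z'$, which is a Laurent polynomial in the cluster of $\mu_k \frz_F t_0$.

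The hard part will be bookkeeping the degree subscripts and the base points of the two freezing operators so that the identity ``$\frz$ commutes with $\mu_k^*$'' is literally correct: the freezing operator is defined relative to a fixed top degree $m = \deg z$, and after mutation the relevant top degree becomes $\phi_{\mu_k t_0, t_0}(\deg z)$, with the dominance order also changing. I would handle this by decomposing $z$ according to its $\prec_{t_0}$-homogeneous pieces $z = \sum_{m' \preceq_{t_0} m} z_{m'}$ with $z_{m'} \in \ptSet^{t_0}(m')$, applying Lemma~\ref{lem:projection_properties}(2) to see $\frz_F$ acts piece-by-piece (killing exactly those $z_{m'}$ with $m' = m + \tB n$, $\supp n \cap F \neq \emptyset$), and then checking that the surviving pieces are precisely those built from $y$-monomials supported away from $F$, a condition stable under the $y$-mutation \eqref{eq:mutation-y} at a vertex $k \notin F$. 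Once the commutation is established, the Starfish criterion closes the argument; I would also remark that full rank of $\tB(\frz_F t_0)$ — needed to invoke Theorem~\ref{thm:starfish_thm} — follows since $\frz_F \tB = (b_{ij})_{(i,j)\in I \times R}$ is a submatrix of the columns of the full-rank matrix $\tB(t_0)$, hence still has full column rank.
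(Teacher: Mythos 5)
Your high-level strategy is sound and the Starfish reduction is a valid (though unnecessary) step, but your proposed mechanism for the key step has a genuine gap. You want to show $\frz_{F,\deg z}^{t_0} z = \mu_k^*\bigl(\frz_{F,\deg^{\mu_k t_0} z'}^{\mu_k t_0} z'\bigr)$, where $z' = (\mu_k^*)^{-1}z$. But $z$ being pointed in $\LP(t_0)$ does \emph{not} imply $z'$ is pointed in $\LP(\mu_k t_0)$: the set $\{\psi_{\mu_k t_0,t_0}(g+p^*n)\mid c_n\neq 0\}$ can fail to have a unique $\prec_{\mu_k t_0}$-maximal element, in which case $\deg^{\mu_k t_0}z'$ is undefined and there is no canonical degree subscript with which to apply $\frz^{\mu_k t_0}$. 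You flag the degree bookkeeping as ``the hard part,'' but your workaround — decomposing $z$ into $\prec_{t_0}$-homogeneous pieces and appealing to stability of the support condition under the $y$-mutation \eqref{eq:mutation-y} at $k\notin F$ — does not resolve this: support stability of individual pieces does not by itself show the surviving subsum is a \emph{finite} Laurent expansion in $\mu_k t_0$. A subsum of a Laurent polynomial, expanded in a different cluster, need not terminate; you would still have to control the cancellations.

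The paper avoids this by never changing the base point of the freezing operator at all. It works in $\LP(t_0)$ throughout: for $t=\seq t_0$ with $\seq$ on $R$, write the polynomial identity $z*x(t)^d=\sum c_m x(t)^m$ in $\LP(t_0)$, then apply $\frz_{F,\deg z+\deg x(t)^d}^{t_0}$ to both sides. Multiplicativity (Lemma~\ref{lem:projection_properties}(3)) splits the left side as $\frz(z)*\frz(x(t)^d)$; since $\supp x_i(t)\subset R$ (the $F$-polynomials of cluster variables of $t$ involve only $y_j$ with $j\in R$, because $\seq$ is on $R$), $\frz$ fixes $x(t)^d$ and each $x(t)^m$ (up to deletion, by Lemma~\ref{lem:projection_properties}(2)), so the right side stays a Laurent polynomial in $x_i(t)=x_i(\frz_F t)$. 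This directly gives $\frz z\in\LP(\frz_F t)$ for every $t\in\Delta^+_{\frz_F t_0}$ — no Starfish theorem needed, and no pointedness in new seeds needed. If you carry out your one-step-mutation plan carefully, the clean implementation is exactly this argument with $\seq=\mu_k$; the precise commutation of $\frz$ with $\mu_k^*$ (which is Lemma~\ref{lem:freeze_mutation}, and needs the stronger compatible-pointedness hypothesis) is more than what the proposition requires. One small additional note: it is really full rank of $\tB(t_0)$, not of $\tB(\frz_F t_0)$, that is the operative hypothesis, since it is needed for $p^*$ to be injective and hence for the freezing operator itself to be well-defined; full column rank of the submatrix follows automatically.
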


\begin{proof}

Let $\seq$ be any sequence of mutations on $I_{\ufv}\backslash F$.
Denote $t=\seq t_{0}$. Then we have $\frz_{F}t=\seq\frz_{F}t_{0}$.
Since $z$ is a Laurent polynomial in the cluster variables of $t$,
we can write $z*x(t)^{d}=\sum c_{m}x(t)^{m}$, $c_{m}\in\kk$, $d,m\in\N^{I}$.
View this equality in $\LP(t_{0})$ and applying the freezing operator
$\frz_{F,\deg z+\deg x(t)^{d}}^{t_{0}}$ to both sides. Note that
$\frz_{F,\deg x_{i}(t)}^{t_{0}}x_{i}(t)=x_{i}(t)=x_{i}(\frz_{F}t)$.
So Lemma \ref{lem:projection_properties}(3)(2) implies that $\frz_{F,\deg z}^{t_{0}}(z)$
is still a Laurent polynomial in $x_{i}(\frz_{F}t)$. The desired
claim follows.

\end{proof}

\begin{Eg}

Take $\kk=\Z$. We define the seed $t_{0}$ such that $I=\{1,2\}$,
$I_{\ufv}=\{1\}$, $(b_{ij})_{i,j\in I}=\left(\begin{array}{cc}
0 & -1\\
1 & 0
\end{array}\right)$, $d_{1}=d_{2}=1$. The upper cluster algebra $\upClAlg(t_{0})$ equals
$\kk[x_{2}^{\pm}][x_{1},x_{1}']$ where $x_{1}':=\frac{1+x_{2}}{x_{1}}=x_{1}^{-1}\cdot(1+x^{p^{*}e_{1}})$.
The localized cluster monomials of $\upClAlg(t_{0})$ take the form
$s_{m}:=\begin{cases}
x_{1}^{m_{1}}x_{2}^{m_{2}} & m_{1}\geq0\\
(x_{1}')^{-m_{1}}x_{2}^{m_{2}} & m_{1}<0
\end{cases}$, where $m=(m_{1},m_{2})\in\Z^{2}$, and they form a basis $\cS$
for $\upClAlg(t_{0})$.

Take $F=I_{\ufv}=\{1\}$ and construct the seed $\frz_{F}t_{0}$ by
freezing $F$. We have $\upClAlg(\frz_{F}t_{0})=\kk[x_{1}^{\pm},x_{2}^{\pm}]$.
Its localized cluster monomials $s'_{m}:=x^{m}$ form a basis. For
each $m$-pointed (localized) cluster monomial $s_{m}$ in $\upClAlg(t_{0})$,
the freezing operator $\frz_{F,m}$ sends it to $s'_{m}$ in $\upClAlg(\frz_{F}t_{0})$.
We can check $\frz_{F,m+m'}(s_{m}\cdot s_{m'})=\frz_{F,m}(s_{m})\cdot\frz_{F,m'}(s_{m'})$
as in Lemma \ref{lem:projection_properties}(3).

Correspondingly, we can construct a bijective $\kk$-linear map $\frz_{F}^{\cS}:\upClAlg(t_{0})\rightarrow\upClAlg(\frz_{F}t_{0})$,
sending $s_{m}$ to $s'_{m}$. As seen in Lemma \ref{lem:projection_properties}(2),
this linear map $\frz_{F}^{\cS}$ does not preserve the multiplication.
For example, we have $\frz_{F}^{\cS}x_{1}=x_{1}$, $\frz_{F}^{\cS}x_{1}'=x_{1}^{-1}$,
but $\frz_{F}^{\cS}(x_{1}\cdot x_{1}')=\frz_{F}^{\cS}(1+x_{2}):=\frz_{F}^{\cS}(1)+\frz_{F}^{\cS}(x_{2})=1+x_{2}\neq\frz_{F}^{\cS}(x_{1})\cdot\frz_{F}^{\cS}(x_{1}')$.

\end{Eg}

\subsection{Properties}

\begin{Lem}[{\cite{qin2023analogs}}]\label{lem:freeze_mutation}

Assume that an element $z\in\upClAlg(t_{0})$ has a unique $\prec_{t}$-maximal
degree in each $\LP(t)$, $t\in\Delta_{t_{0}}^{+}$. Then, for any
mutation sequence $\seq$ on $I_{\ufv}\backslash F$, we have $\seq^{*}\frz_{F,\deg^{t_{0}}z}^{t_{0}}z=\frz_{F,\deg^{t}z}^{t}\seq^{*}z$
where $t_{0}=\seq t$.

\end{Lem}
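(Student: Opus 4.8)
The plan is to reduce the statement to the commutativity of freezing with a single mutation $\mu_k$, $k \in I_{\ufv}\backslash F$, and then iterate. So first I would set $t' = \mu_k t$ (with $k \notin F$) and note that $\frz_F t' = \mu_k \frz_F t$, since freezing commutes with mutation at an unfrozen vertex not in $F$ (the exchange matrices $\tE_\varepsilon, \tF_\varepsilon$ for mutation at $k$ restrict compatibly when we delete the columns/rows in $F$). The hypothesis guarantees that $z$ has a well-defined degree $\deg^t z$ in every $\LP(t)$, so both sides of the claimed identity make sense as pointed elements; moreover by Theorem~\ref{thm:compatibly_pointed_cluster_monom}-type reasoning (compatibility of degrees under tropical transformations) the degrees match up, i.e. $\deg^{t'} z = \phi_{t',t}\deg^t z$, and the freezing operator is always taken "at the degree of the element."

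Next I would expand $z$ in the cluster $\LP(t)$ as $z = x^{g}(t)\cdot\sum_{n} b_n x^{p^*n}(t)$ with $g = \deg^t z$, $b_0 = 1$. Applying $\frz_{F,g}^t$ simply kills every term with $\supp n \cap F \neq \emptyset$. On the other hand, $\seq^* z = \mu_k^* z$ is obtained by substituting the exchange relation \eqref{eq:exchange_relation} for $x_k(t)$, or equivalently by applying the automorphism on formal Laurent series induced by $\mu_k$; concretely, using \eqref{eq:mutation-y}, in the $z_i = x^{p^*e_i}$ variables the mutation sends $z_k \mapsto z_k^{-1}$ and $z_i \mapsto z_i z_k^{[b_{ki}]_+}(1+z_k)^{-b_{ki}}$ for $i \neq k$. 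The key point is that this substitution only ever introduces factors of $(1+z_k)^{\pm 1} = (1 + x^{p^*e_k}(t))^{\pm 1}$ and powers of $z_k$, and $k \notin F$; so a monomial $x^{p^*n}$ has $\supp n \cap F \neq \emptyset$ if and only if each monomial appearing in its image under $\seq^*$ still has support meeting $F$ (the $F$-part of the support is unchanged by mutation at $k$). This is exactly the statement that the diagram of $\kk$-linear maps
\begin{align*}
\hLP_{\preceq g}(t) \ & \xrightarrow{\ \seq^*\ }\ \hLP_{\preceq \phi g}(t') \\
\frz^t_{F}\big\downarrow\ \ \ & \qquad\qquad \big\downarrow \frz^{t'}_{F} \\
\hLP_{\preceq g}(\frz_F t)\ & \xrightarrow{\ \seq^*\ }\ \hLP_{\preceq \phi g}(\frz_F t')
\end{align*}
commutes, where in the bottom row $\seq^*$ is the mutation map for the frozen seed (which uses the same exchange relation, since $k \notin F$).

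To make the single-step argument precise I would argue at the level of the substitution map. Write $z \in \LP(t)$ as a polynomial, multiply by a high power $x(t)^d$, $d \in \N^I$ with $d_k$ large, to clear denominators so that $z * x(t)^{d} = \sum_m c_m x(t)^m$ with $m \in \N^I$; then $\seq^*(z * x(t)^d)$ is a Laurent polynomial in $x(t')$, and one tracks that $\frz_{F}$ commutes with the monomial substitution $x_k(t) \mapsto x_k(t')^{-1}(\prod x_i^{[b_{ik}]_+} + \prod x_j^{[-b_{jk}]_+})$ precisely because that substitution does not touch the $F$-coordinates of exponents and because $\frz_F$ is multiplicative away from $F$ (Lemma~\ref{lem:projection_properties}(3)). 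Then I would divide back out by $x(t')^{d'}$ where $d' = \psi_{t',t}d$ (using Lemma~\ref{lem:projection_properties}(2) to move $\frz_F$ past the frozen/pivot monomial) and invoke the hypothesis on the uniqueness of the maximal degree to conclude that the result is genuinely $\frz^{t'}_{F,\deg^{t'}z}(\seq^* z)$ rather than just equal modulo lower-degree ambiguity.

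Finally, for a general mutation sequence $\seq = \mu_{k_r}\cdots\mu_{k_1}$ with all $k_i \in I_{\ufv}\backslash F$, I would induct on $r$: the single-step case handles $\mu_{k_1}$, giving a pointed element at $\mu_{k_1}t_0$; the inductive hypothesis (applied with $\mu_{k_1}t_0$ as the new initial seed, noting the hypothesis on $z$ is seed-independent within $\Delta^+_{t_0}$) handles the remaining $\mu_{k_r}\cdots\mu_{k_2}$; and composing gives the claim. The main obstacle I anticipate is the single-step commutation: one has to be careful that $\seq^* z$ really does have a unique $\prec_{t'}$-maximal degree (this is where the hypothesis is used, via the Lemma of \cite{qin2019bases} relating $\deg^{t'}\seq^* z$ to $\psi_{t',t}$), and that the formal-series manipulations — substituting and then re-expanding — are legitimate and compatible with taking $\frz_F$, i.e.\ that no cancellation across the $F$-boundary occurs. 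Everything else is bookkeeping with Lemma~\ref{lem:projection_properties}.
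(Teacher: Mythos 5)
Your general strategy is sound in outline: reducing to a single mutation $\mu_k$ with $k\notin F$ and inducting is fine, and your central observation is correct and is also the paper's: by \eqref{eq:mutation-y}, mutation at $k\notin F$ preserves the $F$-coordinates of the $y$-exponent $n$ in $x^{m+p^{*}n}$ (since $\mu_k^{*}(z_j)$ for $j\in F$ has $z_j$ as a linear factor and $\mu_k^{*}(z_i)$ for $i\neq j$ involves no $z_j$ with $j\in F$). However, there is a genuine gap that your proposal flags but does not close.

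The freezing on the mutated side is $\frz^{t}_{F,\deg^{t}\seq^{*}z}$, taken at the degree $g := \deg^{t}\seq^{*}z = \deg^{t}\seq^{*}(x^{m+p^{*}n'})$ for a unique maximal index $n'\in\N^{I_{\ufv}}$, whereas the freezing on the other side is at the degree $m$. Whether $\frz^{t}_{F,g}$ kills the term $c_n\seq^{*}(x^{m+p^{*}n})$ depends on the $F$-coordinates of the \emph{offset} $\ell_n$ defined by $\deg^{t}\seq^{*}(x^{m+p^{*}n}) = g + p^{*}_t\ell_n$, not directly on the $F$-coordinates of $n$. By the $F$-invariance observation, the $F$-coordinates of $\ell_n$ are $n_j - n'_j$ for $j\in F$, so the two sides match \emph{only if} $n'_j = 0$ for all $j\in F$. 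This is precisely where the hypothesis enters and it is a nontrivial step: since $g = \deg^{t}\seq^{*}(x^{m+p^{*}n'})$ dominates $\deg^{t}\seq^{*}(x^m)$ (uniqueness of the maximum), the $F$-invariance observation forces $-n'_j \geq 0$, while $n'_j\geq 0$ by definition, giving $n'_j = 0$. You gesture at this with ``no cancellation across the $F$-boundary,'' but you never supply this argument, and without it the claimed commuting square is not established (indeed, as stated, with $\phi g$ as the target degree, it is not even the correct square unless one knows separately that the $F$-part of $\deg^t\seq^*z - \deg^t\seq^*(x^m)$ vanishes).

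Two further issues. (i) The ``clear denominators'' paragraph is incorrect as written: the exchange relation $x_k(t)\mapsto x_k(t')^{-1}(\prod x_i^{[b_{ik}]_+} + \prod x_j^{[-b_{jk}]_+})$ \emph{does} change the $F$-coordinates of $x$-exponents (it introduces powers of $x_j$ with $j\in F$ whenever $b_{jk}\neq 0$); the invariant quantity is the $F$-component of the $y$-exponent $n$, not of $m$. This contradicts your earlier (correct) $z_i$-variable observation, and this passage should be dropped. (ii) The assertion $\deg^{t'}z = \phi_{t',t}\deg^t z$ does not follow from the hypothesis alone (unique maximal degree is strictly weaker than compatible pointedness as in Theorem~\ref{thm:compatibly_pointed_cluster_monom}); fortunately, it is also not needed, since the lemma is stated with $\deg^t z$ rather than $\phi_{t,t_0}(\deg^{t_0}z)$. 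The paper's proof handles the full mutation sequence at once rather than one step at a time, but otherwise rests on the same two facts: $F$-invariance of the $y$-exponent under $\seq^*$, and vanishing of the $F$-coordinates of the maximal index $n'$.
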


\begin{proof}

We give a proof for the convenience of the reader.

Denote $z=\sum_{n\in\N^{I_{\ufv}}}c_{n}x^{m+p^{*}n}\in\LP(t_{0})$,
$c_{n}\in\kk$, $c_{0}\neq0$. By \cite[Section 3.5]{qin2019bases},
we have $\seq^{*}z=\sum_{n}c_{n}\seq^{*}(x^{m+p^{*}n})$ in $\hLP(t)$,
where $\seq^{*}(x^{m+p^{*}n})\in\hLP(t)$ on the right hand side is
the formal Laurent series of the rational function $\seq^{*}(x^{m+p^{*}n})\in\cF(t)$.
Let us denote $N_{1}=\{n|c_{n}\neq0,n\in m+p^{*}\N^{I_{\ufv}\backslash F}\}$
and $N_{1}=\{n|c_{n}\neq0,n\notin m+p^{*}\N^{I_{\ufv}\backslash F}\}$.
Then $\frz_{F}^{t_{0}}z=\sum_{n\in N_{1}}c_{n}x^{m+p^{*}n}$. 

By assumption, $\seq^{*}z$ has a unique $\prec_{t}$-maximal degree
in $\LP(t)$, which must be the contribution of a unique $x^{m+p^{*}n'}$
for some $n'$. Similarly, denote $N_{1}(t)=\{n|c_{n}\neq0,\deg^{t}\seq^{*}x^{m+p^{*}n}\in\deg^{t}\seq^{*}x^{m+p^{*}n'}+p^{*}\N^{I_{\ufv}\backslash F}\}$
and $N_{2}(t)=\{n|c_{n}\neq0,\deg^{t}\seq^{*}x^{m+p^{*}n}\notin\deg^{t}\seq^{*}x^{m+p^{*}n'}+p^{*}\N^{I_{\ufv}\backslash F}\}$.
Then we also have $\frz_{F}^{t}\seq^{*}z=\sum_{n\in N_{1}(t)}c_{n}\seq^{*}x^{m+p^{*}n}$.

By the mutation rules of $x^{p^{*}n}$ (see (\ref{eq:mutation-y})),
the factors $x^{p^{*}e_{j}}$, $j\in F$, in the leading term are
never created or eliminated by $\seq^{*}$. Since $\deg^{t}\seq^{*}x^{m+p^{*}n'}\succeq_{t}\deg^{t}\seq^{*}x^{m}$,
the $j$-th coordinate $n'_{j}$ of $n'$ must vanish for all $j\in F$.
Using the mutation rules of $x^{p^{*}n}$ again, we obtain that $n\in N_{1}(t)$
if and only if $n\in N_{1}$. Therefore $\frz_{F}^{t}\seq^{*}z=\seq^{*}\frz_{F}^{t_{0}}z$.

\end{proof}

Lemma \ref{lem:projection_properties} immediately implies the following
result.

\begin{Lem}[{\cite{qin2023analogs}}]\label{lem:freeze_compatibly_pointed}

If an element $z\in\upClAlg(t_{0})$ is compatibly pointed at all
seeds in $\Delta_{t_{0}}^{+},$ then $\frz_{F}z\in\upClAlg(\frz_{F}t_{0})$
is compatibly pointed at all seeds in $\Delta_{\frz_{F}t_{0}}^{+}$.

\end{Lem}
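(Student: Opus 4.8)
The plan is to deduce the statement from Lemma \ref{lem:freeze_mutation}, which already carries the substantive input; what remains is to unravel the meaning of ``compatibly pointed'' and to check that freezing the vertices of $F$ alters neither the leading term of a pointed element nor the tropical transformations along mutation sequences supported on $R:=I_{\ufv}\backslash F$. Throughout write $m=\deg^{t_{0}}z$ and $\frz_{F}z=\frz_{F,m}^{t_{0}}z$.

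First I would dispose of the base seed. Since $z$ is compatibly pointed it is in particular $m$-pointed in $\LP(t_{0})$, and $z\in\upClAlg(t_{0})$, so Proposition \ref{prop:project_up_cl_alg} yields $\frz_{F}z\in\upClAlg(\frz_{F}t_{0})$. Writing the cluster expansion $z=\sum_{n\in\N^{I_{\ufv}}}c_{n}x^{m+p^{*}n}$ with $c_{0}=1$, Definition \ref{def:freezing_operator} gives $\frz_{F}z=\sum_{\supp n\cap F=\emptyset}c_{n}x^{m+p^{*}n}$, whose unique $\prec_{\frz_{F}t_{0}}$-maximal term is $x^{m}$ with coefficient $1$; hence $\frz_{F}z$ is $m$-pointed in $\LP(\frz_{F}t_{0})$.

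Next, fix an arbitrary $t'\in\Delta_{\frz_{F}t_{0}}^{+}$. Since mutating $\frz_{F}t_{0}$ amounts to mutating $t_{0}$ at vertices of $R$, we may write $t'=\frz_{F}t$ for some $t\in\Delta_{t_{0}}^{+}$, and pick a mutation sequence $\seq$ on $R$ with $t_{0}=\seq t$. Being compatibly pointed, $z$ has a unique $\prec_{t}$-maximal degree in every $\LP(t)$, so Lemma \ref{lem:freeze_mutation} applies and gives
\[
\seq^{*}\bigl(\frz_{F}z\bigr)=\frz_{F,\deg^{t}z}^{t}\,(\seq^{*}z).
\]
Since $z$ is compatibly pointed at $t_{0}$ and $t$, the element $\seq^{*}z\in\LP(t)$ is $(\phi_{t,t_{0}}m)$-pointed, so by Definition \ref{def:freezing_operator} the right-hand side retains the term $x^{\phi_{t,t_{0}}m}$ with coefficient $1$ while all its remaining terms are strictly $\prec_{t'}$-smaller; moreover it lies in $\LP(t')$ because $\frz_{F}z\in\upClAlg(\frz_{F}t_{0})\subseteq\LP(t')$. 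Thus $\frz_{F}z$, expanded in $\LP(t')$, is $(\phi_{t,t_{0}}m)$-pointed.

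Finally I would identify $\phi_{t,t_{0}}m$ with $\phi_{t',\frz_{F}t_{0}}m$: the piecewise-linear map $\phi$ is a composite of the elementary transformations $(\ref{eq:tropical_transformation})$, each of which only involves the column $(b_{ik})_{i\in I}$ of the $B$-matrix at the mutated vertex $k\in R$, and these columns, together with their evolution under mutations at $R$, coincide for $t_{0}$ and for $\frz_{F}t_{0}$. Hence $\frz_{F}z$ is $(\phi_{t',\frz_{F}t_{0}}m)$-pointed in $\LP(t')$ for every $t'\in\Delta_{\frz_{F}t_{0}}^{+}$, i.e. it is compatibly pointed at $\frz_{F}t_{0}$ and $t'$, and functoriality of $\phi$ upgrades this to compatibility at every pair of seeds in $\Delta_{\frz_{F}t_{0}}^{+}$. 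I do not expect a genuine obstacle here: the one nontrivial ingredient is Lemma \ref{lem:freeze_mutation}, and the rest is the bookkeeping described above, which is why the result follows immediately once the basic properties of the freezing operator (Lemma \ref{lem:projection_properties}) are in hand.
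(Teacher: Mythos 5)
Your proof is correct and follows the route the paper intends: the paper merely asserts that the result is immediate from the preceding lemmas (in fact the cross-reference before the lemma is to Lemma~\ref{lem:projection_properties} but the substance being invoked is Lemma~\ref{lem:freeze_mutation}), and your write-up is precisely the unwinding of that deduction, including the needed identification $\phi_{t,t_0}=\phi_{\frz_F t,\frz_F t_0}$ on mutation sequences supported on $R$.
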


\begin{Thm}[{\cite{qin2023analogs}}]\label{thm:freeze_tropical_bases}

Assume that $\upClAlg(t_{0})$ possesses a $\Z^{I}$-pointed subset
$\cS=\{s_{g}|g\in\Z^{I}\}$, such that $\deg^{t_{0}}s_{g}=g$ and
$s_{g}$ are compatibly pointed at all seeds in $\Delta_{t_{0}}^{+}$.
If $\frz_{F}t_{0}$ is injective-reachable, then $\frz_{F}\cS:=\{\frz_{F,g}s_{g}|g\in\Z^{I}\}$
is a basis of $\upClAlg(\frz_{F}t_{0})$.

\end{Thm}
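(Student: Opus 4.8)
The plan is to check that $\frz_{F}\cS$ is a $\Z^{I}$-pointed subset of $\upClAlg(\frz_{F}t_{0})$ which is compatibly pointed at every seed of $\Delta_{\frz_{F}t_{0}}^{+}$, and then to invoke the injective-reachability of $\frz_{F}t_{0}$ in order to upgrade this to the assertion that $\frz_{F}\cS$ is a $\kk$-basis. The first two checks amount to bookkeeping with the results already recorded; the substance of the theorem lies in the last implication.

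First I would verify pointedness and containment. Fix $g\in\Z^{I}$. Since $s_{g}$ is $g$-pointed in $\LP(t_{0})$, the degrees occurring in $s_{g}$ form a finite poset with unique maximal element $g$, so $g$ is their maximum and $s_{g}\in\hLP_{\preceq g}(t_{0})$; hence $\frz_{F,g}^{t_{0}}s_{g}$ is defined. By Definition \ref{def:freezing_operator} it equals $x^{g}+\sum_{n}c_{n}\,x^{g+p^{*}n}$, the sum running over nonzero $n\in\N^{I_{\ufv}}$ with $\supp n\subset R:=I_{\ufv}\backslash F$ and $c_{n}$ the coefficient of $x^{g+p^{*}n}$ in $s_{g}$; because the restriction of $p^{*}$ to $\N^{R}$ is precisely the map $p^{*}$ of the seed $\frz_{F}t_{0}$, every surviving term satisfies $g+p^{*}n\prec_{\frz_{F}t_{0}}g$. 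Thus $\frz_{F,g}^{t_{0}}s_{g}$ is a nonzero $g$-pointed element of $\LP(\frz_{F}t_{0})$, and it lies in $\upClAlg(\frz_{F}t_{0})$ by Proposition \ref{prop:project_up_cl_alg}. The leading degrees being pairwise distinct, $\frz_{F}\cS=\{\frz_{F,g}^{t_{0}}s_{g}\mid g\in\Z^{I}\}$ is a $\Z^{I}$-pointed subset of $\upClAlg(\frz_{F}t_{0})$; in particular its elements are $\kk$-linearly independent.

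Next I would invoke compatibility: since each $s_{g}$ is compatibly pointed at all seeds of $\Delta_{t_{0}}^{+}$, Lemma \ref{lem:freeze_compatibly_pointed} (an immediate consequence of Lemma \ref{lem:projection_properties}, and implicitly of Lemma \ref{lem:freeze_mutation} on the compatibility of freezing with mutations along $R$) shows that $\frz_{F,g}^{t_{0}}s_{g}$ is compatibly pointed at all seeds of $\Delta_{\frz_{F}t_{0}}^{+}$. Hence $\frz_{F}\cS$ is a $\Z^{I}$-pointed, everywhere compatibly pointed family contained in $\upClAlg(\frz_{F}t_{0})$, and $\frz_{F}t_{0}$ is injective-reachable by hypothesis.

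The remaining point — that $\frz_{F}\cS$ spans $\upClAlg(\frz_{F}t_{0})$ — is the main obstacle, and it is exactly here that injective-reachability enters. The mechanism is the one underlying the basis theorems of \cite{qin2019bases} (see also \cite{qin2017triangular}): for $z\in\upClAlg(\frz_{F}t_{0})$, the membership $z\in\LP(\frz_{F}t_{0})$ bounds the degrees occurring in $z$ from above in the dominance order within each $p^{*}$-coset, while the membership $z\in\LP(\frz_{F}t_{0}[1])$ for the injective-reachable shift $\frz_{F}t_{0}[1]$, transported back through the cluster expansion (\ref{eq:cluster_expansion}), bounds them from below; as every dominance-order interval is finite (\cite[Lemma 3.1.2]{qin2017triangular}), the degrees of $z$ lie in a finite union of finite intervals. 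One then straightens $z$ by descending induction along the dominance order: repeatedly pick a $\prec_{\frz_{F}t_{0}}$-maximal term $x^{g}$ of the current remainder, subtract the suitable $\kk$-multiple of the $g$-pointed element $\frz_{F,g}^{t_{0}}s_{g}$, and observe that the new remainder again lies in $\upClAlg(\frz_{F}t_{0})$, hence has its degrees confined to the same finite region; the process therefore terminates and writes $z$ as a finite $\kk$-linear combination of elements of $\frz_{F}\cS$ (the quantum case is identical up to scalars). More economically, one may simply cite the general fact from \cite{qin2019bases} that a $\Z^{I}$-pointed subset of the upper cluster algebra of an injective-reachable seed which is compatibly pointed at all seeds is automatically a $\kk$-basis, and apply it to $\frz_{F}\cS$. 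Either way the theorem follows; the sole non-formal input is the finiteness of the degree range of elements of $\upClAlg(\frz_{F}t_{0})$ coming from injective-reachability, without which the straightening need not terminate.
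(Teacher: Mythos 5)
Your proposal is correct and follows essentially the same route as the paper: establish that $\frz_{F}\cS$ is compatibly pointed at all seeds in $\Delta_{\frz_{F}t_{0}}^{+}$ via Lemma \ref{lem:freeze_compatibly_pointed}, then invoke the basis criterion of \cite[Theorem 4.3.1]{qin2019bases} for injective-reachable seeds. The paper's proof is just the two-sentence version of your final ``more economical'' paragraph; you additionally spell out the pointedness/containment checks (via Proposition \ref{prop:project_up_cl_alg}) and sketch the straightening mechanism behind the cited theorem, which the paper leaves implicit.
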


\begin{proof}

By Lemma \ref{lem:freeze_compatibly_pointed}, the elements of $\frz_{F}\cS$
are compatibly pointed in $\Delta_{\frz_{F}t_{0}}^{+}$. Since $\frz_{F}t_{0}$
is injective-reachable, by \cite[Theorem 4.3.1]{qin2019bases}, $\frz_{F}\cS$
is a basis for $\upClAlg(\frz_{F}t_{0})$.

\end{proof}

\section{Freezing operators on scattering diagrams\label{sec:Projection_of_scattering}}

Let there be given an initial seed $t_{0}$ such that $\tB(t_{0})$
is of full rank. In the following, we will work at either the classical
level for any $t_{0}$, or at the quantum level but assume that $B(t_{0})$
is skew-symmetric. Then we have the corresponding consistent scattering
diagram $\frD(t_{0})$, such that the localized cluster monomials
are theta functions, and the diagram can be mutated, see \cite{gross2018canonical}\cite{davison2019strong}. 

We refer the reader to Section \ref{sec:Scattering-diagram} for a
detailed review of the classical scattering diagrams. Quantum scattering
diagrams for skew-symmetric $B(t_{0})$ have similar descriptions,
and could be found in \cite[Section 5]{MandelQin2021}.

\subsection{Freezing scattering diagrams\label{subsec:Freezing_scattering_diagrams}}

Let there be given an initial seed $t_{0}$. Let $\frD=\frD(t_{0})$
denote the cluster scattering diagram associated to the seed $t_{0}$.
Recall that $\frD$ is a collection of walls $(\frd,\frp_{\frd})$,
such that $\frd$ is a codimension $1$ cone in $\cone_{\R}=\R^{I}$
with a primitive normal vector $n_{0}\in\yCone^{>0}$, and the wall
crossing operator $\frp_{\frd}$. We have broken lines, which are
piecewise linear curves in $\cone_{\R}$ bending at walls, coming
from a direction $m$, and ending at a chosen base point $\sQ$. 

Choose a subset of unfrozen vertices $F\subset I_{\ufv}$ and denote
$R=I_{\ufv}\backslash F$. Then $\frg_{R}:=\oplus_{\supp n\subset R}\frg_{n}$
is a Lie subalgebra of $\frg$. Moreover, we have $\frg_{R}=\frg/\mathfrak{j}$,
where $\mathfrak{j}:=\oplus_{n:\supp n\cap F\ne\emptyset}\kk y^{n}$
is an ideal of the Lie algebra $\frg$. Let $\frz_{F}$ denote the
natural projection $\frg\rightarrow\frg_{R}$. Then $\frz_{F}$ is
a Lie algebra homomorphism.

The homomorphism $\frz_{F}$ induces a homomorphism between the completion
$\hfrg\rightarrow\hfrg_{R}$ and, consequently, a homomorphism $\frz_{F}$
between the corresponding pro-nilpotent group $\hG\rightarrow\hG_{R}$.
Then we obtain a scattering diagram $\frz_{F*}\frD$ in $\cone_{\R}$
via the pushforward $\frz_{F*}$, which consists of the walls $(\frd,\frz_{F}\frp_{\frd})$
for any wall $(\frd,\frp_{\frd})$ in $\frD$, see Lemma \ref{lem:exp_intertwine_morphism}.
Note that, if $\frD$ is consistent, then its pushforward $\frz_{F*}\frD$
is consistent as well. We view $\frz_{F*}\frD$ as the scattering
diagram constructed from $\frD$ by freezing the vertices in $F$.

Let $\frz_{F}t_{0}$ denote the seed obtained from $t_{0}$ by freezing
the vertices in $F$. Let $\frD(t_{0})$ and $\frD(\frz_{F}t_{0})$
denote the cluster scattering diagrams associated to $t_{0}$ and
$\frz_{F}t_{0}$ respectively. Then we have the following result.

\begin{Lem}\label{lem:pushforward_scattering_diagram}

The scattering diagram $\frz_{F*}\frD(t_{0})$ is equivalent to $\frD(\frz_{F}t_{0})$. 

\end{Lem}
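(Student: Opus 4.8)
The plan is to verify the defining property of the cluster scattering diagram $\frD(\frz_F t_0)$ for the pushforward $\frz_{F*}\frD(t_0)$, namely consistency together with the prescribed set of incoming walls and minimality of support. Consistency of $\frz_{F*}\frD(t_0)$ is automatic: the pushforward of a consistent scattering diagram along a Lie algebra homomorphism is consistent, since $\frz_F$ commutes with the formation of wall-crossing operators along any path (Lemma \ref{lem:exp_intertwine_morphism} and the definition of pushforward). So the heart of the matter is to match incoming walls.

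First I would identify the lattices and graded Lie algebras in play. For $t_0$ the relevant Lie algebra is $\frg=\oplus_{n\in\yCone^{>0}}\kk y^n$ with $\yCone=\oplus_{k\in I_{\ufv}}\Z e_k$; for $\frz_F t_0$ the unfrozen set is $R=I_{\ufv}\backslash F$, so the relevant Lie algebra is $\frg_R=\oplus_{n:\,\supp n\subset R}\kk y^n$, and $\frz_F:\frg\to\frg_R$ is exactly the quotient by the ideal $\frj=\oplus_{n:\,\supp n\cap F\neq\emptyset}\kk y^n$ described just before the statement. Crucially, $\cone_\R=\R^I$ is the same ambient space for both diagrams, and the maps $p^*$, the forms $\{\,,\,\}$ and the integers $d_i$ are unchanged by freezing, so the two diagrams live on the same stage with compatible data. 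The incoming walls of $\frD(t_0)$ are $(e_k^\bot,\exp(-d_k\Li_2(-y_k)))$ for $k\in I$; those with $k\in F$ get sent by $\frz_F$ to $(e_k^\bot,1)$, i.e. become trivial, while those with $k\in R$ (together with the frozen $k\in I_{\fv}$) are preserved verbatim, because $\frz_F$ fixes $y_k$ for $k\notin F$. Hence the incoming walls of $\frz_{F*}\frD(t_0)$ are exactly $\{(e_k^\bot,\exp(-d_k\Li_2(-y_k)))\mid k\in I\backslash F\}$, which is precisely the set of incoming walls prescribed for $\frD(\frz_F t_0)$.

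Then I would invoke the uniqueness theorem: by \cite[Theorem 1.21(4)]{gross2018canonical} (quoted in the excerpt), the equivalence class of a consistent scattering diagram is determined by its set of incoming walls. Since $\frz_{F*}\frD(t_0)$ is consistent with the correct incoming walls, it is equivalent to $\frD(\frz_F t_0)$. Passing to minimal support if necessary gives the scattering diagram $\frD(\frz_F t_0)$ on the nose, which is all that "equivalent to" requires. For the quantum case (with $B(t_0)$ skew-symmetric), the same argument runs through with the quantum scattering diagrams of \cite[Section 5]{MandelQin2021}, since the quotient map $\frz_F$ is still a homomorphism of the relevant (quantum) graded Lie algebras and the uniqueness-by-incoming-walls statement has its quantum analog there.

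The main obstacle I anticipate is a bookkeeping subtlety rather than a conceptual one: one must be careful that "incoming" is defined relative to the correct $\yCone$. A wall of $\frD(t_0)$ with primitive normal $n_0\in\yCone^{>0}$ having $\supp n_0\subset R$ is incoming for $t_0$ iff it contains $p^* n_0$; this is the same condition as being incoming for $\frz_F t_0$, because $p^*$ does not change and $n_0$ lies in the sublattice $\oplus_{k\in R}\Z e_k$. Walls whose normal has support meeting $F$ are killed by $\frz_F$, so they do not contribute. One should also check that $\frz_{F*}\frD(t_0)$ genuinely satisfies the finiteness condition at each order — i.e. is a bona fide $\frg_R$-scattering diagram — which follows since $\frz_F$ sends the order filtration of $\frg$ to that of $\frg_R$ (the linear function $d$ on $\yCone$ restricts to one on $\oplus_{k\in R}\Z e_k$). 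Once these compatibilities are recorded, the proof is essentially the two lines above: pushforward preserves consistency, the incoming walls come out right, and uniqueness finishes it.
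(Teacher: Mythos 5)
Your proof is correct and takes essentially the same approach as the paper's: both observe that the pushforward is consistent, that freezing kills the incoming walls indexed by $F$ while preserving those indexed by $R=I_{\ufv}\backslash F$, and then conclude by the uniqueness of a consistent scattering diagram with prescribed incoming walls. Your write-up simply makes explicit a few points the paper leaves implicit (consistency of the pushforward, the reduction to the uniqueness theorem, the compatibility of the ``incoming'' condition under freezing since $p^*$ is unchanged).
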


\begin{proof}

the nontrivial incoming walls of $\frz_{F*}\frD(t_{0})$ takes the
form $(e_{k}^{\bot},\exp(-d_{k}\Li_{2}(-y_{k})),k\in R$, in the classical
case, and the quantum analog in the quantum case (see \cite[(59) (71)]{MandelQin2021}).
Therefore $\frz_{F*}\frD(t_{0})$ and $\frD(\frz_{F}t_{0})$ have
the same set of nontrivial incoming walls. The claims follows from
the fact that they are both consistent.

\end{proof}

\begin{Thm}\label{thm:projection_theta_function}

For any theta function $\theta_{Q,g}$ of $\frD(t_{0})$, $g\in\cone(t)$,
$Q$ a generic base point, $\frz_{F,g}\theta_{Q,g}$ is the theta
function $\theta_{Q,g}^{'}$ of $\frD(t')$ where $t'=\frz_{F}t$.

\end{Thm}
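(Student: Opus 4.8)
The plan is to compare the theta functions term by term using the broken-line description and the compatibility of freezing with wall crossing. First I would recall that, by definition, $\theta_{Q,g} = \sum_\gamma \Mono(\gamma)$, where $\gamma$ ranges over broken lines in $\frD(t_0)$ with base point $Q$ and incoming direction $x^g$, and that each $\Mono(L^{(i)})$ has the form $c^{(i)} x^{g + p^* n^{(i)}}$ with $n^{(i)} \in \yCone^{\geq 0}$ and $n^{(i+1)} \succeq n^{(i)}$ (in particular $\supp n^{(i)} \subseteq \supp n^{(i+1)}$). Applying $\frz_{F,g}$ kills exactly those monomials $x^{g + p^* n}$ with $\supp n \cap F \neq \emptyset$. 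Since the exponents along a broken line only grow, if the final monomial survives $\frz_{F,g}$ then every intermediate monomial $c^{(i)} x^{g + p^* n^{(i)}}$ also survives (has $\supp n^{(i)} \cap F = \emptyset$, since $n^{(i)} \preceq n^{(r+1)}$). So the surviving broken lines are precisely those whose monomials stay in the "$F$-free" part throughout.

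The key step is then to match the surviving broken lines in $\frD(t_0)$ with the broken lines in $\frD(t')$, where $t' = \frz_F t$. By Lemma \ref{lem:pushforward_scattering_diagram}, $\frD(t') = \frD(\frz_F t)$ is equivalent to the pushforward $\frz_{F*}\frD(t)$, whose walls are $(\frd, \frz_F \frp_\frd)$. For a wall $\frd$ with primitive normal vector $n_\frd$: if $\supp n_\frd \cap F \neq \emptyset$, then $\frz_F \frp_\frd = 1$ (the wall-crossing automorphism $\frp_f$ with $f = 1 + \sum c_k y^{k n_\frd}$ becomes trivial since $\frz_F(y^{k n_\frd}) = 0$), so such walls disappear; if $\supp n_\frd \cap F = \emptyset$, then $\frz_F \frp_\frd = \frp_\frd$ is unchanged. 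Now given a broken line $\gamma$ in $\frD(t_0)$ all of whose monomials are $F$-free, I claim it is also a broken line for $\frz_{F*}\frD(t_0)$: each bending at a wall $\frd$ replaces $c^{(i)} x^{m^{(i)}}$ by a term of $\frp_i(c^{(i)} x^{m^{(i)}})$; writing $\frp_\frd = \frp_f$ with $f = 1 + \sum_{k \geq 1} c_k y^{k n_\frd}$, the action sends $x^{m}$ to $x^m \cdot f^{\langle m, n_\frd\rangle}$, so each produced term is $c^{(i)} c_k^{\pm} x^{m^{(i)} + k\, p^* n_\frd}$. If $\supp n_\frd \cap F \neq \emptyset$ then any such term with $k \geq 1$ has support meeting $F$ and is therefore killed — meaning that a broken line staying $F$-free never actually bends at an $F$-meeting wall (it passes straight through, which is consistent with $\frz_F \frp_\frd = 1$). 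Conversely every broken line of $\frD(t')$ lifts to one of $\frD(t_0)$ by the same matching, since the $F$-free walls and their operators agree. This gives a bijection between broken lines of $\frD(t')$ with direction $x^g$ and base point $Q$, and $F$-free-staying broken lines of $\frD(t_0)$, under which $\Mono(\gamma)$ is preserved.

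Summing over broken lines then yields
\begin{align*}
\frz_{F,g}\,\theta_{Q,g} &= \frz_{F,g}\sum_{\gamma}\Mono(\gamma) = \sum_{\gamma\ F\text{-free}}\Mono(\gamma) = \sum_{\gamma'\ \text{in }\frD(t')}\Mono(\gamma') = \theta'_{Q,g},
\end{align*}
which is the claimed identity; here I use that $\frz_{F,g}$ is $\kk$-linear and kills exactly the non-$F$-free monomials, and that $\cone(t_0) = \cone(\frz_F t_0)$ so $g$ makes sense on both sides. One subtlety to handle carefully: the base point $Q$ must be generic for both $\frD(t_0)$ and $\frz_{F*}\frD(t_0)$ simultaneously, but since $\supp \frz_{F*}\frD \subseteq \supp \frD$ (deleting walls only shrinks the support) and likewise the singular locus, any $\frD(t_0)$-generic $Q$ works; and Theorem \ref{thm:base_change_theta_function} lets me reduce to $Q$ with $\Q$-linearly independent coordinates if needed.

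I expect the main obstacle to be the careful verification that the broken-line correspondence is a genuine bijection — specifically, that no new broken lines of $\frD(t')$ fail to lift (handled by the agreement of $F$-free walls and operators) and, more delicately, that passing to an equivalent scattering diagram in Lemma \ref{lem:pushforward_scattering_diagram} does not change theta functions. The latter is standard (theta functions depend only on the equivalence class of the scattering diagram, by Theorem \ref{thm:base_change_theta_function} applied across the region where two equivalent diagrams differ only by trivial walls), but it should be invoked explicitly. A secondary point worth stating precisely is why $\frz_{F,g}$ commutes with the summation despite the sum being infinite: this is because $\frz_{F,g}$ is defined on the completion $\hLP_{\preceq g}$ and is continuous for the relevant topology, or more concretely because it acts coefficientwise on the expansion $\sum_n c_n x^{g + p^* n}$.
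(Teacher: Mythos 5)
Your proof follows the same route as the paper's: identify the broken lines killed by $\frz_{F,g}$ as precisely those that bend at an $F$-meeting wall (via monotonicity of the exponents $n^{(i)}$ along a broken line), and match the surviving broken lines of $\frD(t_0)$ with the broken lines of $\frD(\frz_F t_0)$ through the pushforward $\frz_{F*}$ and Lemma~\ref{lem:pushforward_scattering_diagram}. The paper states this bijection much more tersely, leaving implicit the points you spell out (support monotonicity, the subtlety of passing to an equivalent scattering diagram, and commuting $\frz_{F,g}$ past the infinite sum), so your write-up is a faithful but more detailed version of the same argument.
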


\begin{proof}

For any broken line $L$ in $\frD(t)$ with initial direction $g$,
$\frz_{F}\Mono(L)\neq0$ if and only if $L$ only bends at nontrivial
walls $\frd\subset n^{\bot}$ such that $\supp n\subset R$, which
are walls that also belong to $\frD(t')$. Therefore, $L$ is also
a broken line in $\frD(t')$.

Conversely, any broken line $L$ in $\frD(t')$ is also a broken line
in $\frD(t)$. The claim follows.

\end{proof}

\subsection{Chamber structures and cluster monomials}

Recall that the walls in $\frD(t_{0})$ cut off chambers from $\cone_{\R}\simeq\R^{I}$,
such that the seeds $t\in\Delta^{+}$ correspond to chambers $\cC^{t}$.
In particular, $\cC^{t_{0}}$ is the positive quadrant $\cC^{+}:=\{m|m_{k}\geq0|\forall k\in I_{\ufv}\}$.
Moreover, two seeds related by a single step mutation correspond to
two chambers sharing a common wall. 

\begin{Lem}\label{lem:chamber_merge}

Any chamber of $\frD(t_{0})$ is contained in a chamber of $\frD(\frz_{F}t_{0})$.

\end{Lem}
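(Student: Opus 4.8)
The plan is to show that the freezing operator coarsens the chamber decomposition: since $\frD(\frz_F t_0)$ is (equivalent to) the pushforward $\frz_{F*}\frD(t_0)$ by Lemma~\ref{lem:pushforward_scattering_diagram}, its essential support is a subset of that of $\frD(t_0)$. Indeed, a wall $(\frd,\frp_\frd)$ of $\frD(t_0)$ with primitive normal $n_0\in\yCone^{>0}$ contributes a nontrivial wall to $\frz_{F*}\frD(t_0)$ only if $\frz_F(\frp_\frd)\neq 1$, and because $\frp_\frd\in\hG^{\bigparallel}_{n_0}$ is a series in $y^{n_0}$, this forces $\supp n_0\subset R=I_{\ufv}\backslash F$. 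Hence $\overline{\supp_{\ess}\frD(\frz_F t_0)}\subset\overline{\supp_{\ess}\frD(t_0)}$, so every connected component of $M_\R\backslash\overline{\supp_{\ess}\frD(\frz_F t_0)}$ is a union of connected components of $M_\R\backslash\overline{\supp_{\ess}\frD(t_0)}$; in particular each chamber of $\frD(t_0)$ lies in a single chamber of $\frD(\frz_F t_0)$.

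First I would make precise the identification of $\frD(\frz_F t_0)$ with $\frz_{F*}\frD(t_0)$ up to equivalence (Lemma~\ref{lem:pushforward_scattering_diagram}), and recall that one may choose representatives with minimal support, so that $\supp_{\ess}$ is literally the union of the supports of the nontrivial walls. Next I would record the observation above that the projection $\frz_F\colon\hG\to\hG_R$ kills exactly the wall-crossing operators whose normal direction $n_0$ has $\supp n_0\cap F\neq\emptyset$ (these live in the ideal $\mathfrak{j}$), and is the identity on the remaining ones; this is immediate from the description of $\frg_R=\frg/\mathfrak{j}$ in Section~\ref{subsec:Freezing_scattering_diagrams}. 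Then the containment of essential supports follows, and the statement about chambers is the elementary topological fact that removing a smaller closed set from $M_\R$ yields a coarser partition into connected components.

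The only genuine subtlety — and the step I expect to need the most care — is the passage from ``$\overline{\supp_{\ess}\frD(\frz_F t_0)}\subset\overline{\supp_{\ess}\frD(t_0)}$'' to ``each chamber of $\frD(t_0)$ is contained in a chamber of $\frD(\frz_F t_0)$.'' A chamber of $\frD(t_0)$ is a connected subset of $M_\R\backslash\overline{\supp_{\ess}\frD(t_0)}\subset M_\R\backslash\overline{\supp_{\ess}\frD(\frz_F t_0)}$, hence it is contained in a unique connected component of the latter, i.e.\ in a unique chamber of $\frD(\frz_F t_0)$; this is all that is needed. One should be slightly careful that ``chamber'' in the paper refers to connected components of the complement of the \emph{closure} of the essential support, so that the chambers are open and the containment of closures is exactly what transfers. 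I would also remark, for later use (the chamber $\cC^t$ of a seed $t$ obtained by mutations in $R$ should map into $\cC^{\frz_F t}$), that the broken-line/theta-function compatibility in Theorem~\ref{thm:projection_theta_function} is consistent with this coarsening, but for the present lemma the support-containment argument alone suffices.
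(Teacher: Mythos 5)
Your proposal is correct and follows the same route as the paper: pass through the pushforward identification of Lemma~\ref{lem:pushforward_scattering_diagram}, observe that freezing only deletes walls (those whose normal direction meets $F$), so the essential support shrinks, and conclude by the elementary topological fact that a smaller closed set removed from $M_\R$ gives a coarser decomposition into connected components. The paper states this in one line ("there are fewer walls"); you have simply spelled out the same argument, including the (correct) observation that $\frp_\frd\in\hG^{\bigparallel}_{n_0}$ is a series in $y^{n_0}$ so $\frz_F$ either kills it entirely or leaves it untouched.
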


\begin{proof}

There are fewer walls in $\frD(\frz_{F}t_{0})$ than in $\frD(t_{0})$.
Therefore, the chambers of $\frD(t_{0})$ are contained in chambers
of $\frD(\frz_{F}t_{0})$.

\end{proof}

Recall that a chamber $\cC$ is said to be reachable (from $\cC^{+}$)
in $\frD(t_{0})$ if there is a generic path connecting the interiors
$\Int(\cC^{+})$, $\Int(\cC)$ that intersects only finitely many
walls in $\frD(t_{0})$, see Definition \ref{def:reachable_chamber}.

\begin{Lem}\label{lem:theta_localized_cluster_monomial}

A theta function $\theta_{g}$ is a localized cluster monomial of
a cluster algebra $\upClAlg(t_{0})$ if and only if $g\in\cC$ for
some chamber $\cC$ of $\frD(t_{0})$ reachable from $\cC^{+}$.

\end{Lem}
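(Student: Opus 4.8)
The plan is to compare, for each seed $t\in\Delta^+$, the set of degrees (in $\LP(t_0)$) of the localized cluster monomials lying in seed $t$ with the integral points of the associated reachable chamber $\cC^t$, and then to invoke the cited identification of localized cluster monomials with theta functions of $\frD(t_0)$.

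The first and main step is the claim: for $t\in\Delta^+$, the set $\{\deg^{t_0}z \mid z \text{ a localized cluster monomial in seed } t\}$ is exactly $\cC^t\cap\Z^I$. For the inclusion $\subseteq$, write such a $z$ as $z = x(t)^m = \prod_{i\in I}x_i(t)^{*m_i}$ with $m\in\N^{I_{\ufv}}\oplus\Z^{I_{\fv}}$. By Theorem~\ref{thm:cluster_expansion} we have $\deg^{t_0}x_i(t)=g_i(t)$ for all $i\in I$, with $g_j(t)=f_j$ for $j\in I_{\fv}$ since frozen variables are not mutated; since the degree of a $v$-twisted product is the sum of the degrees, $\deg^{t_0}z = \sum_{i\in I}m_i g_i(t) = \sum_{k\in I_{\ufv}}m_k g_k(t) + \sum_{j\in I_{\fv}}m_j f_j$, which lies in $\cC^t = \sum_{i\in I}\R_{\geq0}g_i(t) + \sum_{j\in I_{\fv}}\R_{\geq0}(-f_j)$ because $m_k\geq0$ and $\R f_j\subset\cC^t$. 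For the reverse inclusion I would use that the full-rank hypothesis on $\tB(t_0)$ makes the extended $g$-vectors $\{g_i(t)\mid i\in I\}$ a $\Z$-basis of $\cone(t_0)\simeq\Z^I$; then any $g\in\cC^t\cap\Z^I$ is uniquely $\sum_i m_i g_i(t)$ with $m_k\geq0$ for $k\in I_{\ufv}$ and $m_j\in\Z$ for $j\in I_{\fv}$, so $g=\deg^{t_0}\bigl(x(t)^m\bigr)$ for the localized cluster monomial $x(t)^m$ in seed $t$.

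Next I would record that a $g$-pointed theta function $\theta_{\cC^+,\cdot}$ of $\frD(t_0)$ must be $\theta_g$: indeed $\deg^{t_0}\theta_m=m$ for every $m$ (taking $t=t_0$ in the degree formula for theta functions), so the $\theta_m$ are $m$-pointed and pairwise distinct. The two implications then follow. If $g$ lies in a reachable chamber, that chamber equals $\cC^t$ for some $t\in\Delta^+$, so by the previous step $g=\deg^{t_0}z$ for a localized cluster monomial $z=x(t)^m$; since localized cluster monomials are theta functions of $\frD(t_0)$, $z$ is a $g$-pointed theta function, hence $z=\theta_g$, so $\theta_g$ is a localized cluster monomial. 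Conversely, if $\theta_g$ is a localized cluster monomial, write $\theta_g=x(t)^m$ with $t\in\Delta^+$; taking $\deg^{t_0}$ of both sides gives $g=\deg^{t_0}\theta_g=\deg^{t_0}\bigl(x(t)^m\bigr)\in\cC^t$ by the first step, and $\cC^t$ is reachable because all chambers $\cC^t$, $t\in\Delta^+$, are reachable (Definition~\ref{def:reachable_chamber} and the structure theorem for reachable chambers).

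The point that needs the most care is the first step — that $\cC^t\cap\Z^I$ is precisely the degree set of the localized cluster monomials in seed $t$. The inclusion into $\cC^t$ is the routine pointedness/additivity computation above, but the converse rests on $\{g_i(t)\}_{i\in I}$ being a $\Z$-basis, which I would cite as a standard consequence of the full-rank assumption on $\tB(t_0)$ (equivalently, of the $c$-vectors in seed $t$ forming a basis). Everything else is bookkeeping together with the two cited facts that cluster monomials are theta functions and that $\theta_m$ is $m$-pointed.
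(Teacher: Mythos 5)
Your proof is correct and takes the same route the paper's one-line citation to the mutation of scattering diagrams in \cite{gross2018canonical} is implicitly invoking: you reconstruct the argument from the chamber description $\cC^{t}=\sum_{i\in I}\R_{\geq0}g_{i}(t)+\sum_{j\in I_{\fv}}\R_{\geq0}(-f_{j})$, the fact that $\psi_{t_0,t}$ is a $\Z$-linear bijection so that $\{g_i(t)\}_{i\in I}$ is a lattice basis, the $m$-pointedness of $\theta_m$, and the identification of cluster monomials with theta functions. The only small remark is that invertibility of the extended $G$-matrix over $\Z$ holds already from sign coherence/tropical duality and does not in fact require the full-rank hypothesis on $\tB(t_0)$, though that hypothesis is in force throughout the paper so nothing is lost.
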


\begin{proof}

The claim follows from the mutation of scattering diagrams, see \cite{gross2018canonical}.

\end{proof}

\begin{Prop}\label{prop:projection_localized_cluster_monomial}

We work with $\kk=\Z$, or with $\kk=\Z[v^{\pm}]$ but assuming $B(t_{0})$
is skew-symmetric. If $z$ is a localized cluster monomial in $\upClAlg(t_{0})$,
then $\frz_{F,\deg z}z$ is a localized cluster monomial in $\clAlg(\frz_{F}t_{0})$.

\end{Prop}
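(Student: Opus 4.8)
The plan is to translate the statement into the language of scattering diagrams and run everything through Theorem~\ref{thm:projection_theta_function} together with the chamber combinatorics of freezing. Write $g=\deg z$. Since $z$ is a localized cluster monomial of $\upClAlg(t_0)$, it is a theta function of $\frD(t_0)$ (see the remarks opening Section~\ref{sec:Projection_of_scattering}, and \cite{gross2018canonical}, with \cite{davison2019strong} in the quantum case where $B(t_0)$ is skew-symmetric); more precisely, by Lemma~\ref{lem:theta_localized_cluster_monomial} we may write $z=\theta_{\cC^{+},g}$ where $g$ lies in a chamber $\cC$ of $\frD(t_0)$ reachable from $\cC^{+}$. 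So I would fix a generic base point $Q\in\Int(\cC^{+})$ whose coordinates are $\Q$-linearly independent and replace $z$ by $\theta_{Q,g}$.

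Next I would identify the image $\frz_{F,g}z$. By Lemma~\ref{lem:pushforward_scattering_diagram} the pushforward $\frz_{F*}\frD(t_0)$ is equivalent to $\frD(\frz_F t_0)$, and by Theorem~\ref{thm:projection_theta_function} (applied with $t_0$ in the role of $t$) we get $\frz_{F,g}z=\frz_{F,g}\theta_{Q,g}=\theta'_{Q,g}$, the theta function of $\frD(\frz_F t_0)$ based at $Q$. Because $\Int(\cC^{+})$ is contained in the interior of the positive chamber $\cC^{+}(\frz_F t_0)$ of $\frD(\frz_F t_0)$ --- the defining inequalities $m_k\ge 0$ for $k\in R=I_{\ufv}\backslash F$ being a subset of those for $k\in I_{\ufv}$ --- we have $\theta'_{Q,g}=\theta'_{\cC^{+}(\frz_F t_0),g}$. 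Thus $\frz_{F,g}z$ is a theta function of $\frD(\frz_F t_0)$, and by Lemma~\ref{lem:theta_localized_cluster_monomial} applied to the seed $\frz_F t_0$, it will be a localized cluster monomial of $\clAlg(\frz_F t_0)$ precisely when $g$ lies in a chamber of $\frD(\frz_F t_0)$ reachable from $\cC^{+}(\frz_F t_0)$.

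So the proposition reduces to the claim that a reachable chamber of $\frD(t_0)$ lies inside a reachable chamber of $\frD(\frz_F t_0)$. By Lemma~\ref{lem:chamber_merge}, the chamber $\cC$ is contained in some chamber $\cC'$ of $\frD(\frz_F t_0)$, so $g\in\cC'$ and it remains to see that $\cC'$ is reachable. For this I would take a generic smooth path $\gamma$ in $\frD(t_0)$ from $\Int(\cC^{+})$ to $\Int(\cC)$ meeting only finitely many walls. The essential walls of $\frz_{F*}\frD(t_0)$ form a sub-collection of those of $\frD(t_0)$ --- the pushforward only deletes walls on which $\frz_F\frp_{\frd}$ becomes trivial --- while $\Int(\cC^{+})\subset\Int(\cC^{+}(\frz_F t_0))$ and $\Int(\cC)\subset\Int(\cC')$. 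Hence, after a small generic perturbation (and after replacing $\frz_{F*}\frD(t_0)$ by a minimal, essential-support representative, equivalent to $\frD(\frz_F t_0)$), $\gamma$ becomes a generic path in $\frD(\frz_F t_0)$ from $\cC^{+}(\frz_F t_0)$ to $\cC'$ crossing only finitely many walls. Therefore $\cC'$ is reachable, and $\frz_{F,g}z$ is a localized cluster monomial of $\clAlg(\frz_F t_0)$, as desired.

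I expect the reachability step of the last paragraph to be the only substantive point: the passage to theta functions and their behaviour under $\frz_{F*}$ are handed to us by Theorem~\ref{thm:projection_theta_function}, Lemma~\ref{lem:pushforward_scattering_diagram} and Lemma~\ref{lem:chamber_merge}, whereas the fact that freezing preserves mutation reachability of chambers is exactly Muller's observation (\cite{muller2015existence}) cited in the introduction. The care required is only in keeping the perturbed path generic while still crossing finitely many walls, and in passing to the minimal representative of the pushforward diagram; neither of these causes real difficulty.
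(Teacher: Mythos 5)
Your proposal is correct and follows essentially the same route as the paper: pass to theta functions via Lemma~\ref{lem:theta_localized_cluster_monomial}, apply Theorem~\ref{thm:projection_theta_function} and Lemma~\ref{lem:pushforward_scattering_diagram} to identify $\frz_{F,g}z$ with the theta function of $\frD(\frz_F t_0)$, use Lemma~\ref{lem:chamber_merge} to place $g$ inside a chamber $\cC'$ of the frozen diagram, and invoke Muller's reachability argument. The only difference is that you spell out the path-perturbation argument for reachability of $\cC'$ explicitly, which the paper packages separately as Lemma~\ref{lem:projection_reachable_chamber} (stated just after the proposition and implicitly relied on in its one-line proof), so your version is if anything a bit more self-contained on that point.
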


\begin{proof}

Since $z$ is a localized cluster monomial, it must agree with the
$\deg z$-pointed theta function $\vartheta_{\deg z}$ and $\deg z$
is contained in some reachable chamber $\cC$ in $\frD(t_{0})$, see
Lemma \ref{lem:theta_localized_cluster_monomial}. By Lemma \ref{lem:chamber_merge},
$\deg z$ must be contained in some reachable chamber $\cC'$ in $\frD(\frz_{F}t_{0})$
that contains $\cC$. The claim follows.

\end{proof}

Let us discuss the mutation reachability of chambers. We start with
the following observation by Muller \cite{muller2015existence}.

\begin{Lem}\label{lem:projection_reachable_chamber}

Let $\frC$ and $\frC'$ denote chambers in the scattering diagram
$\frD(t_{0})$ and $\frz_{F}\frD(t_{0})$ respectively, such that
$\frC\subset\frC'$. If $\frC$ is reachable in $\frD(t_{0})$, then
$\frC'$ is reachable in $\frz_{F}\frD(t_{0})$ as well.

\end{Lem}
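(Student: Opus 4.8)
The plan is to exploit the description of reachable chambers in terms of paths and the fact, recorded in Lemma~\ref{lem:chamber_merge}, that freezing only removes walls. Suppose $\frC$ is reachable in $\frD(t_{0})$, so there is a $\frD(t_{0})$-generic path $\gamma$ from $\Int(\frC^{+})$ to $\Int(\frC)$ crossing only finitely many walls of $\frD(t_{0})$. The key observation is that every wall of $\frz_{F*}\frD(t_{0})$ with nontrivial wall-crossing operator is the image of a wall of $\frD(t_{0})$: indeed a wall $(\frd,\frp_{\frd})$ of $\frD(t_{0})$ with primitive normal vector $n$ is sent by the pushforward to the wall $(\frd,\frz_{F}\frp_{\frd})$, and $\frz_{F}\frp_{\frd}$ is trivial unless $\supp n\subset R=I_{\ufv}\backslash F$. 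Hence $\supp_{\ess}\frz_{F*}\frD(t_{0})\subset\supp\frD(t_{0})$, and by Lemma~\ref{lem:pushforward_scattering_diagram} the same holds for $\frD(\frz_{F}t_{0})$ up to equivalence.

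Next I would deform $\gamma$ slightly so that it is simultaneously generic with respect to both $\frD(t_{0})$ and $\frD(\frz_{F}t_{0})$; this is possible because the singular locus of $\frD(\frz_{F}t_{0})$ is contained in a countable union of codimension-$\geq 2$ cones together with the boundaries of its (finitely many, per order) walls, and the path already avoids the finer singular locus of $\frD(t_{0})$ up to a generic perturbation. Along this perturbed path, the walls of $\frD(\frz_{F}t_{0})$ that $\gamma$ crosses form a subset of the walls of $\frD(t_{0})$ that it crosses, hence there are still only finitely many of them. Since $\gamma$ starts in $\Int(\frC^{+})$ — which is the initial chamber of $\frD(\frz_{F}t_{0})$ as well — and ends in $\Int(\frC)\subset\Int(\frC')$, this exhibits a generic path from the initial chamber of $\frD(\frz_{F}t_{0})$ to $\Int(\frC')$ crossing finitely many walls. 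By Definition~\ref{def:reachable_chamber}, $\frC'$ is reachable in $\frz_{F}\frD(t_{0})$.

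The one point requiring care — and the step I expect to be the main obstacle — is the genericity of the endpoint $\Int(\frC)$ inside $\frC'$ and the perturbation argument: I must ensure that when I perturb $\gamma$ to be $\frD(\frz_{F}t_{0})$-generic I do not push its terminal endpoint out of $\Int(\frC')$, and that the finitely many wall crossings of $\frD(t_{0})$ are not replaced by infinitely many near-coincident crossings in $\frD(\frz_{F}t_{0})$. Both are handled by noting that $\frC\subset\frC'$ with $\frC$ a (full-dimensional) chamber, so $\Int(\frC)$ is an open subset of $\Int(\frC')$, and by working order by order: for each $k$, $\frD^{\leq k}(\frz_{F}t_{0})$ is a finite scattering diagram whose walls are among those of $\frD^{\leq k}(t_{0})$, so a path generic for the latter, after an arbitrarily small perturbation keeping endpoints in the respective chamber interiors, is generic for the former and crosses no new walls. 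Taking the inverse limit over $k$ gives the required path. Alternatively, one can bypass the path argument entirely by invoking Muller's original statement \cite[Theorem 1.4.1]{muller2015existence} together with Lemma~\ref{lem:pushforward_scattering_diagram}, which is presumably the route the author has in mind; the above is the self-contained version.
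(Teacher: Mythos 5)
Your proof takes exactly the same approach as the paper: start with a $\frD(t_{0})$-generic path $\gamma$ from $\Int(\frC^{+})$ to $\Int(\frC)$ realizing reachability, and observe that, since the pushforward only trivializes wall-crossing operators without enlarging supports, $\gamma$ is automatically generic for $\frz_{F*}\frD(t_{0})$ and crosses a subset of the walls. The perturbation worries you raise are unnecessary once you work directly with $\frz_{F*}\frD(t_{0})$ (as the statement and the paper do) rather than passing through $\frD(\frz_{F}t_{0})$: the collection of wall supports is literally unchanged under $\frz_{F*}$, so no new genericity conditions appear; also note that your ``alternative'' of citing Muller's Theorem 1.4.1 would be circular here, since the paper derives that theorem as a corollary of this very lemma.
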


\begin{proof}

Since $\frC$ is reachable, there exists a $\frD(t_{0})$-generic
smooth path $\gamma$ from $\Int(\cC^{+})$ to $\Int(\frC)$ intersecting
finitely many walls in $\frD(t_{0})$. By the construction of $\frz_{F}\frD(t_{0})$,
$\gamma$ is also $\frz_{F}\frD(t_{0})$-generic and intersects fewer
walls in $\frz_{F}\frD(t_{0})$.

\end{proof}

Note that $\cC^{t_{0}[1]}=\cC^{-}:=-\cC^{+}$ when $t_{0}$ is injective-reachable.
So Lemma \ref{lem:projection_reachable_chamber} implies the following
result.

\begin{Thm}[{\cite[Theorem 1.4.1]{muller2015existence}}]\label{thm:freeze_inj_reachable}

If $t_{0}$ is injective-reachable, then $\frz_{F}t_{0}$ is injective-reachable
too.

\end{Thm}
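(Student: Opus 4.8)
The plan is to reformulate injective-reachability of a seed as the statement that its negative chamber $-\frC^{+}$ is a mutation-reachable chamber of the associated cluster scattering diagram, and then to transport this reachability along the freezing pushforward $\frz_{F*}$, using that freezing merely enlarges chambers (Lemma~\ref{lem:chamber_merge}) and preserves reachability (Lemma~\ref{lem:projection_reachable_chamber}).

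First I would observe that, since $t_{0}$ is injective-reachable, the seed $t_{0}[1]$ lies in $\Delta_{t_{0}}^{+}$, so $\frC^{t_{0}[1]}$ is a reachable chamber of $\frD(t_{0})$; moreover, as noted just before the statement, $\frC^{t_{0}[1]}$ is exactly the negative chamber $\frC^{-}=-\frC^{+}=\{m\in\cone_{\R}\mid\langle m,e_{k}\rangle\le 0\ \forall k\in I_{\ufv}\}$. Now pass to $\frz_{F}t_{0}$ with unfrozen set $R=I_{\ufv}\setminus F$, and consider the region $\frC^{-}_{\frz_{F}t_{0}}=\{m\mid\langle m,e_{k}\rangle\le 0\ \forall k\in R\}$. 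It contains $\frC^{-}$, and its interior meets no wall of $\frz_{F*}\frD(t_{0})$, since every such wall lies in $n^{\bot}$ for some $n\in\yCone^{>0}(\frz_{F}t_{0})$ and $\langle m,n\rangle<0$ on that interior; hence $\frC^{-}_{\frz_{F}t_{0}}$ is a chamber of $\frz_{F*}\frD(t_{0})$, which by Lemma~\ref{lem:pushforward_scattering_diagram} is equivalent to $\frD(\frz_{F}t_{0})$ and so has the same chambers and reachability relation. By Lemma~\ref{lem:chamber_merge} (comparing interiors) it is the chamber containing the chamber $\frC^{-}$ of $\frD(t_{0})$, so Lemma~\ref{lem:projection_reachable_chamber} makes it reachable. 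Thus $\frC^{-}_{\frz_{F}t_{0}}=\frC^{t'}$ for some $t'\in\Delta_{\frz_{F}t_{0}}^{+}$.

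It then remains to read injective-reachability of $\frz_{F}t_{0}$ off the identity $\frC^{t'}=\frC^{-}_{\frz_{F}t_{0}}$. The frozen vertices of $\frz_{F}t_{0}$ are those in $I\setminus R=I_{\fv}\sqcup F$, with frozen $g$-vectors $\deg^{\frz_{F}t_{0}}x_{j}(t')=f_{j}$, so by the chamber formula $\frC^{t'}$ contains $\pm f_{j}$ for all $j\in I\setminus R$; reducing modulo $\sum_{j\in I\setminus R}\R f_{j}$ identifies $\frC^{t'}$ with the simplicial cone on the images of $\deg^{\frz_{F}t_{0}}x_{k}(t')$, $k\in R$, and $\frC^{-}_{\frz_{F}t_{0}}$ with the simplicial cone on $-\overline{f_{k}}$, $k\in R$. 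Equality of these simplicial cones matches their rays by a permutation $\sigma$ of $R$, and since the $g$-vectors $\{\deg^{\frz_{F}t_{0}}x_{i}(t')\mid i\in I\}$ form a $\Z$-basis of $\cone(\frz_{F}t_{0})=\Z^{I}$ (the transition map $\psi_{\frz_{F}t_{0},t'}$ being a $\Z$-linear bijection), their images $\{\overline{\deg^{\frz_{F}t_{0}}x_{k}(t')}\mid k\in R\}$ form a $\Z$-basis of $\Z^{R}$, which forces the matching to be the equalities $\overline{\deg^{\frz_{F}t_{0}}x_{\sigma k}(t')}=-\overline{f_{k}}$ rather than a mere proportionality. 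Hence $\deg^{\frz_{F}t_{0}}x_{\sigma k}(t')\in -f_{k}+\Z^{I\setminus R}$ for every $k\in R$, i.e. $\frz_{F}t_{0}$ is injective-reachable with $(\frz_{F}t_{0})[1]=t'$. The real work is done by Lemma~\ref{lem:projection_reachable_chamber} (Muller's observation), which I am assuming; granting that, the only step needing genuine care rather than bookkeeping is this last passage from the chamber identity to the $g$-vector condition.
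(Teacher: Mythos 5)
Your proposal is correct and follows essentially the same route as the paper's one-line argument: observe $\frC^{t_0[1]}=\frC^{-}$, invoke Lemma~\ref{lem:projection_reachable_chamber} to make the containing chamber of $\frD(\frz_{F}t_{0})$ reachable, and conclude. You merely spell out the step the paper leaves implicit, namely that identifying the reachable chamber with the negative chamber of $\frD(\frz_{F}t_{0})$ forces the $g$-vector condition defining injective-reachability of $\frz_{F}t_{0}$.
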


We then deduce the following result from the above observation (Lemma
\ref{lem:projection_reachable_chamber}). \cite{cao2020enough} also
gave a proof for this result based on Muller's observation.

\begin{Thm}\label{thm:seed_rechability_reduction}

Given a seed $t\in\Delta_{t_{0}}^{+}$ such that $x_{j}(t_{0})$ are
cluster variables of $t$ for any $j\in F$. Then we can find a mutation
sequence $\seq=\mu_{k_{r}}\cdots\mu_{k_{1}}$ such that $t=\seq t_{0}$
and $k_{s}\notin F$ for any $1\leq s\leq r$. In particular, $\supp x_{i}(t)\subset R$
for any $i$.

\end{Thm}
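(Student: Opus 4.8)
The plan is to set up a reachable chamber in the frozen scattering diagram $\frD(\frz_F t_0)$ that corresponds to the seed $t$, then lift the mutation path realizing that chamber back to a mutation path in $\frD(t_0)$ that avoids the vertices of $F$. More precisely, first I would translate the hypothesis that $x_j(t_0)$, $j\in F$, are cluster variables of $t$ into a statement about chambers. By Theorem~\ref{thm:compatibly_pointed_cluster_monom} and the description of reachable chambers, the seed $t$ corresponds to a reachable chamber $\frC^t$ in $\frD(t_0)$, and the walls of $\frC^t$ are indexed by $I_{\ufv}$ with primitive normal vectors $c_k(t)$. The hypothesis that $x_j(t_0)$ remains a cluster variable of $t$ for $j\in F$ should force the half-space $H^{\geq 0}_{e_j}$ (the defining wall of $\frC^+$ for $j\in F$) to contain $\frC^t$; equivalently, $\deg^t x_j(t_0) = f_j$ and the $g$-vectors $\deg^t x_i(t)$, together with the frozen directions, all lie in $H^{\geq 0}_{e_j}$ for every $j\in F$.

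Next I would pass to $\frD(\frz_F t_0)$. By Lemma~\ref{lem:pushforward_scattering_diagram}, this is $\frz_{F*}\frD(t_0)$ up to equivalence, so it has strictly fewer walls, and by Lemma~\ref{lem:chamber_merge} the chamber $\frC^t$ is contained in some chamber $\frC'$ of $\frD(\frz_F t_0)$. By Lemma~\ref{lem:projection_reachable_chamber}, $\frC'$ is reachable in $\frD(\frz_F t_0)$; concretely, there is a $\frz_F\frD(t_0)$-generic path $\gamma$ from $\Int(\frC^+)$ to $\Int(\frC')$ crossing finitely many walls, all of which have normal vectors supported in $R$. Now invoke the bijection between reachable chambers of $\frD(\frz_F t_0)$ and seeds in $\Delta^+_{\frz_F t_0}$: the chamber $\frC'$ corresponds to a seed $\frz_F t = \seq\, \frz_F t_0$ for some mutation sequence $\seq = \mu_{k_r}\cdots\mu_{k_1}$ on $I_{\ufv}\backslash F = R$, so automatically $k_s\notin F$ for all $s$. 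It remains to check that the same sequence $\seq$, applied in $\Delta^+_{t_0}$, produces $t$ and not merely a seed mutation-equivalent along $R$ to something with the same chamber closure: since $\seq$ uses only vertices in $R$ and $t_0$ and $\frz_F t_0$ share the same $B$-matrix columns over $R$, the cluster variables $x_i(\seq t_0)$ have the same $g$-vectors as $x_i(\seq\,\frz_F t_0) = x_i(\frz_F t)$, hence $\seq t_0$ has chamber $\frC^{\seq t_0}\subset \frC'$; one then argues, using that both $\frC^t$ and $\frC^{\seq t_0}$ are reachable chambers of $\frD(t_0)$ contained in the single chamber $\frC'$ of $\frD(\frz_F t_0)$ and share the same frozen-direction boundary, that $\frC^{\seq t_0} = \frC^t$, whence $\seq t_0 = t$ by the chamber-to-seed bijection in $\frD(t_0)$.

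For the final assertion, that $\supp x_i(t)\subset R$ for every $i$: since $t = \seq t_0$ with $\seq$ a mutation sequence on $R$, the $F$-polynomial of $x_i(t)$ in $\LP(t_0)$ is obtained by iterating the $y$-mutation rule (\ref{eq:mutation-y}) only at vertices $k\in R$, so the variables $y_j$, $j\in F$, are never introduced; equivalently, applying the freezing operator $\frz_F$ changes nothing, $\frz_F x_i(t) = x_i(t)$, which by Definition~\ref{def:freezing_operator} says precisely that the support of $x_i(t)$ is contained in $R$. Alternatively this is immediate from Proposition~\ref{prop:projection_localized_cluster_monomial} together with the observation that $x_i(t)$, being a cluster variable of $\frz_F t$, is already expressed over $R$.

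The main obstacle I anticipate is the "descent" step in the second paragraph: knowing that $\frC^t$ and the chamber $\frC^{\seq t_0}$ of the concretely-built sequence $\seq$ both sit inside the same chamber $\frC'$ of the coarser diagram does not by itself pin them down to be equal, since a chamber of $\frD(\frz_F t_0)$ can contain several reachable chambers of $\frD(t_0)$. Ruling this out requires using the hypothesis seriously — that $x_j(t_0)$, $j\in F$, survive as cluster variables of $t$ forces $\frC^t$ to be the \emph{particular} sub-chamber of $\frC'$ adjacent to the frozen walls $e_j^\bot$, $j\in F$, in the correct way, and one must check the sequence $\seq$ lands on that same sub-chamber. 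Getting this bookkeeping right, rather than any deep new idea, is where the work lies; everything else is assembled from Muller's observation (Lemma~\ref{lem:projection_reachable_chamber}) and the standard dictionary between seeds, $g$-vectors, and reachable chambers.
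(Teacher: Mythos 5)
Your proof follows the same overall plan as the paper's — pass to the frozen scattering diagram, use Muller's observation to get a mutation sequence $\seq$ on $R$ realizing the big chamber $\frC'$, then argue $\seq t_0 = t$ by a chamber comparison. The gap you flag at the end is real, and as it stands the proposal does not close it: knowing that $\frC^t$ and $\frC^{\seq t_0}$ both lie in $\frC'$ and that both have $f_j$ among their rays for $j\in F$ does not by itself give equality, since a chamber of $\frD(\frz_F t_0)$ genuinely can contain several distinct chambers of $\frD(t_0)$ sharing the subcone $\sigma_F := \sum_{j\in F}\R_{\geq 0}f_j$ as a common face. So the step ``one then argues \dots that $\frC^{\seq t_0}=\frC^t$'' is the actual content, and it is not supplied.

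The paper fills this in two moves. First, it establishes the \emph{equality} $\frC' = \frC^{\seq t_0} + \R^F$ (not merely $\frC^{\seq t_0}\subset\frC'$): the paper does this via equivariance of chambers under the tropical transformation $\phi^{-1}_{t_0,\seq t_0}=\phi^{-1}_{t_0',\seq t_0'}$ applied to the identity $\frC^{t_0}+\R^F=\frC^{t_0'}$; equivalently, one can see it directly from the explicit chamber description $\frC^t=\sum_i \R_{\geq 0}\deg x_i(t)+\sum_{j\in I_\fv}\R_{\geq 0}(-f_j)$, noting that $\deg^{t_0}x_i(\seq t_0)=\deg^{t_0'}x_i(\seq t_0')$ for all $i$ since $\seq$ is supported on $R$, and that passing from $\frD(t_0)$ to $\frD(\frz_F t_0)$ merely adds $-f_j$, $j\in F$, to the frozen directions. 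Second, it deduces $\frC^t\subset\frC^{\seq t_0}+\R^F$ and concludes that $\Int(\frC^t)\cap\Int(\frC^{\seq t_0})\neq\emptyset$. This second deduction is exactly where the hypothesis is used: for $m\in\Int(\frC^t)$ write $m=m'+a$ with $m'\in\frC^{\seq t_0}$, $a=a^+-a^-$ with $a^\pm\in\sigma_F$; then $m+a^-\in\Int(\frC^t)$ because $\sigma_F\subset\frC^t$ (this is precisely the hypothesis, phrased as: $f_j=\deg^{t_0}x_j(t_0)$ is a ray of $\frC^t$ for $j\in F$), and $m+a^-=m'+a^+\in\frC^{\seq t_0}$ because $\sigma_F\subset\frC^{\seq t_0}$; since $\Int(\frC^t)$ avoids all walls, this point lies in $\Int(\frC^{\seq t_0})$, forcing $\frC^t=\frC^{\seq t_0}$. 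Note that the form in which you invoke the hypothesis — ``$\frC^t\subset H^{\geq 0}_{e_j}$ for $j\in F$'' — is not what is needed and is not in general a consequence of $f_j$ being a ray of $\frC^t$; the operative fact is the inclusion $\sigma_F\subset\frC^t$. The rest of your argument, including the final assertion about $\supp x_i(t)\subset R$, is correct.
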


\begin{proof}

Recall that the seeds $t_{0}$ and $t$ correspond to chambers $\frC^{t_{0}}$
and $\frC^{t}$ of $\frD(t_{0})$. Let us consider the scattering
diagram $\frD(t_{0}')$, where $t_{0}'=\frz_{F}t_{0}$. We have an
inclusion $\frC^{t_{0}}\subset\frC^{t_{0}'}$ and $\frC^{t}\subset\frC'$
for chambers $\frC^{\frz_{F}t_{0}}$ and $\frC'$ of $\frD(\frz_{F}t_{0})$.
Note that $\frC^{'}$ is reachable from $\frC^{\frz_{F}t_{0}}$ in
$\frD(\frz_{F}t_{0})$ by Lemma \ref{lem:projection_reachable_chamber}.
Therefore, there exists a $t'=\seq t'_{0}$ for some mutation sequence
$\seq$ on the set of vertices $R$ such that $\frC'=\frC^{t'}$.
For showing $t=\seq t_{0}$, it suffices to verify $\frC^{t}=\frC^{\seq t_{0}}$.

Note that we have $\frC^{t_{0}}+\Z^{F}=\frC^{t_{0}'}$. Applying the
sequence of tropical mutations $\phi_{t_{0}',t'}^{-1}=\phi_{t_{0}',\seq t'_{0}}^{-1}=\phi_{t_{0},\seq t_{0}}^{-1}$,
these chambers are mapped to $\frC^{\seq t_{0}}+\Z^{F}=\frC^{t'}$.
We deduce that $\frC^{t}\subset\frC^{\seq t_{0}}+\Z^{F}$. It follows
that the intersection between the interiors of $\frC^{t}$ and $\frC^{\seq t_{0}}$
is non-empty. Therefore, $\frC^{t}=\frC^{\seq t_{0}}$.

\end{proof}

\begin{Cor}

Let there be given a cluster monomial $z$ in a seed $t\in\Delta_{t_{0}}^{+}$
such that $\supp z\cap F=\emptyset$. Then $z$ is a cluster monomial
in $t'$ for some $t'\in\Delta_{\frz_{F}t_{0}}^{+}$. 

\end{Cor}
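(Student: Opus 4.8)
The plan is to realise $z$ as a theta function, transport it through the freezing operator into the scattering diagram $\frD(\frz_F t_0)$, and then upgrade the resulting \emph{localised} cluster monomial to a genuine one by comparing chambers of $\frD(t_0)$ and $\frD(\frz_F t_0)$. Write $z=\prod_{i\in I}x_i(t)^{m_i}$ with $m_i\in\N$ and set $J=\{i\in I\mid m_i>0\}$. By the positivity of cluster expansions (Theorem~\ref{thm:positivity_cluster}) there is no cancellation among the $x^{p^*n}$-terms when the $x_i(t)^{m_i}$ are multiplied, so $\supp z=\bigcup_{i\in J}\supp x_i(t)$; hence $\supp x_i(t)\cap F=\emptyset$ for $i\in J$, and in particular $\frz_{F,\deg z}z=z$. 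Being a cluster monomial, $z$ is the theta function $\theta_{\deg z}$ of $\frD(t_0)$, and $\deg z$ lies in the reachable chamber $\cC^t$.

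By Lemma~\ref{lem:chamber_merge}, $\cC^t$ is contained in a chamber $\cC^{t'}$ of $\frD(\frz_F t_0)$, which is reachable by Lemma~\ref{lem:projection_reachable_chamber}; write $t'=\seq\frz_F t_0$ with $\seq$ a mutation sequence on $R=I_{\ufv}\backslash F$. By Theorem~\ref{thm:projection_theta_function}, $\frz_{F,\deg z}\theta_{\deg z}$ is the theta function of $\frD(\frz_F t_0)$ at $\deg z$, which, since $\frz_{F,\deg z}z=z$, equals $z$; as $\deg z\in\cC^{t'}$, Lemma~\ref{lem:theta_localized_cluster_monomial} then gives that $z$ is a localised cluster monomial in the seed $t'$, say $z=\prod_{i\in R}x_i(t')^{a_i}\prod_{j\in F\cup I_{\fv}}x_j^{b_j}$ with $a_i\in\N$, $b_j\in\Z$. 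So it remains to show $b_j\ge 0$ for all $j\in F\cup I_{\fv}$.

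Put $\hat t:=\seq t_0\in\Delta_{t_0}^+$, so that $\frz_F\hat t=\seq\frz_F t_0=t'$ as in the proof of Proposition~\ref{prop:project_up_cl_alg}: thus $x_i(\hat t)=x_i(t')$ for $i\in R$, while $x_j(\hat t)=x_j(t_0)=x_j$ for $j\in F$ since $\seq$ never mutates a vertex of $F$, and hence being a cluster monomial of $t'$ is the same as being a cluster monomial of $\hat t$. Since $z$, as a cluster monomial of $t$, has no pole along the frozen coordinate hyperplanes $\{x_j=0\}$, $j\in I_{\fv}$, in the chart of $\hat t$, it suffices to prove $\deg z\in\cC^{\hat t}$: granting this, $z=\theta_{\deg z}$ is a localised cluster monomial of $\hat t$ whose exponents at all unfrozen vertices (in particular at those of $F$) are $\ge 0$, and the previous remark supplies non-negativity at $I_{\fv}$. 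The key point towards $\deg z\in\cC^{\hat t}$ is that, because $\supp x_i(t)\cap F=\emptyset$ for $i\in J$, the broken lines computing $z=\theta_{\deg z}$ in $\frD(t_0)$ never bend at a wall whose primitive normal $n$ has $\supp n\cap F\ne\emptyset$ (the mechanism in the proof of Theorem~\ref{thm:projection_theta_function}); as $\cC^t$ and $\cC^{\hat t}$ are chambers of $\frD(t_0)$ separated, inside the common chamber $\cC^{t'}$ of $\frD(\frz_F t_0)$, only by such walls, I would use the broken-line description of the chamber decomposition to conclude that $\deg z$ is not separated from $\cC^{\hat t}$ by any wall of $\frD(t_0)$, whence $\deg z\in\cC^{\hat t}$.

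This last step --- pinning $\deg z$ down in the chamber $\cC^{\hat t}$ produced by the $R$-sequence $\seq$, rather than in an unrelated subchamber of $\cC^{t'}$ --- is the main obstacle: it is exactly here that the support hypothesis is used in an essential way, and the bookkeeping of which ``extra'' walls of $\frD(t_0)$ can separate $\cC^t$ from $\cC^{\hat t}$ must be done carefully. An alternative I would try instead is to show, via Proposition~\ref{prop:projection_localized_cluster_monomial}, that each cluster variable $x_i(t)$ with $i\in J$ (which is fixed by $\frz_F$ and hence a localised cluster monomial of $\clAlg(\frz_F t_0)$, and is irreducible as a cluster variable) is in fact a cluster or frozen variable of a seed in $\Delta_{\frz_F t_0}^+$; then assemble the $x_i(t)$, $i\in J$, into a single such seed $\hat t$ that also contains the $x_j$, $j\in F$, among its cluster variables, and apply Theorem~\ref{thm:seed_rechability_reduction} to present $\hat t=\seq t_0$ with $\seq$ supported on $R$, so that $z=\prod_{i\in J}x_i(\hat t)^{m_i}$ is visibly a cluster monomial of $\seq\frz_F t_0=t'$. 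Either way, producing this seed (equivalently, verifying $\deg z\in\cC^{\hat t}$) is the crux; everything else is routine.
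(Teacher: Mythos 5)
You have correctly assembled all of the scaffolding: the reduction to $\supp x_i(t)\cap F=\emptyset$ for $i\in J$ via positivity, the identification $\frz_F z=z$, the use of Lemma~\ref{lem:chamber_merge} and Lemma~\ref{lem:projection_reachable_chamber} to land $\deg z$ in a reachable chamber $\cC^{t'}$ of $\frD(\frz_F t_0)$, the passage to $\hat t=\seq t_0$ with $\frz_F\hat t=t'$, and the observation that non-negativity of the exponents at $I_\fv$ follows from $z\in\bClAlg(t_0)\subset\bLP(\hat t)$. All of this is sound and is essentially the same chamber-level bookkeeping that underlies the proof of Theorem~\ref{thm:seed_rechability_reduction}. (The paper itself states the Corollary with no proof, immediately after that theorem, so there is no written argument to compare against; but your framework is the natural one.)

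The gap is exactly where you flag it, and it is genuine. Having placed $\deg z$ in the big chamber $\cC^{t'}$, you still have to show it lies in the particular sub-chamber $\cC^{\hat t}$ of $\frD(t_0)$ (equivalently, that the exponents $b_j$, $j\in F$, are non-negative). Your sketch --- that the broken lines for $\theta_{\deg z}$ do not bend at $F$-walls, so $\deg z$ ``is not separated from $\cC^{\hat t}$ by any wall of $\frD(t_0)$'' --- does not follow as stated: the broken lines for $\theta_{\deg z}$ are based at a point of $\Int(\cC^+)$ and record the Laurent expansion of $z$, not the position of $\deg z$ relative to the walls partitioning $\cC^{t'}$; the fact that a path from the direction $\deg z$ to $\cC^+$ never bends at an $F$-wall does not control which of the sub-chambers of $\cC^{t'}$ the vector $\deg z$ lies in. Your alternative route has the same hole at a different spot: a cluster variable $y=x_i(t)$ with $\supp y\cap F=\emptyset$ is, by Proposition~\ref{prop:projection_localized_cluster_monomial}, a \emph{localized} cluster monomial of $\frz_F t_0$, and irreducibility in $\upClAlg(t_0)$ does not by itself rule out negative exponents on the newly frozen variables $x_j$, $j\in F$ (nor does it show that the various $x_i(t)$, $i\in J$, together with $x_j(t_0)$, $j\in F$, fit into one cluster, which is the compatibility statement you would need before invoking Theorem~\ref{thm:seed_rechability_reduction}). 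So the crux --- proving $b_j\ge 0$ for $j\in F$, or equivalently producing the seed $\hat t$ --- remains unproved; filling it requires an actual argument (e.g.\ a careful analysis of which walls can separate $\cC^t$ from $\cC^{\hat t}$ and how $\supp x_i(t)\cap F=\emptyset$ constrains the rays $\deg x_i(t)$), not just the observations you have recorded.
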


\subsection{Freezing of cluster monomials}

Take any mutation sequence $\seq=\seq_{\uk}$ and denote a seed $t=\seq t_{0}$.
Let $u=x(t)^{m}\in\LP(t)$ denote a localized cluster monomial of
$t$, $m\in\N^{I_{\ufv}}\oplus\Z^{F}$. Recall that we have $\seq^{*}u=x^{g}\cdot F_{u}|_{y^{n}\mapsto x^{p^{*}n}}$,
where $F_{u}^{t_{0}}=\sum_{n\in\N^{I_{\ufv}}}c_{n}y^{n}$ is the $F$-polynomial
with $c_{0}=1$. We define 
\begin{align*}
\frz_{F}^{t_{0}}\seq^{*}u & :=x^{g}\cdot(\frz_{F}F_{u}^{t_{0}})|_{y^{n}\mapsto x^{p^{*}n}}=x^{g}\cdot\sum_{n:\supp n\cap F=\emptyset}c_{n}x^{p^{*}n}.
\end{align*}

Note that the above $\frz_{F}^{t_{0}}\seq^{*}u$ is well-defined even
when $\tB(t_{0})$ is not of full rank. When $\tB(t_{0})$ is of full
rank, the above $\frz_{F}^{t_{0}}\seq^{*}u$ equals $\frz_{F,g}^{t_{0}}\seq^{*}u$
in Definition \ref{def:freezing_operator}.

Let $N(F_{u}^{t_{0}})$ denote the Newton polytope of $F_{u}^{t_{0}}$,
i.e., the convex hull of $\{n\in\Z^{I_{\ufv}}|c_{n}\neq0\}$. For
the quantum case $\kk=\Z[v^{\pm}]$, we denote it by $N_{v}(F_{u}^{t_{0}})$.
For the classical case, we denote it by $N_{1}(F_{u}^{t_{0}})$. 

\begin{Lem}[{\cite[Section 5]{Tran09}}]\label{lem:newton_polytope_F_poly}

We have $N_{v}(F_{u}^{t_{0}})=N_{1}(F_{u}^{t_{0}})$.

\end{Lem}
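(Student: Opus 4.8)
The inclusion $N_1(F_u^{t_0})\subseteq N_v(F_u^{t_0})$ is immediate: as recalled after Theorem~\ref{thm:cluster_expansion}, specializing $v=1$ turns the quantum $F$-polynomial into the classical one, and a specialization can only annihilate coefficients, so every exponent occurring in $F_u^{t_0}$ over $\Z$ already occurs over $\Z[v^\pm]$. All the content is in the reverse inclusion, which says precisely that no cancellation occurs at $v=1$: if the quantum coefficient $c_n\in\Z[v^\pm]$ is nonzero, then $c_n|_{v=1}\neq0$. When $B(t_0)$ is skew-symmetric this is immediate from Theorem~\ref{thm:positivity_cluster}(2), since the coefficients $c_n$ are then the $b_n$ appearing in the cluster expansion of the cluster variable $u$, hence lie in $\N[v^\pm]$, and a nonzero element of $\N[v^\pm]$ evaluates to a positive integer at $v=1$. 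So the only case requiring work is the general (skew-symmetrizable) quantum one, which is exactly the case handled in \cite[Section~5]{Tran09}.

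The plan is to prove the non-cancellation statement by induction on the length of a mutation sequence $\seq$ with $u$ a cluster variable of $\seq t_0$, using the quantum analogue of the Fomin--Zelevinsky $F$-polynomial recursion from \cite[Section~5]{Tran09}. The base case, $u$ an initial cluster variable with $F_u^{t_0}=1$, is trivial. For the inductive step one observes that in the recursion expressing the $F$-polynomial of a once-more-mutated cluster variable through the $F$-polynomials of the current cluster, the $y$-exponents are combined exactly as in the classical recursion; the only discrepancy is the appearance of $v$-powers in front of the monomials, arising from reordering in the quantum torus. Consequently the induced operation on the set of occurring exponents — and hence on the Newton polytope, a purely polyhedral object — is governed by the same recursion classically and quantumly. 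What remains is to exclude quantum cancellations that do not already happen at $v=1$; for this one propagates through the induction the finer statement that each \emph{vertex} of the Newton polytope carries a coefficient of the form $\pm v^{a}$ for a single integer $a$ (for the vertex $0$ this coefficient is $1$ by the normalization $c_0=1$), and such a monomial is nonzero at $v=1$. Since a polytope is the convex hull of its vertices, this yields $N_v(F_u^{t_0})\subseteq N_1(F_u^{t_0})$.

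The main obstacle is precisely this last bookkeeping step: verifying that the extremal coefficients of the quantum $F$-polynomial stay monomial in $v$ under the recursion, i.e. that the two products entering the quantum exchange relation contribute, at each vertex exponent, a single $v$-monomial rather than a sum that might collapse at $v=1$. This is where one must use the explicit commutation scalars among the quantum $\widehat Y$-variables in Tran's normalization, and it is carried out in \cite[Section~5]{Tran09}; granting it, the lemma follows.
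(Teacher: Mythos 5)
The paper offers no proof of this lemma---it is stated purely as a citation to \cite[Section~5]{Tran09}---so there is no internal argument for you to reproduce, and your proposal should be read as a gloss on that reference. Your setup is correct: you rightly note $N_1\subseteq N_v$ is trivial under specialization, and you correctly reduce the nontrivial inclusion $N_v\subseteq N_1$ to non-vanishing at $v=1$ of the coefficients at the \emph{vertices} of $N_v$, since $N_v$ is the convex hull of its vertices and $N_1$ is convex. Your observation that the skew-symmetric case follows immediately from the positivity of Theorem~\ref{thm:positivity_cluster}(2) is a valid shortcut the paper does not record, and it is the cleanest argument available there. For general skew-symmetrizable $B$ you defer to Tran, exactly as the paper does, so there is no logical gap. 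One small caution: the inductive invariant you propose to propagate---that each vertex of $N_v$ carries a coefficient of the form $\pm v^{a}$---is strictly stronger than what you actually need (nonvanishing at $v=1$ suffices, and in the classical limit a vertex could a priori carry an integer coefficient larger than $1$), and I am not certain this monomiality statement is literally what Tran establishes; before leaning on it you should check what form the non-cancellation result actually takes in \cite[Section~5]{Tran09} (for instance whether Tran proves equality of full supports rather than just of Newton polytopes, or controls coefficients via quantum binomials rather than via extremality). Since you explicitly grant that step to the reference, this does not make your argument wrong, only your guess at its mechanism uncertain.
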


\begin{Thm}

If $z$ is a localized (quantum) cluster monomial of $\upClAlg(t_{0})$,
then $\frz_{F}^{t_{0}}z$ is a localized (quantum) cluster monomial
of $\upClAlg(\frz_{F}t_{0})$.\label{thm:projection_quantum_cluster_monomial}

\end{Thm}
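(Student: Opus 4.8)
The plan is to deduce the quantum statement from the classical one (Proposition~\ref{prop:projection_localized_cluster_monomial}) by specializing $v=1$; the only extra ingredient needed is Tran's comparison of Newton polytopes of quantum and classical $F$-polynomials (Lemma~\ref{lem:newton_polytope_F_poly}), and in particular no skew-symmetry assumption on $B(t_0)$ enters. The classical case ($\kk=\Z$) is exactly Proposition~\ref{prop:projection_localized_cluster_monomial}, so assume $\kk=\Z[v^\pm]$. Let $z$ be a localized quantum cluster monomial of $\upClAlg(t_0)$, set $g=\deg^{t_0}z$, and let $\bar z=z|_{v=1}$, a localized classical cluster monomial. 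By the classical case, $\frz_{F,g}\bar z$ is a localized classical cluster monomial of $\upClAlg(\frz_F t_0)$ at $v=1$, so $\frz_{F,g}\bar z=\bar x(\bar t')^{m'}$ for some seed $\bar t'\in\Delta_{\frz_F t_0}^+$ and exponent $m'$ with $m'|_R\in\N^R$. Since the unfrozen vertices of $\frz_F t_0$ are precisely $R$, we may write $\bar t'=\seq'(\frz_F\bar t_0)$ for a mutation sequence $\seq'$ supported on $R$; put $\tilde t:=\seq' t_0\in\Delta_{t_0}^+$ and $t':=\seq'(\frz_F t_0)=\frz_F\tilde t\in\Delta_{\frz_F t_0}^+$. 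Freezing commutes with mutations along $R$ and $\tB$ stays of full rank, so $\tilde t$ and $t'$ are genuine quantum seeds, and they have the same $x$-variables.

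Next I would carry the computation over to the seed $\tilde t$. Localized cluster monomials are compatibly pointed at all seeds (Theorem~\ref{thm:compatibly_pointed_cluster_monom}), hence $z$ has a unique $\prec$-maximal degree in every $\LP(t)$, so Lemma~\ref{lem:freeze_mutation} applies to the mutation sequence relating $t_0$ and $\tilde t$: under the induced identification of fraction fields, $\frz_{F,g}^{t_0}z$ becomes $\frz_F^{\tilde t}(z|_{\tilde t})$, where $z|_{\tilde t}=x^{\tilde g}\cdot F_z^{\tilde t}|_{y\mapsto x^{p^*}}$ with $\tilde g=\deg^{\tilde t}z$ and $F_z^{\tilde t}$ the quantum $F$-polynomial of $z$ in $\tilde t$ (constant term $1$). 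Thus $\frz_F^{\tilde t}(z|_{\tilde t})=x^{\tilde g}\cdot(\frz_F F_z^{\tilde t})|_{y\mapsto x^{p^*}}$, and the whole problem reduces to proving $\frz_F F_z^{\tilde t}=1$: granting this, $\frz_{F,g}^{t_0}z$ re-expands in $\tilde t$ to the monomial $x^{\tilde g}=x(t')^{\tilde g}$, which is a localized quantum cluster monomial of $\upClAlg(\frz_F t_0)$ because (as the $v=1$ comparison below shows) $\tilde g=m'$ and $m'|_R\in\N^R$.

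It remains to prove $\frz_F F_z^{\tilde t}=1$. By Lemma~\ref{lem:freeze_mutation} over $\kk=\Z$, the re-expansion in $\tilde t$ of $\frz_{F,g}\bar z=\bar x(\bar t')^{m'}$ equals $\frz_F^{\tilde t}(\bar z|_{\tilde t})$; the left side is just the monomial $\bar x^{m'}$ since $\tilde t$ and $\bar t'=\frz_F\tilde t$ have the same $x$-variables, while the right side is $\bar x^{\tilde g}\cdot(\frz_F\overline{F_z^{\tilde t}})|_{y\mapsto x^{p^*}}$. Comparing forces $\tilde g=m'$ and $\frz_F\overline{F_z^{\tilde t}}=1$, i.e.\ every nonzero exponent $n$ in the support of $\overline{F_z^{\tilde t}}$ satisfies $\supp n\cap F\neq\emptyset$. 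A short convexity argument (the total $F$-coordinate $\sum_{j\in F}n_j$ is nonnegative on every such exponent and on $0$, and vanishes only at $0$) then shows that the Newton polytope $N_1(\overline{F_z^{\tilde t}})$ meets the coordinate subspace $\{n: n_j=0\ \forall j\in F\}$ only in the origin. By Lemma~\ref{lem:newton_polytope_F_poly}, $N_v(F_z^{\tilde t})=N_1(\overline{F_z^{\tilde t}})$, so the support of $F_z^{\tilde t}$, lying inside its Newton polytope, likewise meets $\{n: n_j=0\ \forall j\in F\}$ only in the origin; since $F_z^{\tilde t}$ has constant term $1$, this gives $\frz_F F_z^{\tilde t}=1$, as required.

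I expect the last step — upgrading the classical fact ``the $R$-part of the $F$-polynomial is trivial'' to its quantum counterpart via the Newton-polytope comparison — to be the only substantive point; everything else amounts to checking that the freezing operators, the mutation identifications, and the specialization $v=1$ commute with one another, which is routine given Lemma~\ref{lem:freeze_mutation} and Proposition~\ref{prop:projection_localized_cluster_monomial}. One should also double-check the minor compatibilities used above: that $\frz_F t_0$ is again of full rank, that $\tilde t$ and $\frz_F\tilde t$ share the same $x$-variables, and that $\frz_{F,g}^{t_0}z\in\upClAlg(\frz_F t_0)$ to begin with (Proposition~\ref{prop:project_up_cl_alg}).
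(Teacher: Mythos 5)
Your proof is correct and follows essentially the same route as the paper's: first establish the classical case via Proposition~\ref{prop:projection_localized_cluster_monomial}, then transport to a seed reachable by $R$-mutations in which the freezing operator kills the classical $F$-polynomial, and upgrade to the quantum case using Tran's identity of Newton polytopes (Lemma~\ref{lem:newton_polytope_F_poly}). You also explicitly supply the short convexity argument needed to pass from the vanishing of the $R$-supported part of the classical $F$-polynomial's \emph{support} to the corresponding statement about its \emph{Newton polytope}, a step the paper invokes without comment.
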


\begin{proof}

Denote $z'=\frz_{F}^{t_{0}}z$, $t_{0}'=\frz_{F}t_{0}$.

Assume $\kk=\Z$ for the moment. In this case, the desired statement
has been proved in Proposition \ref{prop:projection_localized_cluster_monomial}.
Then $z'$ is the Laurent expansion of a localized classical cluster
monomial. So there exists a seed $s'\in\Delta_{t_{0}'}^{+}$ and a
localized classical cluster monomial of $u'=x(s')^{m}$, $m\in\N^{I_{\ufv}\backslash F}\oplus\Z^{I_{\fv}\sqcup F}$,
such that $z'$ is the Laurent expansion of $u'$ in $\LP(t_{0}')$.

Note that we have $t_{0}'=\seq s'$ for some sequence of mutation
$\seq$ on the letters from $I_{\ufv}\backslash F$. Then $\seq^{*}(z')$
equals $u'$. Therefore, in $\LP(s')$, $\seq^{*}(z')$ is $m$-pointed
and its $F$-polynomial $F_{z'}^{s'}$ is $1$.

Denote $s=(\seq)^{-1}t_{0}$. Recall that the localized cluster monomial
$z$ is compatibly pointed at all seeds. Then Lemmas \ref{lem:freeze_mutation}
and \ref{lem:freeze_compatibly_pointed} imply $\frz_{F}^{s}(\seq^{*}z)=\seq^{*}(\frz_{F}^{t_{0}}z)=\seq^{*}z'\in\LP(s')$.
Let $F_{z}^{s}=\sum_{n\in\N^{I_{\ufv}}}c_{n}(1)y^{n}$, $c_{n}(1)\in\Z$,
denote the classical $F$-polynomial of $\seq^{*}z$ in the seed $s$.
Then, we have $\frz_{F}^{s}F_{z}^{s}=F_{z'}^{s'}=1$ or, equivalently,
$\{n|c_{n}(1)\neq0,\supp n\cap F=\emptyset\}=\{0\}$. We deduce that
the Newton polytope $N_{1}$ of $F_{z}^{s}$ satisfies $N_{1}\cap\N^{I_{\ufv}}=\{0\}$,
where we identify $\N^{I_{\ufv}}$ with $\N^{I_{\ufv}}\oplus0\subset\N^{I}$.

Now work at the quantum case $\kk=\Z[v^{\pm}]$. Note that $\seq^{*}z$
is still $m$-pointed. Let $F_{z}^{s}(v)=\sum_{n\in\N^{I_{\ufv}}}c_{n}(v)\cdot y^{n}$,
denote the quantum $F$-polynomial of $\seq^{*}z$ in the seed $s$,
where $c_{n}(v)\in\Z[v^{\pm}]$ satisfy $c_{n}(v)|_{v\mapsto1}=c_{n}(1)$.
By Lemma \ref{lem:newton_polytope_F_poly}, the Newton polytope $N_{v}$
of $F_{z}^{s}(v)$ and the Newton polytope $N_{1}$ of $F_{z}^{s}$
are the same. So $N_{v}\cap\N^{I_{\ufv}}=\{0\}$. It follows that
$\{n|c_{n}(v)\neq0,\supp n\cap F=\emptyset\}=\{0\}$. Therefore, $\frz_{F}^{s}\seq^{*}z$
equals the localized quantum cluster monomial $x(s')^{m}$ of the
seed $s':=(\seq)^{-1}t_{0}'$ as desired.

\end{proof}

\section{Bases constructions: freezing and localization }

\subsection{Localness\label{subsec:Localness}}

Let there be given a subset $\Theta\subset\cone(t_{0})$ and $\cS=\{s_{g}|g\in\Theta\}$
a collection of pointed formal Laurent series in $\hLP(t_{0})$ such
that $\deg s_{g}=g$. Note that $s_{g}$ are linearly independent
in $\hLP(t_{0})$. We define the $\kk$-module $\clAlg^{\Theta}:=\oplus_{g\in\Theta}\kk s_{g}$.

\begin{Def}

A formal sum $\sum b_{g}s_{g}$, $b_{g}\in\kk$, is said to be $\prec_{t}$-unitriangular,
if $\{g|b_{g}\neq0\}$ has a unique $\prec_{t}$-maximal element $g_{0}$
and, moreover, $b_{g_{0}}=1$. If further $b_{g}\in\mm:=v^{-1}\Z[v^{-1}]$
whenever $g\neq g_{0}$, the sum is said to be $(\prec_{t},\mm)$-unitriangular.

\end{Def}

Note that a $\prec_{t}$-unitriangular formal sum is a well defined
sum in $\widehat{\LP}(t_{0})$.

\begin{Lem}[{\cite[Lemma 3.1.10]{qin2017triangular}}]\label{lem:finit_sum_triangular}

If $z$ is pointed and $z=\sum b_{g}s_{g}$, $b_{g}\in\kk$, is a
finite sum in $\hLP(t_{0})$, then the sum is a $\prec_{t}$-unitriangular.

\end{Lem}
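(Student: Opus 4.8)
The plan is to isolate the $\prec_t$-leading term of the combination $\sum b_g s_g$. Set $S=\{g\in\Theta\mid b_g\neq0\}$, a finite set by hypothesis, and recall that $\preceq_t$ is a genuine partial order on $\cone(t)$ (antisymmetry uses that $p^*$ is injective, which holds since $\tB(t)$ has full rank). Write $g_0=\deg^{t}z$, so $z$ is $g_0$-pointed. Recall also that a $g$-pointed series lies in $\hLP_{\preceq g}(t)$, so every monomial $x^{g'}$ occurring in $s_g$ satisfies $g'\preceq_t g$, and the coefficient of $x^{g}$ in $s_g$ is $1$. The first thing I would record is the observation: \emph{for any $\preceq_t$-maximal element $g^*$ of $S$, the coefficient of $x^{g^*}$ in $z=\sum_{g\in S}b_g s_g$ is exactly $b_{g^*}$.} Indeed, if $x^{g^*}$ occurs in $s_g$ for some $g\in S$ then $g^*\preceq_t g$, and maximality of $g^*$ in $S$ forces $g=g^*$; so only $b_{g^*}s_{g^*}$ contributes, and it contributes $b_{g^*}\cdot 1$. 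Since $b_{g^*}\neq0$, the monomial $x^{g^*}$ genuinely occurs in $z$, and because $z$ is $g_0$-pointed this yields $g^*\preceq_t g_0$.

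Next I would show $g_0\in S$. The coefficient of $x^{g_0}$ in $z$ is $1\neq0$, and $x^{g_0}$ can occur in $s_g$ only when $g_0\preceq_t g$; hence $\{g\in S\mid g_0\preceq_t g\}\neq\emptyset$. Pick a $\preceq_t$-maximal element $g^{**}$ of it. If $g^{**}\prec_t g'$ for some $g'\in S$, then $g_0\preceq_t g^{**}\prec_t g'$ contradicts maximality of $g^{**}$ in $\{g\in S\mid g_0\preceq_t g\}$; so $g^{**}$ is in fact $\preceq_t$-maximal in all of $S$. The observation then gives $g^{**}\preceq_t g_0$, and with $g_0\preceq_t g^{**}$ and antisymmetry we get $g^{**}=g_0$, so $g_0\in S$. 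For uniqueness: any $\preceq_t$-maximal $g^*\in S$ satisfies $g^*\preceq_t g_0$ by the observation, and since $g_0\in S$, maximality of $g^*$ rules out $g^*\prec_t g_0$; hence $g^*=g_0$, and $g_0$ is the unique $\prec_t$-maximal element of $S$.

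Finally, since $S$ is finite with a unique maximal element, $g_0$ is actually a maximum: walking up a chain in $S$ from any $g\in S$ reaches a maximal element, necessarily $g_0$, so $g\preceq_t g_0$ for every $g\in S$. Therefore in $z=\sum_{g\in S}b_g s_g$ the monomial $x^{g_0}$ can only come from $s_{g_0}$ — because $g_0\preceq_t g$ together with $g\preceq_t g_0$ forces $g=g_0$ — so its coefficient equals $b_{g_0}\cdot 1=b_{g_0}$; comparing with the coefficient $1$ in the $g_0$-pointed $z$ gives $b_{g_0}=1$. Thus $\{g\mid b_g\neq0\}$ has the unique $\prec_t$-maximal element $g_0$ with $b_{g_0}=1$, i.e.\ the sum is $\prec_t$-unitriangular. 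The only step needing real care — the main obstacle, such as it is — is the second paragraph, namely checking that $\deg^{t}z$ itself occurs among the indices of the combination; everything else is bookkeeping with the partial order, and the finiteness of the sum is used exactly to guarantee that maximal elements of $S$ exist and that one of them dominates the whole index set.
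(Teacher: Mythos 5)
Your approach is the natural one and, modulo one small imprecision, it is correct; note that the paper itself does not prove this lemma but cites \cite[Lemma 3.1.10]{qin2017triangular}, so there is no in-paper proof to match against.

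The one step I would tighten is the move ``$x^{g^*}$ occurs in $z$, and because $z$ is $g_0$-pointed this yields $g^*\preceq_t g_0$.'' The paper's definition of $g_0$-pointed only says that $g_0$ is the \emph{unique} $\prec_t$-maximal element among the occurring degrees $T$ of $z$; it does not literally say $g_0$ is a maximum of $T$, and for an infinite poset those are not the same thing. You can fill this gap in two ways. One is to observe that, because $z$ is a formal Laurent series (so $T$ is contained in a finite union of down-sets $\{m : m\preceq_t g\}$, and dominance intervals are finite by the lemma quoted in the paper), the unique maximal degree is in fact a maximum. A cleaner route, which avoids the issue entirely, is to show directly that any $\preceq_t$-maximal $g^*$ of $S$ is a $\preceq_t$-maximal element of $T$: you already have $g^*\in T$, and if $g^*\prec_t m$ for some $m\in T$, then $m$ occurs in some $s_g$ with $g\in S$, so $m\preceq_t g$ and hence $g^*\prec_t g$, contradicting maximality of $g^*$ in $S$. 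Uniqueness of the maximal element of $T$ then gives $g^*=g_0$ at once, which also streamlines your second paragraph (you no longer need to first produce an element $g^{**}$ with $g_0\preceq_t g^{**}$). The rest of your argument — in particular the use of finiteness of $S$ to pass from ``unique maximal'' to ``maximum'' when extracting $b_{g_0}=1$ — is fine.
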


\begin{Def}[Local support under multiplication]\label{def:local_support}

A collection of pointed Laurent series $\cS=\{s_{g}|g\in\Theta\}$
in $\hLP(t_{0})$ is said to have local support under multiplication
(or multiplicative local support, or local support), if we have a
$\prec_{t_{0}}$-unitriangular decomposition for any $g_{1},g_{2}\in\Theta$:

\begin{align}
v^{-\lambda(g_{1},g_{2})}s_{g_{1}}*s_{g_{2}} & =\sum b_{g}s_{g},\ b_{g}\in\kk,\label{eq:multiplication_decomposition}
\end{align}
such that $\supp s_{g}\subset(\supp s_{g_{1}}\cup\supp s_{g_{2}})$
whenever $b_{g}\neq0$.

Assume that $\cS$ consists of pointed Laurent polynomials. We further
say that $\cS$ has local support dimensions (under multiplication),
if $\suppDim s_{g}\leq\suppDim s_{g_{1}}+\suppDim s_{g_{2}}$, and
the inequality is strict whenever $g\neq g_{1}+g_{2}$.

\end{Def}

We have the following observation.

\begin{Rem}

If all elements in $\cS$ have non-negative Laurent coefficients,
and all $b_{g}$ in (\ref{eq:multiplication_decomposition}) are non-negative,
then $\cS$ has the the local support property. If $\cS$ is further
contained in $\LP(t_{0})$, then it has the local support dimension
property.

\end{Rem}

We give the following sufficient condition for the multiplicative
local support property.

\begin{Prop}

Assume that $\clAlg^{\Theta}$ is closed under multiplication, $\cS$
has positive multiplicative structure constants, and all $s_{g}\in\cS$
have positive Laurent expansions, then $\cS$ has local support under
multiplication. When $\clAlg^{\Theta}\subset\LP(t_{0})$, then $\cS$
has local support dimensions.

\end{Prop}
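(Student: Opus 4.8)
The plan is to exploit positivity everywhere in order to forbid cancellations, both when the product $s_{g_1}*s_{g_2}$ is expanded directly as a Laurent series in $\hLP(t_0)$ and when it is re-expanded in terms of $\cS$; comparing the two expansions will pin down the supports.

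First I would set up the decomposition. Since $s_{g_i}$ is $g_i$-pointed, the unique $\prec_{t_0}$-maximal exponent appearing in $s_{g_1}*s_{g_2}$ comes from $x^{g_1}*x^{g_2}=v^{\lambda(g_1,g_2)}x^{g_1+g_2}$ with coefficient $1$, so $z:=v^{-\lambda(g_1,g_2)}s_{g_1}*s_{g_2}$ is $(g_1+g_2)$-pointed. As $\clAlg^{\Theta}$ is closed under multiplication, $z=\sum_g b_g s_g$ is a finite sum, hence $\prec_{t_0}$-unitriangular by Lemma \ref{lem:finit_sum_triangular}; thus $b_{g_1+g_2}=1$ and every other $g$ with $b_g\neq0$ has the form $g=(g_1+g_2)+p^*m_g$ for a unique nonzero $m_g\in\yCone^{\geq0}(t_0)$.

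Next I would bound $\supp z$ from above. Writing $s_{g_i}=x^{g_i}\sum_{n_i}(s_{g_i})_{n_i}x^{p^*n_i}$ with non-negative coefficients, the coefficient of $x^{g_1+g_2+p^*n}$ in $z$ is a sum of products of the $(s_{g_i})_{n_i}$ over $n_1+n_2=n$; positivity of the Laurent expansions forces this to vanish unless some such product is nonzero, in which case $\supp n=\supp n_1\cup\supp n_2\subset\supp s_{g_1}\cup\supp s_{g_2}$. Hence $\supp z\subset\supp s_{g_1}\cup\supp s_{g_2}$. For the lower bound I would rewrite each summand relative to the exponent $g_1+g_2$: $b_g s_g=b_g x^{g_1+g_2}\sum_n(s_g)_n x^{p^*(m_g+n)}$, whose relative support is $\supp m_g\cup\supp s_g$ (the $n=0$ term survives since $(s_g)_0=1$). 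Because all $b_g\geq0$ and all $(s_g)_n\geq0$, there is again no cancellation in $z=\sum b_g s_g$, so $\supp z=\bigcup_{b_g\neq0}(\supp m_g\cup\supp s_g)$. Comparing with the upper bound yields $\supp s_g\subset\supp s_{g_1}\cup\supp s_{g_2}$ for every $g$ with $b_g\neq0$, which is the local support property; the required unitriangular decomposition in Definition \ref{def:local_support} is precisely $v^{-\lambda(g_1,g_2)}s_{g_1}*s_{g_2}=\sum b_g s_g$.

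Finally, when $\clAlg^{\Theta}\subset\LP(t_0)$, I would run the identical no-cancellation argument with $\suppDim$ in place of $\supp$: the direct expansion gives $\suppDim z=\suppDim s_{g_1}+\suppDim s_{g_2}$ (componentwise maxima add, since nothing cancels), while the re-expansion gives $\suppDim z=\max_{b_g\neq0}(m_g+\suppDim s_g)$ componentwise. Thus $m_g+\suppDim s_g\leq\suppDim s_{g_1}+\suppDim s_{g_2}$ for all $g$ with $b_g\neq0$; taking $g=g_1+g_2$ (so $m_g=0$) gives $\suppDim s_{g_1+g_2}\leq\suppDim s_{g_1}+\suppDim s_{g_2}$, and for $g\neq g_1+g_2$ the vector $m_g$ is nonzero and non-negative, so the inequality $\suppDim s_g\leq\suppDim s_{g_1}+\suppDim s_{g_2}$ is strict. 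The only point requiring care throughout is the bookkeeping of the shift vectors $m_g$ together with the repeated observation that non-negativity of coefficients genuinely rules out cancellation in both the $*$-product and the recombination; this is just the twice-applied form of the Remark preceding the Proposition, so I do not anticipate a serious obstacle.
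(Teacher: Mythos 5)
Your proof is correct and takes essentially the same approach as the paper's: both equate the direct Laurent expansion of $v^{-\lambda(g_1,g_2)}s_{g_1}*s_{g_2}$ with the Laurent expansion of the finite $\prec_{t_0}$-unitriangular sum $\sum b_g s_g$, then invoke the non-negativity of the $b_g$, of the Laurent coefficients of each $s_g$, and of the coefficients in the direct product to rule out cancellation on both sides, from which the containment $\supp s_g\subset\supp s_{g_1}\cup\supp s_{g_2}$ and the inequality $m_g+\suppDim s_g\leq\suppDim s_{g_1}+\suppDim s_{g_2}$ (strict when $m_g\neq0$) follow. You merely present the comparison a bit more explicitly, via the intermediate quantities $\supp z$ and $\suppDim z$, whereas the paper directly matches monomials on the two sides of the expanded identity; the content is the same.
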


\begin{proof}

For any $s_{g}\in\Theta$, let us denote its Laurent expansion as
$s_{g}=\sum_{n'}c_{g,n'}x^{g+p^{*}n'}$, where $c_{g,n'}$ are positive
by assumption. For any $g_{1},g_{2}\in\Theta$, we have $\prec_{t_{0}}$-unitriangular
$s_{g_{1}}*s_{g_{2}}=\sum b_{g_{1},g_{2}}^{g}s_{g}$ with $b_{g_{1},g_{2}}^{g}$
being positive. Denote $g=g_{1}+g_{2}+\tB\cdot n$ for $s_{g}$ appearing.
By taking the Laurent expansion, we get
\begin{align*}
(\sum_{n_{1}}c_{g_{1},n_{1}}x^{g_{1}+p^{*}n_{1}})*(\sum_{n_{2}}c_{g_{2},n_{2}}x^{g_{2}+p^{*}n_{2}}) & =\sum_{g,n'}b_{g_{1},g_{2}}^{g}c_{g,n'}x^{g_{1}+g_{2}+p^{*}(n+n')}
\end{align*}
Equivalently, $\sum_{n_{1},n_{2}}v^{\alpha(n_{1},n_{2})}c_{g_{1},n_{1}}c_{g_{2},n_{2}}x^{p^{*}(n_{1}+n_{2})}=\sum_{g,n'}b_{g_{1},g_{2}}^{g}c_{g,n'}x^{p^{*}(n+n')}$,
where $\alpha(n_{1},n_{2}):=\lambda(g_{1}+p^{*}n_{1},g_{2}+p^{*}n_{2})$.
Because all coefficients appearing are non-negative, every Laurent
monomial $x^{p^{*}(n+n')}$ appearing on the right must be the contribution
of some $x^{p^{*}(n_{1}+n_{2})}$ on the left. In particular, we have
$\supp(n+n')\subset(\supp n_{1}\cup\supp n_{2})$. Moreover, the $k$-th
coordinates satisfy $\max\{(n+n')_{k}|b_{g_{1},g_{2}}^{g},c_{g,n'}\neq0\}\leq\max\{(n_{1}+n_{2})_{k}|c_{g_{1},n_{1}},c_{g_{2},n_{2}}\neq0\}$
for any $k\in I_{\ufv}$ when $\max\{(n_{1}+n_{2})_{k}|c_{g_{1},n_{1}},c_{g_{2},n_{2}}\neq0\}$
exists. The desired claims follow.

\end{proof}

Lemma \ref{lem:finit_sum_triangular} implies the following result.

\begin{Prop}\label{prop:same_pointed_degrees}

If $\cZ$ is a $\Theta'$-pointed subset of $\hLP(t_{0})$ for some
$\Theta'\subset\cone(t_{0})$ such that it is a $\kk$-basis for $\clAlg^{\Theta}$,
then $\Theta'=\Theta$.

\end{Prop}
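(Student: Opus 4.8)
The plan is to derive the statement from Lemma \ref{lem:finit_sum_triangular} alone, by running one symmetric argument in two directions. The starting point is that both $\cS=\{s_g\mid g\in\Theta\}$ and $\cZ=\{z_h\mid h\in\Theta'\}$ are $\kk$-bases of the \emph{same} module $\clAlg^\Theta$: the family $\cS$ by the very definition $\clAlg^\Theta=\oplus_{g\in\Theta}\kk s_g$, and the family $\cZ$ by hypothesis. Moreover each of these families consists of pointed Laurent series with pairwise distinct degrees (the degrees being the elements of $\Theta$, resp. of $\Theta'$).

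First I would prove $\Theta'\subseteq\Theta$. Fix $h\in\Theta'$. Since $z_h\in\clAlg^\Theta=\oplus_{g\in\Theta}\kk s_g$, it is a \emph{finite} $\kk$-linear combination $z_h=\sum_{g\in\Theta}b_g s_g$. As $z_h$ is $h$-pointed and each $s_g$ is $g$-pointed, Lemma \ref{lem:finit_sum_triangular} shows this expansion is $\prec_{t_0}$-unitriangular: the index set $\{g\in\Theta\mid b_g\neq 0\}$ has a unique $\prec_{t_0}$-maximal element $g_0$ with $b_{g_0}=1$. Since a unique maximal element of a finite poset is in fact its maximum, all other indices appearing are $\prec_{t_0}g_0$; reading off Laurent monomials, $x^{g_0}$ is then the unique $\prec_{t_0}$-maximal monomial of $z_h$ and it occurs with coefficient $1$. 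Hence $\deg z_h=g_0\in\Theta$, and since $z_h$ is $h$-pointed we get $h=g_0\in\Theta$.

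Next I would run exactly the same argument with the roles of $\cS$ and $\cZ$ exchanged to obtain $\Theta\subseteq\Theta'$. For $g\in\Theta$, the element $s_g$ lies in $\clAlg^\Theta$, which by hypothesis is spanned (freely) by $\cZ$, so $s_g=\sum_{h\in\Theta'}c_h z_h$ is a finite $\kk$-linear combination. Applying Lemma \ref{lem:finit_sum_triangular}—now to the pointed family $\cZ$ in place of $\cS$—yields a $\prec_{t_0}$-unitriangular expansion, so $\deg s_g$ equals the $\prec_{t_0}$-maximum of $\{h\mid c_h\neq0\}$, which lies in $\Theta'$; as $s_g$ is $g$-pointed this forces $g\in\Theta'$. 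Combining the two inclusions gives $\Theta'=\Theta$.

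I do not expect a genuine obstacle here. The only points requiring care are bookkeeping: (i) Lemma \ref{lem:finit_sum_triangular} is literally phrased for the specific family $\cS$, so I should remark that its proof uses only that $\cS$ is a family of pointed elements with distinct degrees and hence applies verbatim with $\cZ$ substituted; and (ii) one must note that ``unique $\prec_{t_0}$-maximal element'' upgrades to ``$\prec_{t_0}$-maximum'' on a finite index set, which is precisely what lets me read off the degree of the triangular sum. Both are routine.
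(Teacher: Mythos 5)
Your proof is correct and is the natural unpacking of the paper's one-line assertion that Lemma~\ref{lem:finit_sum_triangular} implies the proposition: both inclusions are obtained by expanding one pointed basis in the other, invoking the finite-sum unitriangularity, and reading off the degree as the $\prec_{t_0}$-maximum of the (finite) index set. Your two caveats—that the lemma applies verbatim to any pointed family with distinct degrees such as $\cZ$, and that a unique maximal element of a finite poset is the maximum—are exactly the right points to flag, and both are handled correctly.
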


Let there be given another collection of pointed Laurent series $\cZ=\{z_{g}|g\in\Theta\}$
in $\hLP(t_{0})$ such that there is a $\prec_{t_{0}}$-unitriangular
transition $(b_{g,g'})_{g,g'\in\Theta}$ from $\cS$ to $\cZ$:

\begin{align}
z_{g} & =\sum_{g'}b_{g,g'}s_{g'},\ b_{g,g'}\in\kk.\label{eq:pointed_elements_transition}
\end{align}

Taking the inverse, the transition from $\cZ$ to $\cS$ is $\prec_{t_{0}}$-unitriangular
as well, see \cite[Lemma 3.1.11]{qin2017triangular}.

\begin{Def}[Local transition]\label{def:local_transition}

The transition (\ref{eq:pointed_elements_transition}) is said to
be finite if, for any $g$, only finitely many $b_{g,g'}$ do not
vanish. 

We say the transition is local if $\supp s_{g'}\subset\supp z_{g}$
whenever $b_{g,g'}\neq0$.

If both $\cS$ and $\cZ$ are contained in $\LP(t_{0})$, the transition
if further said to be local in dimension if we have $\suppDim s_{g'}\leq\suppDim z_{g}$,
and the inequality is strict whenever $g'\neq g$.

\end{Def}

Note that if the right hand side of (\ref{eq:multiplication_decomposition})
or (\ref{eq:pointed_elements_transition}) is a finite sum, it must
be $\prec_{t_{0}}$-unitriangular by Lemma \ref{lem:finit_sum_triangular}.

\begin{Rem}

Even if there is a local transition from $\cS$ to $\cZ$, it is not
necessarily true that there is a local transition from $\cZ$ to $\cS$.
For example, one can take $\cS$ to be the dual canonical basis and
$\cZ$ the dual PBW basis (see \cite{GeissLeclercSchroeer11} for
the cluster structure on quantum groups). Then a frozen variable belongs
to $\cS$. It is a finite sum of dual PBW basis elements with larger
support.

\end{Rem}

\begin{Prop}

Assume that $\cZ$ has local support (resp. local support dimensions)
under multiplication, and the transitions from $\cS$ to $\cZ$ and
from $\cZ$ to $\cS$ are both local (resp. local in dimensions).
Then $\cS$ has local support (resp. local support dimensions) under
multiplication.

\end{Prop}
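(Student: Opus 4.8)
The plan is to route the product through $\cZ$: expand the factors $s_{g_{1}},s_{g_{2}}$ in the family $\cZ$, apply the local-support property of $\cZ$, and then expand the result back into $\cS$, tracking supports at every step. Fix $g_{1},g_{2}\in\Theta$. Using the (given, $\prec_{t_{0}}$-unitriangular, local) transition from $\cZ$ to $\cS$, I would write $s_{g_{i}}=\sum_{g_{i}'}c_{g_{i},g_{i}'}z_{g_{i}'}$ with $\supp z_{g_{i}'}\subset\supp s_{g_{i}}$ whenever $c_{g_{i},g_{i}'}\neq0$, so that
\[
v^{-\lambda(g_{1},g_{2})}s_{g_{1}}*s_{g_{2}}=\sum_{g_{1}',g_{2}'}c_{g_{1},g_{1}'}c_{g_{2},g_{2}'}\cdot v^{-\lambda(g_{1},g_{2})}z_{g_{1}'}*z_{g_{2}'}.
\]
Each summand $v^{-\lambda(g_{1},g_{2})}z_{g_{1}'}*z_{g_{2}'}$ differs from $v^{-\lambda(g_{1}',g_{2}')}z_{g_{1}'}*z_{g_{2}'}$ only by a power of $v$, which is a unit of $\kk$ (and trivial in the classical case) and hence irrelevant to supports; so the local-support decomposition of $\cZ$, i.e.\ (\ref{eq:multiplication_decomposition}) applied to $\cZ$, rewrites it as a $\kk$-combination of $z_{g''}$ with $\supp z_{g''}\subset\supp z_{g_{1}'}\cup\supp z_{g_{2}'}\subset\supp s_{g_{1}}\cup\supp s_{g_{2}}$ for every $g''$ occurring.

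Collecting terms gives $v^{-\lambda(g_{1},g_{2})}s_{g_{1}}*s_{g_{2}}=\sum_{g''}e_{g''}z_{g''}$ with $e_{g''}\in\kk$ and $\supp z_{g''}\subset\supp s_{g_{1}}\cup\supp s_{g_{2}}$ whenever $e_{g''}\neq0$. I would then expand back into $\cS$ via the (given, $\prec_{t_{0}}$-unitriangular, local) transition from $\cS$ to $\cZ$, namely $z_{g''}=\sum_{h}b_{g'',h}s_{h}$ with $\supp s_{h}\subset\supp z_{g''}$ whenever $b_{g'',h}\neq0$, obtaining $v^{-\lambda(g_{1},g_{2})}s_{g_{1}}*s_{g_{2}}=\sum_{h}d_{h}s_{h}$. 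If $d_{h}\neq0$, then $b_{g'',h}\neq0$ for some $g''$ with $e_{g''}\neq0$, whence $\supp s_{h}\subset\supp z_{g''}\subset\supp s_{g_{1}}\cup\supp s_{g_{2}}$. Since the left-hand side is $(g_{1}+g_{2})$-pointed and each $s_{h}$ is $h$-pointed, comparing leading terms shows the expansion $\sum_{h}d_{h}s_{h}$ is $\prec_{t_{0}}$-unitriangular (as in Lemma \ref{lem:finit_sum_triangular}); this is exactly (\ref{eq:multiplication_decomposition}) for $\cS$. For the ``local support dimensions'' variant I would repeat the argument verbatim with $\supp$ replaced by $\suppDim$ and each inclusion by $\le$, chaining $\suppDim s_{h}\le\suppDim z_{g''}\le\suppDim z_{g_{1}'}+\suppDim z_{g_{2}'}\le\suppDim s_{g_{1}}+\suppDim s_{g_{2}}$; for the strictness clause, if all three inequalities were equalities then $h=g''=g_{1}'+g_{2}'=g_{1}+g_{2}$, so at least one is strict whenever $h\neq g_{1}+g_{2}$.

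The step I expect to require the most care is the handling of infinite sums, since the hypotheses do not assume that any of the transitions or decompositions is finite. The intermediate expressions — the double sum over $(g_{1}',g_{2}')$, the regrouping $\sum_{g''}e_{g''}z_{g''}$, and the final regrouping $\sum_{h}d_{h}s_{h}$ — are formal sums, and I must check they are well defined in $\hLP(t_{0})$ and may be rearranged freely. This follows because $s_{g_{i}}\in\hLP_{\preceq g_{i}}(t_{0})$ forces every degree that occurs anywhere in these computations to lie in $\hLP_{\preceq g_{1}+g_{2}}(t_{0})=x^{g_{1}+g_{2}}*\widehat{\kk[M^{\oplus}]}$, which is complete and in which each homogeneous component receives only finitely many contributions; the regroupings are therefore legitimate. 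The remaining ingredients — the support containments, the $v$-power being a unit, and the triangularity — are routine.
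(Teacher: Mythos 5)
Your proof follows the same route as the paper: expand $s_{g_1},s_{g_2}$ in $\cZ$ via the local transition from $\cZ$ to $\cS$, apply the local support (resp.\ dimension) property of $\cZ$ to the twisted products $z_{g_1'}*z_{g_2'}$, then expand back into $\cS$ via the local transition from $\cS$ to $\cZ$, and chain the three support inclusions (resp.\ dimension inequalities). Your explicit discussion of why the infinite regroupings are legitimate in $\hLP_{\preceq g_1+g_2}(t_0)$ is somewhat more careful than the paper's, which simply asserts that the nested $\prec_{t_0}$-unitriangular sums are well-defined.
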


\begin{proof}

Let us denote $\prec_{t_{0}}$-unitriangular transitions $s_{g}=\sum_{g'}b_{g,g'}z_{g'}$,
$z_{g}=\sum_{g'}c_{g,g'}s_{g'}$, and $v^{-\lambda(g_{1},g_{2})}z_{g_{1}}*z_{g_{2}}=\sum_{g}\alpha_{g_{1,}g_{2}}^{g}z_{g}$.
For any $g_{1},g_{2}\in\Theta$, we have the product 
\begin{align*}
s_{g_{1}}*s_{g_{2}} & =(\sum b_{g_{1},g'}z_{g'})*(\sum b_{g_{2},g''}z_{g''})\\
 & =\sum_{g',g''}b_{g_{1},g'}b_{g_{2},g''}z_{g'}*z_{g''}\\
 & =\sum_{g',g''}v^{\lambda(g',g'')}b_{g_{1},g'}b_{g_{2},g''}\sum_{g}\alpha_{g',g''}^{g}z_{g}\\
 & =\sum_{g'\preceq g_{1},g''\preceq g_{2}}v^{\lambda(g',g'')}b_{g_{1},g'}b_{g_{2},g''}\sum_{g\preceq g'+g''}\alpha_{g',g''}^{g}\sum_{\eta\preceq g}c_{g,\eta}s_{\eta},
\end{align*}
where the sums are $\prec_{t_{0}}$-unitriangular and well-defined
in $\widehat{\LP(t_{0})}$. Now we have 
\begin{align*}
\supp\eta & \subset\supp g\subset(\supp g'\cup\supp g'')\subset(\supp g_{1}\cup\supp g_{2})
\end{align*}
and, respectively,
\begin{align*}
\suppDim\eta & \leq\suppDim g\leq(\suppDim g'+\suppDim g'')\leq(\suppDim g_{1}+\suppDim g_{2})
\end{align*}
such that $\suppDim\eta<\suppDim g_{1}+\suppDim g_{2}$ when $\eta\neq g_{1}+g_{2}$.
The desired claims follow.

\end{proof}

\subsection{Construction by freezing\label{subsec:Construction-by-freezing}}

Recall that we have defined $\clAlg^{\Theta}=\oplus_{g\in\Theta}\kk s_{g}$.
Define the injective $\kk$-linear map $\frz_{F}^{\cS}:\clAlg^{\Theta}\rightarrow\hLP(\frz_{F}t_{0})$
such that $\frz_{F}^{\cS}s_{g}:=\frz_{F,g}s_{g}$. Note that $\{\frz_{F,g}s_{g}|g\in\Theta\}$
is a $\Theta$-pointed basis of $\frz_{F}^{\cS}\clAlg^{\Theta}$.

\begin{Lem}

Assume that $\clAlg^{\Theta}$ is closed under multiplication, then
$\frz_{F}^{\cS}\clAlg^{\Theta}$ is closed under multiplication.

\end{Lem}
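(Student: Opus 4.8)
The plan is to reduce the statement to the compatibility identities for the freezing operators collected in Lemma~\ref{lem:projection_properties}. Since $\frz_F^{\cS}$ is $\kk$-linear and the $v$-twisted product of $\hLP(\frz_F t_0)$ is $\kk$-bilinear, and since the elements $\frz_F^{\cS}s_g=\frz_{F,g}s_g$, $g\in\Theta$, form a basis of $\frz_F^{\cS}\clAlg^{\Theta}$ (as noted just before the Lemma), it suffices to prove that $\frz_F^{\cS}(s_{g_1})*\frz_F^{\cS}(s_{g_2})\in\frz_F^{\cS}\clAlg^{\Theta}$ for all $g_1,g_2\in\Theta$. I should keep in mind that $\frz_F^{\cS}$ is \emph{not} an algebra homomorphism (see the Example in Section~\ref{sec:Basics-of-the-freezing}), so one cannot argue by ``image of a subalgebra is a subalgebra''; the degrees have to be tracked by hand.

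First I would apply Lemma~\ref{lem:projection_properties}(3) with $z_1=s_{g_1}\in\hLP_{\preceq g_1}(t_0)$ and $z_2=s_{g_2}\in\hLP_{\preceq g_2}(t_0)$ to obtain the identity $\frz_F^{\cS}(s_{g_1})*\frz_F^{\cS}(s_{g_2})=\frz_{F,g_1+g_2}(s_{g_1}*s_{g_2})$, where the product on the left is computed in $\hLP(\frz_F t_0)$ (the twisting factors match because $\frz_F t_0$ carries the same Poisson form $\lambda$). By the hypothesis that $\clAlg^{\Theta}$ is closed under multiplication, $s_{g_1}*s_{g_2}=\sum_g b_g s_g$ is a finite $\kk$-linear combination. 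Since $x^{g_1}*x^{g_2}=v^{\lambda(g_1,g_2)}x^{g_1+g_2}$ whereas all remaining terms in the product contribute monomials $x^{g_1+g_2+p^{*}n}$ with $0\neq n\in\N^{I_{\ufv}}$ (hence strictly $\prec_{t_0}$-below $g_1+g_2$), the element $v^{-\lambda(g_1,g_2)}s_{g_1}*s_{g_2}$ is $(g_1+g_2)$-pointed; by Lemma~\ref{lem:finit_sum_triangular} the sum $\sum_g b_g s_g$ is $\prec_{t_0}$-unitriangular, so every $g$ with $b_g\neq0$ satisfies $g\preceq_{t_0}g_1+g_2$, \ie $g=(g_1+g_2)+\tB n_g$ for some $n_g\in\N^{I_{\ufv}}$.

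Next, since each such $s_g$ lies in $\hLP_{\preceq g_1+g_2}(t_0)$, the operator $\frz_{F,g_1+g_2}$ can be applied term by term, and Lemma~\ref{lem:projection_properties}(2) (with $m=g_1+g_2$ and $m'=g=m+\tB n_g$) gives $\frz_{F,g_1+g_2}(s_g)=\frz_{F,g}(s_g)=\frz_F^{\cS}(s_g)$ when $\supp n_g\cap F=\emptyset$, and $\frz_{F,g_1+g_2}(s_g)=0$ otherwise. Hence $\frz_F^{\cS}(s_{g_1})*\frz_F^{\cS}(s_{g_2})=\sum_{g:\ \supp n_g\cap F=\emptyset}b_g\,\frz_F^{\cS}(s_g)$ is a finite $\kk$-linear combination of the basis elements of $\frz_F^{\cS}\clAlg^{\Theta}$, which completes the reduction and hence the proof.

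I do not anticipate a genuine obstacle: with Lemma~\ref{lem:projection_properties} in hand the argument is essentially bookkeeping. The one delicate point is the verification that $v^{-\lambda(g_1,g_2)}s_{g_1}*s_{g_2}$ is pointed, so that Lemma~\ref{lem:finit_sum_triangular} forces $\sum_g b_g s_g$ to be supported on degrees $g\preceq_{t_0}g_1+g_2$; this boundedness of the support is precisely what legitimizes applying the single operator $\frz_{F,g_1+g_2}$ to the whole expansion and then evaluating it term by term via Lemma~\ref{lem:projection_properties}(2), rather than mixing up operators attached to different degrees.
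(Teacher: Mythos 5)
Your proof is correct and follows essentially the same route as the paper's: expand $v^{-\lambda(g_1,g_2)}s_{g_1}*s_{g_2}$ in $\cS$, invoke Lemma~\ref{lem:finit_sum_triangular} for $\prec_{t_0}$-unitriangularity, then apply $\frz_{F,g_1+g_2}$ and use Lemma~\ref{lem:projection_properties}(2)--(3) to identify the result with a linear combination of the $\frz_{F,g}s_g$. You spell out the pointedness check and the term-by-term evaluation a bit more explicitly than the paper does, but the argument is the same.
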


\begin{proof}

For any $s_{g_{1}},s_{g_{2}}$, we have a finite decomposition $v^{-\lambda(g_{1},g_{2})}s_{g_{1}}*s_{g_{2}}=\sum_{g}b_{g_{1},g_{2}}^{g}s_{g}$,
which must be $\prec_{t_{0}}$-unitriangular by Lemma \ref{lem:finit_sum_triangular}.
Applying $\frz_{F,g_{1}+g_{2}}$ on both sides, we get a $\prec_{\frz_{F}t_{0}}$-unitriangular
decomposition $v^{-\lambda(g_{1},g_{2})}\frz_{F,g_{1}}s_{g_{1}}*\frz_{F,g_{2}}s_{g_{2}}=\sum_{n}b_{g_{1},g_{2}}^{g}\frz_{g_{1}+g_{2}}s_{g}$.
By Lemma \ref{lem:projection_properties}, $\frz_{g_{1}+g_{2}}s_{g}$
is either $0$ or equals $\frz_{g}s_{g}$. Therefore, $\frz_{F}\clAlg^{\Theta}$
is a closed under multiplication with the basis $\frz_{F}\cS$.

\end{proof}

\begin{Lem}\label{lem:bases_with_transitions}

Assume that $\cZ=\{z_{g}|g\in\Theta\}$ is another basis of $\clAlg^{\Theta}=\oplus\kk s_{g}$.
Then $\{\frz_{F,g}z_{g}|g\in\Theta\}$ is another basis of $\frz_{F}^{\cS}\clAlg^{\Theta}$.
Equivalently, we have $\frz_{F}^{\cS}\clAlg^{\Theta}=\frz_{F}^{\cZ}\clAlg^{\Theta}$.

\end{Lem}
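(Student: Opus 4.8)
The plan is to use the unitriangularity of the transition between $\cS$ and $\cZ$ together with the fact (already recorded in Lemma~\ref{lem:projection_properties}) that $\frz_{F,g}$ sends $\hLP_{\preceq m}(t_0)$ compatibly across the dominance order. First I would observe that, since $\cZ$ is a basis of $\clAlg^{\Theta}=\oplus_{g\in\Theta}\kk s_g$ and both $\cS,\cZ$ are $\Theta$-pointed, the transition matrices are $\prec_{t_0}$-unitriangular: $z_g=\sum_{g'\preceq_{t_0}g}c_{g,g'}s_{g'}$ with $c_{g,g}=1$, and the inverse transition is also $\prec_{t_0}$-unitriangular by \cite[Lemma 3.1.11]{qin2017triangular}. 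Here each such sum is finite (since $z_g\in\clAlg^\Theta$, which is a $\kk$-module with basis $\cS$), so no completion subtleties arise.

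Next I would apply the freezing operator. Because $z_g$ is $g$-pointed, we have $z_g\in\hLP_{\preceq_{t_0}g}(t_0)$, and the finite decomposition $z_g=\sum_{g'\preceq_{t_0}g}c_{g,g'}s_{g'}$ lies entirely in $\hLP_{\preceq_{t_0}g}(t_0)$. Applying the $\kk$-linear map $\frz_{F,g}^{t_0}$ to both sides and invoking Lemma~\ref{lem:projection_properties}(2) (which says that on a lower graded piece $\hLP_{\preceq_{t_0}g'}(t_0)$ the operator $\frz_{F,g}$ either agrees with $\frz_{F,g'}$ or is zero, according to whether the connecting vector $n$ has support meeting $F$), we obtain
\begin{align*}
\frz_{F}^{\cS}z_g \;=\; \frz_{F,g}^{t_0}z_g \;=\; \sum_{g'\preceq_{t_0}g} c_{g,g'}\,\frz_{F,g}^{t_0}s_{g'},
\end{align*}
and each term $\frz_{F,g}^{t_0}s_{g'}$ is either $0$ or equals $\frz_{F,g'}^{t_0}s_{g'}=\frz_{F}^{\cS}s_{g'}$. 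Thus every $\frz_{F}^{\cS}z_g$ lies in $\frz_{F}^{\cS}\clAlg^{\Theta}=\oplus_{g\in\Theta}\kk\,\frz_{F,g}s_g$, giving one inclusion $\frz_{F}^{\cZ}\clAlg^{\Theta}\subseteq\frz_{F}^{\cS}\clAlg^{\Theta}$; by symmetry (using the inverse transition, which is equally unitriangular and finite) we get the reverse inclusion, hence equality of the two submodules of $\hLP(\frz_F t_0)$.

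It then remains to check that $\{\frz_{F,g}z_g\mid g\in\Theta\}$ is actually a basis of this common module, not merely a spanning set. For this I would note that the coefficient of the pointed leading term $x^{g}$ in $\frz_{F,g}z_g$ is $1$ (the $g=g'$ term survives since $\supp 0\cap F=\emptyset$), so $\frz_{F,g}z_g$ is $g$-pointed; the collection $\{\frz_{F,g}z_g\}$ is therefore $\Theta$-pointed, and any $\Theta$-pointed family in $\hLP(\frz_F t_0)$ is $\kk$-linearly independent. Since it spans $\frz_{F}^{\cS}\clAlg^{\Theta}$ by the previous paragraph, it is a basis. The one point requiring care — and the only real obstacle — is the bookkeeping in the displayed application of $\frz_{F,g}$: one must be sure that the hypotheses of Lemma~\ref{lem:projection_properties}(2) genuinely apply termwise, i.e.\ that writing $g'=g+\tB n$ with $n\in\yCone^{\geq0}(t_0)$ is exactly the situation covered, and that the two-case output ($\frz_{F,g'}s_{g'}$ versus $0$) is precisely what lands us back in the span of $\{\frz_{F,g}s_g\}$; granting that, the rest is the routine linear algebra of unitriangular change of basis.
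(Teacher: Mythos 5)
Your argument is correct and takes essentially the same route as the paper: decompose one basis unitriangularly into the other, apply $\frz_{F,g}$, and use Lemma~\ref{lem:projection_properties}(2) to conclude that each term lands back in the target span, then symmetrize to get equality of the two $\kk$-modules. Your additional closing remark — that $\{\frz_{F,g}z_g\}$ is $\Theta$-pointed, hence $\kk$-linearly independent, and therefore a genuine basis rather than merely a spanning set — is a valid and slightly more explicit version of a step the paper leaves implicit.
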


\begin{proof}

For any $s_{g}\in\cS$, we have a finite $\prec_{t_{0}}$-unitriangular
decomposition $s_{g}=\sum_{g'}b_{g,g'}z_{g'}$. By taking $\frz_{F,g}$,
we obtain

\begin{align*}
\frz_{F,g}s_{g} & =\sum_{g'}b_{g,g'}\frz_{F,g}z_{g'}
\end{align*}
where $\frz_{F,g}z_{g'}$ is either $0$ or equals to $\frz_{F,g'}z_{g'}$
by Lemma \ref{lem:projection_properties}. Therefore, we obtain that
$\frz_{F}^{\cS}\clAlg^{\Theta}:=\oplus_{g\in\Theta}\kk\frz_{F,g}s_{g}\subset\oplus_{g\in\Theta}\kk\frz_{F,g}z_{g}=:\frz_{F}^{\cZ}\clAlg^{\Theta}$.

By the same method, we obtain the inverse inclusion $\frz_{F}^{\cZ}\clAlg^{\Theta}\subset\frz_{F}^{\cS}\clAlg^{\Theta}$.
The claim follows.

\end{proof}

\begin{Rem}

Although we have $\frz_{F}^{\cS}\clAlg^{\Theta}=\frz_{F}^{\cZ}\clAlg^{\Theta}$
in Lemma \ref{lem:bases_with_transitions}, the $\kk$-linear maps
$\frz_{F}^{\cS}$ and $\frz_{F}^{\cZ}$ are not the same in general. 

\end{Rem}

Lemma \ref{lem:bases_with_transitions} implies the following result.

\begin{Thm}\label{thm:freezing_basis}

Assume that $\tB(t_{0})$ is of full rank, $\upClAlg(t_{0})$ has
a $\Z^{I}$-pointed basis $\cZ=\{z_{g}|g\in\Z^{I}\}$ such that $\deg z_{g}=g$,
and $\{\frz_{F,g}z_{g}|g\in\Z^{I}\}$ is a basis of $\upClAlg(\frz_{F}t_{0})$.
Then the following statement is true: if $\cS=\{s_{g}|g\in\Z^{I}\}$
is a basis for $\upClAlg(t_{0})$ such that $s_{g}$ are $g$-pointed,
then $\{\frz_{F,g}s_{g}|g\in\Z^{I}\}$ is a basis for $\upClAlg(\frz_{F}t_{0})$ 

\end{Thm}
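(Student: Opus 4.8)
The plan is to deduce this as an immediate consequence of Lemma \ref{lem:bases_with_transitions}, applied with the roles of the two bases interchanged. Since $\cS=\{s_{g}\mid g\in\Z^{I}\}$ and $\cZ=\{z_{g}\mid g\in\Z^{I}\}$ are both $\Z^{I}$-pointed $\kk$-bases of $\upClAlg(t_{0})$, the $\kk$-module $\clAlg^{\Theta}$ of Section \ref{subsec:Localness} is, in this situation, $\upClAlg(t_{0})$ itself, presented either as $\oplus_{g}\kk s_{g}$ or as $\oplus_{g}\kk z_{g}$. First I would note that, because each $s_{g}$ lies in $\upClAlg(t_{0})=\oplus_{g}\kk z_{g}$, it is a \emph{finite} $\kk$-combination of the $z_{g'}$; as $s_{g}$ is $g$-pointed, Lemma \ref{lem:finit_sum_triangular} makes this expansion $\prec_{t_{0}}$-unitriangular, so the hypotheses of Lemma \ref{lem:bases_with_transitions} are satisfied when one takes its ``$\cS$'' to be our $\cZ$ and its ``$\cZ$'' to be our $\cS$.

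Then I would apply Lemma \ref{lem:bases_with_transitions} in that form. It yields, first, the equality of $\kk$-submodules $\frz_{F}^{\cS}\upClAlg(t_{0})=\frz_{F}^{\cZ}\upClAlg(t_{0})$ inside $\hLP(\frz_{F}t_{0})$, and, second, that $\{\frz_{F,g}s_{g}\mid g\in\Z^{I}\}$ is a $\kk$-basis of this common image (its linear independence is automatic, being a $\Z^{I}$-pointed set, so the content is that it spans the image). It remains only to identify $\frz_{F}^{\cZ}\upClAlg(t_{0})$ with $\upClAlg(\frz_{F}t_{0})$: by definition $\frz_{F}^{\cZ}\upClAlg(t_{0})$ is the $\kk$-span of $\{\frz_{F,g}z_{g}\mid g\in\Z^{I}\}$, and this is all of $\upClAlg(\frz_{F}t_{0})$ precisely because $\{\frz_{F,g}z_{g}\mid g\in\Z^{I}\}$ is assumed to be a basis of $\upClAlg(\frz_{F}t_{0})$. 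Combining the two displayed facts gives that $\{\frz_{F,g}s_{g}\mid g\in\Z^{I}\}$ is a $\kk$-basis of $\upClAlg(\frz_{F}t_{0})$, which is the assertion.

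I do not expect a serious obstacle here: the genuine work is already packaged in Lemma \ref{lem:bases_with_transitions}, whose proof uses Lemma \ref{lem:projection_properties}(2) to guarantee that applying $\frz_{F,g_{1}+g_{2}}$ (resp.\ $\frz_{F,g}$) to a $\prec_{t_{0}}$-unitriangular expansion either annihilates a term or collapses it onto $\frz_{F,g'}z_{g'}$, thereby transporting a unitriangular transition between the two bases upstairs to a transition between their images. The only point I would be careful to state explicitly in the write-up is that, although the full change-of-basis matrix between $\cS$ and $\cZ$ may have infinitely many nonzero entries overall, each individual $s_{g}$ (resp.\ $z_{g}$) is a finite combination of the other basis, so every sum occurring is a bona fide element of $\hLP(\frz_{F}t_{0})$ and no convergence question arises; the full-rank hypothesis on $\tB(t_{0})$ enters only implicitly, through the availability of the dominance order and the notion of $g$-pointedness.
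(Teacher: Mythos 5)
Your proposal is correct and matches the paper's approach exactly: the paper's proof is the single sentence ``Lemma \ref{lem:bases_with_transitions} implies the following result,'' and you have correctly unpacked that deduction — apply Lemma \ref{lem:bases_with_transitions} with $\Theta=\Z^{I}$ so that $\clAlg^{\Theta}=\upClAlg(t_{0})$, conclude $\frz_{F}^{\cS}\upClAlg(t_{0})=\frz_{F}^{\cZ}\upClAlg(t_{0})$, and identify the latter with $\upClAlg(\frz_{F}t_{0})$ using the hypothesis on $\cZ$. (The role-swap you perform is harmless but not strictly needed; the equality $\frz_{F}^{\cS}\clAlg^{\Theta}=\frz_{F}^{\cZ}\clAlg^{\Theta}$ is symmetric and gives the conclusion either way.)
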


\begin{Rem}

Assume that $\tB(t_{0})$ is of full rank and $t_{0}$ is injective-reachable.
Note that $\frz_{F}t_{0}$ is also injective-reachable by \cite{muller2015existence}.
Then Theorem \ref{thm:freeze_tropical_bases} implies that the assumption
in Theorem \ref{thm:freezing_basis} is satisfied in the following
cases:
\begin{enumerate}
\item $\kk=\Z$: we can take $\cZ$ to be the theta basis \cite{gross2018canonical}
or the generic basis \cite{qin2019bases}.
\item $\kk=\Z[v^{\pm}]$, and $B$ is skew-symmetric: we can take $\cZ$
to be the quantum theta basis \cite{davison2019strong}.
\item $\kk=\Z[v^{\pm}]$ and $\upClAlg(t_{0})$ possesses the common triangular
basis $\can$ in the sense of \cite{qin2017triangular}: we can take
$\cZ=\can$.
\end{enumerate}
\end{Rem}

\subsection{Construction by localization\label{subsec:Construction-by-localization}}

In this section, we give an alternative way to construct bases for
$\upClAlg(\frz_{F}t_{0})$. This construction is motivated by the
monoidal categorification of $\upClAlg(\frz_{F}t_{0})$ in \cite{qin2023analogs}.

As before, let $\Theta$ denote a subset of $\cone(t_{0})$ and $\cS=\{s_{g}|g\in\Theta\}$
a subset of $\hLP(t_{0})$ such that $s_{g}$ are $g$-pointed. Further
assume that $\Theta+f_{i}\subset\Theta$ for any $i\in I$.

\begin{Property}[Property (S)]\label{property:S}

We say $\cS$ has property (S),\footnote{We view $g+df_{k}$ as a shift of $g$ along the direction $f_{k}$,
and (S) stands for a shift.} if for any $s_{g}$ and any $k\in I_{\ufv}$, we have $k\notin\supp s_{g+df_{k}}$
when $d\in\N$ is large enough.

\end{Property}

\begin{Property}[Factorization Property]

Let $I'$ denote a subset of $I$. We say $\cS$ has the factorization
property in the direction $I'$ if, for any $s_{g}$ and any $j\in I'$,
we have $v^{-\lambda(f_{j},g)}x_{j}*s_{g}=s_{g+f_{j}}$.

\end{Property}

The three important families of bases in the introduction satisfy
the factorization property in the direction $I_{\fv}$.

\begin{Lem}\label{lem:shift_product_factorization}

Take any $k\in I_{\ufv}$ and choose $F=\{k\}$, $R=I_{\ufv}\backslash\{k\}$.
Assume that $\frz_{F}\cS$ has the factorization property with respect
to $k$: 
\begin{align*}
v^{-\lambda(f_{k},g)}x_{k}*\frz_{F}s_{g} & =\frz_{F}s_{g+f_{k}}.
\end{align*}
Then the following claims are true.

(1) We have $s_{g+df_{k}}=v^{-\lambda(df_{k},g)}x_{k}^{d}*\frz_{F}s_{g}=x_{k}^{d}\cdot\frz_{F}s_{g}$
when $k\notin\supp s_{g+df_{k}}$.

(2) Further assume that $\cS$ has local support under multiplication
(Definition \ref{def:local_support}) and $x_{k}^{d}=s_{df_{k}}$
for $d\in\N$. Then we have $s_{g+f_{k}}=v^{-\lambda(f_{k},g)}x_{k}*s_{g}=x_{k}\cdot s_{g}$
when $k\notin\supp s_{g}$.

\end{Lem}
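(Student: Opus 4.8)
The plan is to exploit the interplay between the freezing operator $\frz_F$, the multiplication by the frozen variable $x_k$ in the frozen seed $\frz_F t_0$, and the local support hypothesis on $\cS$. For part (1), I would start from the factorization property for $\frz_F\cS$, which reads $v^{-\lambda(f_k,g)}x_k*\frz_F s_g = \frz_F s_{g+f_k}$, and iterate it $d$ times; using Lemma \ref{lem:projection_properties}(3) (compatibility of $\frz$ with the $v$-twisted product, applied to $x_k^j = x_k^j$ which is fixed by $\frz_F$) one gets $v^{-\lambda(df_k,g)}x_k^d*\frz_F s_g = \frz_F s_{g+df_k}$, where the $v$-factor bookkeeping is the routine computation $\lambda(df_k,g) = \sum_{j} \lambda(f_k, g + (\text{shifts}))$ collapsing because $\lambda(f_k,f_k)=0$. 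Now the hypothesis $k\notin\supp s_{g+df_k}$ means that $s_{g+df_k}$ involves no $y_k = x^{p^*e_k}$, i.e.\ $\frz_F s_{g+df_k} = s_{g+df_k}$ as elements of $\hLP$ (the freezing operator acts as the identity on a series with support disjoint from $F=\{k\}$). Combining these two facts yields $s_{g+df_k} = v^{-\lambda(df_k,g)}x_k^d*\frz_F s_g = x_k^d\cdot\frz_F s_g$, the last equality being the passage from the $v$-twisted product $*$ to the commutative product $\cdot$ after extracting the $v$-power, which is exactly how the localized cluster-monomial formula is normalized.

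For part (2), the extra hypotheses are that $\cS$ has local support under multiplication and that $x_k^d = s_{df_k}\in\cS$. I would apply the local-support decomposition \eqref{eq:multiplication_decomposition} to the product $s_{df_k}*s_g$ with $d=1$: we get $v^{-\lambda(f_k,g)}x_k*s_g = v^{-\lambda(f_k,g)}s_{f_k}*s_g = \sum_h b_h s_h$, a $\prec_{t_0}$-unitriangular sum with $\supp s_h\subset \supp s_{f_k}\cup\supp s_g = \{k\}\cup\supp s_g$ for every $h$ with $b_h\neq 0$, and with leading term $s_{g+f_k}$ (coefficient $1$). The point is that if $k\notin\supp s_g$, I want to conclude this sum has exactly one term. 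Here I would invoke part (1): since $k\notin\supp s_g$, part (1) with $d=1$ gives $s_{g+f_k} = x_k\cdot\frz_F s_g$ provided $k\notin\supp s_{g+f_k}$ — but note $\frz_F s_g = s_g$ because $k\notin\supp s_g$, so this reads $s_{g+f_k} = x_k\cdot s_g = v^{-\lambda(f_k,g)}x_k*s_g$, which is precisely the desired identity once we know $k\notin\supp s_{g+f_k}$. So the crux reduces to showing $k\notin\supp s_{g+f_k}$.

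The main obstacle, then, is verifying $k\notin\supp s_{g+f_k}$ under the hypothesis $k\notin\supp s_g$. The plan is to deduce it from the local support property directly: applying $\frz_F$ to the unitriangular decomposition $v^{-\lambda(f_k,g)}x_k*s_g = \sum_h b_h s_h$ and using that $\frz_F(x_k * s_g) = x_k * \frz_F s_g = x_k * s_g$ (since $k\notin\supp s_g$, nothing is killed on the left), we find $v^{-\lambda(f_k,g)}x_k * s_g = \sum_h b_h\, \frz_F s_h$; comparing with the original decomposition and using unitriangularity (Lemma \ref{lem:finit_sum_triangular}), each $s_h$ appearing must satisfy $\frz_F s_h = s_h$, i.e.\ $k\notin\supp s_h$; in particular $k\notin\supp s_{g+f_k}$. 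With that in hand, part (1) applies and gives $s_{g+f_k} = x_k\cdot s_g$, completing (2). I should double-check that the application of $\frz_F$ to an infinite (but $\prec_{t_0}$-unitriangular) sum is legitimate — it is, since $\frz_F$ is continuous for the relevant completion topology and the sum lies in $\widehat{\LP}(t_0)$ — and that the normalization $x_k^d=s_{df_k}$ is consistent with $x_k = s_{f_k}$ being $f_k$-pointed, which is immediate.
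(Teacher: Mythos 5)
Your proof of part (1) is correct and matches the paper: iterate the factorization property, collect the $v$-factors using $\lambda(f_k,f_k)=0$, and use $k\notin\supp s_{g+df_k}\Rightarrow\frz_F s_{g+df_k}=s_{g+df_k}$.

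In part (2), however, there is a computational error that leads you into a gap. You write $\supp s_{f_k}\cup\supp s_g=\{k\}\cup\supp s_g$, but in fact $\supp s_{f_k}=\supp x_k=\emptyset$. Recall that the support of a pointed element $x^g\cdot\sum_n b_n x^{p^*n}$ is $\bigcup_{b_n\neq 0}\supp n$, i.e.\ it records which $y$-variables occur in the $F$-polynomial, not which $x$-variables occur in the monomial; for $x_k=x^{f_k}$ the $F$-polynomial is $1$, so its support is empty. Once this is computed correctly, the local support property applied to the decomposition $v^{-\lambda(f_k,g)}s_{f_k}*s_g=\sum_h b_h s_h$ gives $\supp s_h\subset\supp s_{f_k}\cup\supp s_g=\supp s_g$ for every $h$ with $b_h\neq 0$; in particular $k\notin\supp s_{g+f_k}$ immediately, which is exactly what the paper does, and the rest of your argument (reduce to part (1) with $d=1$) goes through. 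By contrast, the route you take to compensate for the miscomputation — applying $\frz_F$ to the decomposition and ``comparing with the original'' — does not close: from $\sum_h b_h s_h=\sum_h b_h\,\frz_{F,g+f_k}s_h$ it does not follow that $\frz_{F,g+f_k}s_h=s_h$ for each $h$. Equivalently, from $\sum_h b_h\bigl(s_h-\frz_{F,g+f_k}s_h\bigr)=0$ you would need to rule out nontrivial cancellation among the differences, and the elements $s_h-\frz_{F,g+f_k}s_h$ are not a priori linearly independent, nor does the unitriangularity of Lemma \ref{lem:finit_sum_triangular} apply to them (they are supported strictly below their nominal degrees and the leading terms have all been stripped off). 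Without an additional positivity hypothesis this step is unjustified, and the lemma does not assume positivity.
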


\begin{proof}

(1) Because $k\notin\supp s_{g+df_{k}}$, we have $\frz_{F}s_{g+df_{k}}=s_{g+df_{k}}$.
We further have $\frz_{F}s_{g+df_{k}}=v^{-\lambda(df_{k},g)}x_{k}^{d}*\frz_{F}s_{g}$
by assumption. The claim follows.

(2) Because $k\notin\supp s_{g}$, we have $k\notin\supp s_{g+f_{k}}$
by considering the decomposition of $x_{k}*s_{g}$ into $\cS$ and
the local support assumption. Then we have both $\frz_{F}s_{g}=s_{g}$
and $\frz_{F}s_{g+f_{k}}=s_{g+f_{k}}$. Moreover, we have $v^{-\lambda(f_{k},g)}x_{k}*\frz_{F}s_{g}=\frz_{F}s_{g+f_{k}}$
by assumption. The claim follows.

\end{proof}

Denote $\Theta'=\{g\in\Theta|\supp s_{g}\cap F=\emptyset\}$. Define
$s'_{g}=s_{g}$ for any $g\in\Theta'$. Assume that $\cS$ has property
(S). For any $g\in\Theta\backslash\Theta',$ we define the $g$-pointed
localized function 
\begin{align}
s'_{g} & :=x^{-\sum_{j\in F}d_{j}f_{j}}\cdot s_{g+\sum_{j\in F}d_{j}f_{j}}\label{eq:localized_func}
\end{align}
for some choice of $d_{j}$ large enough as in Property (S).

\begin{Prop}\label{prop:induced_basis}

Assume that $\cS$ has Property (S) and $\frz_{F}\cS$ has the factorization
property in direction $F$. Then the following statements are true.

(1) We have $s_{g}'=\frz_{F,g}s_{g}$ for $g\in\Theta$. In particular,
$s'_{g}$ is independent of the choice of $d_{j}$.

(2) The set $\{s_{g}'|g\in\Theta\}$ is a basis of $\clAlg^{\Theta}(\frz_{F}t_{0})$.

\end{Prop}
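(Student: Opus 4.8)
\textbf{Proof strategy for Proposition \ref{prop:induced_basis}.}

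The plan is to prove (1) first, since (2) follows almost immediately from it combined with Theorem \ref{thm:freeze_tropical_bases} or the hypotheses already in play. For (1), the case $g \in \Theta'$ is trivial: by definition $s_g$ has no support in $F$, so $\frz_{F,g}s_g = s_g = s'_g$. So the content is in the case $g \in \Theta \backslash \Theta'$, where I must show that the localized function defined in \eqref{eq:localized_func} agrees with $\frz_{F,g}s_g$ and, as a consequence, does not depend on the choice of the large integers $d_j$. The key tool is Lemma \ref{lem:shift_product_factorization}(1): for a single vertex $k \in F$ and $d$ large enough that $k \notin \supp s_{g+df_k}$ (which exists by Property (S)), that lemma gives $s_{g+df_k} = x_k^d \cdot \frz_{\{k\}}s_g$, hence $x_k^{-d}\cdot s_{g+df_k} = \frz_{\{k\}}s_g$. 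The first thing I would do is iterate this over the vertices of $F$ one at a time. Writing $F = \{j_1,\dots,j_r\}$, I would apply the one-vertex statement successively, at each stage using Property (S) for the current vertex together with the fact (from Lemma \ref{lem:projection_properties}(3) applied to $x_{j}^{d_j}$, which is $\frz$-invariant) that freezing earlier vertices does not reintroduce support at $j_s$. This shows $x^{-\sum_{j\in F}d_j f_j}\cdot s_{g+\sum_{j\in F}d_j f_j} = \frz_{F,g}s_g$, and since the right-hand side manifestly does not involve the $d_j$, part (1) and the independence claim follow.

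For part (2), I would argue as follows. By (1) the set $\{s'_g \mid g \in \Theta\}$ equals $\{\frz_{F,g}s_g \mid g \in \Theta\}$, which is exactly the $\Theta$-pointed set $\frz_F^{\cS}\cS$ considered in Section \ref{subsec:Construction-by-freezing}. Its elements are $g$-pointed in $\hLP(\frz_F t_0)$ with distinct degrees, hence linearly independent; so it suffices to show they span $\clAlg^\Theta(\frz_F t_0)$. Here I would invoke the setup of Section \ref{subsec:Construction-by-freezing}: $\frz_F^{\cS}\clAlg^\Theta = \oplus_{g\in\Theta}\kk\,\frz_{F,g}s_g$ by definition, and the surjectivity/closure lemmas there (together with Lemma \ref{lem:bases_with_transitions}) identify this with the appropriate cluster-type subalgebra $\clAlg^\Theta(\frz_F t_0)$. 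Concretely, any element of $\clAlg^\Theta(\frz_F t_0)$ pulls back under the freezing operator (which is surjective onto each $\hLP_{\preceq m}(\frz_F t_0)$ by Lemma \ref{lem:projection_properties}(1)) to an element of $\clAlg^\Theta$, which expands in the $s_g$; pushing forward via $\frz_{F}$ and using Lemma \ref{lem:projection_properties}(2) (each $\frz_{F,m}s_g$ is either $0$ or $\frz_{F,g}s_g$) expresses it in the $s'_g$.

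The main obstacle I anticipate is the bookkeeping in the iteration for part (1): one must be careful that "large enough $d_j$" can be chosen \emph{simultaneously} for all $j \in F$, and that freezing a subset $F' \subsetneq F$ of the vertices does not destroy the property $k \notin \supp s_{g + \sum d_j f_j}$ for $k \in F \backslash F'$ that is needed to apply the one-vertex lemma at the next step. This is where Property (S) must be used in the right order, and where the compatibility $\frz_{F} = \frz_{F\backslash F'} \circ \frz_{F'}$ of nested freezing operators (immediate from Definition \ref{def:freezing_operator}) does the work. Once that is set up cleanly, everything else is formal. A secondary point to check is that the factorization hypothesis on $\frz_F\cS$ in direction $F$ is exactly what licenses the repeated application of Lemma \ref{lem:shift_product_factorization}(1), since that lemma's hypothesis is stated for $F = \{k\}$; applying it inside $\frz_{F'}\cS$ for intermediate $F'$ requires knowing the factorization property descends to these partial freezings, which again follows from the nesting identity.
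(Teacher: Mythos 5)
Your iterative strategy for part (1) has a genuine gap at exactly the spot you flag as a ``secondary point.'' To apply the one-vertex Lemma \ref{lem:shift_product_factorization}(1) at stage $s$, having already frozen $F'=\{j_1,\dots,j_{s-1}\}$ and now freezing $k=j_s$, that lemma's hypothesis requires the factorization property of $\frz_{\{j_s\}}(\frz_{F'}\cS)=\frz_{F'\cup\{j_s\}}\cS$ with respect to $j_s$. But the proposition only hypothesizes the factorization property for the \emph{full} freezing $\frz_F\cS$, and this does not descend to partial freezings $\frz_{F''}\cS$ with $F''\subsetneq F$. The nesting identity $\frz_F=\frz_{F\backslash F''}\circ\frz_{F''}$ gives the implication only in the opposite direction: from the $\frz_{F''}$-factorization one deduces the $\frz_F$-factorization by applying $\frz_{F\backslash F''}$ to both sides, but $\frz_{F\backslash F''}$ has a nonzero kernel, so one cannot go back. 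Thus ``which again follows from the nesting identity'' is backwards, and the induction cannot be carried out as written.

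The repair, and what the paper intends, is not to iterate the one-vertex lemma over the elements of $F$ but to run its argument once with the full $F$. For $g\in\Theta\backslash\Theta'$ choose $d_j$ ($j\in F$) so that $\supp s_{g+\sum_{j\in F}d_jf_j}\cap F=\emptyset$ (this is the operative reading of ``large enough as in Property (S)'' in \eqref{eq:localized_func}). Iterating the factorization property of $\frz_F\cS$ in direction $F$ gives
\begin{align*}
\frz_F s_{g+\sum_{j\in F}d_jf_j}\;=\;v^{-\lambda(\sum_{j\in F}d_jf_j,\,g)}\,x^{\sum_{j\in F}d_jf_j}*\frz_F s_g\;=\;x^{\sum_{j\in F}d_jf_j}\cdot\frz_F s_g,
\end{align*}
where the last step uses that $\frz_F s_g$ has no $F$-support and $\lambda(f_j,p^*e_k)=\delta_{jk}\diag_k$. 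Since $\supp s_{g+\sum_{j\in F}d_jf_j}\cap F=\emptyset$, the left-hand side is just $s_{g+\sum_{j\in F}d_jf_j}$, and multiplying by $x^{-\sum_{j\in F}d_jf_j}$ yields $s'_g=\frz_F s_g=\frz_{F,g}s_g$. This uses only the stated hypothesis on $\frz_F\cS$; no factorization property for partial freezings is needed.

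Part (2) is also lighter than you make it: once (1) holds, $\{s'_g\mid g\in\Theta\}=\{\frz_{F,g}s_g\mid g\in\Theta\}$, which is by construction the $\Theta$-pointed basis of $\frz_F^{\cS}\clAlg^\Theta$ noted at the start of Section \ref{subsec:Construction-by-freezing}; the pullback/surjectivity argument via $\frz_{F,m}$ is unnecessary.
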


\begin{proof}

(1) We have $s'_{g}=\frz_{F,g}s_{g}$ for any $g\in\Theta$ by Lemma
\ref{lem:shift_product_factorization}.

(2) The claim follows from (1).

\end{proof}

\begin{Thm}\label{thm:induced_theta_basis}

Assume that $\tB(t_{0})$ is of full rank, and $\cS=\{s_{g}|g\in\Z^{I}\}$
is a basis for $\upClAlg(t_{0})$ such that $s_{g}$ are $g$-pointed.
Assume that $\cS$ has Property (S) and $\frz_{F}\cS:=\{\frz_{F,g}s_{g}|g\in\Z^{I}\}$
has the factorization property in direction $F$. Let us take the
set of localized functions $\cS':=\{s'_{g}|g\in\Z^{I}\}$ as in (\ref{eq:localized_func}).
Then, in the cases of Theorem \ref{thm:freezing_basis}, $\cS'$ is
a basis of $\upClAlg(\frz_{F}t_{0})$ and it equals $\frz_{F}\cS$.

\end{Thm}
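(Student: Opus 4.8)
The plan is to deduce the theorem by combining the two basis-constructions already set up: the freezing construction of Theorem~\ref{thm:freezing_basis} and the localization construction of Proposition~\ref{prop:induced_basis}. First I would observe that $\cS' = \frz_F\cS$ is essentially immediate from Proposition~\ref{prop:induced_basis}(1), which asserts $s'_g = \frz_{F,g}s_g$ for all $g\in\Z^I$ under the stated hypotheses (Property (S) for $\cS$, and the factorization property in direction $F$ for $\frz_F\cS$). So the content of the theorem is really that $\frz_F\cS$ is a basis of $\upClAlg(\frz_F t_0)$, and then the equality $\cS' = \frz_F\cS$ follows formally.

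To see that $\frz_F\cS$ is a basis, I would invoke Theorem~\ref{thm:freezing_basis}: its hypotheses require that $\upClAlg(t_0)$ has a $\Z^I$-pointed basis $\cZ = \{z_g\}$ such that $\{\frz_{F,g}z_g\}$ is a basis of $\upClAlg(\frz_F t_0)$, and then \emph{any} other $\Z^I$-pointed basis $\cS$ of $\upClAlg(t_0)$ automatically maps to a basis under freezing. The phrase ``in the cases of Theorem~\ref{thm:freezing_basis}'' in the statement means precisely that we are assuming we are in a situation where such a $\cZ$ exists — e.g. $\kk = \Z$ with $\cZ$ the theta basis or generic basis, or $\kk = \Z[v^\pm]$ with $B$ skew-symmetric and $\cZ$ the quantum theta basis, or $\cZ$ the common triangular basis, all of which arise via Theorem~\ref{thm:freeze_tropical_bases} when $t_0$ is injective-reachable (using Muller's Theorem~\ref{thm:freeze_inj_reachable} to get injective-reachability of $\frz_F t_0$). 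Thus the first half of the conclusion, that $\cS'$ is a basis of $\upClAlg(\frz_F t_0)$, follows by chaining Theorem~\ref{thm:freezing_basis} with Proposition~\ref{prop:induced_basis}(1).

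Concretely the steps are: (i) record that the hypotheses of Proposition~\ref{prop:induced_basis} are met — Property (S) for $\cS$ is assumed, and the factorization property in direction $F$ for $\frz_F\cS$ is assumed; (ii) apply Proposition~\ref{prop:induced_basis}(1) to get $s'_g = \frz_{F,g}s_g$ for every $g\in\Z^I$, hence $\cS' = \frz_F\cS$ as sets of pointed elements; (iii) in each of the cases of Theorem~\ref{thm:freezing_basis}, take the distinguished basis $\cZ$ (theta, generic, quantum theta, or common triangular) for which $\frz_F\cZ$ is known to be a basis of $\upClAlg(\frz_F t_0)$ — this is exactly the content of Theorem~\ref{thm:freeze_tropical_bases} together with injective-reachability of $\frz_F t_0$; (iv) apply Theorem~\ref{thm:freezing_basis} with this $\cZ$ and our $\cS$ to conclude $\frz_F\cS$ is a basis of $\upClAlg(\frz_F t_0)$; (v) combine with (ii) to finish.

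The main obstacle — really the only nontrivial point — is verifying that the distinguished basis $\cZ$ genuinely satisfies the hypothesis of Theorem~\ref{thm:freezing_basis}, i.e. that $\{\frz_{F,g}z_g\}$ is a basis of $\upClAlg(\frz_F t_0)$. For the theta basis, generic basis, quantum theta basis, and common triangular basis this requires knowing that $\cZ$ is $\Z^I$-pointed and compatibly pointed at all seeds in $\Delta^+_{t_0}$, so that Theorem~\ref{thm:freeze_tropical_bases} applies once $\frz_F t_0$ is injective-reachable; the injective-reachability of $\frz_F t_0$ is supplied by Theorem~\ref{thm:freeze_inj_reachable} (Muller), assuming $t_0$ is injective-reachable, which is implicit in ``the cases of Theorem~\ref{thm:freezing_basis}.'' Everything else is a formal bookkeeping argument with pointed elements and the linearity of the freezing operators, and no new estimates or scattering-diagram input beyond what has already been established are needed. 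One should also note that Theorem~\ref{thm:freeze_theta_basis}, referenced in the surrounding text, gives the sharper statement that the theta basis is sent to the theta basis, which makes the theta-basis case of the present theorem completely explicit; I would mention this as the prototypical instance.
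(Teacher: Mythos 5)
Your proof is correct and follows the same route as the paper: the paper's proof is a one-line invocation of Proposition~\ref{prop:induced_basis} (with $\Theta=\Z^I$) to identify $\cS'$ with $\frz_F\cS$, followed by Theorem~\ref{thm:freezing_basis} to certify that $\frz_F\cS$ is a basis. Your additional unpacking of ``in the cases of Theorem~\ref{thm:freezing_basis}'' (via the remark after it, Theorem~\ref{thm:freeze_tropical_bases}, and Theorem~\ref{thm:freeze_inj_reachable}) is accurate and simply makes explicit what the paper leaves implicit.
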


\begin{proof}

The claim follows from Proposition \ref{prop:induced_basis} for $\Theta=\Z^{I}$
and Theorem \ref{thm:freezing_basis}.

\end{proof}

\subsection{Applications to theta bases\label{sec:well-known-bases}}

As before, let there be given a (quantum) seed $t_{0}$ and a subset
$F\subset I_{\ufv}$. Then we have the seed $\frz_{F}t_{0}$ obtained
from $t_{0}$ by freezing the vertices $F$. Denote $R=I_{\ufv}\backslash F$.
We will see that we can obtain theta bases for $\upClAlg(\frz_{F}t_{0})$
from the theta bases for $\upClAlg(t_{0})$ via both the freezing
construction (Theorem \ref{thm:freezing_basis}) and the localization
construction (Theorem \ref{thm:induced_theta_basis}).

We work in the cases $\kk=\Z$, or $\kk=\Z[v^{\pm}]$ but $(b_{ij})_{i,j\in I_{\ufv}}$
is skew-symmetric. Assume that $t_{0}$ is injective-reachable, then
the $m$-pointed theta functions $\vartheta_{m}$, $m\in\Z^{I}$,
form a basis of $\upClAlg(t_{0})$ by \cite{gross2018canonical} \cite{davison2019strong}.
By Theorem \ref{thm:projection_theta_function}, their images $\frz_{F}\vartheta_{m}$
under the freezing operators are still theta functions, which are
known to satisfy the factorization property. Since $\frz_{F}t_{0}$
is still injective-reachable, we have the following result.

\begin{Thm}\label{thm:freeze_theta_basis}

The set $\{\frz_{F}\vartheta_{m}|m\in\Z^{I}\}$ is the theta basis
of $\upClAlg(\frz_{F}t_{0})$.

\end{Thm}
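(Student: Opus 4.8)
The plan is to verify that the set $\{\frz_{F}\vartheta_{m}\mid m\in\Z^{I}\}$ satisfies the two hypotheses needed to conclude it is \emph{the} theta basis of $\upClAlg(\frz_{F}t_{0})$: first, that each $\frz_{F}\vartheta_{m}$ is actually a theta function for the cluster scattering diagram $\frD(\frz_{F}t_{0})$, and second, that these theta functions form a basis. Both pieces are essentially already in place from earlier results, so this proof is mainly an assembly.

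First I would invoke Theorem \ref{thm:projection_theta_function}: applied with $Q$ a generic base point in the positive chamber and $t=t_{0}$, it says that $\frz_{F,m}\vartheta_{m}$ is the theta function $\vartheta'_{m}$ of $\frD(\frz_{F}t_{0})$ for every $m\in\cone(t_{0})=\cone(\frz_{F}t_{0})$. (Here one uses Lemma \ref{lem:pushforward_scattering_diagram}, which identifies $\frz_{F*}\frD(t_{0})$ with $\frD(\frz_{F}t_{0})$ up to equivalence, so that broken lines match up.) Thus the set in question is precisely $\{\vartheta'_{m}\mid m\in\Z^{I}\}$, the full collection of pointed theta functions of $\frz_{F}t_{0}$.

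Next I would check that this collection is a basis of $\upClAlg(\frz_{F}t_{0})$. By Theorem \ref{thm:freeze_inj_reachable} (Muller's observation, Lemma \ref{lem:projection_reachable_chamber}), injective-reachability of $t_{0}$ implies injective-reachability of $\frz_{F}t_{0}$; since $\tB(t_{0})$ is of full rank one also needs that $\frz_{F}\tB = \tB(\frz_{F}t_{0})$ is of full rank, which is immediate because it is obtained by deleting columns indexed by $F$ from the full-rank matrix — wait, more carefully, $\tB(\frz_{F}t_{0}) = (b_{ij})_{i\in I, j\in R}$ is a submatrix of $\tB(t_{0})$ with columns a subset of the original columns, hence still of full rank. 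Then by \cite{gross2018canonical}\cite{davison2019strong} the $m$-pointed theta functions $\vartheta'_{m}$, $m\in\Z^{I}$, form a basis of $\upClAlg(\frz_{F}t_{0})$. Alternatively, one can cite Theorem \ref{thm:freeze_tropical_bases} directly: the theta functions $\vartheta_{m}$ are compatibly pointed at all seeds (by the theta-function degree formula and Theorem \ref{thm:compatibly_pointed_cluster_monom}-type statements), so Theorem \ref{thm:freeze_tropical_bases} gives that $\frz_{F}\cS$ is a basis, and Proposition \ref{prop:same_pointed_degrees} forces it to be the $\Z^{I}$-pointed one. Either route completes the argument.

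Finally, I would note where the factorization property comes in: the statement says this is \emph{the theta basis}, which for $\upClAlg(\frz_{F}t_{0})$ is by definition the basis of pointed theta functions of $\frD(\frz_{F}t_{0})$ — and the identification of $\frz_{F}\vartheta_{m}$ with $\vartheta'_{m}$ from Theorem \ref{thm:projection_theta_function} settles this without needing factorization at all. The factorization property is relevant for the companion localization construction (Theorem \ref{thm:induced_theta_basis}), which I would mention is also satisfied since theta functions of $\frz_{F}t_{0}$ satisfy $v^{-\lambda(f_{j},g)}x_{j}*\vartheta'_{g} = \vartheta'_{g+f_{j}}$ for frozen $j$. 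I do not expect a serious obstacle here: every ingredient is a cited or previously-proven result, and the only thing to be careful about is matching base points and confirming the full-rank hypothesis passes to $\frz_{F}t_{0}$. If anything is delicate, it is making sure the equivalence in Lemma \ref{lem:pushforward_scattering_diagram} is compatible with broken-line computations at a fixed generic $Q$, but Theorem \ref{thm:projection_theta_function} already packages exactly that.
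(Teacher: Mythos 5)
Your proposal matches the paper's argument almost verbatim: identify $\frz_{F}\vartheta_{m}$ with theta functions of $\frD(\frz_{F}t_{0})$ via Theorem~\ref{thm:projection_theta_function}, then observe that $\frz_{F}t_{0}$ remains injective-reachable (and of full rank) so its theta functions form a basis of $\upClAlg(\frz_{F}t_{0})$. The only caveat worth flagging is in your ``alternative'' route: Theorem~\ref{thm:freeze_tropical_bases} together with Proposition~\ref{prop:same_pointed_degrees} only shows that $\frz_{F}\cS$ is a $\Z^{I}$-pointed basis, not that its elements \emph{are} the theta functions of $\frD(\frz_{F}t_{0})$ --- so that route still needs Theorem~\ref{thm:projection_theta_function} to finish, and is not genuinely independent of the first.
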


By the following Proposition \ref{prop:shifted_theta_func_supp},
the theta basis for $\upClAlg(t_{0})$ has Property (S). Therefore,
Theorem \ref{thm:induced_theta_basis} applies to it, since the theta
basis for $\upClAlg(\frz_{F}t_{0})$ also has the factorization property
in the frozen direction $F$.

\begin{Prop}\label{prop:shifted_theta_func_supp}

Choose a generic base point $Q$ in the interior of $\cC^{+}$. Let
there be given any $g\in\cone(t_{0})$ such that $\theta_{Q,g}=\theta_{g}$
is a Laurent polynomial. Then, for any $k\in I_{\ufv}$, we have $k\notin\supp\theta_{g+df_{k}}$
when $d\in\N$ is large enough.

\end{Prop}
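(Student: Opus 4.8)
The plan is to analyze how the theta function $\theta_{g+df_k}$ is built from broken lines and to show that, once $d$ is large, no broken line contributing to it can bend at a wall whose normal vector is supported on $k$. First I would recall that $\theta_{g+df_k}=\sum_\gamma \Mono(\gamma)$, where $\gamma$ ranges over broken lines with initial direction $g+df_k$ and base point $Q\in\Int(\cC^+)$, and that each such $\gamma$ begins in the linear component $L^{(0)}$ carrying the monomial $x^{g+df_k}$. Since $Q$ lies in the positive chamber $\cC^+$ and the initial direction $g+df_k$ lies deep in $\cC^+$ for $d$ large (because $\langle g+df_k, e_k\rangle = \langle g,e_k\rangle + d\,\langle f_k,e_k\rangle$ and $\langle f_k,e_k\rangle>0$, so this grows without bound, while the other pairings are unchanged), the initial ray of any broken line already sits inside $\cC^+$, far from the walls in the $k$-direction.

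The key observation to make precise is a monotonicity statement: along a broken line $\gamma$ with initial direction $g+df_k$, write $\Mono(L^{(i)})=c^{(i)}x^{g+df_k+p^*n^{(i)}}$ with $n^{(i)}\in\yCone^{\geq0}$ and $n^{(i+1)}\geq n^{(i)}$ (as recorded after the definition of broken lines, citing \cite[Remark 3.2]{gross2018canonical}). The final monomial $\Mono(\gamma)=c^{(r+1)}x^{g+df_k+p^*n^{(r+1)}}$ contributes to $\theta_{g+df_k}$, and its exponent is $g+df_k+p^*n^{(r+1)}$. Comparing with the known expansion $\theta_{g+df_k}=x^{g+df_k}\cdot F|_{y^n\mapsto x^{p^*n}}$ for an $F$-polynomial-type series, I want to bound the coordinate of $n^{(r+1)}$ at the index $k$ — equivalently, to bound $\suppDim$ of $\theta_{g+df_k}$ in the $e_k$-direction — independently of $d$. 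The cleanest route is to use Theorem \ref{thm:freeze_theta_basis} together with Proposition \ref{prop:project_up_cl_alg} / Lemma \ref{lem:projection_properties}: the freezing operator $\frz_{\{k\}}$ sends $\theta_{g+df_k}$ to the theta function $\theta'_{g+df_k}$ of $\frD(\frz_{\{k\}}t_0)$, and $\frz_{\{k\}}$ kills exactly the monomials whose exponent $n$ has $k\in\supp n$. So $k\notin\supp\theta_{g+df_k}$ is equivalent to saying $\theta_{g+df_k}=\frz_{\{k\}}\theta_{g+df_k}=\theta'_{g+df_k}$, i.e. every broken line contributing to $\theta_{g+df_k}$ is already a broken line in the frozen diagram.

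Thus the crux is: for $d$ large, every broken line $\gamma$ with initial direction $g+df_k$ and base point $Q$ bends only at walls $\frd\subset n^\bot$ with $k\notin\supp n$. To see this, note first that the initial segment $L^{(0)}$ of $\gamma$ lies in (the closure of) $\cC^+$ when $g+df_k$ is far inside $\cC^+$; this can be arranged because broken lines with direction in $\Int(\cC^+)$ and base point in $\Int(\cC^+)$ never leave $\cC^+$ — indeed, the only chamber adjacent to $\cC^+$ across a $k$-type wall is $\cC^{\mu_k t_0}$, and a broken line coming from direction $g+df_k$ travelling backward in time would have to bend at such a wall, which it can only do if the monomial exponent pairs negatively with $n_{\frd}$; but $\langle g+df_k, e_k\rangle>0$ for $d$ large, so the first bend (if any) at a wall of the $k$-direction is impossible, and afterwards the exponent only changes by $p^*n$ with $n\geq 0$, keeping the $e_k$-pairing large. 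I would formalize this by induction on the broken-line segments: $\langle m^{(i)}, e_k\rangle = \langle g, e_k\rangle + d\langle f_k, e_k\rangle + \langle p^*n^{(i)}, e_k\rangle$, and since $\langle p^*n^{(i)}, e_k\rangle = \langle \sum_j b_{jk} f_j, \ldots\rangle$ is bounded below on the relevant finite range (using Theorem \ref{thm:freeze_inj_reachable} and the finiteness of broken-line decompositions at each order), for $d$ large this pairing stays strictly positive, so $\gamma$ never bends at a $k$-directed wall. The main obstacle I anticipate is making the uniform lower bound on $\langle p^*n^{(i)}, e_k\rangle$ genuinely independent of $d$: a priori the set of broken lines grows with $d$. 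I would handle this by observing that the frozen diagram $\frD(\frz_{\{k\}}t_0)$ and its chamber structure do not depend on $d$ at all, and that $\theta'_{g+df_k}=x^{df_k}\cdot\theta'_{g}$ by the factorization property of theta functions in the frozen direction $k$ (Theorem \ref{thm:freeze_theta_basis} and the standard factorization), so $\suppDim_k \theta'_{g+df_k}=\suppDim_k\theta'_g$ is bounded; then $\frz_{\{k\}}\theta_{g+df_k}=\theta'_{g+df_k}$ forces the $n^{(r+1)}$ appearing in contributing broken lines of $\theta_{g+df_k}$ to have bounded $k$-coordinate, and combining with the positivity of $\langle m^{(i)},e_k\rangle$ along each line shows that for $d$ exceeding that bound there is simply no room for a bend in the $k$-direction, giving $k\notin\supp\theta_{g+df_k}$.
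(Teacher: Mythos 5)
Your overall geometric intuition — that for $d$ large, a broken line contributing a monomial with $y_k$ in its exponent must have bent at a wall whose normal has positive $k$-coordinate, and then cannot reach the base point $Q\in\Int(\cC^+)$ — is close to the paper's endgame (the paper makes this precise by showing that once the line crosses the relevant wall $\frd^{(j)}\subset (n^{(j)})^\bot$, all subsequent derivatives pair negatively with $n^{(j)}$, trapping the line in the half-space $H^-$ while $Q\in H^+$). But the step you yourself flag as the main obstacle — a bound on the set of exponents $n$ with $n_k\neq 0$ appearing in $\theta_{g+df_k}$, \emph{uniform in $d$} — is not resolved by your proposed argument, and the reasoning you sketch for it does not work. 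You appeal to $\frz_{\{k\}}\theta_{g+df_k}=\theta'_{g+df_k}$ and to the factorization $\theta'_{g+df_k}=x^{df_k}\cdot\theta'_g$, but the freezing operator $\frz_{\{k\}}$ \emph{by definition} annihilates every monomial $x^{g+p^*n}$ with $n_k>0$; the equality $\frz_{\{k\}}\theta_{g+df_k}=\theta'_{g+df_k}$ therefore carries no information whatsoever about the deleted terms, which are exactly the ones you need to control. Likewise $\theta'_{g+df_k}$ lives in the frozen diagram and has no $y_k$-dependence at all, so its ``$\suppDim_k$'' is trivially zero and yields no bound. The argument is circular: you use the conclusion you want (the $k$-support is small) to bound the quantity that would prove it.

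The paper obtains the needed uniform bound by an entirely different mechanism: positivity of theta-function structure constants. Since $x_k^d=\theta_{df_k}$ and $x_k^d*\theta_g$ decomposes as $v^\alpha\theta_{g+df_k}+\sum b_{g'}\theta_{g'}$ with all $b_{g'}\geq 0$ and all theta functions having nonnegative Laurent coefficients, no cancellation occurs, so every Laurent monomial of $\theta_{g+df_k}$ already appears in $x_k^d*\theta_g$. This forces $N_{g+df_k}\subset N_g$, where $N_g=\{n\mid c_n\neq 0,\ n_k\neq 0\}$ is a finite set that does not depend on $d$. With this, one chooses $d$ large enough that the inequality $\langle g+df_k+p^*n',n^{(j)}\rangle>0$ holds for all the finitely many relevant $n',n^{(j)}$, and the broken-line contradiction goes through. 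If you want to salvage your proof, you should replace the freezing-operator bound with this positivity argument (or some equivalent $d$-independent bound on $N_{g+df_k}$); the rest of your sketch can then be tightened into the paper's hyperplane argument.
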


\begin{proof}

Let us denote $\theta_{g}=\sum_{n\in\N^{I_{\ufv}}}c_{n}x^{g+p^{*}n}$.
We have the finite set $N_{g}:=\{n|c_{n}\neq0,n_{k}\neq0\}$.

For any $n\in N_{g}$, when $d$ is large enough, the following holds
for any $0\leq n'\leq n$ and any $0<n^{(j)}\leq n$ with the $k$-th
coordinate $n_{k}^{(j)}>0$: 
\begin{align}
\langle g+df_{k}+p^{*}n',n^{(j)}\rangle & >0.\label{eq:ineq_derivative}
\end{align}
We further choose $d$ large enough such that the above inequality
holds for any $n\in N_{g}$.

Denote $\theta_{g+df_{k}}=\sum c_{n}'x^{g+df_{k}+p^{*}n}$ and $N_{g+df_{k}}=\{n|c_{n}'\neq0,n_{k}\neq0\}$.
Note that we have a finite decomposition of positive Laurent polynomials
$x_{k}^{d}*\theta_{g}=v^{\alpha}\theta_{g+df_{k}}+\sum b_{g'}\theta_{g'}$
with $\alpha\in\Z$, $b_{g'}$ non-negative. It follows that $N_{g+df_{k}}\subset N_{g}$.

If $N_{g+df_{k}}=\emptyset$, we have verified the claim.

Otherwise, assume $N_{g+df_{k}}\neq\emptyset$. Then there exists
some $n\in N_{g+df_{k}}\subset N_{g}$ or ,equivalently, $\theta_{g+df_{k}}$
contains a Laurent monomial $x^{g+df_{k}+p^{*}n}$ with $n_{k}>0$.
Then this implies that there is a broken line $\gamma:(-\infty,0]\rightarrow\R^{I}$
which bends at walls $\frd^{(i)}\subset(n^{(i)})^{\bot}$ at times
$-\infty<t^{(1)}<\cdots<t^{(r)}<0$. Let $n^{(i)}>0$ denote the primitive
normal vectors of $\frd^{(i)}$. Moreover, for the linear segments
$L^{(i)}=\gamma(t^{(i)},t^{(i+1)}]$ (denote $t^{(0)}=-\infty$, $t^{(r+1)}=0$),
the derivative $\gamma'$ on $L^{(i)}$ is $-g^{(i)}:=-g-df_{k}-p^{*}(\sum_{s=1}^{i}c_{s}n^{(s)})$,
where $c_{s}\in\Z_{>0}$ are multiplicities, and $n=\sum_{s=1}^{r}c_{s}n^{(s)}$.
Since $n_{k}>0$, we must have $n_{k}^{(j)}>0$ for some $j$. Note
that $\gamma(t^{(j)})\in(n^{(j)})^{\perp}$. Define half spaces $H^{\pm}:=\{m\in\R^{I}|\pm\langle n^{(j)},m\rangle>0\}$.
By the inequality (\ref{eq:ineq_derivative}), we have $\langle-g^{(i)},n^{(j)}\rangle<0$
for any $i\geq j$. Equivalently, the derivatives of the segments
of $L^{(i)}$ belong to $H^{-}$ when $i\geq j$. So $\gamma(t^{(j)},0]$
is contained in $H^{-}$. This is impossible because we have $\gamma(0)=Q\in H^{+}$.

\end{proof}


\newcommand{\etalchar}[1]{$^{#1}$}
\def\cprime{$'$}
\providecommand{\bysame}{\leavevmode\hbox to3em{\hrulefill}\thinspace}
\providecommand{\MR}{\relax\ifhmode\unskip\space\fi MR }
\providecommand{\MRhref}[2]{%
  \href{http://www.ams.org/mathscinet-getitem?mr=#1}{#2}
}
\providecommand{\href}[2]{#2}


\end{document}